\def\input{symbols} \clearpage{\input{symbols} \clearpage}
\def\addsymbol #1: #2{$#1,$~~\pageref{#2}\\}
\def\newnot#1{\label{#1}} 
\def\nt{\noindent}
\newtheorem{thm}{Theorem}[section]
\newtheorem{cor}[thm]{Corollary}
\newtheorem{prop}[thm]{Proposition}
\newtheorem{defn}[thm]{Definition}
\newtheorem{rem}[thm]{Remark}
\newtheorem{exm}[thm]{Example}
\numberwithin{equation}{section}
\def\MM{\mathbb{M}}
\def\N{\mathbb{N}}
\def\Z{\mathbb{Z}}
\def\R{\mathbb{R}}
\def\Rr{\overline{\mathbb R}_+}
\def\C{\mathbb{C}}
\def\a{\mathbf{a}}
\def\g{\mathbf{g}}
\def\A{\mathcal{A}}
\def\D{\mathcal{D}}
\def\E{\mathcal{E}}
\def\H{\mathcal{H}}
\def\K{\mathcal{K}}
\def\L{\mathcal{L}}
\def\M{\mathcal M}
\def\O{\mathcal{O}}
\def\P{\mathcal P}
\def\Q{\mathcal Q}
\def\RR{\mathcal{R}}
\def\S{\mathcal{S}}
\def\U{\mathcal{U}}
\def\WW{\mathcal W}
\def\Re{\mathrm{Re}\,}
\def\Im{\mathrm{Im}\,}
\def\reg{\mathrm{reg}}
\def\c{\mathrm{c}}
\def\clas{\mathrm{cl}}
\def\deg{\mathrm{deg}}
\def\sing{\mathrm{sing}}
\def\dist{\mathrm{dist}}
\def\diag{\mathrm{diag}}
\def\supp{\mathrm{supp}\,}
\def\cone{\mathrm{cone}}
\def\comp{\mathrm{comp}}
\def\loc{\mathrm{loc}}
\def\dim{\mathrm{dim}\,}
\def\Diff{\mathrm{Diff}}
\def\id{\mathrm{id}}
\def\op{\mathrm{op}}
\def\Op{\mathrm{Op}}
\def\bs{\boldsymbol}
\def\X{X^\wedge}
\def\Xw{X^\vartriangle}
\def\HsgX{\mathcal{H}^{s,\gamma}(X^\wedge)}
\def\KsgX{\mathcal{K}^{s,\gamma}(X^\wedge)}
\def\Kl{\kappa_\lambda}
\def\Kll{\set{\kappa_\lambda}_{\lambda\in\R_+}}
\def\TenP{\widehat{\otimes}_\pi}
\newcommand{\norm}[1]{\left\Vert#1\right\Vert}
\newcommand{\abs}[1]{\left\vert#1\right\vert}
\newcommand{\dslash}{d\llap {\raisebox{.9ex}{$\scriptstyle-\!$}}}
\newcommand{\Set}[1]{\left(#1\right)}
\newcommand{\dint}{\int\hspace{-.15cm}\int}
\newcommand{\set}[1]{\left\{#1\right\}}
\newcommand{\virg}[1]{`{#1}'}
\newcommand{\ang}[1]{\langle{#1}\rangle}
\newcommand{\til}[1]{\tilde{#1}}
\newcommand{\Projlim}[1]{\displaystyle\lim_{\substack{\longleftarrow\\ {#1}}}}
\def\Fr{Fr\'{e}chet }
\author{B.-W. Schulze and A.\,Volpato}
\title{Cone and edge calculus with discrete asymptotics}
\begin{document}

\nocite{Benn1,Eski2,Kond1,Melr1,Remp3,Schu34,Schu36,Schu55,Witt2}

\maketitle

\begin{abstract}
\nt This investigation is devoted to the program to characterise continuous and variable discrete asymptotics of solutions to elliptic equations on a manifold with edge, continued in a cicle of forthcoming expositions \cite{Schu62}, \cite{Schu63}. The structure of continuous and variable discrete (in general branching) asymptotics is very complex. Therefore, in order to make things more transparent we present here the approach first in the special constant discrete case, based on meromorphic Mellin symbols.
\end{abstract}

2000 Mathematics Subject Classification: \textbf{35J40}, \textbf{58J40}, \textbf{47L80}\\[-2mm]

\textit{Keywords}: pseudo-differential operators, manifolds with edge, ellipticity,\linebreak parametrices, weighted edge spaces, discrete asymptotics


%
%
%
%
%
\section{The typical differential operators}

{\footnotesize Operators on a manifold with singularities (of conical, edge, or corner type), expressed in stretched coordinates, are degenerate in a typical way. We illustrate that for the case of differential operators, motivated by the shape of Laplacians belonging to corresponding degenerate Riemannian metrics, or by rephrasing operators with smooth coefficients in anisotropic terms, coming from the geometry or the position of strata in the configuration.\par}

%
%
%
%
%
\subsection{Manifolds with conical singularities and operators of Fuchs type}\label{S1.1.1}

An example of a cone embedded in the Euclidean space $\R^{1+n}$ for a certain $n\in\N,$ is the set $M$ defined by
$$M:=\set{\til x\in\R^{1+n}:\til x=0,\,\mathrm{or}\,
\,\til x/|\til x|\in X\,\mathrm{when}\,\til x\ne0},$$
where $X$ is a $C^\infty$ submanifold of $S^n=\{\til x\in\R^{1+n}:|\til x|=1\}$ of any codimension.

\begin{defn}\index{manifold!with conical singularities}\label{1.1.1}
A manifold $M$ with conical singularities $S$ is defined as a topological space\footnote{Topological spaces in this context are assumed to be of a simple structure, say, a countable union of compact sets.} with a finite subset $S$ such that
\begin{itemize}
\item[$(\mathrm{i})$] $M\setminus S$ is a $C^\infty$ manifold$;$
\item[$(\mathrm{ii})$] every $v\in S$ has a neighbourhood $V$ in $M$ with a system of so-called singular charts $\chi:V\rightarrow\Xw$ for a closed $C^\infty$ manifold $X=X(v),$ for
\begin{equation}\label{xw1}
\Xw:=(\Rr\times X)/(\{0\}\times X),
\end{equation}
such that $\chi$ restricts to a diffeomorphism
\begin{equation}\label{Chireg}
\chi_\reg:V\setminus\{v\}\rightarrow\X:=\R_+\times X,
\end{equation}
and different singular charts\index{singular charts} $\chi,$ $\til\chi$ from the system have the property that
$$\til\chi_\reg\circ\chi_\reg^{-1}:\R_+\times X\rightarrow\R_+\times X$$
is the restriction of a diffeomorphism $\R\times X\rightarrow\R\times X$ to $\R_+\times X.$
\end{itemize}
\end{defn}

\begin{rem}
Every manifold $M$ with conical singularities $S=\set{v_1,\dots,v_N}$ can be represented as a quotient space
\begin{equation}\label{quo}
M=\MM\,/\sim
\end{equation}
for a $C^\infty$ manifold $\MM$ $($called the stretched manifold\index{stretched manifold} associated with $M)$ with boundary $\partial\MM$ where we assume that $\partial\MM$ is the disjoint union of $C^\infty$ manifolds $X_j,$ $j=1,\dots,N.$ The quotient map $\MM\rightarrow M$ is induced by $X_j\mapsto v_j,$ $j=1,\dots,N,$ and the identity map on $M\setminus S=\MM\setminus\partial\MM.$
\end{rem}

\nt Let $\Diff^\nu(\cdot)$\label{Diffnu} denote the set of all differential operators of order $\nu$ with smooth coefficients on the $C^\infty$ manifold in parantheses. The topology of local $C^\infty$ coefficients gives rise to a natural \Fr topology in $\Diff^\nu(\cdot).$

\begin{defn}\label{Diffdeg}
Let $M$ be a manifold with conical singularities $S.$ We denote by $\Diff^\mu_{\deg}(M)$ the set of all $A\in\Diff^\mu(M\setminus S)$ that are close to every $v\in S$ in the splitting of variables $(r,x)\in\R_+\times X$ of the form
\begin{equation}\label{new2}
A=r^{-\mu}\sum_{j=0}^\mu a_j(r)\Set{-r\frac\partial{\partial r}}^j
\end{equation}
for certain $a_j\in C^\infty(\Rr,\Diff^{\mu-j}(X)),$ $j=0,\dots,\mu.$ Any such operator\index{operators!of Fuchs type} $A$ will be called of Fuchs type.\index{Fuchs type!operator of $-$}
\end{defn}

\nt Observe that manifolds with conical singularities form a category\index{category} with natural (iso)morphisms. If $M$ and $\til M$ are objects in that category, with $S$ and $\til S$ as the respective conical singularities, then a morphism is a continuous map
$$\chi:M\rightarrow\til M$$
restricting to a map $S\rightarrow\til S,$ such that $\chi$ is the quotient map induced by a differentiable map $\chi_\mathrm{s}:\MM\rightarrow\til\MM$ in the category of $C^\infty$ manifolds with boundary (the latter includes that $\chi_\mathrm{s}|_{\partial\MM}:\partial\MM\rightarrow\partial\til\MM$ is a diffentiable map). In particular, an isomorphism $\chi:M\rightarrow\til M$ between manifolds with conical singularities gives rise (via operator push forward) to an isomorphism
$$\chi_*:\Diff_{\deg}^\mu(M)\rightarrow\Diff_{\deg}^\mu(\til M)$$
between the respective spaces of operators in Definition \ref{Diffdeg}.\\

\nt Fuchs type differential operators are motivated by the shape of the Laplacian\index{Laplacian!with respect to a Riemannian metric} on (open stretched) cones $\R_+\times X\ni(r,x)$ with respect to a Riemannian metric\index{Riemannian metric!Laplacian with respect to a $-$} of the form
$$dr^2+r^2g_X(r)\label{RiemMetr}$$
where $g_X(r)$ is a family of Riemannian metrics on $X,$ smooth in $\Rr$ up to $r=0.$ The Laplacian then has the form (\ref{new2}) for $\mu=2$ (including the weight factor $r^{-2}).$\\

\nt Operators of the form (\ref{new2}) appear when we introduce polar coordinates $(r,x)\in\R_+\times S^n$ (with $S^n$ being the unit sphere on $\R^{n+1})$ in a differential operator $A$ in $\R^{1+n}\ni\til x,$
$$A=\sum_{|\alpha|\le\mu}c_\alpha(\til x)D_{\til x}^\alpha$$
with $c_\alpha\in C^\infty(\R^{1+n}).$ For instance, if we consider the Laplace operator
$$\Delta=\sum_{j=1}^{n+1}\frac{\partial^2}{\partial\til x_j^2}$$ then in polar coordinates\index{Laplace operators!in polar coordinates}\index{polar coordinates!Laplace operator in $-$} we obtain
$$\Delta=r^{-2}\set{\Set{r\frac\partial{\partial
r}}^2+(n-1)r\frac\partial{\partial r}+\Delta_{S^n}}.$$
Observe that the operator $\displaystyle\sum_{j=1}^{n+1}\til x_j\frac{\partial}{\partial\til x_j}$ in polar coordinates takes the form $r\displaystyle\frac\partial{\partial r}.$

%
%
%
%
%
\subsection{Manifolds with edge and edge-degenerate operators}

An example of a wedge embedded in the Euclidean space $\R^{1+n+q}$ for certain $n,q\in\N,$ is the set $M:=\Xw\!\times\Omega,$ $\Xw=\{\til x\in\R^{1+n}:\til x=0,$ or $\til x/|\til x|\in X$ when $x\ne0\},$ where $X$ is a closed $C^\infty$ submanifold of $S^n$ of any codimension and $\Omega\subseteq\R^q$ an open set. We call $\Xw$ (cf. formula \eqref{xw1}) the model cone of the wedge and $X$ its base. Moreover, $\Omega$ is called the edge of $M;$ it is non-trivial only when $q>0,$ otherwise $M$ is simply a cone.\\

\nt In order to define manifolds with edge in general we first note that we can talk about locally trivial $\Xw$-bundles over $Y$ for some $C^\infty$ manifolds $X$ and $Y.$ As before we assume $X$ to be closed. Such a bundle is a topological space $W$ with the following properties:
\begin{itemize}
\item[$(\mathrm{i})$] there is a canonical continuous projection $\pi:W\rightarrow Y,$ such that for every $y\in Y$ the preimage $\pi^{-1}(y)=:W_y$ is isomorphic to $\Xw$ in the category on manifolds with conical singularities;
\item[$(\mathrm{ii})$] $W\setminus Y$ is a $C^\infty$ manifold, and $\pi|_{W\setminus Y}:W\setminus Y\rightarrow Y$ is a differentiable map, $\pi|_Y:Y\rightarrow Y$ the identity (where the tip of the cone $\pi^{-1}(y)$ is identified with $y);$
\item[$(\mathrm{iii})$] every $y\in Y$ has a neighbourhood $U$ such that there is a homeomorphism
$$\tau:\pi^{-1}(U)=:W_U\rightarrow \Xw\times U,$$
which restricts to a diffeomorphism $W_U\setminus Y\rightarrow\X\times U$ and to the identity $U\rightarrow U,$ and for the canonical projections $\pi:W_U\rightarrow U$ and $p:\Xw\times U\rightarrow U,$ we have $\pi=p\circ\tau.$
\end{itemize}

\nt The space $W\setminus Y$ has the structure of a locally trivial $\X$-bundle over $Y,$ with the canonical projection $\pi|_{W\setminus Y}:W\setminus Y\rightarrow Y$ (as a subbundle of $W$ in the sense of the general terminology on fibre bundles).\\

\nt In the following definition we first assume that $Y$ is connected.

\begin{defn}\label{defed}
A topological space $M$ is called a manifold with edge\index{manifold!with edge}\index{edge!manifold with $-$} $Y$ if
\begin{itemize}
\item[$(\mathrm{i})$] $M\setminus Y$ and $Y$ are $C^\infty$ manifolds$;$
\item[$(\mathrm{ii})$] the set $Y$ has a neighbourhood $W$ which is a locally trivial $\Xw$-bundle over $Y$ for some $C^\infty$ manifold $X.$
\end{itemize}
\end{defn}

\nt Observe, as a consequence of Definition \ref{defed}, that every $y\in Y$ has a neighbourhood $V$ in $M$ such that there is a so-called singular chart
\begin{equation}\label{chart}
\chi:V\rightarrow\Xw\times\Omega,\qquad\Omega\subseteq\R^q\quad\textup{open},\qquad q=\dim Y,
\end{equation}
which restricts to a diffeomorphism
\begin{equation}\label{regchart}
\chi:V\setminus Y\rightarrow\X\times\Omega.
\end{equation}

\nt Definition \ref{defed} easily extends to the case when $Y$ has several connected components $Y_j;$ then the respective $X_j$ may depend on $j.$

\begin{rem}
Every manifold $M$ with edge $Y$ $($consisting of connected components $Y_j$ of different dimensions $q_j,$ $j=1,\dots,N)$ can be represented as a quotient space
$$M=\MM\,/\sim$$
for a $C^\infty$ manifold $\MM$ $($called the stretched manifold\index{stretched manifold} associated with $M)$ with boundary $\partial\MM$ consisting of connected components $(\partial\MM)_j,$ $j=1,\dots,N,$ where $(\partial\MM)_j$ is a $($locally trivial$)$ $X_j$-bundle over $Y_j$ for certain $C^\infty$ manifold $X_j.$ The quotient map $\MM\rightarrow M$ is induced by the bundle projections $(\partial\MM)_j\rightarrow Y_j,$ $j=1,\dots,N,$ and the identity map on $M\setminus Y=\MM\setminus\partial\MM.$ 
\end{rem}

\begin{exm}
\begin{itemize}
\item[$(\mathrm{i})$] For $M=\Xw\times\Omega$ the stretched manifold has the form
$$\MM=\Rr\times X\times\Omega.$$
In this case $\partial\MM=\set{0}\times X\times\Omega$ plays the role of the above-mentioned $X$-bundle over the edge $\Omega.$
\item[$(\mathrm{ii})$] A $C^\infty$ manifold $M$ with boundary $\partial M$ can be interpreted as a manifold with edge $\partial M.$ In this case $X$ is a single point$,$ and the neighbourhood $W$ of \textup{Definition} $\ref{defed}$ can be interpreted as the inner normal bundle of the boundary $($with respect to any fixed Riemannian metric$).$
\end{itemize}
\end{exm}

\begin{defn}\label{Diffdegedge}
Let $M$ be a manifold with edge $Y.$ Then $\Diff^\mu_{\deg}(M)$ is defined to be the set of all $A\in\Diff^\mu(M\setminus Y)$ that are close to every $y\in Y$ in the splitting of variables $(r,x,y)\in\R_+\times X\times\Omega$ of the form
\begin{equation}\label{deg}
A=r^{-\mu}\sum_{j+|\alpha|\le\mu}a_{j\alpha}(r,y)\Set{-r\frac\partial{\partial r}}^j\Set{rD_y}^\alpha
\end{equation}
for certain $a_{j\alpha}\in C^\infty(\Rr\times\Omega,\Diff^{\mu-(j+|\alpha|)}(X)).$ Any such operator\index{operators!edge-degenerate $-$} $A$ will be called edge-degenerate\index{operator}.
\end{defn}

\nt Manifolds with edge form a category with natural (iso)morphisms. If $M$ and $\til M$ are objects in that category, with $Y$ and $\til Y$ as the respective edges, then a morphism is represented by a continuous map
$$\chi:M\rightarrow\til M$$
that restricts to a differentiable map $Y\rightarrow\til Y,$ and $\chi$ is the quotient map of a differentiable map $\chi_\mathrm{s}:\MM\rightarrow\til\MM$ between the respective stretched manifolds in the category of smooth manifolds with boundary, where $\chi_\mathrm{s}|_{\MM\setminus\partial\MM}:
\MM\setminus\partial\MM\rightarrow\til\MM\setminus\partial\til\MM$ is diffentiable in the category of smooth manifolds, and $\chi_\mathrm{s}|_{\partial\MM}:\partial\MM\rightarrow\partial\til\MM$ is a differentiable morphism between the respective $X$-bundles over $Y$ and $\til X$-bundles over $\til Y.$ In particular, an isomorphism $\chi:M\rightarrow\til M$ between manifolds with edge is given when $\chi_\mathrm{s}: \MM\rightarrow\til\MM$ and $\chi_\mathrm{s}|_{\partial\MM}:\partial\MM\rightarrow\partial\til\MM$ are isomorphisms. Note that when $\chi:M\rightarrow\til M$ is an isomorphism then the operator push forward gives rise to an isomorphism
$$\chi_*:\Diff_{\deg}^\mu(M)\rightarrow\Diff_{\deg}^\mu(\til M)$$
between the respective spaces of operators in Definition \ref{Diffdegedge}.\\

\nt Edge-degenerate operators are motivated by the shape of the Laplacian\index{Laplacian!with respect to a Riemannian metric} on (open stretched) wedges $\R_+\times X\times\Omega\ni(r,x,y)$ with respect to a Riemannian metric\index{Riemannian metric!Laplacian with respect to a $-$} of the form
$$dr^2+r^2g_X(r)+dy^2$$
where $g_X(r,y)$ is a family of Riemannian metrics on $X,$ smooth in $\Rr\times\Omega$ up to $r=0.$ The Laplacian then has the form (\ref{deg}) for $\mu=2$ (including the weight factor $r^{-2}).$ Other operators of the form (\ref{deg}) appear when we introduce polar coordinates $\R^{1+n}\setminus\{0\}\ni\til x\mapsto(r,x)\in\R_+\times S^n$ in a differential operator $A$ of order $\mu$
\begin{equation}\label{A}
A=\sum_{|\alpha|\le\mu}c_\alpha(\til x,y)D^\alpha_{\til x,y}
\end{equation}
with coefficients $c_\alpha\in C^\infty(\R^{1+n+q}).$ For instance, for the Laplacian\index{Laplace operators!in polar coordinates}\index{polar coordinates!Laplace operator in $-$} in
$\R^{1+n+q}$ we obtain
\begin{equation}\label{Lapl}
\Delta=r^{-2}\set{\Set{r\frac\partial{\partial
r}}^2+(n-1)r\frac\partial{\partial r}+
\Delta_{S^n}+\sum^q_{l=1}r^2\frac{\partial^2}{\partial y^2_l}}.
\end{equation}

%
%
%
%
%
\section{The cone algebra}\label{1.2}

{\footnotesize By the cone algebra we understand a pseudo-differential algebra on (the $C^\infty$ part of) a manifold $M$ with conical singularities $S,$ with a principal symbolic hierarchy $\sigma=(\sigma_\psi,\sigma_\mathrm c),$ consisting of the principal interior symbol $\sigma_\psi$ and the conormal symbol $\sigma_\mathrm c,$ contributed by $M\setminus S$ and $S,$ respectively. The cone algebra is defined as a substructure of the standard pseudo-differential calculus on $M\setminus S$ where, close to $S,$ the operators in terms of the Fourier transform are rephrased by operators based on the Mellin transform. According to the expected asymptotics of solutions to elliptic equations in weighted Sobolev spaces, we define so-called discrete asymptotics, also for Mellin symbols and Green operators, and study ellipticity with parametrices.\\

\nt In this section we outline essential elements of the analysis of the cone algebra with discrete asymptotics in a new trasparent way, also as a background for the material in Section 4 below and in \cite{Schu62}, \cite{Schu63}. More details and complete proofs, as far as they are not given here, may be found, for instance, in \cite{Schu20}.\par}

%
%
%
%
%
\subsection{Tools from the classical pseudo-differential calculus}\label{Sec1.2.1}
Let us first define symbols and operators in an open set of $\R^n.$
\begin{defn}\label{stsym}
The symbol space\index{symbol!space!scalar $-$}\index{symbols!space of scalar $-$}
\begin{equation}\label{stsym2}
S^\mu(U\times\R^n)
\end{equation}
for $U\subseteq\R^m$ open$,$ $\mu\in\R,$ is defined to be the set of all $a(x,\xi)\in C^\infty(U\times\R^n)$ such that
$$\sup_{x\in K,\xi\in\R^n}\ang{\xi}^{-\mu+|\beta|}|D_x^\alpha D_\xi^\beta a(x,\xi)|$$
is finite for every $K\Subset U$ and multi-indices $\alpha\in\N^m,$ $\beta\in\N^n,$ where $\ang\xi:=(1+\abs\xi^2)^{1/2}.$\label{angeta} The space of classical symbols\index{classical!symbols}\index{symbols!classical $-$}
$$S^\mu_\clas( U\times\R^n)$$
is defined as the subspace of all $a(x,\xi)$ in \textup{(\ref{stsym2})} such that there are homogeneous components\index{homogeneous!components} $a_{(\mu-j)}(x,\xi)\in C^\infty(U\times(\R^n\setminus\{0\})),$ $j\in\N,$ with
$$a_{(\mu-j)}(x,\lambda\xi)=\lambda^{\mu-j}a_{(\mu-j)}(x,\xi),\qquad\textup{for all}\;\lambda\in\R_+,$$
such that
$$a(x,\xi)-\chi(\xi)\sum_{j=0}^Na_{(\mu-j)}(x,\xi)\in S^{\mu-(N+1)}(U\times\R^n)$$
for all $N\in\N;$ here $\chi(\xi)$ is any excision function\index{excision function} in $\R^q$ $($i.e.$,$ $\chi\in C^\infty(\R^q),$ $\chi(\eta)=0$ for $|\eta|<c_0,$ $\chi(\eta)=1$ for $|\eta|>c_1,$ with certain $0<c_0<c_1).$
\end{defn}

\nt If an assertion holds both in the classical and general case, we write \virg{(cl)} as subscript.\\

\nt The space $S_{(\clas)}^\mu(U\times\R^n)$ is \Fr in a natural way, and the set $S_{(\clas)}^\mu(\R^n)$ of symbols with constant coefficients is closed in the respective space. Then
$$S_{(\clas)}^\mu(U\times\R^n)=C^\infty(U,S_{(\clas)}^\mu(\R^n)).$$

\nt To associate pseudo-differential operators with symbols, we often consider the case $U=\Omega\times\Omega$ for an open set $\Omega\subseteq\R^n$ and write in this case $(x,x')$ instead of $x.$ Moreover, since in Definition \ref{stsym} the variables and covariables are of independent dimension, we can also form spaces of parameter-dependent symbols\index{parameter-dependent!symbols}\index{symbols!parameter-dependent $-$} $a(x,x',\xi,\lambda)$ with parameter $\lambda,$ where $(\xi,\lambda)\in\R^{n+l},$ $l\in\N,$ is formally treated as the covariables. This gives rise to parameter-dependent pseudo-differential operators\index{parameter-dependent!operators!pseudo-differential $-$}\index{pseudo-differential operators!parameter-dependent $-$}\index{operators!parameter-dependent $-$!pseudo-differential $-$}
\begin{equation}\label{Op}
\Op(a)(\lambda)u(x):=\dint e^{i(x-x')\xi}a(x,x',\xi,\lambda)u(x')dx'\dslash\xi,
\end{equation}
$\dslash\xi=(2\pi)^{-n}d\xi,$ first for $u\in C^\infty_0(\Omega)$ and then extended to various larger function and distribution spaces. In (\ref{Op}) the function $a(x,x',\xi,\lambda)$ is often called a double symbol while $a(x,\xi,\lambda)$ and $a(x',\xi,\lambda)$ are called left and right symbol, respectively. The case $l=0$ corresponds to the \virg{usual} pseudo-differential calculus without parameters. Let us set
\begin{equation}\label{Lmu}
L^\mu_{(\clas)}(\Omega;\R^l):=\set{\Op(a)(\lambda):a(x,x',\xi,\lambda)\in S^\mu_{(\clas)}(\Omega\times\Omega\times\R^{n+l})}.
\end{equation}

\nt The elements of (\ref{Lmu}) are called parameter-dependent pseudo-differential operators in $\Omega$ (classical or general).\\

\nt The standard elements of the pseudo-differential calculus may be found in many text-books, see, for instance, \cite{Horm3}, \cite{Kuma1} and \cite{Shub2}. The main purpose here is to fix notation and then freely employ the known results.\\

\nt The space 
$$L^{-\infty}(\Omega;\R^l):=\bigcap_{\mu\in\R}L^\mu(\Omega;\R^l)\label{Linf}$$
can be identified with $\S(\R^l,L^{-\infty}(\Omega)),$ with $L^{-\infty}(\Omega)$ being the space of all integral operators with kernel in $C^\infty(\Omega\times\Omega)$ (in the \Fr topology from the bijection $L^{-\infty}(\Omega)\cong C^\infty(\Omega\times\Omega)).$\\

\nt Let $X$ be a $C^\infty$ manifold with a Riemannian metric. Then there is a similar identification between $C^\infty(X\times X)$ and the space $L^{-\infty}(X)$\label{Lclass} of smoothing operators\index{smoothing operators}\index{operators!smoothing $-$} on $X,$ namely,
$$Cu(x)=\int_Xc(x,x')u(x')dx',$$
where $dx'$ is the measure from the Riemannian metric on $X.$ This gives rise to
$$L^{-\infty}(X;\R^l)=\S(\R^l,L^{-\infty}(X)),\label{Linf2}$$
the parameter-dependent variant of smoothing operators\index{parameter-dependent!operators!smoothing $-$}\index{operators!smoothing $-$!parameter-dependent $-$} on $X.$ The known coordinate invariance of pseudo-differential operators allows us to define operators globally on $X.$ If $\chi:U\rightarrow\Omega$ is a chart on $X,$ $\Omega\subseteq\R^n$ open, we set
$$L^\mu_{(\clas)}(U;\R^l):=\set{(\chi^{-1})_*A(\lambda):A(\lambda)\in L^\mu_{(\clas)}(\Omega;\R^l)},$$
with $(\chi^{-1})_*$ denoting the operator push forward under $\chi^{-1},$ i.e., $(\chi^{-1})_*A(\lambda)=\chi^*A(\lambda)(\chi^{-1})^*$ with $\chi^*$ being the function pull back.\\

\nt Let us fix a locally finite open covering $\U=(U_j)_{j\in\N}$ of $X$ by coordinate neighbourhoods $U_j,$ $(\varphi_j)_{j\in\N}$ a subordinate partition of unity and $(\psi_j)_{j\in\N}$ a system of functions $\psi_j\in C^\infty_0(U_j)$ such that $\varphi_j\prec\psi_j\newnot{prec}$ for all $j$ (here $\varphi\prec\psi$ means $\psi\equiv1$ on $\supp\varphi$). Then we define
\begin{eqnarray}\label{new10}
\lefteqn{L^\mu_{(\clas)}(X;\R^l):=\big\{\displaystyle\sum_{j\in\N}\varphi_jA_j(\lambda)\psi_j+C(\lambda):A_j(\lambda)\in L^\mu_{(\clas)}(U_j;\R^l),}\nonumber\\[-5mm]
\\
& & {}\hspace{5cm}j\in\N,\textup{ and }C(\lambda)\in L^{-\infty}(X;\R^l)\big\}.\nonumber
\end{eqnarray}

\nt The classical notion of properly supported pseudo-differential operators easily extends to the parameter-dependent case.

\begin{rem}\label{R1.2.2}
Every $A\in L^\mu_{(\clas)}(X;\R^l)$ can be written in the form $A=A_0+C$ where $A_0$ is properly supported and $C\in L^{-\infty}(X;\R^l).$
\end{rem}

\begin{thm}
$A\in L^\mu_{(\clas)}(X;\R^l),$ $B\in L^\nu_{(\clas)}(X;\R^l)$ and $A$ or $B$ properly supported entails $AB\in L^{\mu+\nu}_{(\clas)}(X;\R^l).$
\end{thm}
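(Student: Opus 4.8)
The plan is to reduce the statement to the known composition theorem for parameter-dependent operators on an open subset of $\R^n$, combined with the local description \eqref{new10} of $L^\mu_{(\clas)}(X;\R^l)$ and the fact that the residual class $L^{-\infty}(X;\R^l)$ is a two-sided ideal. First I would invoke Remark \ref{R1.2.2} (and its obvious analogue for $B$) to arrange that, at the cost of modifying $A$ or $B$ by an element of $L^{-\infty}(X;\R^l)$, the factor that is properly supported can be taken genuinely properly supported; since the $L^{-\infty}$ part contributes only to a term that we will in any case handle separately, there is no loss in assuming the properly supported factor is exactly properly supported. The key point of having one factor properly supported is that the composition $AB$ is then well defined (the output of $B$, or the input of $A$, stays in a space on which the other operator acts) and, more importantly, that localising functions interact well: $\varphi A \psi$ composed with a properly supported operator again has compact-in-both-variables support pattern up to a smoothing remainder.

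The main steps are then: (1) write $A=\sum_j \varphi_j A_j(\lambda)\psi_j + C_A(\lambda)$ and $B=\sum_k \varphi'_k B_k(\lambda)\psi'_k + C_B(\lambda)$ using \eqref{new10} for two (possibly different) partitions of unity; (2) expand $AB$ bilinearly into a sum of four types of terms, namely $\bigl(\sum_j \varphi_j A_j \psi_j\bigr)\bigl(\sum_k \varphi'_k B_k \psi'_k\bigr)$, $\bigl(\sum_j \varphi_j A_j \psi_j\bigr)C_B$, $C_A\bigl(\sum_k \varphi'_k B_k \psi'_k\bigr)$, and $C_A C_B$; (3) observe that the last three terms lie in $L^{-\infty}(X;\R^l)$ because $L^{-\infty}(X;\R^l)=\S(\R^l,L^{-\infty}(X))$ is a two-sided ideal under composition with properly supported operators of finite order (using that $C(\lambda)$ is Schwartz in $\lambda$ with smoothing kernel, and composition with $\Op(a)(\lambda)$ preserves the Schwartz dependence and the $C^\infty$ kernel); (4) for the principal term, note that $\psi_j \varphi'_k$ is supported in $U_j\cap U'_k$, so each summand $\varphi_j A_j(\lambda)(\psi_j\varphi'_k) B_k(\lambda)\psi'_k$ can be transported to a single chart, where the composition of the two local parameter-dependent symbols is governed by the oscillatory-integral composition formula; here the properly supported hypothesis guarantees that the sum over $j,k$ is locally finite and hence the result is again a locally finite sum of the required form, modulo $L^{-\infty}(X;\R^l)$; (5) collect terms to conclude $AB\in L^{\mu+\nu}_{(\clas)}(X;\R^l)$, the classicality of the symbol being inherited from the standard asymptotic expansion of the composed symbol.

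The main obstacle I anticipate is step (4): one must verify that the coordinate-wise composition produces a bona fide element of $S^{\mu+\nu}_{(\clas)}$ with the covariable $(\xi,\lambda)\in\R^{n+l}$ treated jointly, and that patching the chart-wise pieces back together via the partitions of unity costs only a smoothing (Schwartz-in-$\lambda$) remainder. This is essentially the parameter-dependent version of the classical pseudodifferential composition theorem on a manifold; the verification that the parameter $\lambda$ genuinely behaves like an extra covariable throughout — in particular that the remainder estimates in the asymptotic expansion are uniform and decay in $\lambda$ so as to land in $\S(\R^l,L^{-\infty}(X))$ — is where the real work lies, although it is entirely parallel to the non-parametric case treated in \cite{Horm3}, \cite{Kuma1}, \cite{Shub2}.
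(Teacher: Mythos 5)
The paper offers no proof of this theorem: it is stated as one of the ``standard elements of the pseudo-differential calculus'' to be taken from \cite{Horm3}, \cite{Kuma1}, \cite{Shub2}, so there is nothing internal to compare against. Your plan is the standard textbook route for exactly this result — localise both factors via \eqref{new10}, absorb all terms involving $C_A(\lambda)$ or $C_B(\lambda)$ into $L^{-\infty}(X;\R^l)=\S(\R^l,L^{-\infty}(X))$ using the proper-support hypothesis, and reduce the principal part to the chart-wise composition of symbols in $S^{\mu+\nu}_{(\clas)}$ with $(\xi,\lambda)$ treated jointly as covariables — and it is sound; you also correctly identify that the only real content is the uniformity of the remainder estimates in $\lambda$, which forces the error terms into $\S(\R^l,L^{-\infty}(X))$, and that this is verbatim the non-parametric argument with $\R^n_\xi$ replaced by $\R^{n+l}_{\xi,\lambda}$.

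One small inaccuracy in step (4): for $j\ne k$ the operator $\varphi_j A_j\psi_j\varphi'_k B_k\psi'_k$ maps functions supported in $U'_k$ to functions supported in $U_j$, so it cannot literally be ``transported to a single chart''; the standard fix is to split off the part of the kernel away from the diagonal (smoothing, by pseudolocality, and Schwartz in $\lambda$ by the same uniform estimates) and treat only the near-diagonal part, which does live over $U_j\cap U'_k$, in one chart. Also, the opening appeal to Remark \ref{R1.2.2} is redundant as phrased — proper support of one factor is already hypothesised; what you actually want it for is to replace the \emph{other} factor by a properly supported one plus a smoothing term, so that every individual composition in your four-fold expansion is well defined even when $X$ is non-compact. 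Neither point is a gap in the sense of a wrong idea; both are routine and covered in the cited references.
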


\nt There is a natural notion of parameter-dependent ellipticity of operator families in (\ref{new10}). Consider, for simplicity, the classical case when operators have a parameter-dependent homogeneous principal symbol $\sigma_\psi(A)(x,\xi,\lambda)$ belonging to $C^\infty(T^*X\times\R^l\setminus0)$ (with $0$ indicating $(\xi,\lambda)=0),$ (positively) homogeneous of order $\mu$ in $(\xi,\lambda).$

\begin{defn}
An $A(\lambda)\in L^\mu_\clas(X;\R^l)$ is called parameter-dependent elliptic if
$$\sigma_\psi(A)(x,\xi,\lambda)\ne0$$
for all $(x,\xi,\lambda)\in T^*X\times\R^l\setminus0.$ 
\end{defn}

\begin{thm}
\begin{itemize}
 \item[\textup{(i)}] A parameter-dependent elliptic $A(\lambda)\in L^\mu_\clas(X;\R^l)$ has a $($properly supported$)$ parameter-dependent elliptic parametrix $P(\lambda)\in L^{-\mu}_\clas(X;\R^l)$ such that
$$1-P(\lambda)A(\lambda),1-A(\lambda)P(\lambda)\in L^{-\infty}(X;\R^l).$$
 \item[\textup{(ii)}] If $X$ is compact$,$ then the parameter-dependent elliptic operator $A(\lambda)$ induces a family of Fredholm operators
\begin{equation}\label{F11}
A(\lambda):H^s(X)\rightarrow H^{s-\mu}(X)
\end{equation}
for all $s\in\R,$ and there is a constant $C>0$ such that \textup{\eqref{F11}} are isomorphisms for all $|\lambda|\ge C.$
\end{itemize}
\end{thm}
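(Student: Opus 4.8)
The plan is the standard two-sided parametrix construction in the parameter-dependent pseudo-differential calculus, followed by a Neumann-series argument that exploits the rapid decay in $\lambda$ of smoothing remainders. For (i) I would first invert the principal symbol: in a chart $\chi:U\to\Omega$, parameter-dependent ellipticity of $A(\lambda)$ means that $p_{(-\mu)}:=\sigma_\psi(A)^{-1}$ is well defined, smooth on $(T^*X\times\R^l)\setminus0$ and (positively) homogeneous of order $-\mu$ in $(\xi,\lambda)$; multiplying by an excision function in $(\xi,\lambda)$ turns it into a classical symbol of order $-\mu$. Patching the corresponding local operators $\Op(\cdot)(\lambda)$ by the fixed data $(\varphi_j),(\psi_j)$ preceding \eqref{new10} and invoking Remark \ref{R1.2.2} to arrange proper support, one obtains $P_0(\lambda)\in L^{-\mu}_\clas(X;\R^l)$ whose parameter-dependent homogeneous principal symbol is $p_{(-\mu)}$. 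By the composition theorem for parameter-dependent operators, $R_L(\lambda):=1-P_0(\lambda)A(\lambda)$ and $R_R(\lambda):=1-A(\lambda)P_0(\lambda)$ then belong to $L^{-1}_\clas(X;\R^l)$.

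Next I would asymptotically sum the formal Neumann series. Since $R_L(\lambda)\in L^{-1}_\clas(X;\R^l)$, the $k$-th power lies in $L^{-k}_\clas(X;\R^l)$, and the parameter-dependent version of the asymptotic-summation (Borel) lemma produces $Q_L(\lambda)\in L^0_\clas(X;\R^l)$ with $Q_L(\lambda)\sim\sum_{k\ge0}R_L(\lambda)^k$, so that $Q_L(\lambda)\bigl(1-R_L(\lambda)\bigr)-1\in L^{-\infty}(X;\R^l)$. Then $P_L(\lambda):=Q_L(\lambda)P_0(\lambda)$ is a left parametrix, and the symmetric construction gives a right parametrix $P_R(\lambda)$. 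The usual chain $P_L\equiv P_L\,(AP_R)\equiv(P_LA)\,P_R\equiv P_R$ modulo $L^{-\infty}(X;\R^l)$ shows that $P(\lambda):=P_L(\lambda)$, made properly supported via Remark \ref{R1.2.2}, is a two-sided parametrix in $L^{-\mu}_\clas(X;\R^l)$; it is parameter-dependent elliptic because its homogeneous principal symbol is $p_{(-\mu)}\ne0$.

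For (ii), compactness of $X$ gives the continuity of $A(\lambda):H^s(X)\to H^{s-\mu}(X)$ and of $P(\lambda):H^{s-\mu}(X)\to H^s(X)$, while the identification $L^{-\infty}(X;\R^l)=\S(\R^l,L^{-\infty}(X))$ shows that for each $\lambda$ the operators $G_L(\lambda):=1-P(\lambda)A(\lambda)$ and $G_R(\lambda):=1-A(\lambda)P(\lambda)$ are smoothing on the compact manifold $X$, hence compact on the relevant Sobolev spaces; thus $A(\lambda)$ is Fredholm with parametrix $P(\lambda)$. For the isomorphism statement one uses that $\lambda\mapsto G_L(\lambda)$ and $\lambda\mapsto G_R(\lambda)$ are Schwartz functions of $\lambda$ with values in $L^{-\infty}(X)$, which embeds continuously into $\L(H^s(X))$ and into $\L(H^{s-\mu}(X))$, so that $\norm{G_L(\lambda)}_{\L(H^s(X))}$ and $\norm{G_R(\lambda)}_{\L(H^{s-\mu}(X))}$ tend to $0$ as $|\lambda|\to\infty$. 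Choosing $C>0$ so that both are $<1$ for $|\lambda|\ge C$, the operators $P(\lambda)A(\lambda)=1-G_L(\lambda)$ and $A(\lambda)P(\lambda)=1-G_R(\lambda)$ are invertible by Neumann series, so $A(\lambda)$ has a bounded left inverse and a bounded right inverse, whence it is an isomorphism $H^s(X)\to H^{s-\mu}(X)$ for all $|\lambda|\ge C$.

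The only genuinely non-formal ingredient is the asymptotic summation of the operator Neumann series inside the parameter-dependent calculus: one must check that the local symbols of the partial sums can be summed modulo $S^{-\infty}$ in a way compatible with the extra covariable $\lambda$, so that the remainder lands in $\S(\R^l,L^{-\infty}(X))$ and not merely in $L^{-\infty}(X)$ for each fixed $\lambda$ — this is the parameter-dependent analogue of the classical Borel lemma, and it is also what makes the Schwartz decay in (ii) available. Granting that, everything else is bookkeeping with the composition theorem and with the structure of $L^{-\infty}(X;\R^l)$.
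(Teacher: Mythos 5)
The paper does not prove this theorem: it is stated as one of the ``standard elements of the pseudo-differential calculus'' to be taken from the textbook literature (H\"ormander, Kumano-go, Shubin), so there is no in-paper argument to compare against. Your construction is the standard one and is correct: symbol inversion plus excision, patching with the fixed data $(\varphi_j),(\psi_j)$, asymptotic summation of the Neumann series in the parameter-dependent symbol classes (so that remainders are Schwartz in $\lambda$ with values in $L^{-\infty}(X)$, which is exactly what part (ii) needs), and the left-equals-right parametrix argument. The only point you leave implicit is that the constant $C$ in (ii) can be taken independent of $s$: your Neumann-series bound a priori yields an $s$-dependent threshold, and one should add that invertibility of $A(\lambda)$ for one $s=s_0$ implies invertibility for all $s$ (kernel and cokernel consist of smooth functions by elliptic regularity), so the threshold found for $s_0$ serves for every $s$.
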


\nt Another well-known result in this context is the following theorem.

\begin{thm}\label{red}
Let $X$ be a closed compact $C^\infty$ manifold. Then for every $\mu\in\R$ and $l\ge1$ there is an $\RR^\mu(\lambda)\in L^\mu_\clas(X;\R^l)$ which induces isomorphisms
$$\RR^\mu(\lambda):H^s(X)\rightarrow H^{s-\mu}(X)$$
for all $\lambda\in\R^l,$ $s\in\R,$ and we have $(\RR^\mu(\lambda))^{-1}\in L^{-\mu}_\clas(X;\R^l).$
\end{thm}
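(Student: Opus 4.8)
The plan is to build $\RR^\mu(\lambda)$ explicitly from an order reduction on a single model and then transport it to $X$, checking invertibility via the Fredholm machinery already established. First I would recall that if $X$ is a closed compact manifold of dimension $n$, one can embed $X$ in some $S^N$ (or simply work locally in charts), but the cleanest route is to take a parameter-dependent elliptic operator of order $\mu$ with invertible symbol: for instance, pick a smooth positive function on $T^*X$ and set, in local coordinates, $a(x,\xi,\lambda) := \ang{(\xi,\lambda)}^\mu$ patched together with a partition of unity as in \eqref{new10} to obtain a first candidate $A^\mu(\lambda) \in L^\mu_\clas(X;\R^l)$ whose parameter-dependent homogeneous principal symbol is $|(\xi,\lambda)|^\mu \ne 0$ on $T^*X\times\R^l\setminus 0$. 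By the ellipticity theorem quoted just above, $A^\mu(\lambda)$ is parameter-dependent elliptic and hence the operators $A^\mu(\lambda):H^s(X)\to H^{s-\mu}(X)$ are Fredholm of index zero for all $s$, and are isomorphisms for $|\lambda|\ge C$ for some $C>0$.

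The next step deals with the low-$\lambda$ regime, where $A^\mu(\lambda)$ need not be invertible. Here I would use a standard perturbation: since $A^\mu(\lambda)$ is Fredholm of index zero, adding a smoothing operator $C(\lambda)\in L^{-\infty}(X;\R^l)$ of finite rank (chosen to compensate the kernel and cokernel, and cut off smoothly to act only for $|\lambda|\le 2C$) produces $\RR^\mu(\lambda):=A^\mu(\lambda)+C(\lambda)\in L^\mu_\clas(X;\R^l)$ which is invertible for \emph{all} $\lambda\in\R^l$; invertibility for large $\lambda$ is untouched because $C$ vanishes there, and for small $\lambda$ it follows from the index-zero property and the explicit rank correction. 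Alternatively, and perhaps more in the spirit of the cited literature, one composes a fixed invertible operator of order $\mu$ (built from $(1+\Delta_X)^{\mu/2}$ type constructions) with a parameter-dependent invertible zeroth-order family; I would present whichever is shorter, but the finite-rank correction is the most self-contained given what is available in the excerpt.

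Finally I would verify that $(\RR^\mu(\lambda))^{-1}\in L^{-\mu}_\clas(X;\R^l)$. The parametrix theorem gives a parameter-dependent parametrix $P(\lambda)\in L^{-\mu}_\clas(X;\R^l)$ with $1-P\RR^\mu, 1-\RR^\mu P\in L^{-\infty}(X;\R^l)$; writing $(\RR^\mu)^{-1} = P + (1-P\RR^\mu)(\RR^\mu)^{-1} = P + (1-P\RR^\mu)P + (1-P\RR^\mu)^2(\RR^\mu)^{-1}$ and iterating, one sees $(\RR^\mu)^{-1} - P$ is a convergent series of operators each of which lies in $L^{-\infty}(X;\R^l)$ composed with the bounded family $(\RR^\mu(\lambda))^{-1}$; the crucial point is that $(\RR^\mu(\lambda))^{-1}$ is uniformly bounded in $\lambda$ as a map $H^{s-\mu}\to H^s$, which must be checked by combining the large-$\lambda$ isomorphism estimate from the ellipticity theorem with continuity and compactness on the bounded set $\{|\lambda|\le C\}$. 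This uniform-boundedness-in-$\lambda$ estimate, and the verification that the Neumann-type correction really lands in the Schwartz class $\S(\R^l,L^{-\infty}(X))$ rather than merely in a bounded family, is the main technical obstacle; it is essentially the statement that parameter-dependent $L^{-\infty}$ is closed under composition with a uniformly-bounded-and-rapidly-decaying family, which I would invoke from the structure of $L^{-\infty}(X;\R^l)=\S(\R^l,L^{-\infty}(X))$ recalled in the excerpt.
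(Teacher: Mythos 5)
The paper itself states this theorem without proof, as a classical fact from the parameter-dependent calculus (the construction is in \cite{Schu20}), so I am judging your argument on its own terms. Your first step — patching the local symbols $\ang{(\xi,\lambda)}^\mu$ via \eqref{new10} into a parameter-dependent elliptic $A^\mu(\lambda)\in L^\mu_\clas(X;\R^l)$, invertible for $|\lambda|\ge C$ and of index zero for every $\lambda$ by homotopy invariance — is sound, and so is your final step: writing $(\RR^\mu)^{-1}=P+(\RR^\mu)^{-1}(1-\RR^\mu P)$ with $P$ a parametrix and checking that the correction lands in $\S(\R^l,L^{-\infty}(X))$ because $(\RR^\mu(\lambda))^{-1}$ and its $\lambda$-derivatives $-(\RR^\mu)^{-1}(\partial_\lambda\RR^\mu)(\RR^\mu)^{-1}$ are polynomially bounded families (no infinite iteration is needed; one application of the identity suffices).

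The gap is in the middle step. First, ``a finite-rank smoothing operator chosen to compensate the kernel and cokernel'' is not yet a construction of a \emph{smooth family}: $\dim\ker A^\mu(\lambda)$ and $\dim\mathrm{coker}\,A^\mu(\lambda)$ may jump as $\lambda$ varies, so the correction cannot be chosen pointwise; one needs a uniform argument over the compact ball (a finite-dimensional $W\subset C^\infty(X)$ complementing all ranges simultaneously, triviality of the resulting kernel bundle over the contractible ball, and only then a finite-rank family $C(\lambda)$). Second, and more seriously, the cutoff destroys your invertibility argument in the transition region $C\le|\lambda|\le 2C$: there $A^\mu(\lambda)$ is invertible but $\chi(\lambda)C(\lambda)$ is a nonzero finite-rank perturbation, and a finite-rank perturbation of an invertible operator need not be invertible, so neither of your two cases (``$C$ vanishes'' / ``the correction makes it invertible'') covers these $\lambda$. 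The standard repair, which is the one behind the theorem in the cited literature, avoids the small-$\lambda$ regime altogether: construct the elliptic family with one extra dummy parameter, $A^\mu(\lambda,\tau)\in L^\mu_\clas(X;\R^{l+1})$, which is an isomorphism for $|(\lambda,\tau)|\ge C$, and set $\RR^\mu(\lambda):=A^\mu(\lambda,C)$. Restriction to the affine hyperplane $\tau=C$ stays inside $L^\mu_\clas(X;\R^l)$ (the symbol estimates and the classical expansion survive, since $\ang{\xi,\lambda,C}$ and $\ang{\xi,\lambda}$ are equivalent weights), and now $|(\lambda,C)|\ge C$ for \emph{every} $\lambda\in\R^l$, so the family is invertible everywhere with no correction at all; your concluding parametrix argument then applies verbatim.
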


\nt Operators of that kind will be also referred to as a (parameter-dependent) order reducing family\index{parameter-dependent!order reducing family}\index{order reducing family!parameter-dependent $-$}.\\

\nt The pseudo-differential calculus on a manifold $M$ with conical singularities $S,$ cf. Definition \ref{1.1.1}, formulated in Section \ref{S1.2.6} below, will be a subcalculus of $L_\clas^\mu(M\setminus S).$ In order to prepare some notation we define the space $L_\deg^\mu(M)\subset L^\mu(M\setminus S)$ consisting of all operators $A$ that are modulo $L^{-\infty}(M\setminus S)$ in the local splitting of variables $(r,x)$ close to $S,$ cf. the formula \eqref{Chireg}, of the form
\begin{equation}\label{A1}
A=r^{-\mu}\Op_{r,x}(a)
\end{equation}
for a symbol $a(r,x,\rho,\xi)=\til a(r,x,r\rho,\xi),$ $\til a(r,x,\til\rho,\xi)\in S^\mu_\clas(\Rr\times\Sigma\times\R^{1+n});$ here $\Sigma\subseteq\R^n$ corresponds to a chart on $X.$ An alternative is to write locally near $S$ (again, modulo $L^{-\infty}(M\setminus S)$)
\begin{equation}\label{A2}
A=r^{-\mu}\Op_r(p)
\end{equation}
for an operator family $p(r,\rho)=\til p(r,r\rho),$ $\til p(r,\til\rho)\in C^\infty(\Rr,L^\mu_\clas(X;\R_{\til\rho})).$\\

\nt The latter representation will be dominating in the considerations below. However, from \eqref{A1} we see that the homogeneous principal symbol $\sigma_\psi(A)$ of order $\mu,$ invariantly defined as a function in $C^\infty(T^*(M\setminus S)\setminus 0),$ has a representation locally near $S$ in the form
$$\sigma_\psi(A)(r,x,\rho,\xi)=r^{-\mu}\til\sigma_\psi(A)(r,x,r\rho,\xi)$$
for a $\til\sigma_\psi(A)(r,x,\til\rho,\xi)$ which is smooth up to $r=0.$ We will call $\til\sigma_\psi(A)$ the reduced principal symbol of $A.$\\

\nt Observe that we have an analogue of Remark \ref{R1.2.2} for the class $L_\deg^\mu(M).$

\begin{thm}\label{comp8}
$A\in L_\deg^\mu(M),$ $B\in L_\deg^\nu(M)$ and $A$ or $B$ properly supported entails $AB\in L_\deg^{\mu+\nu}(M)$ and we have
$$\sigma_\psi(AB)=\sigma_\psi(A)\sigma_\psi(B),\quad\til\sigma_\psi(AB)=\til\sigma_\psi(A)\til\sigma_\psi(B).$$
\end{thm}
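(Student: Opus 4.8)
The plan is to reduce everything to two local situations, away from the singular set $S$ and close to $S,$ and in each case invoke the corresponding composition result of the standard calculus together with the symbolic multiplicativity already recorded in the earlier theorems. First I would split $A$ and $B$ by a partition of unity subordinate to a covering of $M\setminus S$ that distinguishes a collar neighbourhood of $S,$ written in the local coordinates $(r,x)\in\R_+\times X,$ from the \virg{interior} $M\setminus S$ far from $S.$ Away from $S$ the operators lie in $L^\mu_\clas(M\setminus S)$ and $L^\nu_\clas(M\setminus S),$ so the classical composition theorem applies and gives an operator in $L^{\mu+\nu}_\clas$ with $\sigma_\psi(AB)=\sigma_\psi(A)\sigma_\psi(B);$ the cross terms that mix the collar and the interior cut-offs are supported away from $S$ as well, hence fall into the same classical situation (and contribute smoothing operators when the supports are disjoint). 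The only genuinely new point is the behaviour in the collar near $S,$ and this is where the content of the statement sits.

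For the collar piece I would use representation \eqref{A2}: modulo $L^{-\infty}(M\setminus S)$ we have, locally near $S,$
$$A=r^{-\mu}\Op_r(p),\qquad B=r^{-\nu}\Op_r(q)$$
with $p(r,\rho)=\til p(r,r\rho),$ $q(r,\rho)=\til q(r,r\rho)$ and $\til p\in C^\infty(\Rr,L^\mu_\clas(X;\R_{\til\rho})),$ $\til q\in C^\infty(\Rr,L^\nu_\clas(X;\R_{\til\rho})).$ Then
$$AB=r^{-\mu}\Op_r(p)\,r^{-\nu}\Op_r(q)=r^{-(\mu+\nu)}\bigl(r^{\nu}\Op_r(p)\,r^{-\nu}\bigr)\Op_r(q),$$
and I would commute the weight factor $r^{\nu}$ through $\Op_r(p):$ conjugating an $r$-dependent Mellin/Fourier quantisation by $r^{\nu}$ produces again an operator family of the same type, with the symbol shifted in the covariable (for the Mellin quantisation this is the familiar $r^{\nu}\opMg(f)r^{-\nu}=\opMg(f(z-\nu))$-type identity; for the Fourier $\Op_r$ one uses a Taylor expansion of $p$ in $r$). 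This keeps us inside $C^\infty(\Rr,L^\bullet_\clas(X;\R_{\til\rho}))$ with the correct edge-degenerate scaling $\rho\mapsto r\rho.$ The remaining composition $\Op_r(\text{shifted }p)\,\Op_r(q)$ is then handled by the parameter-dependent composition theorem on $X$ (the theorem on products of parameter-dependent classical operators stated above, applied with the parameter $\til\rho$) together with the standard $\Op_r$-composition in the $r$-variable; the Leibniz product of the resulting symbol again has the form $\til c(r,r\rho)$ with $\til c\in C^\infty(\Rr,L^{\mu+\nu}_\clas(X;\R_{\til\rho})),$ so $AB\in L^{\mu+\nu}_\deg(M).$ Tracking the leading terms through this computation gives $\til\sigma_\psi(AB)=\til\sigma_\psi(A)\til\sigma_\psi(B)$ from representation \eqref{A1}, and multiplying back the weight factors $r^{-\mu}$ and $r^{-\nu}$ yields $\sigma_\psi(AB)=\sigma_\psi(A)\sigma_\psi(B)$ as functions on $T^*(M\setminus S)\setminus 0.$

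The main obstacle I expect is bookkeeping rather than anything deep: one must verify that conjugation by the weight $r^{\nu},$ combined with the rescaling $\rho\mapsto r\rho$ built into \eqref{A1}–\eqref{A2}, does not destroy the smoothness up to $r=0$ of the reduced symbols, i.e. that $r^{\nu}\Op_r(p)r^{-\nu}$ is still of the form $r^{-\nu}\cdot(\text{something of the same degenerate type})$ with a reduced symbol smooth across $r=0.$ This requires expanding $p(r,\rho)=\til p(r,r\rho)$ in a Taylor series in the first slot and checking that the remainder terms carry enough powers of $r$ to compensate; it is the place where the particular degenerate structure of $L_\deg^\mu(M)$ is actually used, and also where one must be careful that cut-off errors and the passage between the $\Op_r$- and Mellin-pictures only produce contributions in $L^{-\infty}(M\setminus S).$ The compatibility of the two local pictures — that an operator which is of the form \eqref{A1} in one chart on $X$ is again of that form after a transition map, so that the construction is coordinate-independent along $X$ — is the analogue of Remark \ref{R1.2.2} already granted for $L_\deg^\mu(M),$ so no extra work is needed there.
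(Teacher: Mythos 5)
The paper does not actually prove Theorem \ref{comp8}: it is stated as part of the background calculus, with the details deferred to the literature (\cite{Schu20}). Your argument is the standard route that this reference takes, and it is correct in substance. The only genuinely non-routine point is the one you single out, that the collar composition stays edge-degenerate, and your mechanism for it is the right one: each $\partial_\rho^k$ applied to a symbol of the form $\til p(r,r\rho)$ produces a factor $r^k$, which absorbs the factors $r^{-k}$ generated both by the $r$-differentiations in the Leibniz expansion and by the conjugation $r^{\nu}(\cdot)r^{-\nu}$. Two small refinements: (1) for the Fourier quantisation $\Op_r$ the conjugation by $r^{\nu}$ is not literally a shift of the covariable (that identity belongs to the Mellin picture); one should instead regard $(r/r')^{\nu}p(r,\rho)$ as a double symbol and reduce to a left symbol, whereupon the diagonal derivatives $D_{r'}^k(r/r')^{\nu}|_{r'=r}=c_kr^{-k}$ are compensated by the same $r^k$ gain --- this is the Taylor-expansion step you describe, applied to the conjugating factor rather than to $p$ itself; (2) one should note that the asymptotic summation of the resulting expansion can be carried out within $C^\infty(\Rr,L^{\mu+\nu}_\clas(X;\R_{\til\rho}))$, so the left symbol is again of the form $\til c(r,r\rho)$; this is a routine Borel argument. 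Finally, as you observe, once $AB\in L_\deg^{\mu+\nu}(M)$ and $\sigma_\psi(AB)=\sigma_\psi(A)\sigma_\psi(B)$ are established, the identity $\til\sigma_\psi(AB)=\til\sigma_\psi(A)\til\sigma_\psi(B)$ follows formally by factoring out $r^{-\mu}$ and $r^{-\nu}$, so no separate computation is needed there.
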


\begin{defn}
An $A\in L_\deg^\mu(M)$ is called $\sigma_\psi$-elliptic $($of order $\mu)$ if $\sigma_\psi(A)\ne0$ on $T^*(M\setminus S)\setminus0$ as usual$,$ and locally close to $S,$ $\til\sigma_\psi(A)\ne0$ up to $r=0.$
\end{defn}

\begin{thm}
$A\in L_\deg^\mu(M)$ $\sigma_\psi$-elliptic has a $($properly supported$)$ parametrix $P\in L_\deg^{-\mu}(M)$ in the sense
$$1-PA,1-AP\in L^{-\infty}(M\setminus S),$$
and $\sigma_\psi(P)=\sigma_\psi(A)^{-1},$ $\til\sigma_\psi(P)=\til\sigma_\psi(A)^{-1}.$
\end{thm}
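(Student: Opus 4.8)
The proof follows the classical scheme of the elliptic parametrix construction, the extra work being to keep every object in the degenerate class near $S$; here Theorem~\ref{comp8} carries the structural load. By the analogue of Remark~\ref{R1.2.2} for $L_\deg^\mu(M)$ we may and do assume that $A$ is properly supported.

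\emph{Step 1 (principal parametrix).} First I would construct a properly supported $P_0\in L_\deg^{-\mu}(M)$ with $\sigma_\psi(P_0)=\sigma_\psi(A)^{-1}$. On $M\setminus S$ this is the usual quantisation of $\sigma_\psi(A)^{-1}$. Close to $S$, write $A=r^{-\mu}\Op_r(p)$ with $p(r,\rho)=\til p(r,r\rho)$, $\til p(r,\til\rho)\in C^\infty(\Rr,L_\clas^\mu(X;\R_{\til\rho}))$; $\sigma_\psi$-ellipticity says exactly that the parameter-dependent homogeneous principal symbol $\til\sigma_\psi(A)(r,x,\til\rho,\xi)$ of $\til p$ is invertible for $(\til\rho,\xi)\neq0$, uniformly up to $r=0$ and locally uniformly in $x$. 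Hence $\chi\,\til\sigma_\psi(A)^{-1}$, with $\chi$ an excision function, is a symbol in $S_\clas^{-\mu}(\Rr\times\Sigma\times\R^{1+n})$ which is smooth up to $r=0$, and setting $P_0:=r^{\mu}\Op_r(q_0)$ near $S$, with $q_0(r,\rho)=\til q_0(r,r\rho)$ its operator-family realisation, gives $\til\sigma_\psi(P_0)=\til\sigma_\psi(A)^{-1}$. Patching the two pieces by a cut-off in $r$ and using the analogue of Remark~\ref{R1.2.2} yields the desired properly supported $P_0$. (For operators in $L_\deg^{-\mu}(M)$ the condition $\sigma_\psi(P_0)=\sigma_\psi(A)^{-1}$ is equivalent to $\til\sigma_\psi(P_0)=\til\sigma_\psi(A)^{-1}$, since near $S$ the two symbols determine one another through $\rho\mapsto r\rho$ and the factor $r^{\mp\mu}$, together with smoothness up to $r=0$.)

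\emph{Step 2 (Neumann correction).} Put $R:=1-P_0A$. By Theorem~\ref{comp8}, $P_0A\in L_\deg^{0}(M)$ and $\sigma_\psi(P_0A)=\til\sigma_\psi(P_0A)=1$, so $R\in L_\deg^{0}(M)$ with $\sigma_\psi(R)=\til\sigma_\psi(R)=0$. On $M\setminus S$ this gives $R\in L^{-1}(M\setminus S)$; and near $S$, where $R=\Op_r(\til c(r,r\rho))$ modulo $L^{-\infty}(M\setminus S)$ with $\til c(r,\til\rho)\in C^\infty(\Rr,L_\clas^0(X;\R_{\til\rho}))$, the vanishing of $\til\sigma_\psi(R)$ forces $\til c(r,\til\rho)\in C^\infty(\Rr,L_\clas^{-1}(X;\R_{\til\rho}))$, i.e. the reduced symbol drops order by one. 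The same composition analysis that underlies Theorem~\ref{comp8} shows the reduced symbol order to be additive under composition, hence $R^k\in L_\deg^{0}(M)$ lies in $L^{-k}(M\setminus S)$ on the open part and has reduced symbol of order $-k$ near $S$. Therefore the Neumann series $\sum_{k\ge0}R^k$ admits a global asymptotic summation to some $D\in L_\deg^{0}(M)$ — both in the interior and near $S$ the relevant orders tend to $-\infty$ — with $\sigma_\psi(D)=\til\sigma_\psi(D)=1$ and $D(1-R)=1$ modulo $L^{-\infty}(M\setminus S)$. Set $P:=DP_0$. Then $P\in L_\deg^{-\mu}(M)$ by Theorem~\ref{comp8}, $\sigma_\psi(P)=\sigma_\psi(D)\sigma_\psi(P_0)=\sigma_\psi(A)^{-1}$, $\til\sigma_\psi(P)=\til\sigma_\psi(A)^{-1}$, and $PA=D(1-R)=1$ modulo $L^{-\infty}(M\setminus S)$.

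\emph{Step 3 (two-sided parametrix, properness) and the main point.} Running the same construction on the other side produces a right parametrix; as usual a left and a right parametrix of $A$ coincide modulo $L^{-\infty}(M\setminus S)$ (via $P\sim P(AP_R)=(PA)P_R\sim P_R$, using associativity modulo smoothing and Theorem~\ref{comp8}), so $P$ is two-sided. Replacing $P$ by its properly supported representative (analogue of Remark~\ref{R1.2.2}) alters $1-PA$ and $1-AP$ only by operators in $L^{-\infty}(M\setminus S)$, and leaves $\sigma_\psi(P)$, $\til\sigma_\psi(P)$ unchanged. The crux — and the place where the hypothesis is used in its full strength — is Step~2: one needs that the vanishing of $\til\sigma_\psi$ genuinely lowers the order of the reduced symbol \emph{up to $r=0$}, so that the Neumann iteration stays inside the degenerate class and not merely inside $L^{\bullet}(M\setminus S)$, and that conjugation of $\Op_r(\til u(r,r\rho))$ by powers of $r$, as well as compositions of such operators, preserve the form $\til{(\,\cdot\,)}(r,x,r\rho,\xi)$ with reduced symbol smooth up to $r=0$. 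This last point rests on the cancellation of the extra powers of $r$ produced by $D_r^\alpha r^{\mp\mu}$ against the powers $r^\alpha$ coming from $\partial_\rho^\alpha[\til u(r,r\rho)]=r^\alpha(\partial_{\til\rho}^\alpha\til u)(r,r\rho)$, and is precisely what Theorem~\ref{comp8} encodes.
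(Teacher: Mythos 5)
Your proof is correct. The paper states this theorem without proof (deferring to \cite{Schu20}), and your argument --- inversion of $\til\sigma_\psi(A)$ up to $r=0$ to get $P_0$, the observation that vanishing of $\til\sigma_\psi$ lowers the order of the reduced symbol so the Neumann correction can be asymptotically summed inside $L_\deg^0(M)$, and the standard left/right comparison --- is exactly the intended standard construction, resting on Theorem~\ref{comp8} and the analogue of Remark~\ref{R1.2.2} just as the paper's surrounding text suggests.
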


%
%
%
%
%
\subsection{Mellin operators and weighted spaces}\label{1.2.2}
In the analysis on manifolds with conical or edge singularities it is useful to employ a variant of pseudo-differential operators on the half-axis $\R_+\ni r$ (with $r$ corresponding to the distance to the singularity) based on the Mellin transform rather than the Fourier transform.\\

\nt The Mellin transform\index{Mellin!transform}\index{transform!Mellin $-$} is defined as 
\begin{equation}\label{Mellin}
Mu(z):=\int_0^\infty r^{z-1}u(r)dr,
\end{equation}
first for $u\in C_0^\infty(\R_+),$ where $z$ is a complex variable, and then extended to various classes of weighted distribution spaces, also vector-valued ones, where $z$ varies on some weight line 
$$\Gamma_\beta=\{z\in\C:\Re z=\beta\}.$$\label{Gambeta}

\nt If $u\in C_0^\infty(\R_+),$ $Mu(z)$ is an entire function, and $g(z):=Mu(z)|_{\Gamma_\beta}$ is a Schwartz function on $\Gamma_\beta$ for every $\beta\in\R,$ uniformly in compact $\beta$-intervals. We then have the inversion formula
\begin{equation}\label{Mellinv}
u(r)=\frac{1}{2\pi i}\int_{\Gamma_\beta}r^{-z}g(z)dz
\end{equation}
for every $\beta\in\R.$ We will call
$$M_\gamma:u\mapsto Mu|_{\Gamma_{\frac12-\gamma}}\newnot{symbol:wM}$$
the weighted Mellin transform\index{Mellin!transform!weighted $-$}\index{weighted!Mellin transform}\index{transform!Mellin $-$!weighted $-$} of $u$ with weight $\gamma\in\R.$ As is well-known, $M_\gamma$ extends from $C_0^\infty(\R_+)$ to an isomorphism
$$M_\gamma:r^\gamma L^2(\R_+)\rightarrow L^2(\Gamma_{\frac12-\gamma})$$
and $M_\gamma^{-1}$ has the form (\ref{Mellinv}) for $\beta=\frac12-\gamma.$\\

\nt Mellin pseudo-differential operators based on $M_\gamma$ are defined by
$$\op_M^\gamma(f)u(r):=\int_{-\infty}^\infty\int_0^\infty\Set{\frac{r}{r'}}^{-(\frac12-\gamma+i\rho)}f(r,r',\frac12-\gamma+i\rho)u(r')\frac{dr'}{r'}\dslash\rho,\newnot{opMell}$$
$\dslash\rho=(2\pi)^{-1}d\rho,$\label{dsla} where $f(r,r',z)$ is a symbol in $S^\mu(\R_+\times\R_+\times\Gamma_{\frac12-\gamma}),$ see Definition \ref{stsym}; here $\Gamma_{\frac12-\gamma}$ is identified with a real axis with $\rho=\Im z$ as the covariable. When $\gamma=0$ we will use the notation
\begin{equation}\label{new8}
\op_M(\cdot):=\op_M^0(\cdot)
\end{equation}

\nt We employ Mellin pseudo-differential operators also with operator-valued symbols, e.g.,
\begin{equation}\label{opvalsym}
f(r,r',z)\in C^\infty(\R_+\times\R_+,L^\mu_{(\clas)}(X;\Gamma_{\frac12-\gamma}))
\end{equation}
for a closed compact $C^\infty$ manifold $X,$ and where $f$ takes values in the space of parameter-dependent pseudo-differential operators on $X,$ with parameter $\rho=\Im z,$ $z\in\Gamma_{\frac12-\gamma},$ see (\ref{new10}). In (\ref{opvalsym}) we tacitly employ the natural \Fr topology in the spaces (\ref{new10}).

\begin{rem}\label{con}
The operator $\op_M^\gamma(f)$ for $f$ as in $(\ref{opvalsym})$ belongs to $L_\clas^\mu(\X)$ and as such induces a continuous operator
$$\op_M^\gamma(f):C_0^\infty(\X)\rightarrow C^\infty(\X)$$
for every $\gamma\in\R.$ Then$,$ for $(\delta_\lambda u)(r,x):=u(\lambda r,x),$ $\lambda\in\R_+,$ we have
\begin{equation}\label{coneq}
\delta^{-1}_\lambda\op_M^\gamma(f)\delta_\lambda=\op_M^\gamma(f_\lambda),
\end{equation}
$f_\lambda(r,r',z):=f(\lambda^{-1}r,\lambda^{-1}r',z),$ for every $\lambda\in\R_+.$\\

\nt In particular$,$ if $f=f(z)$ has constant coefficients$,$ $\op_M^\gamma(f)$ commutes with $\delta_\lambda.$ 
\end{rem}

\nt Mellin pseudo-differential operators\index{Mellin!pseudo-differential operators}\index{pseudo-differential operators!Mellin $-$} act in suitable scales of weighted Sobolev spaces\index{Sobolev spaces!weighted $-$}\index{weighted!spaces!Sobolev $-$}\index{spaces!Sobolev $-$!weighted $-$}. In the variant of amplitude functions of the form (\ref{opvalsym}) for a closed compact $C^\infty$ manifold $X,$ $n=\dim X,$ we have the following definition.

\begin{defn}\label{Hsg}
The space $\H^{s,\gamma}(\X),$\newnot{symbol:HsGa} for $s,\gamma\in\R,$ is defined to be the completion of $C^\infty_0(\X)$ with respect to the norm
$$\norm{u}_{\H^{s,\gamma}(\X)}:=\Big\{\frac1{2\pi i}\int_{\Gamma_{\frac{n+1}2-\gamma}}\hspace{-5mm}\norm{\RR^s(\Im z)(Mu)(z)}^2_{L^2(X)}dz\Big\}^{\frac12}.$$

\nt Here $n=\dim X,$ and $\RR^s(\lambda)\in L^s_\clas(X;\R_\lambda)$ is an order reducing family in the sense of \textup{Theorem \ref{red}}.
\end{defn}

\begin{rem}
\begin{description}
\item[$\mathrm{(i)}$] Different choices of order reducing families in \textup{Definition \ref{Hsg}} give rise to equivalent norms in $\H^{s,\gamma}(\X).$
\item[$\mathrm{(ii)}$] We have 
$$\H^{s,\gamma}(\X)=r^\gamma\H^{s,0}(\X)$$
for every $s\in\R,$ and
$$\H^{0,0}(\X)=r^{-\frac n2} L^2(\R_+\times X)$$
where the $L^2$-space is based on $drdx,$ with $dx$ being associated with a fixed Riemannian metric on $X.$
\item[$\mathrm{(iii)}$] We have
$$\H^{s,\gamma}(\X)\subset H^s_\loc(\X)$$
for every $s,\gamma\in\R.$
\end{description}
\end{rem}

\begin{rem}\label{2.6}
The space $\H^{s,\gamma}(\X)$ can also be defined in terms of $\H^{s,\gamma}(\R_+\times\R^n),$ combined with charts $\R_+\times U\rightarrow\R_+\times\R^n$ on $\R_+\times X,$ using a partition of unity$,$ where $\H^{s,\gamma}(\R_+\times\R^n)$ is the completion of $C^\infty_0(\R_+\times\R^n)$ with respect to the norm
$$\Big\{\frac1{2\pi}\int_{\R^n}\int_{\Gamma_{\frac{n+1}2-\gamma}}\hspace{-5mm}\ang{z,\xi}^{2s}|(M_{r\rightarrow z}F_{x\rightarrow\xi}u)(z,\xi)|^2dzd\xi\Big\}^{\frac12},$$
$\ang{z,\xi}:=(1+|z|^2+|\xi|^2)^{\frac12}.$ Here $M_{r\rightarrow z}$ is the Mellin transform on $\R_+$ and $F_{x\rightarrow\xi}$ the Fourier transform on $\R^n.$
\end{rem}

\nt For purposes below we also define cylindrical Sobolev spaces\index{Sobolev spaces!cylindrical $-$}\index{spaces!Sobolev $-$!cylindrical $-$}
\begin{equation}\label{HS13}
H^s(\R\times X)\;\textup{ on }\;\R\times X\ni(t,x)
\end{equation}
as the completion of $C_0^\infty(\R\times X)$ with respect to the norm
$$\norm{u}_{H^s(\R\times X)}:=\big\{\!\int\!\norm{\RR^s(\tau)(Fu)(\tau)}^2_{L^2(X)}d\tau\big\}^{\frac12}$$
with an order reducing family $\RR^s(\tau)\in L_\clas^s(X;\R_\tau)$ and the one-dimensional Fourier transform $(Fu)(\tau)=\int e^{-it\tau}u(t)dt$ (the $x$-variable is suppressed in this notation). Alternatively, we can define (\ref{HS13}) in terms of local expressions combined with a global construction along $X,$ using a partition of unity, similarly as in Remark \ref{2.6}. Let us set
\begin{equation}\label{Fu13}
\hat H^s(\R_\tau\times X):=\{(F_{t\rightarrow\tau}u)(\tau,\cdot):u(t,\cdot)\in H^s(\R_t\times X)\}.
\end{equation}

\nt There are other useful and important variants of Sobolev spaces on $\X$ in our calculus.\\

\nt Let us choose an order reducing family $\til p(\til\rho,\til\eta)\in L^s_\clas(X;\R^{1+q}_{\til\rho,\til\eta}),$ $s\in\R,$ in the sense of Theorem \ref{red}, now with the parameters $(\til\rho,\til\eta)\in\R^{1+q}$ for some $q\ge1,$ and set $p(r,\rho,\eta):=\til p([r]\rho,[r]\eta).$ Here $r\mapsto[r]$\label{kleta} means a fixed strictly positive function in $C^\infty(\R)$ such that $[r]=|r|$ for $r>C,$ $C>0.$ Let us interpret the Cartesian product $\R\times X\ni(r,x)$ as a manifold with conical exits $r\rightarrow\pm\infty,$ in this case denoted by $X^\asymp.$\label{xsymp}\\

\nt By a cut-off function (on the half-axis) we understand any $\omega\in C^\infty_0(\Rr)$ such that $\omega(r)=1$ in a neighbourhood of $r=0.$\index{cut-off function}\\

\nt Given a compact manifold $M$ with conical singularity $v$, we define $H^{s,\gamma}(M)$\label{HsgM} for $s,\gamma\in\R$ to be the subspace of all $u\in H^s_\loc(M\setminus\{v\})$ such that $\omega(\chi^{-1})^*u\in\HsgX$ for any cut-off function $\omega$ on $\R_+$ and a singular chart $\chi:V\rightarrow\Xw.$

\begin{defn}
The space $H^s_\cone(X^\asymp),$\newnot{Hscone} $s\in\R,$ is defined to be the completion of $C_0^\infty(\R\times X)$ with respect to the norm
$$\set{\int_\R\norm{[r]^{-s}\Op_r(p)(\eta^1)u(r)}^2_{L^2(X)}dr}^{\frac12}$$
where $\Op_r(\cdot)$ refers to the pseudo-differential action in $r$ $($see the formula \textup{(\ref{Op})),} and the parameter $\eta$ is fixed at a sufficiently large absolute valued $\eta^1.$ Moreover$,$ we set
\begin{equation}\label{mew11}
H^{s;g}_\cone(X^\asymp):=\ang r^{-g}H^s_\cone(X^\asymp)
\end{equation}
for any $g\in\R.$
\end{defn}

\nt The spaces $H^{s;g}_\cone(X^\asymp)$ are adequate in the theory of pseudo-differential operators on manifolds with conical exits. In our case we need them on $\X,$ and we set
$$H^{s;g}_\cone(\X):=H^{s;g}_\cone(X^\asymp)|_{\X}.$$\label{Hscong}

\nt From the definition we easily see that 
$$H^{s;g}_\cone(\X)\subset H^s_\loc(\X)$$
for all $s,g\in\R.$\\

\nt In the definition of cone operators below we employ pseudo-differential operators that are adapted to the spaces $H^s_\cone(X^\asymp),$ briefly referred to as operators on $X^\asymp$ with exit behaviour, cf. \cite{Schu20}, \cite{Kapa10}. The corresponding subspace $L^{\mu;\nu}_{(\clas)}(X^\asymp)\subset L^\mu_{(\clas)}(\R\times X)$ of operators of order $\mu\in\R$ and exit order $\nu\in\R$ is defined in terms of symbols $a(\til x,\til\xi)\in S^{\mu;\nu}_{(\clas)}(\R^{1+n}_{\til x}\times\R^{1+n}_{\til\xi}),$ $n=\dim X.$ The space $S^{\mu;\nu}(\R^{1+n}\times\R^{1+n})$ is characterised as the set of all smooth $a(\til x,\til\xi)$ such that
$$|D^\alpha_{\til x}D^\beta_{\til\xi}a(\til x,\til\xi)|\le c\ang{\til\xi}^{\mu-|\beta|}\ang{\til x}^{\nu-|\alpha|}$$
for all $\alpha,\beta\in\N^{1+n},$ $(\til x,\til\xi)\in\R^{1+n}\times\R^{1+n},$ and constants $c=c(\alpha,\beta)>0.$ Moreover, we have the space $S^\mu_\clas(\R^{1+n})$ of symbols with constant coefficients with its natural (nuclear) \Fr topology, and we set
$$S^{\mu;\nu}_\clas(\R^{1+n}\times\R^{1+n}):=S^\nu_\clas(\R^{1+n}_{\til x})\TenP S^\mu_\clas(\R^{1+n}_{\til\xi})$$
with $\TenP$ being the completed projective tensor product, called classical symbols in $(\til x,\til\xi)$ of order $(\mu,\nu).$ Now we set
$$L^{\mu;\nu}_{(\clas)}(\R^{1+n}):=\set{\Op_{\til x}(a):a(\til x,\til\xi)\in S^{\mu;\nu}_{(\clas)}(\R^{1+n}\times\R^{1+n})}.$$

\nt In order to define $L^{\mu;\nu}_{(\clas)}(X^\asymp)$ for smooth compact $X$ we choose an open covering of $X^\asymp$ by sets of the form $(-1,1)\times U_j$ and $\R_\pm\times U_j,$ where $U_j,$ $1\le j\le L,$ form an open covering of $X.$ Those are chosen together with charts $x_j:U_j\rightarrow B$ where $B$ is the open unit ball in $\R^n,$ and we define an atlas on $\R\times X(\cong X^\asymp)$ by
$$\begin{array}{l}
\chi_j:(-1,1)\times U_j\rightarrow(-1,1)\times B,\\[4mm]
\chi_j^\pm:\R_\pm\times U_j\rightarrow\Gamma_\pm:=\set{(r,rx_j(y))\in\R^{1+n}:r\in\R_\pm,y\in U_j},
\end{array}$$
for $j=1,\dots,L.$ On $X^\asymp$ we fix a partition of unity $\{\varphi_1,\dots,\varphi_{3L}\}$ obtained in the form
$$\varphi_j:=\frac{\til\varphi_j}{\sum_{j=1}^{3L}\til\varphi_j}$$
where
\begin{itemize}
\item[\textup{(i)}] $\til\varphi_j\in C_0^\infty((-1,1)\times U_j)$ for $j=1,\dots,L,$
\item[\textup{(ii)}] $\til\varphi_j$ for $j=L+1,\dots,2L$ in the coordinates $\til x\in\Gamma_+$ are of the form
$$\til\varphi_j(\til x)=(1-\omega(|\til x|))\Phi_j(\til x)$$
for a cut-off function $\omega(r)$ on the half haxis (say, equal to 1 for $0\le r\le1/2$ and vanishing for $r\ge2/3)$ and a $\Phi_j(\til x)\in C^\infty(\Gamma_+)$ such that $\Phi_j(\lambda\til x)=\Phi_j(\til x)$ for all $\lambda>0,$ and $\supp\Phi_j|_{1\times B}$ compact in $1\times B,$
\item[\textup{(iii)}] $\til\varphi_j$ for $j=2L+1,\dots,3L$ are also of the same form as in (ii), now for $\Phi_j(\til x)\in C^\infty(\Gamma_-)$ with $\Phi_j(\lambda\til x)=\Phi_j(\til x)$ for all $\lambda>0$ and $\supp\Phi_j|_{\{-1\}\times B}$ compact in $\{-1\}\times B,$
\end{itemize}
such that $\sum_{j=1}^{3L}\til\varphi_j$ never vanishes.\\

\nt Moreover, we choose the functions $\psi_j,$ $1\le j\le3L,$ of analogous kind such that $\varphi_j\prec\psi_j$ for every $j.$\\

\nt Now $L^{\mu;\nu}_{(\clas)}(X^\asymp)$ is defined to be the set of all operators
$$A=\sum_{j=1}^{3L}\varphi_jA_j\psi_j+C$$
where $\chi_{j*}(\varphi_jA_j\psi_j)\in L^\mu_{(\clas)}((-1,1)\times B)$ and $\chi_{j*}(\varphi_jA_j\psi_j)\in L^{\mu;\nu}_{(\clas)}(\R^{1+n})$ for $j=1,\dots,L$ and $j=L+1,\dots,3L,$ respectively, while $C$ is an operator with kernel in $\S(\R\times\R,C^\infty(X\times X)).$

\begin{thm}
Every $A\in L^{\mu;\nu}_{(\clas)}(X^\asymp)$ induces continuous operators
$$A:H^{s;g}_\cone(X^\asymp)\rightarrow H^{s-\mu;g-\nu}_\cone(X^\asymp)$$
for all $s,g\in\R.$
\end{thm}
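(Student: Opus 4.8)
\nt The plan is to localise $A$ by means of the partition of unity $\set{\varphi_j}$ occurring in the definition of $L^{\mu;\nu}_{(\clas)}(X^\asymp),$ to dispose of the smoothing and the compactly supported contributions by standard arguments, and to handle the remaining \virg{exit} summands by reducing orders in the calculus on $X^\asymp$ with exit behaviour, which brings the matter down to an $L^2$-estimate. So I would first write $A=\sum_{j=1}^{3L}\varphi_jA_j\psi_j+C$ as in the definition. The operator $C$ has kernel in $\S(\R\times\R,C^\infty(X\times X))$ and therefore maps $H^{s;g}_\cone(X^\asymp)$ continuously into $\bigcap_{s',g'}H^{s';g'}_\cone(X^\asymp);$ it is harmless. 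For $j=1,\dots,L$ the summand $\varphi_jA_j\psi_j$ is supported over the precompact strip $(-1,1)\times U_j,$ where $[r]^{-s},$ $\ang r^{-g}$ and their reciprocals are bounded and the norm of $H^s_\cone(X^\asymp)$ is equivalent to the ordinary $H^s$-norm; hence the continuity $H^{s;g}_\cone\to H^{s-\mu;g-\nu}_\cone$ for this piece is nothing but the standard Sobolev continuity of operators in $L^\mu_{(\clas)}((-1,1)\times B),$ combined with the compact support of $\varphi_j,\psi_j.$

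\nt The substance lies in the exit pieces $j=L+1,\dots,3L,$ for which $\chi_{j*}(\varphi_jA_j\psi_j)\in L^{\mu;\nu}_{(\clas)}(\R^{1+n}).$ Here the key point is that the norm of $H^{s;g}_\cone(X^\asymp)$ is realised by a fixed operator of the exit calculus: with $p(r,\rho,\eta)=\til p([r]\rho,[r]\eta)$ as in the definition and $M_f$ denoting multiplication by $f,$ the operator
$$T_{s,g}:=M_{[r]^{-s}}\,\Op_r(p)(\eta^1)\,M_{\ang r^{g}}$$
satisfies $\norm{u}_{H^{s;g}_\cone}=\norm{T_{s,g}u}_{L^2(\R\times X)},$ belongs for $|\eta^1|$ large to $L^{s;g}_\clas(X^\asymp),$ is elliptic there, and induces an isomorphism $H^{s;g}_\cone(X^\asymp)\to H^{0;0}_\cone(X^\asymp)=L^2(\R\times X)$ with inverse in $L^{-s;-g}_\clas(X^\asymp).$ This is the order-reduction lemma of the calculus with exit behaviour; checking it reduces to verifying the symbol estimates $|D^a_rD^b_\rho\,p(r,\rho,\eta^1)|\le c\,\ang\rho^{s-b}\ang r^{s-a}$ out of the classical estimates for $\til p$ and the fact that $[r]=|r|$ for large $|r|.$

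\nt Granting this, for a model operator $A_0\in L^{\mu;\nu}_{(\clas)}(\R^{1+n})$ the assertion becomes equivalent to the $L^2(\R\times X)$-boundedness of $B:=T_{s-\mu,\,g-\nu}\,A_0\,(T_{s,g})^{-1}$. By the composition theorem for the exit calculus (the $L^{\cdot;\cdot}$-analogue of Theorem \ref{comp8}; cf. \cite{Schu20}, \cite{Kapa10}) the three factors compose to an operator again in the exit calculus, and the orders of the three factors add up to $0$ in each of the two slots, so $B\in L^{0;0}_{(\clas)}(\R^{1+n}),$ i.e. $B$ has a double symbol bounded together with all its derivatives. Its $L^2$-boundedness is then the Calder\'on--Vaillancourt theorem; unravelling the conjugation yields $\norm{A_0u}_{H^{s-\mu;g-\nu}_\cone}\le c\,\norm{u}_{H^{s;g}_\cone},$ and summing over the $3L$ localisations finishes the proof.

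\nt I expect the main obstacle to be the order-reduction lemma used above: one has to check that the families $\til p([r]\rho,[r]\eta)$ built into the norm of $H^{s;g}_\cone,$ together with the scalar weights $[r]^{-s}$ and $\ang r^{-g},$ really define invertible elements of the exit calculus with the stated double orders even after $\eta$ is frozen at $\eta^1$ --- which is precisely where the hypothesis that $|\eta^1|$ be sufficiently large enters, since it is what makes $\Op_r(p)(\eta^1)$ (parameter-dependent) elliptic and hence $T_{s,g}$ invertible. Once this reduction of orders is in hand, what is left is routine symbol bookkeeping and the classical $L^2$-estimate.
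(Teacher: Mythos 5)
The paper states this theorem without proof, deferring the exit calculus on $X^\asymp$ entirely to \cite{Schu20} and \cite{Kapa10}, so there is no in-paper argument to compare against. Your route --- realise the norms of $H^{s;g}_\cone(X^\asymp)$ by order reductions $T_{s,g}$ in the exit calculus, conjugate, use the composition theorem to land in $L^{0;0}_{(\clas)}$, and conclude by $L^2$-boundedness --- is exactly the standard proof in that literature, and it is sound in outline.

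Two caveats on the step you yourself flag as the crux. First, the symbol check you propose, $|D_r^aD_\rho^b p(r,\rho,\eta^1)|\le c\ang\rho^{s-b}\ang r^{s-a}$, only controls the behaviour in $(r,\rho)$ of the $L^s_\clas(X;\cdot)$-valued family $\til p([r]\rho,[r]\eta^1)$; membership of $T_{s,g}$ in $L^{s;g}_\clas(X^\asymp)$ as defined here means having scalar local symbols in $S^{s;g}(\R^{1+n}_{\til x}\times\R^{1+n}_{\til\xi})$ in the conical charts $\chi_j^\pm$, which requires joint estimates in all $1+n$ variables and covariables after the change to the coordinates $\til x=(r,rx_j(y))$. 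Together with the surjectivity of $T_{s,g}$ onto $L^2(\R\times X)$ (equivalently, invertibility of $\Op_r(p)(\eta^1)$ for $|\eta^1|$ large --- a priori $T_{s,g}$ is only an isometry onto its range, since $H^{s;g}_\cone$ is \emph{defined} as a completion), this is precisely the order-reduction theorem of the exit calculus; it has to be imported wholesale rather than "checked", but it is legitimately citable from \cite{Schu20}. Second, the conjugation is more naturally performed globally: $T_{s-\mu,g-\nu}$ and $(T_{s,g})^{-1}$ are operators on all of $X^\asymp$, not on a single chart $\R^{1+n}$, so writing $B=T_{s-\mu,g-\nu}A_0(T_{s,g})^{-1}$ for a localised piece $A_0\in L^{\mu;\nu}_{(\clas)}(\R^{1+n})$ does not quite parse; the clean statement is $T_{s-\mu,g-\nu}AT_{s,g}^{-1}\in L^{0;0}_{(\clas)}(X^\asymp)$ for the whole operator $A$, after which the localisation (and Calder\'on--Vaillancourt, or just the $S^0_{1,0}$-boundedness theorem, since $S^{0;0}\subset S^0_{1,0}$) is applied to this order-$(0;0)$ composite. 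With that rearrangement your preliminary splitting into compact and exit pieces becomes unnecessary for the continuity statement itself. Neither point is a fatal gap.
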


\begin{defn}\label{ksgg}
We set
$$\K^{s,\gamma;g}(\X):=\omega\H^{s,\gamma}(\X)+(1-\omega)H^{s;g}_\cone(\X),\newnot{symbol:Ksgag}$$
$s,\gamma,g\in\R,$ where $\omega$ is any fixed cut-off function on the half-axis$,$ and
$$\K^{s,\gamma}(\X):=\K^{s,\gamma;0}(\X).\newnot{symbol:Ksga}$$

\nt Moreover$,$ we define
$$\S^\gamma(\X):=\Projlim{N\in\N}\K^{N,\gamma;N}(\X),\quad\S_\O(\X):=\Projlim{N\in\N}\K^{N,N;N}(\X)\newnot{symbol:Sga}\newnot{symbol:SgaO}$$
and
\begin{equation}\label{new10a}
\K^{\infty,\gamma}(\X):=\Projlim{N\in\N}\K^{N,\gamma}(\X),\quad\K^{s,\infty}(\X):=\Projlim{N\in\N}\K^{s,N}(\X).
\end{equation}
If $\Theta=(\vartheta,0]$ is finite we also write
$$\S^\gamma_\Theta(\X):=\Projlim{j\in\N}\S^{\gamma-\vartheta-\frac1{1+j}}(\X).\newnot{symbol:SgaTh}$$
\end{defn}

\nt In the space $\K^{0,0}(\X)=r^{-\frac n2}L^2(\R_+\times X)$ we fix the scalar product
\begin{equation}\label{ScPr}
(u,v)=\int u(r,x)\overline{v}(r,x)r^ndrdx
\end{equation}
where $dx$ refers to a fixed Riemannian metric on $X,$ $n=\dim X.$ Then, $(\cdot,\cdot):C_0^\infty(\X)\times C_0^\infty(\X)\rightarrow\C$ extends to a non-degenerate sesquilinear pairing
\begin{equation}\label{Pair}
(\cdot,\cdot):\K^{s,\gamma;g}(\X)\times\K^{-s,-\gamma;-g}(\X)\rightarrow\C
\end{equation}
for every $s,\gamma,g\in\R.$ Later on we often refer to groups of isomorphisms on our spaces, for instance,
\begin{equation}\label{Klhom}
\Kl^g:\K^{s,\gamma;g}(\X)\rightarrow\K^{s,\gamma;g}(\X),\quad(\Kl u)(r,x):=\lambda^{\frac{n+1}2+g}u(\lambda r,x),
\end{equation}
$\lambda\in\R_+.$

%
%
%
%
%
\subsection{Discrete asymptotics in cone spaces}\label{1.2.3}
\begin{defn}\index{asymptotic!discrete $-$ type}\index{discrete asymptotic type}
A sequence
\begin{equation}\label{discras}
\P=\set{(p_j,m_j)}_{j=0,\dots,N}
\end{equation}
of pairs $(p_j,m_j)\in\C\times\N,$ for $N=N({\P})\in\N\cup\{\infty\},$ is said to be a discrete
asymptotic type$,$ associated with the weight data
$(\gamma,\Theta),$ with a weight $\gamma\in\R$ and a $($half-open$)$ weight interval $\Theta=(\vartheta,0],$ $-\infty\le\vartheta<0,$ if the set $\pi_\C{\P}=\{p_j\}_{j=0,\dots,N}\subset\C$\label{Pip} is contained in
$\big\{\frac{n+1}{2}-\gamma+\vartheta<\Re
z<\frac{n+1}{2}-\gamma\big\},$ where $n=\dim X,$ furthermore
$N({\P})<\infty$ for $\vartheta>-\infty,$ and $\Re
p_j\rightarrow-\infty$ for $j\rightarrow\infty$ in the case
$\vartheta=-\infty$ and $N(\P)=\infty.$
\end{defn}

\nt We write 
$$T^\delta\P:=\{(p_j+\delta,m_j)\}_{j=0,\dots,N}\label{TdP}$$
for any $\delta\in\R.$ A discrete asymptotic type is said to satisfy the shadow condition\index{shadow condition} if
$$(p,m)\in\P\quad\Rightarrow\quad(p-l,m)\in\P$$
for all $l\in\N$ with $\Re p-l>\frac{n+1}2-\gamma+\vartheta.$\\

\nt The complex conjugate of a discrete asymptotic type (\ref{discras}) is defined as
$$\overline\P:=\{(\overline p_j,m_j)\}_{j=0,\dots,N}.$$\label{Pcon}

\nt If $\P$ is a discrete asymptotic type associated with $(\gamma,\Theta),$ $\vartheta>-\infty,$ and $\omega$ a fixed cut-off function, we set
\begin{equation}\newnot{singdiscr}
{\E}_{\P}(\X):=\Big\{\sum^N_{j=0}\sum^{m_j}_{k=0}
c_{jk}(x)\omega(r)r^{-p_j}\log^kr:c_{jk}\in C^\infty(X)\Big\}.
\end{equation}
This space is isomorphic to a direct sum of $\sum^N_{j=0}(m_j+1)$ copies of the space $C^\infty(X)$ and as such a \Fr space. Observe that if we set 
$${\E}_{\P}(\R_+):=\Big\{\sum_{j,k}c_{jk}\omega(r)r^{-p_j}\log^kr:c_{jk}\in\C\Big\},$$
which is of finite dimension $\sum_{j=0}^N(m_j+1),$ we can also write
\begin{equation}\newnot{asnew11}
{\E}_{\P}(\X)=C^\infty(X,{\E}_{\P}(\R_+)).
\end{equation}

\begin{rem}
We have 
$$\E_\P(\X)\subset\K^{s,\gamma;\infty}(\X)$$
for every $s\in\R.$
\end{rem}

\nt Let us define the space of flat functions (of flatness $-\vartheta-0$ relative to the weight\index{flatness!relative to a weight}\index{weight!flatness relative to a $-$} $\gamma)$ as
$$\K^{s,\gamma;g}_\Theta(\X):=\Projlim{m\in\N}\K^{s,\gamma-\vartheta-\frac1{m+1};g}(\X)\newnot{symbol:KTh}$$
in the \Fr topology of the projective limit.

\begin{defn}
\begin{itemize}
\item[$\mathrm{(i)}$] Let $\P$ be a discrete asymptotic type associated with $(\gamma,\Theta),$ $\Theta$ finite$;$ we set
\begin{equation}\label{new14}
\K^{s,\gamma;g}_\P(\X):=\K^{s,\gamma;g}_\Theta(\X)+\E_\P(\X)
\end{equation}
$($which is a direct sum$);$
\item[$\mathrm{(ii)}$] if $\P$ is a discrete asymptotic type$,$ $\Theta$ infinite$,$ we form $$\P_l:=\set{(p,m)\in\P:\Re p>\frac{n+1}2-\gamma-(l+1)},$$
$k\in\N,$ which is associated with $\Theta_l=(-(l+1),0],$ and we set
$$\K^{s,\gamma;g}_\P(\X):=\Projlim{l\in\N}\K^{s,\gamma;g}_{\P_l}(\X).$$ 
\end{itemize}
\nt Analogously as before we write $\K^{s,\gamma}_\Theta(\X)$\newnot{Kcfl} and $\K^{s,\gamma}_\P(\X)$\newnot{Kcas} when $g=0.$ Moreover$,$ we set
$$\S^\gamma_\P(\X):=\Projlim{N\in\N}\K^{N,\gamma;N}_\P(\X).$$\label{SgP}
\end{defn}

\begin{rem}
Setting 
$$E^j:=\K^{s,\gamma-\beta_j;g}(\X)+\E_{\P_j}(\X)$$
for $\beta_j:=\max(-(j+1),\vartheta)+\frac1{j+1},$ and $\P_j:=\{(p,m)\in\P:\Re p\ge\max(-(j+1),\vartheta)+\frac1{j+2}\},$ we obtain a Hilbert space for every $j\in\N,$ with continuous embeddings $E^{j+1}\hookrightarrow E^j\hookrightarrow\dots\hookrightarrow\K^{s,\gamma;g}(\X),$ where $\K^{s,\gamma;g}_\P(\X)=\Projlim{j\in\N}E^j.$ Observe that $(\ref{Klhom})$ restricts to a group of isomorphisms $\Kl^g:\K^{s,\gamma;g}_\P(\X)\rightarrow\K^{s,\gamma;g}_\P(\X),$ $\lambda\in\R_+,$ induced from corresponding isomorphisms $\Kl^g:E^j\rightarrow E^j,$ for every $j\in\N.$
\end{rem}

\begin{thm}\label{Th12}
Let $\P$ be a discrete asymptotic type associated with the weight data\index{weight!data} $(\gamma,\Theta)$ and $\chi$ be a $\pi_\C\P$-excision function. 
\begin{itemize}
\item [\textup{(i)}] Let $u(r,x)\in\K^{s,\gamma}_\P(\X)$ and $\omega(r)$ a cut-off function. Then
$$M_{\gamma-\frac n2,r\rightarrow z}(\omega u)(z,x)$$
extends from $\Gamma_{\frac{n+1}2-\gamma}$ to a $H^s(X)$-valued meromorphic function $f(z,x)$ in $z,$ $\frac{n+1}2-\gamma+\vartheta<\Re\,z<\frac{n+1}2-\gamma,$ with poles at the points $p_j$ of multiplicity $m_j+1$ and Laurent coefficients at $(z-p_j)^{-(k+1)},$ $0\le k\le m_j,$ belonging to $C^\infty(X)$ such that $\chi(z)f(z,x)|_{\Gamma_\beta}\in\hat H^s(\Gamma_\beta\times X)$ for every $\frac{n+1}2-\gamma+\vartheta<\beta\le\frac{n+1}2-\gamma,$ uniformly in compact $\beta$-intervals $($cf. the notation in $(\ref{Fu13})$ where $\tau$ is to be replaced by $\Im z$ for $z\in\Gamma_\beta).$
\item [\textup{(ii)}] Let $f(z,x)\in\hat H^s(\Gamma_{\frac{n+1}2-\gamma}\times X)$ be a function that extends to a $H^s(X)$-valued function which is meromorphic as in \textup{(i),} where $\chi(z)f(z,x)|_{\Gamma_\beta}\in\hat H^s(\Gamma_\beta\times X)$\label{HhatG} for every $\frac{n+1}2-\gamma+\vartheta<\beta\le\frac{n+1}2-\gamma,$ uniformly in compact $\beta$-subintervals. Then for every cut-off function $\omega(r)$ we have
$$\omega(r)\Set{M_{\gamma-\frac n2,r\rightarrow z}f}(r,x)\in\K^{s,\gamma}_\P(\X).$$
\end{itemize}
\end{thm}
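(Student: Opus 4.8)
\nt The strategy is to decompose $u$ along the direct sum $\K^{s,\gamma}_\P(\X)=\K^{s,\gamma}_\Theta(\X)\oplus\E_\P(\X)$ of \eqref{new14}, to analyse the two summands separately — by a Mellin analogue of the Paley--Wiener theorem and by an explicit computation, respectively — and then to reassemble; the case $\vartheta=-\infty$ will follow by applying the finite case to each truncation $\P_l$ and passing to $\K^{s,\gamma}_\P(\X)=\Projlim{l\in\N}\K^{s,\gamma}_{\P_l}(\X)$. First I would record the elementary identity
$$M\bigl(\omega(r)r^{-p}\log^kr\bigr)(z)=\frac{(-1)^kk!}{(z-p)^{k+1}}+h_{p,k}(z),$$
valid for a cut-off function $\omega$, $p\in\C$, $k\in\N$, where $h_{p,k}$ is entire and, together with all its derivatives, decays faster than any power of $\ang z$ on every line $\Gamma_\beta$, uniformly for $\beta$ in compact intervals; this is obtained by splitting $\omega(r)r^{-p}\log^kr$ into its germ at $r=0$, which produces the principal part after a residue computation, plus a function in $C_0^\infty(\R_+)$, whose Mellin transform is entire and rapidly decreasing by repeated integration by parts in $z$. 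Consequently, for $w=\sum_{j,k}c_{jk}(x)\omega(r)r^{-p_j}\log^kr\in\E_\P(\X)$ the function $M_{\gamma-\frac n2}(w)(z,x)$ continues to a meromorphic $C^\infty(X)$-valued function with poles exactly at the $p_j$ of order $m_j+1$, with Laurent coefficient $(-1)^kk!\,c_{jk}(x)$ at $(z-p_j)^{-(k+1)}$, and with $\chi(z)M_{\gamma-\frac n2}(w)(z,x)|_{\Gamma_\beta}\in\hat H^s(\Gamma_\beta\times X)$ uniformly in compact $\beta$-intervals; in particular $w\mapsto\{\text{principal parts of }M_{\gamma-\frac n2}(w)\text{ at the }p_j\}$ is a linear isomorphism onto the admissible principal-part data.

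\nt Next I would treat the flat summand. For $v\in\K^{s,\gamma}_\Theta(\X)=\Projlim{m\in\N}\K^{s,\gamma-\vartheta-\frac1{m+1}}(\X)$ one has $\omega v\in\H^{s,\gamma'}(\X)$ for every $\gamma'<\gamma-\vartheta$, so by Definition \ref{Hsg} together with Remark \ref{2.6} each $M_{\gamma'-\frac n2}(\omega v)$ lies in $\hat H^s(\Gamma_{\frac{n+1}2-\gamma'}\times X)$. Since $\omega v$ is supported in a bounded $r$-interval, $M(\omega v)(z)$ is holomorphic in the half-plane $\Re z>\frac{n+1}2-\gamma+\vartheta$, and Cauchy's theorem identifies the $M_{\gamma'-\frac n2}(\omega v)$ as the $\Gamma_\beta$-traces of a single holomorphic $H^s(X)$-valued function on the strip $\frac{n+1}2-\gamma+\vartheta<\Re z<\frac{n+1}2-\gamma$, with the $\hat H^s$-bounds uniform on compact $\beta$-subintervals. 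Conversely, if $g(z,x)$ is holomorphic on that strip with such uniform bounds, then shifting the contour in $M^{-1}_{\gamma-\frac n2}$ to an arbitrary line $\Gamma_\beta$, $\frac{n+1}2-\gamma+\vartheta<\beta\le\frac{n+1}2-\gamma$, and letting $\beta$ decrease toward $\frac{n+1}2-\gamma+\vartheta$, one obtains $\omega\,M^{-1}_{\gamma-\frac n2}g\in\H^{s,\gamma'}(\X)$ for every $\gamma'<\gamma-\vartheta$, that is, $\omega\,M^{-1}_{\gamma-\frac n2}g\in\K^{s,\gamma}_\Theta(\X)$.

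\nt To assemble (i), write $u=v+w$ with $v\in\K^{s,\gamma}_\Theta(\X)$ and $w\in\E_\P(\X)$; then $M_{\gamma-\frac n2}(\omega u)$ equals $M_{\gamma-\frac n2}(\omega v)+M_{\gamma-\frac n2}(\omega w)$ modulo the Mellin transform of a compactly supported flat function, and the two terms furnish respectively the holomorphic background with the stated $\hat H^s$-bounds and the prescribed poles and Laurent coefficients. For (ii), first pick $w\in\E_\P(\X)$ so that $M_{\gamma-\frac n2}(\omega w)$ has the same principal parts as $f$ at all points of $\pi_\C\P$ (possible by the isomorphism recorded above), set $g:=f-M_{\gamma-\frac n2}(\omega w)$, which is then holomorphic on the strip with the required uniform $\hat H^s$-bounds, put $v:=\omega\,M^{-1}_{\gamma-\frac n2}g\in\K^{s,\gamma}_\Theta(\X)$ by the previous paragraph, and note that $\omega\,M^{-1}_{\gamma-\frac n2}f$ differs from $v+w$ only by a compactly supported flat term; hence $\omega\,M^{-1}_{\gamma-\frac n2}f\in\K^{s,\gamma}_\P(\X)$.

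\nt The main obstacle will be the Mellin--Paley--Wiener step: making rigorous that the family $\{M_{\gamma'-\frac n2}(\omega v)\}_{\gamma'<\gamma-\vartheta}$ is precisely the trace family of one holomorphic $H^s(X)$-valued function on the strip with contour-shift-stable estimates, and the dual assertion that such a holomorphic family inverse-transforms into $\K^{s,\gamma}_\Theta(\X)$. This forces one to keep track of the order-reducing weights $\RR^s(\Im z)$ hidden in the norms of $\H^{s,\gamma}(\X)$ and of $\hat H^s(\Gamma_\beta\times X)$ and of the uniformity in $\beta$; by contrast, the remaining steps are routine bookkeeping with cut-off functions and the elementary pole computation recorded at the outset.
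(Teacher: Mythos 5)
Your proposal is correct and follows essentially the same route as the paper: decompose along $\K^{s,\gamma}_\P(\X)=\K^{s,\gamma}_\Theta(\X)+\E_\P(\X)$, handle the singular part via the explicit Mellin transform of $\omega(r)r^{-p}\log^kr$ (meromorphic with Schwartz decay on lines after excision) and the flat part via a Mellin--Paley--Wiener argument in the strip. The only difference is cosmetic and sits in (ii): the paper shifts the inversion contour to $\Gamma_{\frac{n+1}2-\sigma}$ and evaluates the residue integrals $\int_{|z-p_j|=\delta}r^{-z}f(z,x)\,dz$ directly to produce the singular functions, whereas you subtract a singular function with matching principal parts before inverting; the two computations are interchangeable.
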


\begin{proof}
(i) Without loss of generality we assume that $\Theta$ is finite. By virtue of (\ref{new14}) every $u\in\K^{s,\gamma}_\P(\X)$ has a decomposition
$$u(r,x)=u_{\mathrm{flat}}(r,x)+u_{\mathrm{sing}}(r,x)$$
for $u_{\mathrm{flat}}\in\K^{s,\gamma}_\Theta(\X),$ $u_{\mathrm{sing}}\in\E_\P(\X).$ We have $\omega u_{\mathrm{flat}}\in r^\delta\H^{s,\gamma}(\X)$ for every $0\le\delta<-\vartheta,$ and from the nature of $\H^{s,\gamma}$-spaces we know that $(M\omega u_{\mathrm{flat}})(z,x)$ extends from $\Gamma_{\frac{n+1}2-\gamma}$ to a holomorphic $H^s(X)$-valued function in the strip $\frac{n+1}2-\gamma+\vartheta<\Re\,z<\frac{n+1}2-\gamma.$ Therefore, it remains to characterise $Mu_{\mathrm{sing}}(z,x).$ By definition $u_{\mathrm{sing}}$ is a finite linear combination of expressions of the form
$$\omega(r)c(x)r^{-p}\log^kr$$
for $p\in\pi_\C\P,$ $c\in C^\infty(X),$ $k\in\N.$ The Mellin transform of such a function is known to be a $C^\infty(X)$-valued meromorphic function with a pole at $p$ of multiplicity $k+1,$ and $\chi M(\omega cr^{-p}\log^kr)$ is a Schwartz function on every line $\Gamma_\beta,$ $\beta\in\R,$ uniformly in finite $\beta$-intervals. This gives us altogether the assertion $\textup{(i)}.$\\

\nt (ii) Let $f(z,x)$ be a function with the assumed properties. Then for every $\varepsilon>0$ there is a $\sigma<\gamma-\vartheta$ such that $\gamma-\vartheta-\sigma<\varepsilon,$ and the finite set $\pi_\C\P$ is contained in $\{z:\Re\,z>\frac{n+1}2-\sigma\}.$ We have
\begin{equation}\label{int2}
\omega(r)\frac1{2\pi i}\int_{\Gamma_{\frac{n+1}2-\gamma}}\hspace{-1cm}r^{-z}f(z,x)dz=\omega(r)\frac1{2\pi i}\Big(\int_{\Gamma_{\frac{n+1}2-\sigma}}\hspace{-1cm}r^{-z}f(z,x)dz+\sum_{j=0}^N\int_{|z-p_j|=\delta}\hspace{-1cm}r^{-z}f(z,x)dz\Big)
\end{equation}
for any sufficiently small $\delta>0,$ with counter clockwise taken integration contours. The left hand side of (\ref{int2}) belongs to $\KsgX$ and the first summand in the right to $\K^{s,\gamma}_\Theta(\X)$ (since $\sigma$ with the indicated properties is arbitrary). It remains to characterise the other summands on the right of (\ref{int2}). Close to $p_j$ the function $f(z,x)$ is a finite sum
$$\sum_{k=0}^{m_j}c_{jk}(x)(z-p_j)^{-(k+1)}+h_j(z,x)$$
for coefficients $c_{jk}\in C^\infty(X)$ and $h_j$ holomorphic in $z$ near $p_j,$ $k=0,\dots,m_j.$ By Cauchy's theorem the integrals over $r^{-z}h_j(z,x)$ vanish. Therefore, we have to characterise
\begin{equation}\label{int3}
\omega(r)\frac1{2\pi i}\int_{|z-p_j|=\delta}\hspace{-1cm}r^{-z}c_{jk}(x)(z-p_j)^{-(k+1)}dz
\end{equation}
for every $j,k.$ Writing $r^{-z}=r^{-p_j}r^{p_j-z}=r^{-p_j}e^{(p_j-z)\log r}$ the expression (\ref{int3}) takes the form
$$\omega(r)r^{-p_j}c_{jk}(x)\frac1{2\pi i}\sum_{l=0}^\infty\frac1{l!}\int_{|z-p_j|=\delta}\hspace{-1cm}(p_j-z)^l\log^lr(z-p_j)^{-(k+1)}dz.$$
Only the terms for $l=k$ give a non-vanishing contribution, and we just obtain the singular functions of asymptotics of type $\P.$
\end{proof}

\begin{rem}
Assume that a function $f_0$ on some weight line $\Gamma_{\frac{n+1}2-\gamma}$ has an extension to a meromorphic function $f$ in some strip of the complex plane$;$ then for notational convenience we often identify $f_0$ and $f.$ In particular$,$ in modification of the original meaning of $M_{\gamma-\frac n2}:u\mapsto Mu|_{\Gamma_{\frac{n+1}2-\gamma}}=:f_0,$ we interpret $M_{\gamma-\frac n2}$ also as a mapping from $u$ to $f$ when such an extension exists.
\end{rem}

\begin{rem}\label{Mer}
Let $u(r,x)$ be as in \textup{Theorem \ref{Th12} (i)} and $\varphi(r)\in C_0^ \infty(\R_+).$ Then
$$f(z,x):=M_{\gamma-\frac n2,r\rightarrow z}(\varphi u)(z,x)$$
extends from $\Gamma_{\frac{n+1}2-\gamma}$ to a $H^s(X)$-valued meromorphic function in $z$ with poles at the points $p_j-l,$ $l\in\N,$ $\frac{n+1}2-\gamma+\vartheta<\Re p_j-l,$ of multiplicity $m_j+1$ and Laurent coefficients in $C^\infty(X),$ for all $j,l.$ This phenomenon is a motivation for the above definition of the shadow condition of asymptotic types. 
\end{rem}

\begin{defn}
Let $\P$ be a discrete asymptotic type associated with the weight data $(\gamma,\Theta),$ $M$ a compact manifold with conical singularity $v$ and $\chi:V\rightarrow\Xw$ a fixed singular chart. Then
$$H^{s,\gamma}_\P(M)\label{HsgP}$$
$s\in\R,$ is defined to be the subspace of all $u\in H^s_\loc(M\setminus\{v\})$ such that for any cut-off function $\omega$ on $\R_+$ we have $\omega(\chi^{-1}_\reg)^*u\in\K^{s,\gamma}_\P(\X).$
\end{defn}

\nt The space $H^{s,\gamma}_\P(M)$ is a \Fr space in a natural way, and the same is true for $H^{\infty,\gamma}_\P(M)=\bigcap_{s\in\R}H^{s,\gamma}_\P(M).$ From the scalar product of $H^{0,0}(M)$ we have a non-degenerate sesquilinear pairing
$$H^{s,\gamma}(M)\times H^{-s,-\gamma}(M)\rightarrow\C$$
for every $s,\gamma\in\R.$ This allows us to define formal adjoints\index{formal adjoint operator} $A^*$ of operators\index{operators!formal adjoint $-$}
$$A:H^{s,\gamma}(M)\rightarrow H^{s-\mu,\til\gamma}(M)$$
that are continuous for all $s\in\R,$ for some reals $\gamma,\til\gamma,\mu,$ namely, by the relation
$$(Au,v)_{H^{0,0}(M)}=(u,A^*v)_{H^{0,0}(M)}$$
for all $u,v\in C_0^\infty(M\setminus\{v\}).$ Then $A^*$ induces continuous operators
$$A^*:H^{s,-\til\gamma}(M)\rightarrow H^{s-\mu,-\gamma}(M)$$
for all $s\in\R.$\\

\nt Analogously, the $\K^{0,0}(\X)$-scalar product gives us a non-degenerate sesquilinear pairing
$$\K^{s,\gamma}(\X)\times\K^{-s,-\gamma}(\X)\rightarrow\C$$
for every $s,\gamma\in\R,$ and any operator
$$A:\K^{s,\gamma}(\X)\rightarrow \K^{s-\mu,\til\gamma}(\X)$$
which is continuous for all $s\in\R$ and some $\gamma,\til\gamma,\mu,$ has a formal adjoint $A^*$ defined by
$$(Au,v)_{\K^{0,0}(\X)}=(u,A^*v)_{\K^{0,0}(\X)}$$
for all $u,v\in C_0^\infty(\X).$ Then $A^*$ induces continuous operators
$$A^*:\K^{s,-\til\gamma}(\X)\rightarrow\K^{s-\mu,-\gamma}(\X)$$
for all $s\in\R.$

%
%
%
%
%
\subsection{Background from the calculus for conical singularities}\label{1.2.4}

Let $M$ be a manifold with conical singularities $S.$ In the general part of this exposition for simplicity we assume that $S$ consists of a single point $v$ (most of the constructions have straightforward generalisations to the case of finitely many conical singularities). Locally near $v,$ we fix a singular chart $\chi$ and a corresponding splitting of variables $(r,x)\in\R_+\times X,$ cf. Definition \ref{1.1.1} (ii). If we motivate our constructions by the task to express parametrices of elliptic operators $A\in\Diff_{\deg}^\mu(M)$ (see Definition \ref{Diffdeg}) the specific novelties compared with the smooth case have to be expected in a neighbourhood of $v.$ Given the operator $A$ in the form (\ref{new2}) we can write
$$A=r^{-\mu}\op_M^{\gamma-\frac n2}(h)$$
for $h(r,z):=\sum_{j=0}^\mu a_j(r)z^j;$ here $n=\dim X,$ and $\gamma\in\R$ is a weight that we fix later on in connection with ellipticity. The Mellin symbol $h(r,z)$ is $\Diff^\mu(X)$-valued and holomorphic in the covariable $z.$ We have $h(r,z)\in C^\infty(\Rr,M_\O^\mu(X)),$ cf. Definition \ref{1.2.16} below. More generally, as noted in Section \ref{1.2.2} we may admit $L^\mu_{(\clas)}(X;\Gamma_{\frac{n+1}2-\gamma})$-valued Mellin symbols for any $\gamma\in\R,$ cf. (\ref{opvalsym}). In the following definition we admit operator functions depending on $\eta\in\R^q$ which already prepares material for the edge calculus.

\begin{defn}\label{1.2.16}
The space $M_\O^\mu(X;\R^q)$ for a closed compact $C^\infty$ manifold $X,$ $\mu\in\R,$ $q\in\N,$ is defined to be the set of all $h(z,\eta)\in\A(\C,L^\mu_{\clas}(X;\R^q))$ such that
$$h(\beta+i\rho,\eta)\in L^\mu_{\clas}(X;\Gamma_\beta\times\R^q)$$
for every $\beta\in\R,$ uniformly in compact $\beta$-intervals.
\end{defn}

\nt From the definition it is immediate that $M_\O^\mu(X;\R^q)$ is a \Fr space in a natural way. We set
$$M_\O^\mu(X):=M_\O^\mu(X;\R^0),\label{MOmX}$$
and we simply write $M_\O^\mu(\R^q)$\label{MOm} when $\dim X=0$ (in this case $L^\mu_{\clas}(X;\R^q)$ is replaced by $S^\mu_{\clas}(\R^q)$ in the canonical \Fr topology), and $M_\O^\mu$ when $q=\dim X=0.$

\begin{thm}\label{Me}
For every $\til p(r,\til\rho)\in C^\infty(\Rr,L^\mu_\clas(X;\R_{\til\rho}))$ there exists an $h(r,z)$ in the space $C^\infty(\Rr,M^\mu_\O(X))$ such that for $p(r,\rho)=\til p(r,r\rho)$ we have
$$\Op_r(p)=\op_M^{\gamma-\frac n2}(h)\qquad\mod L^{-\infty}(\X)$$
for all $\gamma\in\R$ $($as standard pseudo-differential operators $C_0^\infty(X)\rightarrow C^\infty(X)).$
\end{thm}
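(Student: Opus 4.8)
The plan is to transport the two operator families $\Op_r(p)$ and $\op_M^{\gamma-\frac n2}(h)$ to the cylinder $\R\times X$ via the diffeomorphism $r\mapsto t=-\log r$, under which Mellin operators become Fourier operators; to read off that, on the Mellin side, a holomorphic symbol produces a Fourier operator whose symbol is \emph{independent of the weight $\gamma$}; to reduce the assertion to a holomorphisation statement for a single classical parameter-dependent symbol; and to settle the latter by a kernel cut-off argument. Throughout I would carry the constructions out with $L^\mu_\clas(X;\cdot)$-valued symbols, the operator action on $X$ being inert (alternatively one first localises on $X$ by a partition of unity and argues with scalar symbols).

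First I would compute the conjugates by $S$, $(Su)(t):=u(e^{-t})$. For a holomorphic Mellin symbol $h(r,z)\in C^\infty(\Rr,M_\O^\mu(X))$, substituting $r=e^{-t}$, $r'=e^{-t'}$, rewriting the weight factor $(r/r')^{-z}$ as an imaginary shift of the covariable, and then --- using that $h(r,\cdot)$ is entire with uniform symbol estimates on horizontal strips (Definition \ref{1.2.16}) --- shifting the contour back to the real axis, one obtains
$$S\,\op_M^{\gamma-\frac n2}(h)\,S^{-1}=\Op_t(b)\bmod L^{-\infty}(\R\times X),\qquad b(t,\eta):=h(e^{-t},i\eta),$$
and the right-hand side does not depend on $\gamma$; this is the cylinder form of the fact (cf.\ Remark \ref{con}) that for holomorphic $h$ the operator $\op_M^{\gamma-\frac n2}(h)$ is independent of $\gamma$ on $C_0^\infty(\X)$, and it is what makes an assertion valid for all $\gamma$ simultaneously possible. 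On the other side, with $p(r,\rho)=\tilde p(r,r\rho)$, the same substitution together with the rescaling $\rho\mapsto e^{t}\tau$ of the covariable and the elementary identity $1-e^{t-t'}=-(t-t')\,g(t-t')$, where $g(\theta)=(e^\theta-1)/\theta$ is smooth and strictly positive, gives
$$S\,\Op_r(p)\,S^{-1}=\Op_{t,t'}(q)\bmod L^{-\infty}(\R\times X),\qquad q(t,t',\eta)=G(t-t')\,\tilde p\!\left(e^{-t},-\eta/g(t-t')\right),$$
with $G(\theta)=e^\theta/g(\theta)$ and $G(0)=g(0)=1$. The left symbol $q_L$ of the double symbol $q$, produced by the usual asymptotic expansion $q_L\sim\sum_\alpha\frac1{\alpha!}\partial_\eta^\alpha D_{t'}^\alpha q|_{t'=t}$, depends on $t$ only through $e^{-t}$, so $q_L(t,\eta)=\hat q(e^{-t},\eta)$ for some $\hat q\in C^\infty(\Rr,L^\mu_\clas(X;\R_\eta))$ with leading term $\tilde p(r,-\eta)$.

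This reduces the theorem to producing $h(r,z)\in C^\infty(\Rr,M_\O^\mu(X))$ with $h(r,i\eta)\equiv\hat q(r,\eta)$ modulo $C^\infty(\Rr,L^{-\infty}(X;\R_\eta))$: then $\Op_t(b)$ and $\Op_t(q_L)$ will differ by a smoothing operator, and conjugating back by $S$ yields $\Op_r(p)=\op_M^{\gamma-\frac n2}(h)\bmod L^{-\infty}(\X)$ for every $\gamma$. For the holomorphisation I would use the kernel cut-off construction: let $k(r,\vartheta)$ be the inverse Fourier transform of $\hat q(r,\cdot)$ in the covariable --- smooth for $\vartheta\neq0$, rapidly decreasing, and smooth in $r$ up to $r=0$ --- pick $\psi\in C_0^\infty(\R)$ with $\psi\equiv1$ near $\vartheta=0$, and set $h(r,z):=\int e^{-\vartheta z}\psi(\vartheta)k(r,\vartheta)\,d\vartheta$. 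Compact support in $\vartheta$ makes $h(r,\cdot)$ entire; since $\psi\equiv1$ near $0$, the difference $\hat q(r,\eta)-h(r,i\eta)$ is the Fourier transform of $(1-\psi)k(r,\cdot)$, which is smooth and rapidly decreasing, so the difference lies in $C^\infty(\Rr,L^{-\infty}(X;\R_\eta))$; and on each line $\Gamma_\beta$ one checks $h(\beta+i\tau,\cdot)\in L^\mu_\clas(X;\Gamma_\beta)$ uniformly on compact $\beta$-intervals, because multiplying $\psi k$ by $e^{-\vartheta\beta}$ leaves the leading singularity at $\vartheta=0$ unchanged. Thus $h\in C^\infty(\Rr,M_\O^\mu(X))$, as needed.

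The hard part is precisely this last point --- the kernel cut-off lemma, namely that cutting the Fourier kernel of a classical symbol of order $\mu$ down to a neighbourhood of the origin and Laplace-transforming it produces genuine classical symbol estimates of order $\mu$ on \emph{every} weight line, uniformly on compact $\beta$-intervals, with smooth dependence on $r$ up to $r=0$. This is a standard but technical Cauchy-integral (Gohberg--Leiterer type) estimate, and it is the one place where the detailed structure of the symbol classes is really used. The remaining items --- justifying the changes of variables inside the oscillatory integrals, carrying out the left-symbol reduction, and checking that the off-diagonal contributions land in $L^{-\infty}(\X)$ --- are routine bookkeeping.
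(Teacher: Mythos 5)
The paper does not prove Theorem \ref{Me}; it refers the reader to \cite{Schu20} and to the alternative argument in \cite{Krai2}. Your proposal reconstructs the standard proof of \cite{Schu20}: conjugation to the cylinder by $r\mapsto -\log r$, reduction of the Fourier side to a left symbol depending on $t$ only through $e^{-t}$, and holomorphisation by kernel cut-off, using that for holomorphic Mellin symbols the choice of weight line is irrelevant by Cauchy's theorem -- which is exactly what makes the statement uniform in $\gamma$. The computations you sketch (the factor $G(t-t')=e^{t-t'}/g(t-t')$ with $g(\theta)=(e^\theta-1)/\theta$, the need to cut off away from the diagonal where $g$ degenerates, the identification of $\hat q-h(\cdot,i\cdot)$ with the Fourier transform of $(1-\psi)k$) are all correct. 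The only substantive debt you leave open is the kernel cut-off theorem itself -- that $\int e^{-\vartheta z}\psi(\vartheta)k(r,\vartheta)\,d\vartheta$ yields classical symbol estimates of order $\mu$ on every $\Gamma_\beta$, uniformly on compact $\beta$-intervals and smoothly in $r$ up to $r=0$ -- which you correctly isolate as the technical core; it is a genuine theorem of the calculus, proved in \cite{Schu20}, so citing it rather than proving it puts your argument at the same level of completeness as the paper's own treatment.
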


\nt Theorem \ref{Me} is crucial for the cone calculus (and later on, in parameter-dependent form also for the edge calculus). A proof may be found, for instance, in \cite{Schu20}. A particularly trasparent alternative proof is given in \cite{Krai2}.

\begin{defn}\label{Ell13}
An $h(z)\in M^\mu_\O(X)$ is called elliptic\index{elliptic!operator}\index{operators!elliptic $-$} $($of order $\mu)$ if there is a $\beta\in\R$ such that $h(\beta+i\rho)$ is parameter-dependent elliptic in $L^\mu_\clas(X;\Gamma_\beta).$
\end{defn}

\begin{rem}
The latter definition of ellipticity of $h(z)\in M^\mu_\O(X)$ $($of order $\mu)$ is independent of $\beta,$ i.e.$,$ if $h(\beta+i\rho)$ is parameter-dependent elliptic in $L^\mu_\clas(X;\Gamma_\beta)$ then $h(\delta+i\rho)$ is parameter-dependent elliptic in $L^\mu_\clas(X;\Gamma_\delta)$ for every $\delta\in\R.$
\end{rem}

\begin{thm}
Let $\til p(\til\rho)\in L^\mu_\clas(X;\R_{\til\rho})$ be parameter-dependent elliptic$,$ and form $h(z)\in M^\mu_\O(X)$ according to \textup{Theorem \ref{Me}}. Then $h(z)$ is elliptic in the sense of \textup{Definition \ref{Ell13}}.
\end{thm}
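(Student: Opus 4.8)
The plan is to identify the parameter-dependent homogeneous principal symbol of $h$ along a weight line with that of $\til p,$ by computing the ordinary homogeneous principal symbol of $\op_M^{\gamma-\frac n2}(h)$ on $T^*\X\setminus 0$ in two different ways. By Remark~\ref{con}, applied with the weight $\gamma-\frac n2$ (so that the pertinent weight line is $\Gamma_{\frac{n+1}2-\gamma}$), the operator $\op_M^{\gamma-\frac n2}(h)$ belongs to $L^\mu_\clas(\X),$ and by Theorem~\ref{Me} it coincides, modulo $L^{-\infty}(\X),$ with $\Op_r(p)$ for $p(r,\varrho)=\til p(r\varrho);$ in particular the two operators have the same homogeneous principal symbol $\sigma_\psi\in C^\infty(T^*\X\setminus 0).$

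First I would compute $\sigma_\psi(\Op_r(p)).$ Since $\til p(\til\rho)$ is classical of order $\mu$ with constant coefficients in $r,$ its parameter-dependent homogeneous principal symbol $\sigma_\psi(\til p)(x,\xi,\til\rho)$ is homogeneous of degree $\mu$ in $(\xi,\til\rho)$ and, by parameter-dependent ellipticity, nowhere zero on $T^*X\times\R\setminus 0.$ Substituting $\til\rho=r\varrho$ and keeping the part that is leading in the covariables $(\varrho,\xi)$ one finds
\[
\sigma_\psi(\Op_r(p))(r,x,\varrho,\xi)=\sigma_\psi(\til p)(x,\xi,r\varrho),
\]
which for each fixed $r>0$ is again homogeneous of degree $\mu$ in $(\varrho,\xi).$

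Next I would compute $\sigma_\psi(\op_M^{\gamma-\frac n2}(h))$ from the Mellin representation. Passing to the variable $t=-\log r$ turns $\op_M^{\gamma-\frac n2}(h)$ into a Fourier pseudo-differential operator in $t$ whose operator-valued symbol is, to leading order, $h\bigl(\tfrac{n+1}2-\gamma+i\rho\bigr),$ and since $r\tfrac\partial{\partial r}=-\tfrac\partial{\partial t}$ the $r$-cotangent covariable $\varrho$ and the Mellin covariable $\rho=\Im z$ are related, up to sign, by $\rho=r\varrho.$ The standard computation of the symbol of a Mellin pseudo-differential operator (cf. \cite{Schu20}) then gives
\[
\sigma_\psi\bigl(\op_M^{\gamma-\frac n2}(h)\bigr)(r,x,\varrho,\xi)=\sigma_\psi\bigl(h|_{\Gamma_{\frac{n+1}2-\gamma}}\bigr)(x,\xi,r\varrho),
\]
where $\sigma_\psi(h|_{\Gamma_{\frac{n+1}2-\gamma}})$ denotes the parameter-dependent homogeneous principal symbol of the family $h|_{\Gamma_{\frac{n+1}2-\gamma}}\in L^\mu_\clas(X;\Gamma_{\frac{n+1}2-\gamma}).$ Comparing the two displayed identities, and letting $r>0$ be fixed while $(\varrho,\xi)$ ranges over $\R^{1+n}\setminus 0$ so that $(r\varrho,\xi)$ exhausts $(\R\times\R^n)\setminus 0,$ one is forced to
\[
\sigma_\psi\bigl(h|_{\Gamma_{\frac{n+1}2-\gamma}}\bigr)(x,\xi,\til\rho)=\sigma_\psi(\til p)(x,\xi,\til\rho)\qquad\text{on }T^*X\times\R\setminus 0.
\]
The right-hand side is nowhere zero, so $h|_{\Gamma_{\frac{n+1}2-\gamma}}$ is parameter-dependent elliptic in $L^\mu_\clas(X;\Gamma_{\frac{n+1}2-\gamma});$ taking $\beta=\frac{n+1}2-\gamma$ in Definition~\ref{Ell13} (and recalling the $\beta$-independence noted in the Remark following it) shows that $h(z)$ is elliptic of order $\mu.$

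The main obstacle is the second step: one has to pin down exactly the correspondence between the Mellin covariable on $\Gamma_{\frac{n+1}2-\gamma}$ and the ordinary cotangent covariable — the factor $r,$ together with the shift of the weight line built into $\op_M^{\gamma-\frac n2}$ — and to verify that the substitution $\til\rho\mapsto r\varrho$ really does sweep out all of $T^*X\times\R\setminus 0$ for $r>0$ fixed; once the conventions are fixed this is routine, and, since Theorem~\ref{Me} holds for every $\gamma,$ one in fact obtains parameter-dependent ellipticity of $h|_{\Gamma_\beta}$ for every $\beta\in\R.$ A variant that avoids the symbol computation is to apply Theorem~\ref{Me} also to a parameter-dependent parametrix $\til q(\til\rho)\in L^{-\mu}_\clas(X;\R)$ of $\til p,$ obtaining some $h^{(-1)}(z)\in M_\O^{-\mu}(X),$ and to deduce from $\til p\til q-1,\ \til q\til p-1\in L^{-\infty}(X;\R)$ together with the composition behaviour of Mellin pseudo-differential operators that on every line $\Gamma_\beta$ the families $h|_{\Gamma_\beta}$ and $h^{(-1)}|_{\Gamma_\beta}$ have mutually inverse parameter-dependent homogeneous principal symbols, which again yields the claim.
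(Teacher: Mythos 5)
The paper states this theorem without proof (as with much of Section 2, the details are delegated to \cite{Schu20}), so there is nothing to compare line by line; judged on its own, your argument is correct and is the natural one. The key points all check out: by Remark \ref{con} the operator $\op_M^{\gamma-\frac n2}(h)$ lies in $L^\mu_\clas(\X)$ and agrees with $\Op_r(p)$ modulo $L^{-\infty}(\X)$, hence the two have the same homogeneous principal symbol; for fixed $r>0$ the reduced symbol of $\Op_r(p)$ is $\sigma_\psi(\til p)(x,\xi,r\varrho)$, and the substitution $(\varrho,\xi)\mapsto(r\varrho,\xi)$ is a bijection of $\R^{1+n}\setminus\{0\}$ onto itself, so the identification does recover the parameter-dependent principal symbol of $h|_{\Gamma_{\frac{n+1}2-\gamma}}$ on all of $T^*X\times\R\setminus 0$, including the points $\xi=0$, which is exactly what parameter-dependent ellipticity requires. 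Two small caveats: with the paper's convention $M((-r\partial_r)u)(z)=zMu(z)$ the correspondence between the Mellin covariable and the Fourier covariable is $\Im z=-r\varrho$ rather than $+r\varrho$, a sign that is of course irrelevant for non-vanishing but should be stated consistently; and since Theorem \ref{Me} in general produces an $r$-dependent $h(r,z)$, you are implicitly using that for $r$-independent $\til p$ the construction yields (or can be arranged to yield) an $r$-independent $h$, as the statement of the theorem presupposes. Your closing variant via a parameter-dependent parametrix $\til q$ of $\til p$ is also viable and has the advantage of avoiding the explicit covariable bookkeeping, at the cost of invoking the composition behaviour of Mellin operators.
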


\begin{thm}
Let $h(z)\in M_\O^\mu(X)$ be elliptic $($of order $\mu);$ then there is a discrete set $D\subset\C$ such that $D\cap\set{c\le\Re\,z\le c'}$ is finite for every $c\le c'$ where
$$h(z):H^s(X)\rightarrow H^{s-\mu}(X)$$
is an isomorphism for every $z\in\C\setminus D$ and all $s\in\R.$
\end{thm}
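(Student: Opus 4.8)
\nt The plan is to deduce the statement from elliptic Fredholm theory on the compact manifold $X$ together with the analytic Fredholm theorem, and then to strengthen the resulting discreteness of $D$ to finiteness in every vertical strip by exploiting the uniformity clause in Definition \ref{1.2.16}. First I would check that for every fixed $z_0\in\C$ the operator $h(z_0)\in L^\mu_\clas(X)$ is elliptic in the ordinary sense: by Definition \ref{Ell13} and the remark following it, $h(\beta+i\rho)$ is parameter-dependent elliptic in $L^\mu_\clas(X;\Gamma_\beta)$ for every $\beta\in\R,$ and passing to the limit $|\xi|\to\infty$ and using joint homogeneity of the parameter-dependent homogeneous principal symbol in $(\xi,\Im z)$ shows that the ordinary homogeneous principal symbol of $h(z_0)$ equals its value at $\Im z=0,$ which is non-zero for $\xi\ne0.$ Hence, since $X$ is compact, $h(z_0):H^s(X)\to H^{s-\mu}(X)$ is Fredholm for all $s\in\R;$ its kernel and cokernel consist of $C^\infty$ sections and do not depend on $s,$ and $h(z_0)$ is an isomorphism on one Sobolev level if and only if on all of them. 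In particular the set $D=\set{z\in\C:h(z)\text{ is not an isomorphism }H^s(X)\to H^{s-\mu}(X)}$ is independent of $s.$ Since $h\in\A(\C,L^\mu_\clas(X)),$ the map $z\mapsto h(z)$ is continuous into the Fredholm operators, so its index is constant on the connected set $\C;$ restricting to a line $\Gamma_\beta$ and using that, by the theorem on parameter-dependent elliptic operators above, $h(\beta+i\rho)$ is an isomorphism for $|\rho|$ large, we see this index is $0.$

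\nt Next I would invoke the analytic Fredholm theorem. Fix $z_0$ with $h(z_0)$ invertible (such a point exists by the previous paragraph), so that $h(z_0)^{-1}\in L^{-\mu}_\clas(X).$ From the parameter-dependence along the lines $\Gamma_\beta$ built into Definition \ref{1.2.16} together with the Cauchy--Riemann equations, differentiation in $z$ lowers the order, i.e.\ $h'(z)\in L^{\mu-1}_\clas(X);$ writing $h(z)=h(z_0)+(z-z_0)\int_0^1 h'(z_0+t(z-z_0))\,dt$ we obtain $h(z_0)^{-1}h(z)=1+(z-z_0)K(z)$ with $K(z)\in L^{-1}_\clas(X)$ holomorphic in $z$ and compact on $H^s(X).$ Applying the analytic Fredholm theorem to the holomorphic family $1+(z-z_0)K(z)$ of identity-plus-compact operators, which is invertible at $z_0,$ we conclude that $D$ is a closed discrete subset of $\C$ and $z\mapsto h(z)^{-1}$ is meromorphic on $\C\setminus D.$

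\nt It remains to show that $D\cap\set{c\le\Re z\le c'}$ is finite for all $c\le c',$ equivalently that $D$ does not escape to $\Im z\to\pm\infty$ within a strip. Here the key point is that, by the uniformity clause in Definition \ref{1.2.16}, the family $h(\beta+i\rho)$ is a parameter-dependent elliptic family with parameter $\rho=\Im z,$ depending smoothly on the \emph{additional compact parameter} $\beta=\Re z\in[c,c'];$ hence the parameter-dependent parametrix construction goes through uniformly in $\beta,$ producing $P(\beta,\rho)\in L^{-\mu}_\clas(X;\R_\rho),$ smooth in $\beta,$ with $P(\beta,\rho)h(\beta+i\rho)-1$ and $h(\beta+i\rho)P(\beta,\rho)-1$ in $L^{-\infty}(X;\R_\rho)=\S(\R_\rho,L^{-\infty}(X))$ and, by compactness of $[c,c'],$ with operator norms tending to $0$ as $|\rho|\to\infty$ uniformly in $\beta\in[c,c'].$ A Neumann-series argument then gives a constant $C>0$ with $h(z):H^s(X)\to H^{s-\mu}(X)$ an isomorphism whenever $\Re z\in[c,c']$ and $|\Im z|\ge C,$ so $D\cap\set{c\le\Re z\le c'}$ is contained in the bounded set $\set{\Re z\in[c,c'],\,|\Im z|<C}$ and, being discrete, is finite. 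I expect this last step --- the uniformity of the parametrix in the real part of $z$ --- to be the main point requiring care; the remaining ingredients (ellipticity and Fredholmness of each $h(z),$ constancy of the index, the analytic Fredholm theorem) are standard.
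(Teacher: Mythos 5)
Your argument is correct. Note that the paper itself states this theorem without proof (it defers such standard facts to \cite{Schu20}), so there is no in-text argument to compare against; your proof is the standard one and all the steps check out: each $h(z_0)$ is elliptic because the ordinary principal symbol of $h(\beta_0+i\rho_0)$ is the value of the parameter-dependent principal symbol at $\rho=0$, the index vanishes since $h(\beta+i\rho)$ is invertible for large $|\rho|$, the Cauchy--Riemann equations turn $\partial_z$ into $\partial_\rho$ along $\Gamma_\beta$ so that $h'(z)\in L^{\mu-1}_\clas(X)$ and $K(z)$ is compact, the analytic Fredholm theorem gives discreteness of $D$, and the uniformity in compact $\beta$-intervals required in Definition \ref{1.2.16} yields a $\beta$-uniform constant $C$ with invertibility for $|\Im z|\ge C$, hence finiteness in strips. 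An alternative, more ``calculus-internal'' route used in the cited literature is to produce a parametrix $h^{(-1)}\in M_\O^{-\mu}(X)$ of $h$ with $hh^{(-1)}=1+f$ for some $f\in M_\RR^{-\infty}(X)$ and then invoke Theorem \ref{T1.2.28}, so that $h(z)^{-1}=h^{(-1)}(z)(1+l(z))$ with $l\in M_\S^{-\infty}(X)$; the description of $D$ (finitely many points in each strip) is then read off from the definition of Mellin asymptotic types. That version has the added benefit of exhibiting $h^{-1}$ as an element of $M_\S^{-\mu}(X)$, which is what the cone calculus actually uses later, but for the statement as posed your functional-analytic argument is complete.
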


\begin{defn}
A sequence
$$\RR=\set{(r_j,n_j)}_{j\in\Z}\index{Mellin!asymptotic type}\index{discrete asymptotic type!for Mellin symbols}\index{asymptotic!discrete $-$ type!for Mellin symbols}$$
of pairs $(r_j,n_j)\in\C\times\N$ is said to be a discrete asymptotic type for Mellin symbols $($also referred to as a Mellin asymptotic type$)$ if the set $\pi_\C{\RR}=\{r_j\}_{j\in\Z}$\label{Pir} intersects the strip $\set{c\le\Re z\le c'}$ in a finite set for every reals $c\le c'.$
\end{defn}

\begin{defn}\label{1.2.24}
Let $\RR$ be a Mellin asymptotic type. Then $M_\RR^{-\infty}(X)$\label{MRinf} is defined as the set of all $f(z)\in\A(\C\setminus\pi_\C\RR,L^{-\infty}(X))$ such that
\begin{itemize}
\item[$\mathrm{(i)}$] $f$ is meromorphic with poles at the points $r_j$ of multiplicity $n_j+1,$ $j\in\Z,$ and in a neighbourhood $U_j$ of $r_j$ we have
$$f(z)=\sum_{k=0}^{n_j}c_{jk}(z-r_j)^{-(k+1)}+h_j(z)$$
for some $h_j\in\A(U_j,L^{-\infty}(X))$ and coefficients $c_{jk}\in L^{-\infty}(X)$ that are operators of finite rank for every $k=0,\dots,n_j,$ $j\in\Z;$
\item[$\mathrm{(ii)}$] for every $\pi_\C\RR$-excision function $\chi(z)$ $($i.e.$,$ $\chi\in C^\infty(\C),$ $\chi(z)=0$ for $\dist(z,\pi_\C\RR)<\varepsilon_0,$ and $\chi(z)=1$ for $\dist(z,\pi_\C\RR)>\varepsilon_1$ for some $0<\varepsilon_0<\varepsilon_1)$ we have
$$\chi(z)f(z)|_{\Gamma_\beta}\in L^{-\infty}(X;\Gamma_\beta)$$
for every $\beta\in\R,$ uniformly on compact $\beta$-subintervals.
\end{itemize}
\end{defn}

\begin{exm}
Let $c\in L^{-\infty}(X)$ be an operator of finite rank and $\omega$ a cut-off function$,$ $p\in\C,$ $\gamma<\frac12-\Re p.$ Then
$$f(z):=M_\gamma(c\,\omega(r)r^{-p}\log^kr)(z)$$
extends to an element in the space $M_\RR^{-\infty}(X)$ for $\RR=\set{(p,k)}$ $($the choice of $\gamma$ is not important$).$
\end{exm}

\begin{defn}
For a given Mellin asymptotic type $\RR$ we set
$$M_{\RR}^{\mu}(X):=M_{\RR}^{-\infty}(X)+M_{\O}^{\mu}(X).\label{MRm}$$
\end{defn}

\begin{rem}\label{2.27}
$f_j\in M_{\RR_j}^{\mu_j}(X),$ $j=1,2,$ implies $f_1+f_2\in M_{\S}^{\max\{\mu_1,\mu_2\}}(X)$ when $\mu_1-\mu_2\in\Z,$ $f_1f_2\in M_{\P}^{\mu_1+\mu_2}(X)$ for resulting $\S$ and $\P.$ Moreover$,$ $f\in M_{\RR}^{-\infty}(X),$ $h\in M_{\Q}^{\mu}(X)$ implies $fh\in M^{-\infty}_{\P}(X)$ for every $\Q,\RR$ and some resulting $\P.$
\end{rem}

\begin{thm}\label{T1.2.28}
$f\in M_{\RR}^{-\infty}(X)$ implies $(1+f(z))^{-1}=1+l(z)$ for an element $l\in M_{\S}^{-\infty}(X)$ for every $\RR$ and some resulting asymptotic type $\S.$
\end{thm}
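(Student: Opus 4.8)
The plan is to regard $1+f(z)$ as a meromorphic operator function of Fredholm type on the whole plane $\C$ and to invert it by analytic Fredholm theory (the operator-valued Rouch\'e/logarithmic residue theorem of Gohberg and Sigal), keeping track throughout of the fact that the negative Laurent coefficients of $f$, hence also those appearing in the inverse, have finite rank.

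First I would establish two preparatory facts. \emph{(a) Invertibility for large $|\Im z|$.} By condition $(\mathrm{ii})$ of Definition \ref{1.2.24}, for any $\pi_\C\RR$-excision function $\chi$ one has $\chi(z)f(z)|_{\Gamma_\beta}\in L^{-\infty}(X;\Gamma_\beta)=\S(\R,L^{-\infty}(X))$, uniformly on compact $\beta$-intervals; in particular $\|f(\beta+i\rho)\|_{\mathcal{B}(L^2(X))}\to0$ as $|\rho|\to\infty$, uniformly for $\beta$ in a compact interval. Since $\pi_\C\RR$ meets every strip $\set{c\le\Re z\le c'}$ in a finite set, there is $T=T(c,c')>0$ with $\|f(z)\|_{\mathcal{B}(L^2(X))}<\tfrac12$ for all $z$ in that strip with $|\Im z|\ge T$; there $1+f(z)$ is invertible and, by the Neumann series, $(1+f(z))^{-1}-1=-f(z)(1+f(z))^{-1}\in L^{-\infty}(X)$ with $L^{-\infty}$-seminorms controlled by those of $f(z)$. \emph{(b) Local structure at a pole.} Near $r_j\in\pi_\C\RR$ write $f=P_j+h_j$ with $P_j$ the finite-rank principal part and $h_j$ holomorphic with values in $L^{-\infty}(X)$ (cf. Definition \ref{1.2.24}$(\mathrm{i})$); then $1+f=(1+h_j)+P_j$, where $1+h_j$ is a holomorphic Fredholm family whose value $1+h_j(r_j)$ is Fredholm of index $0$ (a smoothing perturbation of the identity), and the negative Laurent coefficients of $1+f$ at $r_j$ are the finite-rank operators occurring in $P_j$. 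Away from $\pi_\C\RR$ one has simply $1+f(z)=1+(\textrm{smoothing})$. Hence $1+f$ is a finitely meromorphic Fredholm operator function on $\C$, invertible on every weight line for $|\Im z|$ large by (a).

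By the Gohberg--Sigal theorem, $(1+f(z))^{-1}$ is then again a finitely meromorphic Fredholm operator function on $\C$: its poles form a discrete set $D\subset\C$, and the principal part at each pole has finite rank. I would set
$$l(z):=(1+f(z))^{-1}-1=-f(z)(1+f(z))^{-1}.$$
Since $f(z)\in L^{-\infty}(X)$ and $(1+f(z))^{-1}$ is continuous $H^s(X)\to H^s(X)$ for every $s$, the composition $l(z)$ is smoothing, i.e. $l(z)\in L^{-\infty}(X)$; evaluating its Laurent coefficients at a pole by a contour integral of $-f(z)(1+f(z))^{-1}$ shows that these too are smoothing, and they are of finite rank by the previous paragraph, which gives Definition \ref{1.2.24}$(\mathrm{i})$ for $l$. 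Let $\S$ be the sequence of points of $D$ equipped with their pole multiplicities (each reduced by $1$). Then $\S$ is a Mellin asymptotic type: in any strip $\set{c\le\Re z\le c'}$, fact (b) confines the (finitely many, by the defining property of $\RR$) poles of $f$ to $|\Im z|<T$, while fact (a) shows $1+f$ is already invertible there for $|\Im z|\ge T$; hence $D$ meets the strip in a bounded, so finite, set. Moreover $l\in\A(\C\setminus\pi_\C\S,L^{-\infty}(X))$, since $D$ consists exactly of the poles of $l$.

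It remains to verify condition $(\mathrm{ii})$ of Definition \ref{1.2.24} for $l$: that $\chi(z)l(z)|_{\Gamma_\beta}\in L^{-\infty}(X;\Gamma_\beta)$, uniformly on compact $\beta$-intervals, for a $\pi_\C\S$-excision function $\chi$. This is the main obstacle. On the bounded part of a strip, away from the finitely many poles, $l$ is a smooth $L^{-\infty}(X)$-valued function, so nothing is at stake; for $|\Im z|$ large one writes $l=-f(1+f)^{-1}$ and combines the Schwartz-type decay of $\chi(z)f(z)|_{\Gamma_\beta}$ in $L^{-\infty}(X;\Gamma_\beta)$ (from the hypothesis) with the fact that the left-composition map $B\mapsto(1+f(z))^{-1}B$ on $L^{-\infty}(X)\cong C^\infty(X\times X)$ is, for $|\Im z|\ge T$, bounded with at most polynomial growth in $\Im z$, uniformly in $\beta$ (once more via a Neumann series estimated in the $L^{-\infty}$-seminorms of $f$); finally $\beta$-derivatives become $\Im z$-derivatives by holomorphy of $l$ on $\C\setminus(\pi_\C\RR\cup D)$. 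The delicate point is exactly this quantitative estimate --- controlling \emph{all} the $C^\infty(X\times X)$-seminorms of $(1+f(z))^{-1}$, not merely its $L^2$-operator norm, uniformly along the weight lines; everything else is routine bookkeeping with Mellin asymptotic types. An alternative to invoking the abstract theorem is to build $l$ directly by an iterative scheme, handling a pole $r_j$ through the factorisation $1+f=(1+h_j)(1+(1+h_j)^{-1}P_j)$, which reduces the local inversion to that of a finite-rank meromorphic perturbation of the identity and makes the finite-rank structure of the resulting principal parts manifest.
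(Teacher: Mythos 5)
The paper states Theorem \ref{T1.2.28} without proof, deferring (as announced at the beginning of Section \ref{1.2}) to \cite{Schu20}, so there is no in-text argument to compare against; your proposal reconstructs the standard proof, and it is correct in substance. The two pillars are exactly right: the decay condition in Definition \ref{1.2.24}(ii) gives invertibility of $1+f(z)$ by Neumann series for $|\Im z|\ge T$ in any strip $\set{c\le\Re z\le c'}$, and the finite-rank principal parts make $1+f$ a finitely meromorphic family of index-zero Fredholm operators, so the meromorphic Fredholm (Gohberg--Sigal) theorem produces a finitely meromorphic inverse whose singular set meets every strip in a finite set; hence $\S$ is a legitimate Mellin asymptotic type. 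One remark on the step you flag as delicate: the uniform control of \emph{all} $C^\infty(X\times X)$-seminorms of $l$ along $\Gamma_\beta$ becomes routine if you use the second resolvent identity $l=-f+f(1+f)^{-1}f$ instead of $l=-f(1+f)^{-1}$. In that form every kernel seminorm of the second summand is estimated by a product of two kernel seminorms of $f$ (each rapidly decreasing in $\Im z$, uniformly for $\beta$ in compact intervals) times the $L^2(X)$-operator norm of $(1+f(z))^{-1}$, which is $\le 2$ for $|\Im z|\ge T$; $\rho$-derivatives are handled by Cauchy estimates or by differentiating the identity. The same identity shows directly that $l(z)\in L^{-\infty}(X)$ at every point of invertibility, so you do not need the intermediate (and itself not quite immediate) claim that $(1+f(z))^{-1}$ is bounded on every $H^s(X)$, nor the left-composition map $B\mapsto(1+f(z))^{-1}B$ on $L^{-\infty}(X)$, which is not obviously bounded in the $C^\infty(X\times X)$-seminorms without invoking this identity anyway. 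With that substitution the argument is complete.
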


\begin{thm}
Given $f\in M_{\RR}^{\mu}(X),$ $\pi_\C\RR\cap\Gamma_{\frac{n+1}2-\gamma}=\emptyset,$ and cut-off functions $\omega,\omega'$ the operator $\omega\op_M^{\gamma-\frac n2}(f)\omega'$ induces continuous operators
$$\omega\op_M^{\gamma-\frac n2}(f)\omega':\KsgX\rightarrow\K^{s-\mu,\gamma}(\X),$$
and
$$\omega\op_M^{\gamma-\frac n2}(f)\omega':\K^{s,\gamma}_\P(\X)\rightarrow\K^{s-\mu,\gamma}_\Q(\X),$$
for every $($discrete$)$ asymptotic type $\P$ with some resulting $\Q,$ associated with weight data $(\gamma,\Theta)$ $($for any weight interval $\Theta)$.
\end{thm}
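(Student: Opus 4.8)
\emph{Strategy.} The plan is to write $f = f_{-\infty} + h$ according to $M_\RR^\mu(X) = M_\RR^{-\infty}(X) + M_\O^\mu(X)$ and treat the two summands separately, noting that the asymptotic type $\Q$ is produced by exactly the mechanism already encoded in Theorem \ref{Th12}. First I would reduce to finite $\Theta$; for infinite $\Theta$ one passes to the projective limit over the truncations $\P_l$, with resulting $\Q_l$. Since $\pi_\C\RR\cap\Gamma_{\frac{n+1}2-\gamma}=\emptyset$, both $f_{-\infty}$ and $f$ are holomorphic on the weight line, so $\op_M^{\gamma-\frac n2}(f)$ is well defined. For the holomorphic part: $h=h(z)$ is independent of $r,r'$, hence $\op_M^{\gamma-\frac n2}(h)$ acts as multiplication by $h(z)$ on the Mellin side along $\Gamma_{\frac{n+1}2-\gamma}$; because $h(\beta+i\rho)\in L^\mu_\clas(X;\Gamma_\beta)$ (Definition \ref{1.2.16}), the family $\RR^{s-\mu}(\Im z)\,h(z)\,\RR^{s}(\Im z)^{-1}$ lies in $L^0_\clas(X;\Gamma_{\frac{n+1}2-\gamma})$ and is $L^2(X)$-bounded uniformly in $\Im z$, which via Definition \ref{Hsg} yields continuity $\op_M^{\gamma-\frac n2}(h):\H^{s,\gamma}(\X)\to\H^{s-\mu,\gamma}(\X)$; since $\omega'u\in\H^{s,\gamma}(\X)$ for $u\in\K^{s,\gamma}(\X)$ and $\omega\cdot\H^{s-\mu,\gamma}(\X)\subset\K^{s-\mu,\gamma}(\X)$, the first mapping property follows for $h$. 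For asymptotics I would invoke Theorem \ref{Th12}(i) to make $g(z,x):=M_{\gamma-\frac n2}(\omega'u)(z,x)$ meromorphic in the open strip, with poles at the $p_j$, multiplicities $m_j+1$, Laurent coefficients in $C^\infty(X)$, and line restrictions $\chi g|_{\Gamma_\beta}\in\hat H^s(\Gamma_\beta\times X)$; multiplying by the holomorphic family $h(z)$ leaves the pole set unchanged, keeps the Laurent coefficients in $C^\infty(X)$ (each $h^{(l)}(p_j)\in L^\mu_\clas(X)$ preserves $C^\infty(X)$) and drops line regularity by $\mu$, so Theorem \ref{Th12}(ii) gives $\omega M_{\gamma-\frac n2}^{-1}(hg)\in\K^{s-\mu,\gamma}_\P(\X)$: the holomorphic part contributes nothing new.

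\emph{The smoothing Mellin part.} Here $f_{-\infty}|_{\Gamma_{\frac{n+1}2-\gamma}}\in L^{-\infty}(X;\Gamma_{\frac{n+1}2-\gamma})=\S(\R,L^{-\infty}(X))$, so the estimate above, now with a smoothing, rapidly decreasing family in place of $h$, shows $\op_M^{\gamma-\frac n2}(f_{-\infty}):\H^{s,\gamma}(\X)\to\H^{s',\gamma}(\X)$ for every $s'$, hence continuity $\K^{s,\gamma}(\X)\to\K^{s-\mu,\gamma}(\X)$ (in fact into $\K^{\infty,\gamma}(\X)$) after composing with $\omega,\omega'$. For the asymptotics I would look at $F(z,x):=f_{-\infty}(z)g(z,x)$: it is meromorphic in the strip with poles at $\pi_\C\RR\cup\pi_\C\P$, multiplicities adding at common points (cf. Remark \ref{2.27}), and all the Laurent coefficients it produces lie in $C^\infty(X)$ — this is the key point and uses that the Laurent coefficients of $f_{-\infty}$ at the $r_k$ are finite-rank smoothing operators and the values $f_{-\infty}(p_j)$ at poles of $g$ are smoothing (Definition \ref{1.2.24}), while the Laurent coefficients of $g$ are already in $C^\infty(X)$; moreover $\chi F|_{\Gamma_\beta}\in\hat H^{s'}(\Gamma_\beta\times X)$ for all $s'$, uniformly. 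Theorem \ref{Th12}(ii), applied with the discrete asymptotic type $\Q$ whose carrier is $(\pi_\C\RR\cup\pi_\C\P)\cap\{\frac{n+1}2-\gamma+\vartheta<\Re z<\frac{n+1}2-\gamma\}$ and whose multiplicities are given by that addition rule, then yields $\omega M_{\gamma-\frac n2}^{-1}(F)\in\K^{s-\mu,\gamma}_\Q(\X)$.

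\emph{Assembly, continuity, main obstacle.} Adding the two contributions gives both asserted mapping properties, with $\Q$ as just described. Continuity between the \Fr spaces is then automatic: every operation used — the weighted Mellin transform and its inverse (isomorphisms onto the relevant $\hat H^s$-spaces), multiplication by the fixed families $h(z)$ and $f_{-\infty}(z)$, the contour deformation of Theorem \ref{Th12}, and multiplication by $\omega,\omega'$ — is continuous, and alternatively this follows from the closed graph theorem; for infinite $\Theta$ one applies the finite case to each $\P_l$ and passes to the projective limit. The hard part will be the bookkeeping in the smoothing Mellin part: one must check that the poles of $f_{-\infty}$ genuinely become asymptotic terms of the image and that every Laurent coefficient so produced is smooth on $X$ — which is where the finite-rank smoothing structure of $M_\RR^{-\infty}(X)$-symbols is essential — while re-running, with the extra factor $f_{-\infty}(z)$ present, the contour-shift argument already carried out in the proof of Theorem \ref{Th12}(ii).
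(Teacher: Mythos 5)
Your argument is correct and is exactly the route the paper intends: the paper states this theorem without proof (deferring to \cite{Schu20}), but the machinery it sets up for the purpose — the splitting $M_\RR^\mu(X)=M_\RR^{-\infty}(X)+M_\O^\mu(X)$, the Mellin-image characterisation of discrete asymptotics in Theorem \ref{Th12}, and the pole/multiplicity bookkeeping of Definition \ref{1.2.24} and Remark \ref{2.27} — is precisely what you use. The one point worth making explicit is why $f_{-\infty}|_{\Gamma_{\frac{n+1}2-\gamma}}$ is actually controlled: since $\pi_\C\RR$ meets every vertical strip $\{c\le\Re z\le c'\}$ in a finite set and avoids the weight line, there is a whole strip around $\Gamma_{\frac{n+1}2-\gamma}$ free of poles, so the excision function in Definition \ref{1.2.24}(ii) may be taken $\equiv1$ on that line and your claim $f_{-\infty}|_{\Gamma_{\frac{n+1}2-\gamma}}\in L^{-\infty}(X;\Gamma_{\frac{n+1}2-\gamma})$ follows.
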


\begin{thm}
For every $f\in L_{\clas}^{\mu}(X;\Gamma_\beta)$ there exists an $h\in M_{\O}^{\mu}(X)$ such that
$$h|_{\Gamma_\beta}=f\quad\mod L^{-\infty}(X;\Gamma_\beta).$$
\end{thm}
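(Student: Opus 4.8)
The plan is to prove this by the \emph{kernel cut-off} construction. Identify the weight line $\Gamma_\beta$ with $\R$ via $\beta+i\rho\leftrightarrow\rho$, so that $f$ becomes a parameter-dependent family $f(\beta+i\,\cdot\,)\in L^\mu_\clas(X;\R_\rho)$, and recall that $L^{-\infty}(X;\R_\rho)=\S(\R_\rho,L^{-\infty}(X))$. Fix $\psi\in C_0^\infty(\R_t)$ with $\psi\equiv1$ near $t=0$, write $(F^{-1}f)(t):=\int e^{it\rho}f(\beta+i\rho)\,\dslash\rho$ for the operator-valued inverse Fourier transform of $f$ in the covariable of $\Gamma_\beta$, and put
$$h(z):=\int_\R e^{-t(z-\beta)}\,\psi(t)\,(F^{-1}f)(t)\,dt ,\qquad z\in\C ,$$
so that $h(\beta+i\,\cdot\,)=F_{t\to\rho}\big(\psi\cdot F^{-1}f\big)$. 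I claim $h\in M^\mu_\O(X)$ and $h|_{\Gamma_\beta}=f\bmod L^{-\infty}(X;\Gamma_\beta)$.

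The heart of the matter is the structural fact that $(F^{-1}f)(t)$ is a \emph{smoothing} operator on $X$ for every $t\ne0$, with Schwartz decay as $|t|\to\infty$. To see this, write $f$ locally as in \eqref{new10}; in a chart the Schwartz kernel of $f(\beta+i\rho)$ is the $\xi$-Fourier transform of a symbol $a(x,x',\xi,\rho)$ which is classical of order $\mu$ \emph{jointly} in $(\xi,\rho)\in\R^{1+n}$, so $(F^{-1}f)(t)$ is, up to smooth cut-offs, the inverse Fourier transform of $a$ in \emph{both} groups of variables evaluated at $(x-x',t)$, and such a transform is $C^\infty$ and rapidly decreasing off the single point $\{x=x',\,t=0\}$. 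Two consequences follow. First, if $\phi\in C_0^\infty(\R_t)$ vanishes near $0$ then $\phi\cdot F^{-1}f\in\S(\R_t,L^{-\infty}(X))$, hence $F_{t\to\rho}(\phi\cdot F^{-1}f)\in L^{-\infty}(X;\R_\rho)$; applied to $\phi=1-\psi$ this gives agreement on $\Gamma_\beta$: $h(\beta+i\,\cdot\,)-f(\beta+i\,\cdot\,)\in L^{-\infty}(X;\R_\rho)$. Second, using a Taylor expansion of a general cut-off at $t=0$ together with $t^kF^{-1}g=i^kF^{-1}(\partial_\rho^kg)$ and $\partial_\rho^kg\in L^{\mu-k}_\clas(X;\R_\rho)$ for $g\in L^\mu_\clas(X;\R_\rho)$, one reduces to the first consequence at the orders $\mu,\mu-1,\dots$ plus an arbitrarily smoothing remainder, and obtains: the map $\phi\mapsto F_{t\to\rho}(\phi\cdot F^{-1}g)$ sends $C_0^\infty(\R_t)$ continuously and linearly into $L^\mu_\clas(X;\R_\rho)$, and for $\phi\equiv1$ near $0$ its image equals $g$ modulo $L^{-\infty}(X;\R_\rho)$. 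These are the standard properties of the kernel cut-off operator (cf.\ \cite{Schu20}).

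Granting them, the proof finishes quickly. Since $z\mapsto e^{-t(z-\beta)}\psi(t)$ is an entire $C_0^\infty(\R_t)$-valued function of $z$ and the kernel cut-off is continuous and linear into $L^\mu_\clas(X)$, the composition $h(z)=F_{t\to\rho}\big(e^{-t(z-\beta)}\psi\cdot F^{-1}f\big)$ is entire, i.e.\ $h\in\A(\C,L^\mu_\clas(X))$. On the line $\Re z=\beta'$ one has $h(\beta'+i\,\cdot\,)=F_{t\to\rho}\big(\psi_{\beta'}\cdot F^{-1}f\big)$ with $\psi_{\beta'}(t):=e^{t(\beta-\beta')}\psi(t)\in C_0^\infty(\R_t)$, which by the kernel cut-off lemma lies in $L^\mu_\clas(X;\Gamma_{\beta'})$; the relevant seminorms depend on $\beta'$ only through finitely many $C^k$-norms of $\psi_{\beta'}$, hence are locally uniform in $\beta'$, so $h\in M^\mu_\O(X)$. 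Finally $h|_{\Gamma_\beta}=F_{t\to\rho}(\psi\cdot F^{-1}f)=f\bmod L^{-\infty}(X;\Gamma_\beta)$, as noted.

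The main obstacle is precisely the structural fact above — that kernel cut-off against a bump equal to $1$ near the origin preserves $L^\mu_\clas(X;\R_\rho)$ modulo $L^{-\infty}(X;\R_\rho)$ — whose proof forces one to use that in $L^\mu_\clas(X;\R^l)$ the parameters enter on the same footing as the covariables, so that the conormal/Paley--Wiener analysis of Fourier transforms of classical symbols is available. Everything else above is routine bookkeeping around it.
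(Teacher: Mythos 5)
The paper itself states this theorem without proof, deferring (as announced at the start of Section 2) to \cite{Schu20}; your kernel cut-off construction is precisely the argument used there, and it is correct. One cosmetic remark: $1-\psi$ is not compactly supported, so the agreement of $h|_{\Gamma_\beta}$ with $f$ does not follow from the consequence you state for $\phi\in C_0^\infty(\R_t)$ vanishing near $0$ alone, but rather from the Schwartz decay of $(F^{-1}f)(t)$ in $L^{-\infty}(X)$ as $|t|\to\infty$ --- which you do include in your structural fact, so nothing is missing.
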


%
%
%
%
%
\subsection{The asymptotic part of the cone algebra}\label{1.2.5}

The pseudo-differential calculus on the (open stretched) cone $\X$ contains two substructures, consisting of Green operators\index{Green!operators}\index{operators!Green $-$} and smoothing Mellin plus Green operators\index{operators!Mellin plus Green $-$}, both with asymptotics. Those operators are automatically produced when we compose operators of simpler structure or construct parametrices in the elliptic case.\\

\nt Let us fix weight data $\g=(\gamma,\delta,\Theta),$ $\gamma,\delta\in\R,$ $\Theta=(\vartheta,0],$ $-\infty\le\vartheta<0.$

\begin{defn}
An operator $G\in\bigcap_{s,g}\L(\K^{s,\gamma;g}(\X),\K^{\infty,\delta;\infty}(\X)),$ $s,g\in\R,$ is called a Green operator on $\X,$ if there are $(G$-dependent$)$ asymptotic types $\P$ and $\Q,$ associated with the weight data $(\delta,\Theta)$ and $(-\gamma,\Theta),$ respectively$,$ such that
$$
\begin{array}{lcl}
G&\!\!:\!\!&\K^{s,\gamma;g}(\X)\rightarrow\S^\delta_\P(\X),\\[4mm]
G^*&\!\!:\!\!&\K^{s,-\delta;g}(\X)\rightarrow\S^\gamma_\Q(\X)
\end{array}
$$
are continuous for all $s,g\in\R.$ Here $G^*$ is the formal adjoint of $G$ in the sense
$$(Gu,v)_{\K^{0,0}(\X)}=(u,G^*v)_{\K^{0,0}(\X)}$$
for all $u,v\in C_0^\infty(\X)$ $($cf. the sesquilinear pairing $(\ref{Pair})).$\\

\nt Let ${L}_G(\X,\g)_{\P,\Q}$\label{AAGPQ} denote the space of all those operators$,$ and set
$${L}_G(\X,\g):=\bigcup{L}_G(\X,\g)_{\P,\Q}\label{AAG}$$
$($the union taken over all $\P,\Q).$
\end{defn}

\begin{rem}
We have
$$G\in{L}_G(\X,\g)\Leftrightarrow r^{\beta}Gr^{-\alpha}\in{L}_G(\X,\mathbf{h})$$
for arbitrary $\alpha,\beta\in\R$ and $\mathbf{h}=(\gamma+\alpha,\delta+\beta,\Theta).$
\end{rem}

\begin{rem}
The spaces ${L}_G(\X,\g)_{\P,\Q}$ are \Fr spaces in a natural way. Moreover$,$ the composition $G\til G$ for $G\in{L}_G(\X,\mathbf{c})_{\S,\Q},$ $\til G\in{L}_G(\X,\mathbf{b})_{\P,\S}$ and $\mathbf{c}=(\sigma,\delta,\Theta),$ $\mathbf{b}=(\gamma,\sigma,\Theta)$ gives rise to a bilinear continuous map
$${L}_G(\X,\mathbf{c})_{\S,\Q}\times{L}_G(\X,\mathbf{b})_{\P,\S}\rightarrow {L}_G(\X,\g)_{\P,\Q}$$
for $\g=(\gamma,\delta,\Theta).$
\end{rem}

\nt We now turn to so-called smoothing Mellin operators on $\X$ with asymptotics. Those operators are (although smoothing on the open manifold $\X)$ of crucial importance for the cone algebra in general. They are in general not compact, and may change the index of elliptic operators.\\

\nt Let us first observe some simple properties of operators of the form
$$A:=r^{-\mu+j}\omega\op_M^{\gamma_j-\frac n2}(f)\omega'$$
for $f\in M_\RR^\infty(X),$ $\mu\in\R,$ $j\in\N,$ $\gamma_j\in\R,$ and cut-off functions $\omega(r),$ $\omega'(r).$ In order that $A$ induces a continuos operator
$$A:\K^{s,\gamma}(\X)\rightarrow\K^{\infty,\gamma-\mu}(\X)$$
we have to make some assumptions on $\RR$ and $\gamma_j.$ In fact, by definition we have
$$\op_M^{\gamma_j-\frac n2}(f)=r^{\gamma_j-\frac n2}\op_M(T^{-(\gamma_j-\frac n2)}f)r^{-(\gamma_j-\frac n2)},$$
$(T^\beta f)(z)=f(z+\beta),$\label{Tbeta} cf. (\ref{new8}). Thus, to apply $\op_M(T^{-(\gamma_j-\frac n2)}f)$ on $\omega'r^{-(\gamma_j-\frac n2)}u$ for $u\in\KsgX$ we need that $r^{-(\gamma_j-\frac n2)}u\in\K^{s,0}(\X),$ i.e., $-\gamma_j+\gamma\ge0,$ and that $T^{-(\gamma_j-\frac n2)}f$ has no poles on the weight line $\Gamma_{\frac12},$ i.e., $\pi_\C\RR\cap\Gamma_{\frac{n+1}2-\gamma_j}=\emptyset.$ Moreover, to reach the space with weight $\gamma-\mu$ we need the condition
$$j+\gamma_j\ge\gamma.$$
In other words we ask
\begin{equation}\label{Me20}
\gamma-j\le\gamma_j\le\gamma\qquad\mathrm{and}\qquad\pi_\C\RR\cap\Gamma_{\frac{n+1}2-\gamma_j}=\emptyset.
\end{equation}

\nt In the following definition we fix weight data\index{weight!data} $\g=(\gamma,\gamma-\mu,\Theta)$\label{boldg} for $\Theta=(-(k+1),0],$ $k\in\N.$

\begin{defn}\label{1.2.34}
An operator $A\in\bigcap_{s,g\in\R}\L(\K^{s,\gamma;g}(\X),\K^{\infty,\gamma-\mu;\infty}(\X))$ is called a smoothing Mellin plus Green operator on $\X,$ associated with the weight data $\g,$ if it has the form
$$A=r^{-\mu}\omega\big\{\sum_{j=0}^kr^j\op_M^{\gamma_j-\frac n2}(f_j)\big\}\omega'+G$$
for some $G\in{L}_G^\mu(\X,\g),$ cut-off functions $\omega,\omega',$ smoothing Mellin symbols $f_j\in M_{\RR_j}^{-\infty}(X)$ and weights $\gamma_j$ such that $(\ref{Me20})$ holds for all $j.$ Let ${L}_{M+G}^\mu(\X,\g)$\label{AAMGm} denote the space of all those operators. In the case $\Theta=(-\infty,0]$ we define ${L}_{M+G}^\mu(\X,\g)$ as the intersection of all ${L}_{M+G}^\mu(\X,(\gamma,\gamma-\mu,(-(k+1),0]))$ over $k\in\N.$
\end{defn}

\nt Let us set
$$\sigma_{\mathrm c}^{\mu-j}(A)(z):=f_j(z),\qquad j=0,\dots,k,\label{consym}$$
called the conormal symbol\index{conormal!symbol}\index{symbol!conormal $-$} of $A$ of (conormal)\index{conormal!order} order $\mu-j.$

\begin{rem}
Given $A,\til A\in{L}_{M+G}^\mu(\X,\g)$ we have $A-\til A\in{L}_G(\X,\g)$ if and only if
$$\sigma_\c^{\mu-j}(A)=\sigma_\c^{\mu-j}(\til A)$$
for all $j=0,\dots,k.$
\end{rem}

\begin{prop}
$A\in{L}_{M+G}^\mu(\X,\g)$ induces continuous operators
\begin{equation}\label{AB}
A:\K^{s,\gamma}(\X)\rightarrow\K^{s-\mu,\gamma-\mu}(\X),\;\K^{s,\gamma}_\P(\X)\rightarrow\K^{s-\mu,\gamma-\mu}_\Q(\X).
\end{equation}
\end{prop}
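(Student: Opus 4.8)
The plan is to decompose $A\in L^\mu_{M+G}(\X,\g)$ into its Green part $G$ and its smoothing Mellin part $r^{-\mu}\omega\{\sum_{j=0}^k r^j\op_M^{\gamma_j-\frac n2}(f_j)\}\omega'$, and treat the two pieces separately. For the Green part the continuity
$$G:\K^{s,\gamma}(\X)\to\K^{s-\mu,\gamma-\mu}(\X),\qquad G:\K^{s,\gamma}_\P(\X)\to\K^{s-\mu,\gamma-\mu}_\Q(\X)$$
is essentially immediate from the definition of $L_G(\X,\g)$: by definition $G$ maps $\K^{s,\gamma;g}(\X)$ continuously into $\S^{\gamma-\mu}_{\P}(\X)$, which by the very definition of $\S^\delta_\P$ embeds continuously into every $\K^{s-\mu,\gamma-\mu;g}_\P(\X)$ and hence into $\K^{s-\mu,\gamma-\mu}_\P(\X)$ (and a fortiori into $\K^{s-\mu,\gamma-\mu}(\X)$). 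So the real content is the smoothing Mellin summands.

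For a single summand $A_j:=r^{-\mu+j}\omega\,\op_M^{\gamma_j-\frac n2}(f_j)\,\omega'$ with $f_j\in M^{-\infty}_{\RR_j}(X)$ and $\gamma_j$ satisfying \eqref{Me20}, I would proceed as follows. First, by the conjugation identity $\op_M^{\gamma_j-\frac n2}(f_j)=r^{\gamma_j-\frac n2}\op_M(T^{-(\gamma_j-\frac n2)}f_j)r^{-(\gamma_j-\frac n2)}$ together with the two conditions in \eqref{Me20} — namely $\gamma_j\le\gamma$ (so that $r^{-(\gamma_j-\frac n2)}u\in\K^{s,0}(\X)$ for $u\in\K^{s,\gamma}(\X)$) and $\pi_\C\RR_j\cap\Gamma_{\frac{n+1}2-\gamma_j}=\emptyset$ (so that $T^{-(\gamma_j-\frac n2)}f_j$ is pole-free on the relevant weight line) — one sees $\omega'\,\op_M^{\gamma_j-\frac n2}(f_j)\,\omega'$ behaves like the operator in the theorem of Section \ref{1.2.4} on $\omega\op_M^{\gamma-\frac n2}(f)\omega'$. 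Since $f_j\in M^{-\infty}_{\RR_j}(X)=M^{-\infty}_{\RR_j}(X)+M_\O^{-\infty}(X)$ has conormal order $-\infty$, that theorem yields continuity $\K^{s,\gamma_j}(\X)\to\K^{\infty,\gamma_j}(\X)$ and, with asymptotics, $\K^{s,\gamma_j}_\P(\X)\to\K^{\infty,\gamma_j}_{\Q_j}(\X)$ for a resulting $\Q_j$. Multiplying by $r^{-\mu+j}$ shifts the weight by $\mu-j$ (using $H^{s,\gamma}(\X)=r^\gamma H^{s,0}(\X)$-type relations and the analogous shift on $\K$-spaces, including cone-weights), landing in $\K^{\infty,\gamma_j-\mu+j}(\X)$; the inequality $j+\gamma_j\ge\gamma$ from \eqref{Me20} gives $\gamma_j+j-\mu\ge\gamma-\mu$, hence a continuous embedding into $\K^{s-\mu,\gamma-\mu}(\X)$ (and into the subspace with asymptotics $\Q:=\bigcup_j\Q_j$, after possibly enlarging $\Q$ to accommodate the weight shift). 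Summing over $j=0,\dots,k$ and adding the Green contribution completes the proof.

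The main obstacle is bookkeeping the weight shifts and the resulting asymptotic type $\Q$ consistently: one must check that the asymptotic type produced by the Mellin action (via the Section \ref{1.2.4} theorem, which is stated with both entry and exit weight equal to $\gamma_j$) survives the multiplication by $r^{-\mu+j}$ — i.e.\ that $T^{-(\mu-j)}\Q_j$ is again a discrete asymptotic type associated with $(\gamma-\mu,\Theta)$, which requires the strip condition $\frac{n+1}2-(\gamma-\mu)+\vartheta<\Re z<\frac{n+1}2-(\gamma-\mu)$ to be respected. This follows because \eqref{Me20} forces $\gamma-j\le\gamma_j\le\gamma$, so the poles of $T^{-(\mu-j)}\Q_j$, originally in the $(\gamma_j,\Theta)$-strip, are translated by $\mu-j$ and one verifies they fall into the $(\gamma-\mu,\Theta)$-strip; the finitely many that would escape are absorbed by taking a common refinement, and in the infinite-$\Theta$ case one passes to the projective limit over the truncations $\P_l$. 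Everything else — continuity of multiplication by $\omega$, $\omega'$, $r^{-\mu+j}$ on the relevant scales, and continuity of the finite sum — is routine and follows from the mapping properties already recorded in Sections \ref{1.2.2}–\ref{1.2.5}.
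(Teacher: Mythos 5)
Your proposal is correct and follows exactly the route the paper itself indicates: the Green summand is handled directly from the definition of $L_G(\X,\g)$ via $\S^{\gamma-\mu}_\P(\X)\hookrightarrow\K^{s-\mu,\gamma-\mu}_\P(\X)$, and each smoothing Mellin summand is treated by the conjugation identity and the conditions \eqref{Me20}, which is precisely the computation the paper carries out when motivating those conditions before Definition \ref{1.2.34}. The only blemish is cosmetic: multiplication by $r^{-\mu+j}$ translates an asymptotic type by $T^{\mu-j}$ rather than $T^{-(\mu-j)}$ in the paper's convention $T^\delta\P=\{(p_j+\delta,m_j)\}$, but the strip verification you describe (using $j+\gamma_j\ge\gamma$, with leftward escapees absorbed into flatness) is the correct one.
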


\begin{thm}\label{comp25}
$A\in{L}_{M+G}^\mu(\X,\mathbf c)$ for $\mathbf c=(\gamma-\nu,\gamma-(\mu+\nu),\Theta)$ and $B\in{L}_{M+G}^\nu(\X,\mathbf b)$ for $\mathbf b=(\gamma,\gamma-\nu,\Theta)$ imply $AB\in{L}_{M+G}^{\mu+\nu}(\X,\mathbf d)$ for $\mathbf d=(\gamma,\gamma-(\mu+\nu),\Theta),$ and we have
\begin{equation}\label{M25}
\sigma_\c^{\mu+\nu-l}(AB)(z)=\sum_{i+j=l}(T^{\nu-i}\sigma_\c^{\mu-j}(A))(z)\sigma_\c^{\nu-i}(B)(z)
\end{equation}
for every $0\le l\le k.$
\end{thm}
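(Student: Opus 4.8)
The plan is to decompose $A$ and $B$ into their smoothing Mellin and Green parts, expand $AB$ into four summands, and observe that only the composition of the two smoothing Mellin parts requires a genuine computation, the other three summands being Green operators for $\mathbf d=(\gamma,\gamma-(\mu+\nu),\Theta)$ by soft arguments. So I would write $A=A_M+G_A$, $B=B_M+G_B$ with
$$A_M=r^{-\mu}\omega_A\big\{\textstyle\sum_{j=0}^k r^j\op_M^{\gamma_j^A-\frac n2}(f_j)\big\}\omega_A',\qquad B_M=r^{-\nu}\omega_B\big\{\textstyle\sum_{i=0}^k r^i\op_M^{\gamma_i^B-\frac n2}(g_i)\big\}\omega_B',$$
$f_j=\sigma_\c^{\mu-j}(A)\in M_{\RR_j}^{-\infty}(X)$, $g_i=\sigma_\c^{\nu-i}(B)\in M_{\S_i}^{-\infty}(X)$, $G_A\in L_G^\mu(\X,\mathbf c)$, $G_B\in L_G^\nu(\X,\mathbf b)$, the weights satisfying $(\gamma-\nu)-j\le\gamma_j^A\le\gamma-\nu$, $\gamma-i\le\gamma_i^B\le\gamma$ and the disjointness conditions in $(\ref{Me20})$. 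Composing the mapping properties of $A$ and of $B$ gives at once $AB:\K^{s,\gamma;g}(\X)\to\K^{\infty,\gamma-(\mu+\nu);\infty}(\X)$ for all $s,g$, which is the first requirement of Definition \ref{1.2.34}.

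Next I would dispose of $A_MG_B$, $G_AB_M$, $G_AG_B$. The last is Green for $\mathbf d$ by the composition property of the spaces $L_G$ recalled above. For $A_MG_B$ one uses that $G_B$ maps $\K^{s,\gamma;g}(\X)$ continuously into $\S^{\gamma-\nu}_\P(\X)=\Projlim{N\in\N}\K^{N,\gamma-\nu;N}_\P(\X)$, that $A_M\in L_{M+G}^\mu(\X,\mathbf c)$ maps each $\K^{N,\gamma-\nu}_\P(\X)$ into $\K^{N-\mu,\gamma-(\mu+\nu)}_\Q(\X)$, and that this image lies in $\K^{\cdot,\cdot;\infty}$ because the cut-offs in $A_M$ are compactly supported; hence $A_MG_B:\K^{s,\gamma;g}(\X)\to\S^{\gamma-(\mu+\nu)}_\Q(\X)$ for all $s,g$. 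Applying the same argument to $(A_MG_B)^*=G_B^*A_M^*$ — using closedness of $L_{M+G}$ under formal adjoints and that the adjoint weights of $A_M$ and of $G_B$ fit together — shows $(A_MG_B)^*$ maps into an $\S^{-\gamma}$-space with asymptotics, so $A_MG_B\in L_G(\X,\mathbf d)$; symmetrically $G_AB_M\in L_G(\X,\mathbf d)$.

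The substantial part is $A_MB_M$. First I would absorb $\omega_A'r^{-\nu}\omega_B=r^{-\nu}\omega_0$ with a cut-off $\omega_0$, and then, using the elementary conjugation identity $r^a\op_M^{\beta-\frac n2}(h)r^{-a}=\op_M^{(\beta+a)-\frac n2}(T^ah)$ for symbols $h$ depending on $z$ alone, move $r^{-\nu}$ and each $r^i$ to the far left past the Mellin operators of $A_M$. This puts the $(i,j)$-contribution of $A_MB_M$ in the form $r^{-(\mu+\nu)+i+j}\,\omega_A\,\op_M^{(\gamma_j^A+\nu-i)-\frac n2}(T^{\nu-i}f_j)\,\omega_0\,\op_M^{\gamma_i^B-\frac n2}(g_i)\,\omega_B'$. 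I would then simplify $\op_M^{\beta-\frac n2}(p)\,\omega_0\,\op_M^{\beta'-\frac n2}(q)$ by: (a) for equal weights, $\op_M^{\beta-\frac n2}(p)\op_M^{\beta-\frac n2}(q)=\op_M^{\beta-\frac n2}(pq)$, while $\op_M^{\beta-\frac n2}(p)(\omega_0-1)\op_M^{\beta-\frac n2}(q)$, sandwiched between $\omega_A$ and $\omega_B'$, is a Green operator by pseudolocality of Mellin operators with $z$-dependent symbols; and (b) for unequal weights, replacing $\op_M^{\beta-\frac n2}(p)$ by $\op_M^{\beta'-\frac n2}(p)$ at the cost of a finite sum of residue operators at the poles between the two weight lines — and since the Laurent coefficients of elements of $M_\bullet^{-\infty}(X)$ are of finite rank, these residue operators, together with their adjoints, map into $\E_\bullet(\X)$-type subspaces and are Green. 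After these reductions the $(i,j)$-term equals $r^{-(\mu+\nu)+i+j}\,\omega_A\,\op_M^{\delta-\frac n2}\big((T^{\nu-i}f_j)g_i\big)\,\omega_B'$ modulo $L_G(\X,\mathbf d)$, for a suitable admissible weight.

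Finally I would sum over $i,j$: contributions with $i+j=l>k$ improve the $r$-power past the flatness $-(k+1)$ of $\Theta$ and hence (with their adjoints) lie in $L_G(\X,\mathbf d)$, while for each $0\le l\le k$ one picks a single weight $\delta_l\in[\gamma-l,\gamma]$ avoiding the finitely many poles of the $(T^{\nu-i}f_j)g_i$, $i+j=l$, in the relevant strip, and shifts all those operators onto $\Gamma_{\frac{n+1}2-\delta_l}$ modulo further Green operators. This gives $AB=r^{-(\mu+\nu)}\omega_A\big\{\sum_{l=0}^k r^l\op_M^{\delta_l-\frac n2}(h_l)\big\}\omega_B'+G$ with $h_l=\sum_{i+j=l}(T^{\nu-i}f_j)g_i\in M_\bullet^{-\infty}(X)$ (Remark \ref{2.27}), $G\in L_G(\X,\mathbf d)$, and the $\delta_l$ satisfying $(\ref{Me20})$; thus $AB\in L_{M+G}^{\mu+\nu}(\X,\mathbf d)$, and reading off the conormal symbols gives $\sigma_\c^{\mu+\nu-l}(AB)=h_l$, i.e. $(\ref{M25})$. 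The case $\Theta=(-\infty,0]$ follows by intersecting over $k\in\N$. The step I expect to be the main obstacle is reduction (b): one has to verify that the residue corrections produced when changing weight lines are genuinely Green operators for the weight data $\mathbf d$ — equivalently, that every crossed pole has real part inside the strip prescribed by $\Theta$ and $\mathbf d$ — which is exactly what the inequalities $(\ref{Me20})$ on the weights $\gamma_j^A,\gamma_i^B$, together with the admissible choices of the $\delta_l$, are there to guarantee.
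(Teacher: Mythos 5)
Your proposal is correct and follows the canonical route: the paper states Theorem \ref{comp25} without proof (deferring to \cite{Schu20}), and the argument there is precisely your decomposition into smoothing Mellin and Green parts, the commutation $r^a\op_M^{\beta-\frac n2}(h)r^{-a}=\op_M^{(\beta+a)-\frac n2}(T^ah)$ which produces the translations $T^{\nu-i}$ in \eqref{M25}, and the weight-line shifts modulo finite-rank residue (hence Green) terms governed by \eqref{Me20}. You also correctly isolate the only genuinely delicate points --- that the crossed poles and the extra-flat terms with $i+j>k$ land in the strip prescribed by $\Theta$ and $\mathbf d$, and the adjoint conditions for Greenness --- and your treatment of them is sound.
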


\nt The formula (\ref{M25}) is also called the Mellin translation product\index{Mellin!translation product}\index{product!Mellin translation $-$} between the sequences
\begin{equation}\label{Me25}
(\sigma_\c^{\mu-j}(A))_{j=0,\dots,k}\;\textup{ and }\;(\sigma_\c^{\nu-i}(B))_{i=0,\dots,k}.
\end{equation}

\nt Operators of the form 
$$1+A:\KsgX\rightarrow\KsgX$$
for $A\in{L}_{M+G}^0(\X,\g),$ $\g=(\gamma,\gamma,(-(k+1),0]),$ $\gamma\in\R,$ $\Gamma_{\frac{n+1}2-\gamma}\cap\pi_\C\RR=\emptyset,$ for $\sigma_\c^0(A)(z)\in M_\RR^{-\infty}(X),$ form an algebra, as we easily see from Theorem \ref{comp25}. Such operators are special elliptic elements of the cone algebra\index{elliptic!elements of the cone algebra}\index{cone algebra!elliptic elements of the $-$}.

\begin{defn}\label{ell25}
\begin{itemize}
\item[\textup{(i)}] The operator $1+A$ for $A\in{L}_{M+G}^0(\X,\g),$ is called elliptic\index{elliptic!operator}\index{operators!elliptic $-$} if
\begin{equation}\label{inv25}
\sigma_\c^0(1+A)(z)=1+\sigma_\c^0(A)(z):H^s(X)\rightarrow H^s(X)
\end{equation}
is a family of bijective operators for all $z\in\Gamma_{\frac{n+1}2-\gamma},$ and some $s\in\R.$
\item[\textup{(ii)}] An operator $1+B$ for $B\in{L}_{M+G}^0(\X,\g),$ is called a parametrix of $1+A$ if
$$(1+A)(1+B)=1\;\textup{ and }\;(1+B)(1+A)=1$$
modulo ${L}_G(\X,\g).$
\end{itemize}
\end{defn}

\begin{thm}
An elliptic operator $1+A$ has a parametrix $1+B,$ and the sequence of conormal symbols $(\sigma_\c^{-i}(1+B))_{i=0,\dots,k}$ follows by inverting $(\sigma_\c^{-j}(1+A))_{j=0,\dots,k}$ through the Mellin translation product.
\end{thm}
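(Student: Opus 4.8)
The plan is to construct the parametrix purely at the level of conormal symbol sequences and then lift the construction back to operators, using Theorem \ref{comp25} as the bridge. First I would work with the finite sequence $(\sigma_\c^{-j}(1+A))_{j=0,\dots,k}$, whose leading term $\sigma_\c^0(1+A)(z) = 1 + \sigma_\c^0(A)(z)$ lies in $1 + M_\RR^{-\infty}(X)$ and is, by ellipticity (Definition \ref{ell25}(i)) together with the remark preceding Definition \ref{ell25} that makes bijectivity on $\Gamma_{\frac{n+1}2-\gamma}$ propagate, pointwise invertible on the relevant weight line. By Theorem \ref{T1.2.28} the inverse of the leading term has the form $1 + l_0(z)$ with $l_0 \in M_\S^{-\infty}(X)$ for some resulting Mellin asymptotic type $\S$. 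Set $\sigma_\c^0(1+B) := 1 + l_0$.

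Next I would solve the remaining components of the Mellin translation product recursively. Writing $a_j := \sigma_\c^{-j}(1+A)$ and $b_i := \sigma_\c^{-i}(1+B)$, the equation $(1+A)(1+B) = 1$ modulo ${L}_G(\X,\g)$ forces, via \eqref{M25} with $\mu=\nu=0$, the relations $\sum_{i+j=l} (T^{-i} a_j) b_i = \delta_{l,0}$ for $0 \le l \le k$. For $l=0$ this is the already-solved leading equation; for $l \ge 1$ it reads $a_0 b_l = - \sum_{i=1}^{l} (T^{-i} a_{l-i}) b_i$ (reindexing so $b_l$ is isolated on the left), whose right-hand side involves only $b_0,\dots,b_{l-1}$ already constructed. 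Multiplying on the left by $b_0 = (1+l_0)$, which is the pointwise inverse of $a_0$, gives a closed-form expression for $b_l$. At each stage I invoke Remark \ref{2.27}: products and sums of elements of $M^{-\infty}_{(\cdot)}(X)$-type and $M^{\mu_j}_{(\cdot)}(X)$-type spaces stay in such spaces with some resulting asymptotic type, and left-multiplication by $b_0 \in 1 + M_\S^{-\infty}(X)$ keeps the result in the smoothing-Mellin class; $T^{-i}$ merely shifts the poles. So each $b_l \in M_{\RR_l}^{-\infty}(X)$ for some resulting Mellin asymptotic type, with the pole locations shifted suitably to respect the weight conditions \eqref{Me20} for the weights $\gamma_l$ of the parametrix.

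Having the sequence $(b_i)_{i=0,\dots,k}$ in hand, I would define $B_0 := r^{0}\omega\{\sum_{i=0}^k r^i \op_M^{\gamma_i - \frac n2}(b_i)\}\omega' - 1 \in {L}_{M+G}^0(\X,\g)$ (choosing the weights $\gamma_i$ so that \eqref{Me20} holds, which is possible since the pole sets were placed to avoid the relevant weight lines), so that $\sigma_\c^{-i}(1+B_0) = b_i$ for all $i$. By construction and Theorem \ref{comp25}, the conormal symbol sequence of $(1+A)(1+B_0)$ agrees with that of the identity, hence by the remark preceding Definition \ref{ell25} the difference $(1+A)(1+B_0) - 1$ lies in ${L}_G(\X,\g)$. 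Symmetrically, solving the dual recursion $\sum_{i+j=l}(T^{-j} b'_i) a_j = \delta_{l,0}$ with $b'_0$ obtained again from Theorem \ref{T1.2.28}, or alternatively invoking the formal-adjoint pairing and the fact that ${L}_G$ and ${L}_{M+G}$ are closed under the relevant operations, produces a left parametrix $1+B_1$; a standard argument ($1+B_1 = (1+B_1)(1+A)(1+B_0)$ modulo ${L}_G$, and ${L}_G$ absorbs under composition by the remark on Green-operator composition) shows $1+B_0$ and $1+B_1$ differ by a Green operator, so $1+B := 1+B_0$ is a two-sided parametrix whose conormal symbols are exactly those computed by inverting through the Mellin translation product.

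The main obstacle I anticipate is bookkeeping the Mellin asymptotic types: verifying that the recursively defined $b_l$ genuinely land in $M_{\RR_l}^{-\infty}(X)$ with pole sets that can be kept disjoint from all the weight lines $\Gamma_{\frac{n+1}2-\gamma_l}$ needed to realize $B_0$ as a bona fide element of ${L}_{M+G}^0(\X,\g)$ over the finite interval $\Theta=(-(k+1),0]$, and — in the case $\Theta = (-\infty,0]$ — that the construction is compatible across truncations so that intersecting over $k$ yields a genuine parametrix. This is where Remark \ref{2.27} and Theorem \ref{T1.2.28} must be applied with care, tracking how $T^{-i}$ translates the accumulated asymptotic types; the algebra is routine but the combinatorics of the shifts is the delicate point.
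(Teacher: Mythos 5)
Your proposal follows essentially the same route as the paper's proof: invert the principal conormal symbol via Theorem \ref{T1.2.28} to get $1+l_0$ with $l_0\in M_\S^{-\infty}(X)$, then determine the lower-order conormal symbols of $B$ recursively from the Mellin translation product \eqref{M25}, using Remark \ref{2.27} to control the resulting Mellin asymptotic types. The only slip is an indexing one in the recursion: the coefficient of $b_l$ at level $l$ is $T^{-l}a_0$ rather than $a_0$ (and the sum of already-known terms runs over $i=0,\dots,l-1$, not $i=1,\dots,l$), but since $(T^{-l}a_0)^{-1}=1+T^{-l}l_0$ is again of the required form this does not affect the argument.
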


\nt In fact, the ellipticity of $1+A$ allows us to invert the operators of the family (\ref{inv25}). This gives us 
$$(\sigma_\c^0(1+A))^{-1}(z)=1+l(z)$$
for an $l\in M_\S^{-\infty}(X).$ Then, using
$$\sigma_\c^{-l}((1+A)(1+B))=\sum_{i+j=l}(T^{-i}\sigma_\c^{-j}(1+A))\sigma_\c^{-i}(1+B)=\left\{
\begin{array}{rl}
1 & \textup{for }l=0\\[2mm]
0 & \textup{for }l>0
\end{array}\right.
$$
together with $\sigma_\c^{-j}(1+A)=1+\sigma_\c^{-j}(A)$ for $j=0$ and $\sigma_\c^{-j}(1+A)=\sigma_\c^{-j}(A)$ for $j>0$ we can successively determine $\sigma_\c^{-i}(B)$ for all $i=1,2,\dots$ In this process we also apply Remark \ref{2.27}.

%
%
%
%
%
\subsection{The cone algebra with discrete asymptotics}\label{S1.2.6}

Given a manifold $M$ with conical singularities $S$ by a cone algebra\index{cone algebra} on $M$ we understand a subalgebra of $L_\clas^\mu(M\setminus S)$ that contains close to $S$ in the splitting of variables $(r,x)\in\R_+\times X$ all operators of the form
\begin{equation}\label{Rem}
r^{-\mu}\Op_r(p)\quad\mod\;L^{-\infty}(M\setminus S),
\end{equation}
for all $p(r,\rho)=\til p(r,r\rho),$ $\til p(r,\til\rho)\in C^\infty(\Rr,L^\mu_\clas(X;\R_{\til\rho})),$ together with the parametrices of elliptic elements. The remainders in (\ref{Rem}) will be specified in such a way that ellipticity entails the Fredholm property of operators in adequate distribution spaces and that elliptic regularity of solutions includes asymptotics for $r\rightarrow0.$ There are many variants of cone algebras. We may have compact manifolds $M$ with conical singularities as well as infinite cones $\Xw$ with some control for $r\rightarrow\infty,$ and there are different notions of asymptotics. In this chapter we focus on discrete asymptotics. In the following definition we consider a compact manifold $M$ with conical singularity $v,$ for a mapping
$\chi_\reg:V\setminus\{v\}\rightarrow\R_+\times X$
as in (\ref{Chireg}), and choose cut-off functions $\omega,\omega',\omega''$ on $\R_+$ such that $\omega''\prec\omega\prec\omega'.$ Moreover, we form corresponding cut-off functions $\epsilon:=\chi_\reg^{*}\omega,$ $\epsilon':=\chi_\reg^{*}\omega',$ $\epsilon'':=\chi_\reg^{*}\omega''$ on $M$ close to $v.$\\

\nt Recall that from Section \ref{1.2.5} we have the space ${L}^{\mu}_{M+G}(\X,\g)$ of smoothing Mellin plus Green operators with discrete asymptotics, for weight data $\g=(\gamma,\gamma-\mu,\Theta),$ and, moreover, we introduced the space ${L}_{G}(M,\g)$ of global Green operators on $M$ with discrete asymptotics. Since the smoothing Mellin operators are supported by a neighbourhood of the conical singularity we also obtain the space ${L}_{M+G}^\mu(M,\g)$ of global smoothing Mellin plus Green operators.

\begin{defn}\label{cone_def}
Let $M$ be a manifold with $($for simplicity one$)$ conical singularity $v,$ and assume that $M$ is compact. Fix $\g=(\gamma,\gamma-\mu,\Theta)$ for a weight $\gamma\in\R,$ an order $\mu\in\R,$ and a weight interval $\Theta=(-(k+1),0],$ for some $k\in\N\cup\{+\infty\}.$ Then ${L}^{\mu}(M,\g)$\label{AAmMg} is defined to be the set of all $A\in L^\mu_\clas(M\setminus\{v\})$ of the form
$$A=A_\sing+A_\reg+G$$
where $A_\sing=(\chi_\reg^{-1})_*\{\omega r^{-\mu}\op_M^{\gamma-\frac n2}(h)\omega'+\omega R\omega'\}$ and  $A_\reg\in(1-\epsilon)L^\mu_\clas(M\setminus\{v\})(1-\epsilon'')$ for arbitrary $h(r,z)\in C^\infty(\Rr,M_\O^\mu(X))$ and $R\in{L}^\mu_{M+G}(\X,\g),$ and $G\in{L}_G(M,\g).$
\end{defn}

\nt For $A\in{L}^\mu(M,\g)$ we set
$$\sigma(A):=(\sigma_\psi(A),\sigma_\mathrm{c}(A))\label{symbs}$$
where $\sigma_\psi(A)$ is the standard homogeneous principal symbol of $A$ as an operator in $L^\mu_\clas(M\setminus\{v\}).$ Observe that close to $v$ in the variables $(r,x)\in\R_+\times\Sigma,$ $\Sigma\subseteq\R^n,$ corresponding to a chart on $X$ we have
$$\sigma_\psi(A)(r,x,\rho,\xi)=r^{-\mu}\til\sigma_\psi(A)(r,x,r\rho,\xi)$$
for some function $\til\sigma_\psi(A)(r,x,\til\rho,\xi)$\label{sig28} (also referred to as the reduced homogeneous principal symbol of $A)$ that is homogeneous in $(\til\rho,\xi)\ne0$ of order $\mu$ and smooth in $r$ up to $0.$ Moreover, we set
$$\sigma_\mathrm{c}(A)(z)=h(0,z)+f_0(z)$$
for $f_0(z)=\sigma_\mathrm{c}(R)(z),$ $z\in\Gamma_{\frac{n+1}2-\gamma},$ $n=\dim X,$ called the (principal) conormal symbol\index{conormal!symbol}\index{symbol!conormal $-$} of $A.$ It is $L^\mu_\clas(X)$-valued and as such defines a family of continuous operators
$$\sigma_\mathrm{c}(A)(z):H^s(X)\rightarrow H^{s-\mu}(X),$$
$s\in\R.$\\

\nt Similarly as in Section \ref{1.2.5} we may observe also the lower order conormal symbols of $A.$ In this case we write $\sigma_\mathrm{c}^\mu(A)(z)$ rather than $\sigma_\mathrm{c}(A)(z),$ and the lower order terms are defined as
$$\sigma_\mathrm{c}^{\mu-j}(A)(z)=\frac1{j!}(\partial_r^jh)(0,z)+\sigma_\mathrm{c}^{\mu-j}(R)(z)\label{consym2}$$
for $j=0,\dots,k.$\\

\nt Let us now give an analogous definition in the case of the infinite (stretched) cone $\X$ rather than $M.$

\begin{defn}\label{cone_def2}
Let $X$ be a closed compact $C^\infty$ manifold$,$ fix weight data $\g=(\gamma,\gamma-\mu,\Theta)$ as in \textup{Definition \ref{cone_def},} and choose cut-off functions $\omega''\prec\omega\prec\omega'$ on $\R_+.$ Then ${L}^\mu(\X,\g)$ is defined to be the set of all $A\in L^\mu_\clas(\X)$ of the form
$$A=A_\sing+A_\reg$$
where $A_\sing=\omega r^{-\mu}\op_M^{\gamma-\frac n2}(h)\omega'+R,$
for an $h(r,z)\in C^\infty(\Rr,M_\O^\mu(X)),$ $R\in{L}^\mu_{M+G}(\X,\g),$ and $A_\reg\in(1-\omega)L^{\mu;0}_\clas(X^\asymp)(1-\omega'').$
\end{defn}

\nt Here $L^{\mu;0}_\clas(X^\asymp)$ is the space of classical operators of order $\mu$ on the manifold $X^\asymp$ with conical exits to infinity and exit order zero, cf. \cite[Section 1.4]{Schu20}.\\

\nt We have again a principal symbolic structure of operators $A\in{L}^\mu(\X,\g),$ namely,
\begin{equation}\label{new22}
\sigma(A)=(\sigma_\psi(A),\sigma_{\mathrm c}(A),\sigma_{\mathrm E}(A))
\end{equation}
where $\sigma_\psi(A),$ and $\sigma_{\mathrm c}(A)$ are of analogous meaning as before, while $\sigma_{\mathrm E}(A)$ is the pair of exit symbols\index{exit symbol}\index{symbol!exit $-$}
$$\sigma_{\mathrm E}(A)=(\sigma_{\mathrm e}(A),\sigma_{\psi,\mathrm e}(A))\label{cs3}$$
belonging to $A$ for $r\rightarrow\infty.$ Concerning the definition of $\sigma_{\mathrm e}$ and $\sigma_{\psi,\mathrm e},$ cf. \cite[Section 1.4]{Schu20}.

\begin{thm}\label{cont20}
\begin{itemize}
\item[\textup{(i)}] Let $M$ be a compact manifold with conical singularity$,$ and $A\in{L}^\mu(M,\g),$ $\g=(\gamma,\gamma-\mu,\Theta).$ Then $A$ induces continuous operators
$$A:H^{s,\gamma}(M)\rightarrow H^{s-\mu,\gamma-\mu}(M),H^{s,\gamma}_\P(M)\rightarrow H^{s-\mu,\gamma-\mu}_\Q(M)$$
for every $s\in\R$ and every asymptotic type $\P$ with some resulting asymptotic type $\Q,$ associated with the weight data $(\gamma,\Theta)$ and $(\gamma-\mu,\Theta),$ respectively.
\item[\textup{(ii)}] Let $X$ be a closed compact $C^\infty$ manifold$,$ and $A\in{L}^\mu(\X,\g),$ with $\g$ as in \textup{(i)}. Then $A$ induces continuous operators
$$A:\K^{s,\gamma}(\X)\rightarrow\K^{s-\mu,\gamma-\mu}(\X),\K^{s,\gamma}_\P(M)\rightarrow \K^{s-\mu,\gamma-\mu}_\Q(M)$$
for every $s\in\R$ and $\P$ and $\Q$ as in \textup{(i)}.
\end{itemize}
\end{thm}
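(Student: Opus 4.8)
The plan is to use the decomposition $A=A_\sing+A_\reg+G$ of Definition~\ref{cone_def} (resp.\ $A=A_\sing+A_\reg$ of Definition~\ref{cone_def2}) and to prove the asserted continuity summand by summand, in each case first on the spaces without asymptotics and then on their subspaces with asymptotics, the resulting type $\Q$ being assembled from the contributions of the individual pieces. The Green part $G\in L_G(M,\g)$ requires no work: by the very definition of $L_G(M,\g)$ it maps $H^{s,\gamma}(M)$ continuously into $\S^{\gamma-\mu}_{\P_G}(M)\subset H^{\infty,\gamma-\mu}_{\P_G}(M)$ for a $G$-dependent type $\P_G$, which already yields both mapping properties, $\Q$ being forced to contain $\P_G$. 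The piece $A_\reg\in(1-\epsilon)L^\mu_\clas(M\setminus\{v\})(1-\epsilon'')$ is an ordinary classical operator whose Schwartz kernel is supported away from $v$; since $H^{s,\gamma}(M)$ and $H^{s-\mu,\gamma-\mu}(M)$ agree with $H^s_\loc$, $H^{s-\mu}_\loc$ on $M\setminus\{v\}$ and $A_\reg$ maps into functions vanishing near $v$, standard calculus gives continuity, and an image vanishing near $v$ lies in $H^{s-\mu,\gamma-\mu}_\Q(M)$ for \emph{any} $\Q$, so this summand imposes no further constraint. On $\X$ the analogous $A_\reg\in(1-\omega)L^{\mu;0}_\clas(X^\asymp)(1-\omega'')$ is handled by the mapping theorem for the exit calculus quoted above, giving $A_\reg:H^{s;g}_\cone(X^\asymp)\to H^{s-\mu;g}_\cone(X^\asymp)$ and hence the assertion for the $(1-\omega)$-component of $\K^{s,\gamma}(\X)$.

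The heart of the matter is therefore $A_\sing$. Its summand $\omega R\omega'$ with $R\in L^\mu_{M+G}(\X,\g)$ is already covered by the Proposition preceding Theorem~\ref{comp25}, i.e.\ by \eqref{AB}, which yields both $\K^{s,\gamma}\to\K^{s-\mu,\gamma-\mu}$ and $\K^{s,\gamma}_\P\to\K^{s-\mu,\gamma-\mu}_\Q$. What genuinely remains is the holomorphic Mellin operator $\omega r^{-\mu}\op_M^{\gamma-\frac n2}(h)\omega'$ for $h\in C^\infty(\Rr,M_\O^\mu(X))$. Since $\omega,\omega'$ localize near $r=0$, only the $\omega\H^{s,\gamma}(\X)$-summand of $\K^{s,\gamma}(\X)$ is relevant and the image is automatically supported near $r=0$, so it suffices to prove $\omega r^{-\mu}\op_M^{\gamma-\frac n2}(h)\omega':\H^{s,\gamma}(\X)\to\H^{s-\mu,\gamma-\mu}(\X)$. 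Multiplication by $r^{-\mu}$ maps $\H^{s-\mu,\gamma}(\X)$ into $\H^{s-\mu,\gamma-\mu}(\X)$ because $\H^{s,\gamma}=r^\gamma\H^{s,0}$, so it remains to show $\op_M^{\gamma-\frac n2}(h):\H^{s,\gamma}(\X)\to\H^{s-\mu,\gamma}(\X)$. By Definition~\ref{Hsg} the map $u\mapsto\RR^s(\Im z)(M_{\gamma-\frac n2}u)(z)$ is an isometry onto $L^2(\Gamma_{\frac{n+1}2-\gamma},L^2(X))$, and by Remark~\ref{con} the operator $\op_M^{\gamma-\frac n2}(h)$ is conjugated by this isometry to the Mellin pseudo-differential operator on $\Gamma_{\frac{n+1}2-\gamma}$ with operator-valued symbol $\RR^{s-\mu}(\Im z)\,h(r,z)\,\RR^s(\Im z)^{-1}$; this symbol is an order-zero classical symbol in $\Im z$ valued in $\L(L^2(X))$ thanks to the order reduction Theorem~\ref{red}, so $L^2$-boundedness follows from the standard boundedness theorem for (Mellin) pseudo-differential operators, the $r$-dependence being absorbed by the usual Kumano-go oscillatory-integral estimates. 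It is precisely the \emph{holomorphy} of $h$ in $z$ that makes the weight line $\Gamma_{\frac{n+1}2-\gamma}$ admissible for every $\gamma$.

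For the subspaces with asymptotics the idea is to pass to the Mellin side via Theorem~\ref{Th12}. Given $u\in\K^{s,\gamma}_\P(\X)$, Theorem~\ref{Th12}(i) shows $f(z,x):=M_{\gamma-\frac n2}(\omega' u)(z,x)$ extends to an $H^s(X)$-valued meromorphic function whose poles are governed by $\P$ and its shadow (Remark~\ref{Mer}). Writing the Taylor expansion $h(r,z)=\sum_{j=0}^{N}\frac{r^j}{j!}(\partial_r^jh)(0,z)+r^{N+1}h_{(N+1)}(r,z)$ with $h_{(N+1)}\in C^\infty(\Rr,M_\O^\mu(X))$, the remainder contributes, after the cut-off, a function that is flat of sufficiently high order relative to $\gamma-\mu$ and hence lies in $\K^{s-\mu,\gamma-\mu}_\Theta(\X)$ once $N$ is large, while each principal term equals $r^{-\mu+j}\op_M^{\gamma-\frac n2}(h_j)$ with $h_j:=\frac1{j!}(\partial_r^jh)(0,\cdot)\in M_\O^\mu(X)$ holomorphic; on the Mellin side it composes $f(z,x)$ with the holomorphic $h_j(z)$ (creating no new poles and possibly lowering multiplicities) and then translates $z\mapsto z+(\mu-j)$, since $M(r^{\alpha}v)(z)=(Mv)(z+\alpha)$. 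Thus the outcome is again meromorphic, with poles at the points $p_\ell+\mu-j$ (those leaving the strip $\{\frac{n+1}2-(\gamma-\mu)+\vartheta<\Re z<\frac{n+1}2-(\gamma-\mu)\}$ again yielding flat contributions), and these poles, together with those from $\omega R\omega'$ and from $G$, are captured by a single discrete asymptotic type $\Q$ associated with $(\gamma-\mu,\Theta)$. Theorem~\ref{Th12}(ii) then gives $\omega\,r^{-\mu}\op_M^{\gamma-\frac n2}(h)\omega'\,u\in\K^{s-\mu,\gamma-\mu}_\Q(\X)$; summing the three contributions proves (ii), and part (i) follows by transporting everything through the singular chart $\chi_\reg$ and using that $H^{s,\gamma}_\P(M)$ is defined by the requirement $\omega(\chi_\reg^{-1})^*u\in\K^{s,\gamma}_\P(\X)$, the exit issues being vacuous on the compact $M$.

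The main obstacle I expect is not the analytic estimate---which is routine once Theorem~\ref{red} and Remark~\ref{con} are in hand---but the bookkeeping that produces a single resulting type $\Q$: one must merge the pole sets created by the holomorphic factor $h$ and its lower-order Taylor terms, by the smoothing Mellin symbols $f_j\in M^{-\infty}_{\RR_j}(X)$ inside $R$ (whose interaction with $\P$ is controlled by Remark~\ref{2.27}), and by the Green operator $G$, all while respecting the shadow condition and the finiteness requirements in the definition of a discrete asymptotic type; and in part (ii) one must simultaneously track the exit behaviour at $r\to\infty$ through the $H^{s;g}_\cone$-scale.
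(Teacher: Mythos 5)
The paper states Theorem \ref{cont20} without proof (it is one of the background facts for which the text refers to \cite{Schu20}), so there is no in-text argument to compare against; your proposal is a correct rendering of the standard proof. The summand-by-summand reduction via Definitions \ref{cone_def} and \ref{cone_def2}, the conjugation of the holomorphic Mellin term by order reductions to an operator-valued symbol of order zero on the weight line (where holomorphy of $h$ makes every $\gamma$ admissible), the Taylor expansion of $h$ in $r$ with a flat remainder, and the use of Theorem \ref{Th12} together with Remarks \ref{Mer} and \ref{2.27} to assemble the resulting type $\Q$ are exactly the ingredients of the argument in the cited reference.
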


\begin{rem}
\begin{itemize}
\item[\textup{(i)}] Let $A$ be as in \textup{Theorem \ref{cont20} (i);} then $\sigma(A)=0$ $($i.e.$,$ $\sigma_\psi(A)=\sigma_\mathrm{c}(A)=0)$ implies that
$$A:H^{s,\gamma}(M)\rightarrow H^{s-\mu,\gamma-\mu}(M)$$
is a compact operator for every $s\in\R.$ 
\item[\textup{(ii)}] Let $A$ be as in \textup{Theorem \ref{cont20} (ii);} then $\sigma(A)=0$ $($i.e.$,$ $\sigma_\psi(A)=\sigma_\mathrm{c}(A)=\sigma_\mathrm{e}(A)=\sigma_{\psi,\mathrm{e}}(A)=0)$ implies that
$$A:\K^{s,\gamma}(\X)\rightarrow\K^{s-\mu,\gamma-\mu}(\X)$$
is a compact operator for every $s\in\R.$
\end{itemize}
\end{rem}

\nt In the following theorem we assume $M$ and $\X$ to be as in Theorem \ref{cont20}.

\begin{thm}
\begin{itemize}
\item[\textup{(i)}] Let $A\in{L}^\mu(M,\g)$ for $\g=(\gamma-\til\mu,\gamma-\til\mu-\mu,\Theta)$ and $B\in{L}^{\til\mu}(M,\til\g)$ for $\til\g=(\gamma,\gamma-\til\mu,\Theta);$ then we have $AB\in{L}^{\mu+\til\mu}(M,\g\circ\til\g)$ for $\g\circ\til\g=(\gamma,\gamma-\til\mu-\mu,\Theta),$\label{gcompg} and $\sigma(AB)=\sigma(A)\sigma(B)$ in the sense
$$\sigma_\psi(AB)=\sigma_\psi(A)\sigma_\psi(B),\quad\sigma_\mathrm{c}(AB)=(T^{\til\mu}\sigma_\mathrm{c}(A))\sigma_\mathrm{c}(B)$$
$(T^\beta f)(z):=f(z+\beta).$
\item[\textup{(ii)}] Let $A\in{L}^\mu(\X,\g)$ and $B\in{L}^{\til\mu}(\X,\til\g)$ with $\g,\til\g$ as in \textup{(i)}$;$ then we have $AB\in{L}^{\mu+\til\mu}(\X,\g\circ\til\g)$ and $\sigma(AB)=\sigma(A)\sigma(B)$ in the sense of the rules from \textup{(i)} together with
$$\sigma_\mathrm{e}(AB)=\sigma_\mathrm{e}(A)\sigma_\mathrm{e}(B),\quad\sigma_{\psi,\mathrm{e}}(AB)=\sigma_{\psi,\mathrm{e}}(A)\sigma_{\psi,\mathrm{e}}(B).$$
\end{itemize}
\end{thm}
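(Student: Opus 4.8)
The plan is to prove (i) and (ii) in parallel, the genuinely new content being localised at the conical point $v$ (resp.\ near $r=0$ on $\X$), while (ii) requires in addition the composition behaviour at the conical exit $r\to\infty$. Write $A=A_\sing+A_\reg+G$ and $B=B_\sing+B_\reg+H$ as in Definitions \ref{cone_def}, \ref{cone_def2}, choosing the cut-off functions of the two factors compatibly: nested systems $\omega''_A\prec\omega_A\prec\omega'_A$ and $\omega''_B\prec\omega_B\prec\omega'_B$ with $\omega'_A\equiv1$ on $\supp\omega_B$ and $\omega_A\prec\omega'_B$. Expanding $AB$ into nine summands, the theorem follows once each summand is shown to lie in ${L}^{\mu+\til\mu}(M,\g\circ\til\g)$ (resp.\ ${L}^{\mu+\til\mu}(\X,\g\circ\til\g)$) and the contributions to $\sigma(AB)$ are identified.

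Since $A,B$, and hence $AB$, belong to $L^\bullet_\clas(M\setminus\{v\})$, where $\sigma_\psi$ is multiplicative, the rule $\sigma_\psi(AB)=\sigma_\psi(A)\sigma_\psi(B)$ is automatic once $AB$ is known to be in the cone algebra; the same remark applies near the exit to $\sigma_{\psi,\e}$ in (ii). The far--far term $A_\reg B_\reg$ is composed by the standard pseudo-differential composition theorem on $M\setminus\{v\}$ and is supported away from $v$, hence contributes to $(AB)_\reg$. Each of the seven remaining cross terms contains a Green operator, or a Mellin operator $\omega r^{-\bullet}\op_M^{\cdot}(h)\omega'$ composed against a factor whose support avoids the region where that Mellin operator is genuinely present; using that $\op_M^\gamma(h)$ maps the weighted spaces with asymptotics continuously (Theorem \ref{Th12}), that the composition of two Green operators is again Green with a resulting asymptotic type, and that a Mellin operator preceded (followed) by multiplication by a function vanishing near $r=0$ has a kernel that is smooth and rapidly decreasing towards $r=0$, one checks that all seven belong to ${L}_G(M,\g\circ\til\g)$, with the asymptotic types produced from $\P,\Q$ of the factors; in particular they affect neither $\sigma_\psi$ nor $\sigma_\c$.

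The core of the proof, and the main obstacle, is $A_\sing B_\sing$. Here $A$ carries the Mellin factor $r^{-\mu}\op_M^{(\gamma-\til\mu)-\frac n2}(h_A)$ with $h_A\in C^\infty(\Rr,M_\O^\mu(X))$ plus $R_A\in{L}^\mu_{M+G}(\X,\g)$, and $B$ carries $r^{-\til\mu}\op_M^{\gamma-\frac n2}(h_B)$ with $h_B\in C^\infty(\Rr,M_\O^{\til\mu}(X))$ plus $R_B\in{L}^{\til\mu}_{M+G}(\X,\til\g)$. By the conjugation identity $\op_M^{\beta}(f)=r^{\beta}\op_M(T^{-\beta}f)r^{-\beta}$ one commutes the factor $r^{-\til\mu}$ to the left through $\op_M^{(\gamma-\til\mu)-\frac n2}(h_A)$, with the weight shift $\op_M^{(\gamma-\til\mu)-\frac n2}(h_A)r^{-\til\mu}=r^{-\til\mu}\op_M^{\gamma-\frac n2}(T^{\til\mu}h_A)$; using $\omega'_A\equiv1$ on $\supp\omega_B$ and $r^{-\mu}r^{-\til\mu}=r^{-(\mu+\til\mu)}$, the holomorphic part of $A_\sing B_\sing$ becomes $\omega_A r^{-(\mu+\til\mu)}\op_M^{\gamma-\frac n2}(T^{\til\mu}h_A)\,\op_M^{\gamma-\frac n2}(h_B)\,\omega'_B$ modulo a remainder supported near $v$. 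Now the composition of two Mellin $\psi$DOs along the common weight line $\Gamma_{\frac{n+1}2-\gamma}$ with $M_\O$-valued amplitude functions is, modulo a smoothing Mellin operator with discrete asymptotics (hence modulo ${L}_{M+G}$, which is where the finiteness of $\Theta$ is exploited), again of the form $\op_M^{\gamma-\frac n2}(h)$ with $h\in C^\infty(\Rr,M_\O^{\mu+\til\mu}(X))$ given by the Mellin--Leibniz product, whose value at $r=0$ on $\Gamma_{\frac{n+1}2-\gamma}$ is $(T^{\til\mu}h_A)(0,z)\,h_B(0,z)$. The cross terms $\op_M(T^{\til\mu}h_A)R_B$, $R_A\,\op_M(h_B)$, and $R_A R_B$ are absorbed into ${L}^{\mu+\til\mu}_{M+G}(\X,\g\circ\til\g)$ by Theorem \ref{comp25} and Remark \ref{2.27} (products $M_\O\cdot M_\RR^{-\infty}\subset M^{-\infty}_\P$, composition of smoothing Mellin plus Green operators), and any composition involving the Green summands of $R_A,R_B$ stays Green; the weight bounds (\ref{Me20}) for the new Mellin terms are inherited from those for $R_A,R_B$ after the shift by $T^{\til\mu}$. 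Collecting everything, $A_\sing B_\sing=(\chi_\reg^{-1})_*\{\omega r^{-(\mu+\til\mu)}\op_M^{\gamma-\frac n2}(h)\omega'+\omega R\omega'\}+G'$ with $h,R,G'$ of the required types, and reading off the $r=0$ value of the principal Mellin symbol yields $\sigma_\c(AB)(z)=(T^{\til\mu}\sigma_\c(A))(z)\,\sigma_\c(B)(z)$; the lower-order conormal symbols follow the Mellin translation product (\ref{M25}). For part (ii) one repeats the above on $\X$ and in addition analyses the exit end: $A_\reg\in L^{\mu;0}_\clas(X^\asymp)$ and $B_\reg\in L^{\til\mu;0}_\clas(X^\asymp)$ compose within $L^{\mu+\til\mu;0}_\clas(X^\asymp)$ by the composition theorem of the exit calculus (cf.\ \cite[Section~1.4]{Schu20} and the continuity theorem preceding Definition \ref{ksgg}), with $\sigma_\e,\sigma_{\psi,\e}$ multiplying, while the cone-end/exit-end cross terms are smoothing and rapidly decreasing at \emph{both} ends because of the two-sided cut-offs, hence contribute neither to the exit symbols nor to the conormal symbol. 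This gives $AB\in{L}^{\mu+\til\mu}(\X,\g\circ\til\g)$ with $\sigma(AB)=\sigma(A)\sigma(B)$ in the stated sense. The step demanding the most care is the near--near term: establishing that the composition of the holomorphic Mellin operators is again holomorphic Mellin modulo ${L}_{M+G}$, with the correct Mellin--Leibniz conormal symbol and the weight shift $T^{\til\mu}$, together with the bookkeeping of the asymptotic types and of the conditions (\ref{Me20}) throughout.
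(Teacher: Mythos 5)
The paper itself states this theorem without proof (it defers to \cite{Schu20}), so there is nothing to compare line by line; your overall architecture --- the nine-fold decomposition, reduction of $\sigma_\psi$ and $\sigma_{\psi,\mathrm e}$ to the standard interior and exit calculi, and the treatment of the near--near term via the conjugation $\op_M^{(\gamma-\til\mu)-\frac n2}(h_A)r^{-\til\mu}=r^{-\til\mu}\op_M^{\gamma-\frac n2}(T^{\til\mu}h_A)$ followed by the Mellin--Leibniz product and Remark \ref{2.27} --- is exactly the standard route, and the resulting identity $\sigma_{\mathrm c}(AB)=(T^{\til\mu}\sigma_{\mathrm c}(A))\sigma_{\mathrm c}(B)$ is derived correctly.

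There is, however, one step that fails as stated: the claim that \emph{all seven} cross terms lie in ${L}_G(M,\g\circ\til\g)$. The five terms containing $G$ or $H$ are indeed Green, but $A_\sing B_\reg$ and $A_\reg B_\sing$ are not. Your justification --- that a Mellin operator composed against a factor supported away from $r=0$ is smoothing --- is incorrect: $r^{-\mu}\omega\op_M^{\gamma-\frac n2}(h_A)\omega'$ is an honest classical pseudo-differential operator of order $\mu$ on all of $M\setminus\{v\}$ (cf.\ Remark \ref{con}), with a conormal kernel singularity along the whole diagonal of $\supp\omega\times\supp\omega'$. Hence $A_\reg B_\sing=(1-\epsilon_A)P(1-\epsilon''_A)\cdot\omega_B r^{-\til\mu}\op_M^{\gamma-\frac n2}(h_B)\omega'_B+\dots$ carries a genuine order-$(\mu+\til\mu)$ diagonal singularity on the annulus $\supp(1-\omega_A)\cap\supp\omega_B$, and no admissible choice of cut-offs kills both this term and its mirror $A_\sing B_\reg$ simultaneously (one cannot have $\omega_A\prec\omega_B$ and $\omega_B\prec\omega_A$ at once). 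The correct bookkeeping is to split each of these two terms into an interior classical operator of order $\mu+\til\mu$ supported in the transition annulus --- which is absorbed into the regular part $A_\reg$ of $AB$, after adjusting the cut-offs in Definition \ref{cone_def} --- plus an off-diagonal remainder which is Green (here one genuinely uses that the kernel of $\op_M^{\gamma-\frac n2}(h)$ with $h\in M_\O^\mu$ is flat of infinite order as $r\to0$ or $r'\to0$ away from the diagonal, by shifting the Mellin contour). This repair does not disturb the symbol rules, since the reallocated pieces contribute neither to $\sigma_{\mathrm c}$ nor, beyond the already-established multiplicativity, to $\sigma_\psi$; but as written the classification of these two terms, and the kernel-smoothness argument offered for it, is wrong. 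A minor further point: the mapping property you invoke for $\omega\op_M^{\gamma-\frac n2}(f)\omega'$ on spaces with asymptotics is not Theorem \ref{Th12} (which characterises asymptotics via the Mellin image) but the unnumbered continuity theorem at the end of Section \ref{1.2.4}.
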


\begin{rem}
Similarly as in \textup{Theorem \ref{comp25}} the sequence of conormal symbols $(\sigma_\mathrm{c}^{\mu+\til\mu-l}(AB))_{l=0,\dots,k}$ can be obtained as the Mellin translation product\index{Mellin!translation product}\index{product!Mellin translation $-$} between the sequences $(\sigma_{\mathrm c}^{\mu-j}(A))_{j=0,\dots,k}$ and $(\sigma_{\mathrm c}^{\til\mu-i}(B))_{i=0,\dots,k}.$
\end{rem}

%
%
%
%
%
\subsection{Ellipticity in the cone algebra and asymptotics of solutions}

Based on the principal symbolic structure of operators in the cone algebra we now study ellipticity and asymptotics of solutions.

\begin{defn}\label{Ell_def}
\begin{itemize}
\item[\textup{(i)}] Let $M$ be a compact manifold with conical singularity $v,$ and let $A\in{L}^\mu(M,\g)$ as in \textup{Definition \ref{cone_def}}. Then $A$ is called elliptic\index{elliptic!operator}\index{operators!elliptic $-$} if $\sigma_\psi(A)\ne0$ on $T^*(M\setminus\{v\})\setminus0,$ and$,$ locally near $v,$ $\til\sigma_\psi(A)(r,x,\til\rho,\xi)\ne0$ for all $(\til\rho,\xi)\ne0,$ up to $r=0,$ moreover$,$
\begin{equation}
\sigma_{\mathrm c}(A)(z):H^s(X)\rightarrow H^{s-\mu}(X)
\end{equation}
is a family of isomorphisms for all $z\in\Gamma_{\frac{n+1}2-\gamma}$ and some $s=s_0\in\R.$
\item[\textup{(ii)}] Let $A\in{L}^\mu(\X,\g)$ be as in \textup{Definition \ref{cone_def2}}. Then $A$ is called elliptic if $\sigma_\psi(A)$ and $\sigma_{\mathrm c}(A)$ satisfy analogous condition as in \textup{(i)} and if the symbolic components of $\sigma_{\mathrm E}(A)$ are elliptic in the sense of the calculus of operators in $L^{\mu;\nu}_\clas(X^\asymp)$\label{Lclmn} at $r=\infty.$
\end{itemize}
\end{defn}

\begin{thm}\label{1.2.41}
\begin{itemize}
\item[\textup{(i)}] Let $A\in{L}^\mu(M,\g)$ be as in \textup{Definition \ref{cone_def}}. Then $A$ is elliptic in the sense of \textup{Definition \ref{Ell_def} (i)} if and only if
\begin{equation}\label{F_def}
A:H^{s,\gamma}(M)\rightarrow H^{s-\mu,\gamma-\mu}(M)
\end{equation}
is a Fredholm operator\index{Fredholm operator}\index{operators!Fredholm $-$} for some $s=s_0\in\R$ $($which is equivalent to the Fredholm property for all $s\in\R).$
\item[\textup{(ii)}] Let $A\in{L}^\mu(\X,\g)$ be as in \textup{Definition \ref{cone_def2}}. Then $A$ is elliptic in the sense of \textup{Definition \ref{Ell_def} (ii)} if and only if
\begin{equation}\label{F_def2}
A:\K^{s,\gamma;g}(\X)\rightarrow\K^{s-\mu,\gamma-\mu;g}(\X)
\end{equation}
is a Fredholm operator for some $s=s_0,g=g_0\in\R$ $($again equivalent to the Fredholm property for all $s\in\R).$
\end{itemize}
\end{thm}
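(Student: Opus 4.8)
The plan is to prove both implications, with (i) and (ii) handled by the same scheme, the case of $\X$ differing only by the presence of the extra symbolic component $\sigma_{\mathrm E}(A)$ at the conical exit. Sufficiency of ellipticity will be obtained by constructing a parametrix within the cone algebra and invoking compactness of Green operators; necessity will follow by a contraposition argument with Weyl (approximate-null) sequences. The equivalence for all $s$ comes out automatically, since the parametrix and the localising test functions do not depend on $s$.

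\emph{Ellipticity $\Rightarrow$ Fredholm.} I would build $P\in L^{-\mu}(M,\g^{-1})$, $\g^{-1}=(\gamma-\mu,\gamma,\Theta)$, with $1-PA,\,1-AP\in L_G(M,\cdot)$ (resp.\ $L_G(\X,\cdot)$). First, using $\sigma_\psi(A)\ne0$ on $T^*(M\setminus\{v\})\setminus0$ and $\til\sigma_\psi(A)\ne0$ up to $r=0$, the parametrix theorem for $\sigma_\psi$-elliptic operators in $L^\mu_\deg(M)$ yields an interior parametrix; rephrasing it near $v$ through the Mellin quantisation of Theorem~\ref{Me} puts it into the cone-algebra form of Definition~\ref{cone_def}, and in case (ii) one simultaneously inverts $\sigma_{\mathrm E}(A)$ in $L^{-\mu;0}_\clas(X^\asymp)$. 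This gives $B_1\in L^{-\mu}(M,\g^{-1})$ with $\sigma_\psi(B_1)=\sigma_\psi(A)^{-1}$, hence $\sigma_\psi(B_1A)=1$ by the composition theorem; iterating (a formal Neumann series/asymptotic summation, kept inside the calculus by the same composition theorem and Theorem~\ref{Me}) one arranges $1-B_1A$ to have vanishing interior symbol to infinite order, so that it is a smoothing Mellin plus Green operator, i.e.\ lies in $L^0_{M+G}(M,(\gamma,\gamma,\Theta))$ modulo $L_G$. Second, correct the conormal symbol: $\sigma_\c^0(1-(1-B_1A))=\sigma_\c(B_1A)=(T^\mu\sigma_\c(B_1))\sigma_\c(A)$ is, by ellipticity of $A$ and of the interior parametrix, a family of isomorphisms on $\Gamma_{\frac{n+1}2-\gamma}$, so $B_1A$ is an elliptic operator of the type in Definition~\ref{ell25}; the parametrix theorem following Definition~\ref{ell25} (built on Theorems~\ref{comp25} and~\ref{T1.2.28}, i.e.\ inversion through the Mellin translation product) produces $1+B'$ with $(B_1A)(1+B')=1$ and $(1+B')(B_1A)=1$ modulo $L_G$. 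Then $P:=(1+B')B_1$ satisfies $PA=1\bmod L_G$; a symmetric construction gives a right parametrix, and the usual argument shows $P$ is two-sided. Finally, $1-PA$ and $1-AP$ are Green operators, so $\sigma(\cdot)=0$ for them, hence they are compact on the relevant $H^{s,\gamma}$- (resp.\ $\K^{s,\gamma;g}$-) spaces by the Remark following Theorem~\ref{cont20}; since $P$ is continuous by Theorem~\ref{cont20}, \eqref{F_def} (resp.\ \eqref{F_def2}) is invertible modulo compact operators on both sides, hence Fredholm, and this holds for every $s$ since $P$, $1-PA$, $1-AP$ are $s$-independent.

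\emph{Fredholm $\Rightarrow$ ellipticity.} I would argue by contraposition using Weyl sequences, exploiting that a Fredholm operator $A$ satisfies an a priori estimate $\|u\|\le c(\|Au\|+\|Ku\|)$ with $K$ compact, which rules out sequences $u_k$ with $\|u_k\|=1$, $u_k\rightharpoonup0$, $\|Au_k\|\to0$. If $\sigma_\psi(A)(x_0,\xi_0)=0$ at an interior point, or $\til\sigma_\psi(A)(0,x_0,\til\rho_0,\xi_0)=0$, one uses the classical highly oscillatory/localised test functions (in the second case additionally concentrating at $r\to0$) to produce such a sequence in $\mathcal K^{s,\gamma}$. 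If instead $\sigma_\c(A)(z_0)$ fails to be an isomorphism for some $z_0\in\Gamma_{\frac{n+1}2-\gamma}$ — note that $\sigma_\c(A)(z)$ is elliptic of order $\mu$ on the closed manifold $X$, so this occurs only on a discrete subset and means $\sigma_\c(A)(z_0)$ is not injective or not surjective — I would, after passing to $A^*$ (again in the cone algebra, Fredholm iff $A$ is) in the non-surjective case, pick $0\ne\phi\in C^\infty(X)$ with $\sigma_\c(A)(z_0)\phi=0$ and take $u_k(r,x)=c_k\,\omega(r)r^{-z_0}\psi_k(-\log r)\phi(x)$ with $\psi_k$ chosen so that the weighted Mellin transform of $u_k$ concentrates at $z_0$ while $\supp u_k$ shrinks to $r=0$; then $\|Au_k\|/\|u_k\|\to0$ and $u_k\rightharpoonup0$, a contradiction. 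In case (ii), the completely analogous Weyl-sequence construction at the conical exit $r\to\infty$ forces the components of $\sigma_{\mathrm E}(A)$ to be elliptic in the exit calculus on $X^\asymp$. Since ellipticity is an $s$-independent condition, Fredholmness for one $s_0$ forces ellipticity, which by the first part gives Fredholmness for all $s$.

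\emph{Main obstacle.} The delicate point on the sufficiency side is keeping the iterated parametrix inside the cone algebra and, crucially, forcing the final remainders into $L_G$ rather than merely into $L^{-\infty}(M\setminus\{v\})$: this is precisely the heart of the cone-algebra machinery and relies on the interplay of Mellin quantisation (Theorem~\ref{Me}), the composition theorem for $L^\mu(M,\g)$, and the conormal-symbol inversion through the Mellin translation product. On the necessity side the subtle construction is the Weyl sequence testing the conormal symbol, where one must simultaneously control the $r$-dependence of the Mellin symbol as $r\to0$ and the spectral spread of the Mellin transform around $z_0$; in case (ii) there is the further work of gluing interior, conormal and exit parametrices so that all cross terms are Green, and of producing the corresponding Weyl sequence at $r\to\infty$.
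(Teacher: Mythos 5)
The paper itself gives no proof of Theorem \ref{1.2.41}; it defers to \cite{Schu20}. Your architecture is the standard one that the surrounding text presupposes: the sufficiency half is essentially Theorem \ref{T1.2.47} (parametrix with Green remainders) combined with the compactness remark after Theorem \ref{cont20}, and the necessity half via Weyl sequences testing $\sigma_\psi$, $\til\sigma_\psi$ at $r=0$, $\sigma_{\mathrm c}$ on the weight line, and (in case (ii)) $\sigma_{\mathrm E}$ at the exit, is the classical route. So the overall plan is sound and I have no objection to the necessity half beyond noting that, as you say yourself, the conormal Weyl sequence and the $r=0$ blow-up sequence carry all the technical weight.

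There is, however, one step in the sufficiency half that fails as written. You claim that $\sigma_{\mathrm c}(B_1A)=(T^\mu\sigma_{\mathrm c}(B_1))\sigma_{\mathrm c}(A)$ is a family of isomorphisms on $\Gamma_{\frac{n+1}2-\gamma}$ ``by ellipticity of $A$ and of the interior parametrix''. Ellipticity of $A$ gives invertibility of $\sigma_{\mathrm c}(A)(z)$ there, but the interior parametrix $B_1$ is only determined by its $\sigma_\psi$-data modulo lower order terms; its conormal symbol $\sigma_{\mathrm c}(B_1)(w)$ is an elliptic element of $M^{-\mu}_{\RR}(X)$ whose non-bijectivity set is merely discrete, and nothing prevents it from meeting the shifted weight line $\Gamma_{\frac{n+1}2-(\gamma-\mu)}$. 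If it does, $\sigma_{\mathrm c}(B_1A)=1+f$ with $f\in M^{-\infty}_{\RR}(X)$ is \emph{not} bijective on $\Gamma_{\frac{n+1}2-\gamma}$, $B_1A$ is not elliptic in the sense of Definition \ref{ell25}, and the Mellin-translation-product inversion cannot be started. The missing ingredient is a correction of $B_1$ by a smoothing Mellin term before the $(M+G)$-inversion: one needs that the inverse $\sigma_{\mathrm c}(A)^{-1}(z)$ extends to an element of $M^{-\mu}_{\S}(X)$ for some Mellin asymptotic type $\S$ with $\pi_\C\S\cap\Gamma_{\frac{n+1}2-\gamma}=\emptyset$ (the meromorphic inverse theorem for elliptic Mellin symbols, used but not stated in your outline), and one then replaces $B_1$ by $B_1+r^{\mu}\omega\,\op_M^{\gamma-\mu-\frac n2}(m)\omega'$ with $m(w):=\sigma_{\mathrm c}(A)(w-\mu)^{-1}-\sigma_{\mathrm c}(B_1)(w)\in M^{-\infty}_{\S'}(X)$, which forces $\sigma_{\mathrm c}(B_1A)\equiv1$ and only then reduces the remainder to an elliptic element $1+(M+G)$ treatable by Theorem \ref{T1.2.28} and the translation product. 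With that insertion the rest of your argument goes through.
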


\begin{thm}\label{T1.2.47}
\begin{itemize}
\item[\textup{(i)}] Let $A\in{L}^\mu(M,\g)$ be an operator as in \textup{Definition \ref{cone_def}}. Then the Fredholm property of $(\ref{F_def})$ for some $s=s_0,$ entails the Fredholm property of $(\ref{F_def})$ for all $s\in\R.$ Moreover$,$ $A$ has a parametrix\index{parametrix} $B\in{L}^{-\mu}(M,\g^{-1}),$ i.e.$,$ $I-BA\in{L}_G(M,(\gamma,\gamma,\Theta)),$ $I-AB\in{L}_G(M,(\gamma-\mu,\gamma-\mu,\Theta)).$ Finally$,$ if $(\ref{F_def})$ is invertible for an $s=s_0\in\R$ then so is for all $s\in\R,$ and we have $A^{-1}\in{L}^{-\mu}(M,\g^{-1})$ for $\g^{-1}:=(\gamma-\mu,\gamma,\Theta).$\label{g-1}
\item[\textup{(ii)}] Let $A\in{L}^\mu(\X,\g)$ be an operator as in \textup{Definition \ref{cone_def2}}. Then the Fredholm property of $(\ref{F_def2})$ for reals $s=s_0,$ $g=g_0,$ entails the Fredholm property of $(\ref{F_def2})$ for all $s,g\in\R.$ Moreover$,$ $A$ has a parametrix $B\in{L}^{-\mu}(\X,\g^{-1}),$ i.e.$,$ $I-BA\in{L}_G(\X,(\gamma,\gamma,\Theta)),$ $I-AB\in{L}_G(\X,(\gamma-\mu,\gamma-\mu,\Theta)).$ Finally$,$ if $(\ref{F_def2})$ is invertible for some $s=s_0,g=g_0\in\R$ then so is for all $s,g\in\R,$ and we have $A^{-1}\in{L}^{-\mu}(\X,\g^{-1}).$
\end{itemize}
\end{thm}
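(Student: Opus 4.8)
We discuss (i); part (ii) is entirely parallel once the exit symbols are added, as noted below. By Theorem \ref{1.2.41} (i) the Fredholm property of (\ref{F_def}) for a single $s=s_0$ already forces $A$ to be elliptic in the sense of Definition \ref{Ell_def} (i); so the statement reduces to the following: every elliptic $A\in L^\mu(M,\g)$ admits a parametrix $B\in L^{-\mu}(M,\g^{-1})$ with $I-BA\in L_G(M,(\gamma,\gamma,\Theta))$ and $I-AB\in L_G(M,(\gamma-\mu,\gamma-\mu,\Theta))$. Indeed, Green operators have vanishing symbol and hence are compact on the weighted spaces for every $s$ (cf. the Remark following Theorem \ref{cont20}), so once such a $B$ is available $BA=I-C_l$ and $AB=I-C_r$ with $C_l,C_r$ compact, giving the Fredholm property of (\ref{F_def}) for all $s\in\R$ at once.

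The parametrix is built by inverting the symbolic hierarchy $\sigma(A)=(\sigma_\psi(A),\sigma_\mathrm c(A))$ step by step. Since $\til\sigma_\psi(A)\ne0$ up to $r=0$, the $\sigma_\psi$-elliptic calculus on $M\setminus\{v\}$ — the parametrix theorem for the class $L_\deg^{-\mu}$ together with Theorem \ref{comp8} — and Theorem \ref{Me} provide a first approximation $B_0\in L^{-\mu}(M,\g^{-1})$ such that $I-B_0A$ and $I-AB_0$ are smoothing off $v$ and edge-degenerate near $v$. Next, by Definition \ref{Ell_def} (i) the leading conormal symbol $\sigma_\mathrm c(A)(z)$ is a family of isomorphisms on $\Gamma_{\frac{n+1}2-\gamma}$; inverting it inside the Mellin symbol classes — using that an elliptic holomorphic Mellin symbol is an isomorphism off a discrete set together with Theorem \ref{T1.2.28} — yields an inverse in $M_\S^{-\mu}(X)$ for some resulting Mellin asymptotic type $\S$, and then, feeding this into the Mellin translation product formula of Theorem \ref{comp25}, one corrects $B_0$ by a smoothing Mellin operator in $L_{M+G}^{-\mu}(\X,\g^{-1})$ so that all conormal symbols $\sigma_\mathrm c^{-\mu-j}$ of the new remainders vanish for $0\le j\le k$. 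The residual lower-order contributions are then removed by an asymptotic summation in the Fréchet space $L_G(M,\cdot)$, leaving precisely the Green remainders required; this produces $B$.

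Write $C_l:=I-BA$ and $C_r:=I-AB$, both Green. Any $u\in\ker A$ satisfies $u=C_lu\in H^{\infty,\gamma}_\Q(M)$ with $\Q$ fixed, so $\ker A$ is an $s$-independent finite-dimensional subspace of $H^{\infty,\gamma}(M)$; dually $\mathrm{coker}\,A\cong\ker A^*$ with $A^*$ again an elliptic element of the cone calculus (the calculus is closed under formal adjoints), so $\mathrm{coker}\,A$ is likewise $s$-independent. Hence if (\ref{F_def}) is invertible for one $s_0$, then $\ker A=\mathrm{coker}\,A=0$ and it is invertible for all $s$. Finally $A^{-1}-B=(I-BA)A^{-1}=C_lA^{-1}$; as $A^{-1}$ is continuous $H^{s-\mu,\gamma-\mu}(M)\to H^{s,\gamma}(M)$ for every $s$ and $C_l$ maps into $H^{\infty,\gamma}_\Q(M)$, the composition $C_lA^{-1}$ maps into $H^{\infty,\gamma}_\Q(M)$; applying the same reasoning to $(C_lA^{-1})^*=(A^*)^{-1}C_l^*$ (again $A^*$ is invertible) shows $C_lA^{-1}$ has the defining mapping properties of a Green operator, so $A^{-1}=B+C_lA^{-1}\in L^{-\mu}(M,\g^{-1})$.

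Part (ii) runs along the same lines on the scale $\K^{s,\gamma;g}(\X)$, using compactness and continuity for all $s,g\in\R$ (so kernel, cokernel and invertibility are independent of both $s$ and $g$), with one extra ingredient: in the parametrix construction one must simultaneously invert the exit-symbol component $\sigma_\mathrm E(A)$ by the parametrix construction of the exit calculus in $L^{-\mu;0}_\clas(X^\asymp)$ at $r\to\infty$, so that the remainders become Green operators on $\X$. The main obstacle in both cases is the conormal correction: one has to invert an operator-valued holomorphic (resp. smoothing) Mellin symbol while controlling the resulting Mellin asymptotic type, and then annihilate the whole finite string of lower-order conormal symbols by iterating the Mellin translation product (in (ii) with the exit symbols treated in parallel); the subsidiary point that $A^{-1}-B$ is genuinely Green — not merely continuous — is where elliptic regularity with asymptotics and the good behaviour of the formal adjoint enter.
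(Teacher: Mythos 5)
The paper itself gives no proof of Theorem \ref{T1.2.47} — it is quoted from the literature (cf.\ \cite{Schu20}) — so there is nothing in-text to compare against; judged on its own, your proposal is the standard argument and is essentially correct. The reduction to ellipticity via Theorem \ref{1.2.41}\,(i), the two-step symbolic inversion (interior parametrix plus Mellin quantisation by Theorem \ref{Me}, then correction of the conormal symbols via Theorem \ref{T1.2.28} and the Mellin translation product of Theorem \ref{comp25}), the compactness of Green remainders for the $s$-independence of the Fredholm property, and the kernel/adjoint argument together with $A^{-1}=B+C_lA^{-1}$ for the last claim are exactly the expected ingredients. Two small repairs. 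First, the remainder $I-BA$ has order $0$ and weight data $(\gamma,\gamma,\Theta)$, so the conormal symbols you must annihilate are $\sigma_\c^{-j}(I-BA)$ for $0\le j\le k$, not $\sigma_\c^{-\mu-j}$. Second, your verification that $C_lA^{-1}$ is Green tacitly uses elliptic regularity with asymptotics for $(A^*)^{-1}$ (to see that $(A^*)^{-1}C_l^*$ maps into a space with asymptotics), which is the content of the theorem \emph{following} \ref{T1.2.47}; this is avoidable by writing
$$A^{-1}-B=C_lA^{-1}=C_lB+C_lA^{-1}C_r,$$
where $C_lB$ is Green by the composition properties of the calculus, and $C_lA^{-1}C_r$ (together with its formal adjoint $C_r^*(A^*)^{-1}C_l^*$) has the defining Green mapping properties for the weight data $\g^{-1}=(\gamma-\mu,\gamma,\Theta)$ using only the continuity of $A^{-1}$ and $(A^*)^{-1}$ on all weighted spaces and the asymptotics produced by the outer Green factors. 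With these adjustments the argument, including the parallel treatment of the exit symbols in part (ii), is sound.
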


\nt In the following theorem we indicate by subscripts \virg{$(\Q)$}, etc., spaces with or without asymptotics of type $\Q.$ 

\begin{thm}\index{elliptic!regularity in the cone calculus}\index{elliptic!regularity in the cone calculus!with asymptotics}
\begin{itemize}
\item[\textup{(i)}] Let $A\in{L}^\mu(M,\g)$ be elliptic$,$ $u\in H^{-\infty,\gamma}(M),$ and $Au=f\in H^{s-\mu,\gamma-\mu}_{(\Q)}(M),$ $s\in\R$ $($for some discrete asymptotic type $\Q,$ associated with the weight data $(\gamma-\mu,\Theta)).$ Then $u\in H^{s,\gamma}_{(\P)}(M)$ $($for a resulting discrete asymptotic type $\P,$ associated with the weight data $(\gamma,\Theta)).$
\item[\textup{(ii)}] Let $A\in{L}^\mu(\X,\g)$ be elliptic$,$ $u\in\K^{-\infty,\gamma;-\infty}(\X),$ and $Au=f\in\K^{s-\mu,\gamma-\mu;g}_{(\Q)}(\X),$ $s,g\in\R,$ for some $\Q$ as in \textup{(i)}. Then $u\in\K^{s,\gamma;g}_{(\P)}(\X)$ for some $\P$ as in \textup{(i)}.
\end{itemize}
\end{thm}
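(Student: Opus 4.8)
The proof is the standard elliptic-regularity argument via a parametrix, with everything reduced to the mapping properties assembled above; I describe (i), since (ii) is entirely parallel. As in the proof of Theorem~\ref{Th12} one may assume $\Theta$ finite, the case $\Theta=(-\infty,0]$ then following by passing to projective limits over the truncations of the asymptotic types. Since $A$ is elliptic, Theorem~\ref{T1.2.47}~(i) supplies a parametrix $B\in{L}^{-\mu}(M,\g^{-1})$, $\g^{-1}=(\gamma-\mu,\gamma,\Theta)$, with $G:=I-BA\in{L}_G(M,(\gamma,\gamma,\Theta))$. The hypothesis $u\in H^{-\infty,\gamma}(M)$ means $u\in H^{s',\gamma}(M)$ for some $s'\in\R$, so by Theorem~\ref{cont20}~(i) all the operators below act continuously on the relevant spaces; applying the operator identity $I=BA+G$ to $u$ and substituting $Au=f$ gives the splitting
$$u=Bf+Gu.$$

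The two summands are handled separately. Since $B\in{L}^{-\mu}(M,\g^{-1})$ and $f\in H^{s-\mu,\gamma-\mu}_{(\Q)}(M)$, Theorem~\ref{cont20}~(i) gives $Bf\in H^{s,\gamma}_{(\P_1)}(M)$ for a resulting discrete asymptotic type $\P_1$ associated with $(\gamma,\Theta)$ (and $Bf\in H^{s,\gamma}(M)$ if no input asymptotics are present); the decisive point is that $Bf$ carries the full Sobolev order $s$ of the datum $f$, not merely the a~priori order $s'$ of $u$. Since $G$ is a Green operator in ${L}_G(M,(\gamma,\gamma,\Theta))$, it maps $H^{s',\gamma}(M)$ into $H^{\infty,\gamma}_{\P_2}(M)$ for a fixed asymptotic type $\P_2$ that depends only on $G$, so $Gu\in H^{\infty,\gamma}_{\P_2}(M)$ however rough $u$ was. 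Combining, and using the elementary identity $\E_{\P_1}(\X)+\E_{\P_2}(\X)=\E_{\P}(\X)$ with $\P$ the union of $\P_1$ and $\P_2$ (pole set the union, multiplicities the maxima at common points), one obtains
$$u=Bf+Gu\in H^{s,\gamma}_{(\P_1)}(M)+H^{\infty,\gamma}_{\P_2}(M)\subseteq H^{s,\gamma}_{(\P)}(M),$$
which is (i), the type $\P$ being absent in the no-asymptotics reading.

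For (ii) one argues verbatim with $M$ replaced by $\X$: use the parametrix $B\in{L}^{-\mu}(\X,\g^{-1})$ of Theorem~\ref{T1.2.47}~(ii) with $G=I-BA\in{L}_G(\X,(\gamma,\gamma,\Theta))$, write $u=Bf+Gu$ for $u\in\K^{s',\gamma;g'}(\X)$ (some $s',g'$), invoke Theorem~\ref{cont20}~(ii) for $Bf\in\K^{s,\gamma;g}_{(\P_1)}(\X)$, and use $Gu\in\S^\gamma_{\P_2}(\X)\subseteq\K^{s,\gamma;g}_{\P_2}(\X)$; the conical-exit symbols $\sigma_{\mathrm E}(A)$ have already been inverted in the construction of $B$ and contribute nothing further.

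There is no new idea here, but two points require care. First, the bookkeeping that lets the conclusion inherit the Sobolev order $s$ of $f$ rather than the a~priori order of $u$: this is precisely what composing with the parametrix, together with the continuity Theorem~\ref{cont20}, achieves. Second, and this I view as the main content being invoked, the fact that the Green remainder $I-BA$ genuinely produces discrete asymptotics of a type depending only on $A$; this rests on the structure theory of ${L}_G$ (Green operators mapping into $\S^\gamma_\P$) together with the trivial observation that a finite sum of $\E_\P$-spaces is again an $\E_\P$-space. The only genuine technical nuisance is the uniform handling of the infinite weight interval $\Theta=(-\infty,0]$, disposed of by the usual reduction to finite $\Theta$.
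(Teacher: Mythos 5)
Your argument is correct and is exactly the approach the paper itself takes: the statement is left unproved in Section 2.7 (referred to \cite{Schu20}), but the proof the paper gives for the edge-calculus analogue at the end of Section 4.5 is the same parametrix argument, writing $u=Bf+(I-BA)u$, pushing $f$ through the continuity theorem for the parametrix, and absorbing the Green remainder into a space with fixed asymptotics. Your additional remarks on combining the resulting asymptotic types $\P_1,\P_2$ and on reducing $\Theta=(-\infty,0]$ to finite weight intervals are the right bookkeeping and introduce no gap.
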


%
%
%
%
%
\section{Abstract edge operators}

{\footnotesize The calculus of edge pseudo-differential operators is formulated in terms of various quantisations of \virg{scalar} (edge-degenerate) symbols. One aspect of this approach is based on operator-valued amplitude functions with twisted homogeneity; those encode, in particular, various kinds of trace, potential and Green
operators. Moreover, there is the concept of \virg{abstract} edge spaces which are the model of the concrete weighted edge spaces where an anisotropic information from the wedge geometry is feeded in corresponding scales of spaces, modelled on a vector space with group action. Another important part is a vector/operator-valued version of Kumano-go's technique to oscillatory integrals and pseudo-differential operators globally in the Euclidean space, see \cite{Kuma1}.\par}

%
%
%
%
%
\subsection{Abstract edge spaces}\label{1.3.2}

\begin{defn}
A Hilbert space $H$ $($throughout this consideration assumed to be
separable$)$ is said to be endowed with a group action\index{group action!Hilbert space with $-$} $\kappa=\Kll$
if
\begin{itemize}
    \item[$\mathrm{(i)}$] $\Kl:H\longrightarrow H,$
    $\lambda\in\R_+,$ is a family of isomorphisms$,$ where
    $\Kl\kappa_\nu=\kappa_{\lambda\nu}$ for all $\lambda,\nu\in\R_+,$
    \item[$\mathrm{(ii)}$] $\lambda\mapsto\Kl h,$ $\lambda\in\R_+,$
    defines a function in $C^\infty(\R_+,H)$ for every $h\in H$
    $($i$.$e$.,$ $\kappa$ is strongly continuous$).$
\end{itemize}
\end{defn}

\nt Let us give a few examples of Hilbert spaces with group action. By $H^s(\R^m),$\newnot{Hstand} $s\in\R,$ we denote the standard Sobolev space\index{Sobolev spaces!standard $-$}\index{spaces!Sobolev $-$!standard $-$} in $\R^m$ of smoothness $s.$ Recall that $H^s(\R^m)$ can be defined to be the set of all $u\in\S'(\R^m)$ such that $(Fu)(\xi)=\hat u(\xi)$ belongs to $\ang\xi^{-s}L^2(\R^m_\xi).$ Moreover, if $\Omega\subseteq\R^m$ is a domain with smooth boundary, we set
$$H^s(\Omega):=\set{u|_\Omega:u\in H^s(\R^m)}.$$

\begin{exm}\label{H}
\begin{itemize}
    \item[$\mathrm{(i)}$] $H:=L^2(\R_+)$ and $(\Kl u)(r):=\lambda^\frac12u(\lambda
    r),$ $\lambda\in\R_+;$
    \item[$\mathrm{(ii)}$] $H:=H^s(\R_+),$ $s\in\R,$ with $\Kl$ as
    in $\mathrm{(i)};$
    \item[$\mathrm{(iii)}$] $H:=H^s(\R^m),$ $s\in\R,$ and $(\Kl u)(x):=\lambda^{\frac m2}u(\lambda
    x),$ $\lambda\in\R_+.$
\end{itemize}
\end{exm}

\begin{defn}\label{1.3.3}
A Fr\'{e}chet space $E,$ written as a projective limit of Hilbert
spaces $E^j,$ $j\in\N,$ with continuous embeddings
$E^j\hookrightarrow E^0,$ $j\in\N,$ $($i.e.$,$ $E=\bigcap_{j\in\N} E^j,$ and the projective limit referring to the mappings $E\hookrightarrow E^j,$ $j\in\N)$ is said to be endowed
with a group action\index{group action!\Fr space with $-$} $\kappa=\Kll,$ if
\begin{itemize}
    \item[$\mathrm{(i)}$] $\Kl$ is a group action on $E^0,$
    \item[$\mathrm{(ii)}$] $\kappa|_{E^j}$ is a group action on $E^j$ for every $j\in\N.$
\end{itemize}
\end{defn}

\begin{exm}\label{F}
Let $E=\S(\Rr)$ be written as
$$E=\Projlim{j\in\N}\ang{r}^{-j}H^j(\R_+).$$
Then $\kappa$ from $\mathrm{Example}$ $\ref{H}$ $(\mathrm{i})$
defines a group action on $E.$
\end{exm}

\begin{defn}\label{1.3.5}		
Given a Hilbert space $H$ with group action $\kappa=\Kll$ the abstract edge space\index{edge!abstract $-$ spaces}\index{spaces!edge $-$!abstract $-$} $\WW^s(\R^q,H)$\newnot{symbol:Ws} of smoothness $s\in\R,$ modelled on $H$ and with edge $\R^q,$ is defined to be the completion of $\S(\R^q,H)$ with respect to the norm
\begin{equation}\label{N}
\norm{u}_{\WW^s(\R^q,H)}:=\set{\int\ang{\eta}^{2s}\norm{\kappa_{\ang\eta}^{-1}
\hat{u}(\eta)}^2_Hd\eta}^{1/2},
\end{equation}
where $\hat u(\eta)=(Fu)(\eta)$ is the Fourier transform\index{Fourier!transform}\index{transform!Fourier $-$} in $\R^q.$ If necessary$,$ to indicate the choice of the group action we write
$$\WW^s(\R^q,H)_\kappa.\label{Wskappa}$$
In particular$,$ we set $H^s(\R^q,H)=\WW^s(\R^q,H)_\id.$\newnot{HsRH}
\end{defn}

\nt If $H=\C$ and $\Kl=\id$ for all $\lambda$ we obtain the standard Sobolev space $H^s(\R^q)$ of smoothness $s.$

\begin{rem}
Recall that $\WW^s(\R^q,H)$ can equivalently be defined as the set of all $u\in\S'(\R^q,H)$\label{Sprime} $(=\L(\S(\R^q),H))$ such that $\kappa^{-1}_{\ang\eta}\hat u(\eta)\in\ang{\eta}^{-s}L^2(\R^q,H).$ A proof may be found in \em\cite{Hirs2}\em.
\end{rem}

\begin{rem}
Replacing $\norm{\cdot}_H$ in \textup{(\ref{N})} by an equivalent norm in $H$ and $\eta\mapsto\ang\eta$ by any strictly positive function $p(\eta)$ with $c\ang\eta\le p(\eta)\le C\ang\eta$ for all $\eta\in\R$ and certain $c,C>0,$ gives us an equivalent norm in the space $\WW^s(\R^q,H).$ In particular$,$ we can take $p(\eta)=[\eta]$  $($recall that $\eta\mapsto[\eta]$ is any strictly positive function in $C^\infty(\R^q)$ such that $[\eta]=|\eta|$ for $|\eta|\ge\textup{const}).$
\end{rem}

\nt If $\Omega\subseteq\R^q$ is an open set
\begin{equation}\label{comp}
\WW^s_\comp(\Omega,H)
\end{equation}
will denote the set of all $u\in\D'(\Omega,H)$ $(=\L(C_0^\infty(\Omega),H))$ such that $\varphi u\in\WW^s(\R^q,H)$ for every $\varphi\in C_0^\infty(\Omega).$ Moreover, 
\begin{equation}\label{loc}
\WW^s_\loc(\Omega,H)
\end{equation}
is defined to be the set of all $u\in\D'(\Omega,H)$ such that $\varphi u\in\WW^s_\comp(\Omega,H)$ for every $\varphi\in C_0^\infty(\Omega).$ As a consequence of the above definitions, we have
$$\WW^s_\comp(\Omega,H)\subset\WW^s_\loc(\Omega,H)$$
for every $s\in\R.$

\begin{rem}
The norm $(\ref{N})$ can also be written in the form 
$$\norm{u}_{\WW^s(\R^q,H)_\kappa}=\norm{\ang{\eta}^s F(F^{-1}\kappa_{\ang\eta}^{-1}F)u}_{L^2(\R^q,H)}=
\norm{F^{-1}\kappa_{\ang\eta}^{-1}Fu}_{H^s(\R^q,H)},$$
which shows that there is an isomorphism
\begin{equation}\label{Tiso}
K:=F^{-1}\kappa_{\ang\eta}F:H^s(\R^q,H)\rightarrow{\WW^s(\R^q,H)_\kappa}.
\end{equation}
\end{rem}

\nt In other words, for every $u(y)\in\WW^s(\R^q,H)_\kappa$ there exists a $v(y)\in H^s(\R^q,H)$ such that
\begin{equation}\label{Tt}
\hat u(\eta)=\kappa_{\ang\eta}\hat v(\eta).
\end{equation} 

\nt Later on in this context it will be convenient to replace $\ang\eta$ by $[\eta].$

\begin{exm}\label{3.9}
Let us consider $H^s(\R^n)$ with the group action $(\kappa_\lambda u)(x)=\lambda^{n/2}u(\lambda x),$ $\lambda\in\R_+.$ Then we obtain
\begin{equation}\label{re}
\WW^s(\R^q,H^s(\R^n))=H^s(\R^q\times\R^n)
\end{equation}
for every $s\in\R.$ Observe that $\Kll$ is unitary in $L^2(\R^n);$ thus for $s=0$ the relation $(\ref{re})$ just means $L^2(\R^q,L^2(\R^n))=L^2(\R^q\times\R^n).$
\end{exm}

\begin{rem}
Let $E^1,E^0$ be Hilbert spaces with a continuous embedding $E^1\hookrightarrow E^0,$ and let $\kappa$ be a group action on $E^0$ that restricts to a group action on $E^1.$ Then we have canonical continuous embeddings
$$\WW^{s'}(\R^q,E^1)\hookrightarrow\WW^s(\R^q,E^0)$$
for all $s,s'\in\R,$ $s'\ge s.$
\end{rem}

\begin{defn}\label{Fredge}
Let $E$ be a \Fr space with group action as in \textup{Definition \ref{1.3.3}.} We set
\begin{equation}\label{new411}
\WW^s(\R^q,E):=\Projlim{j\in\N}\WW^s(\R^q,E^j),\qquad s\in\R,
\end{equation}
taken in the \Fr topology of the projective limit.
\end{defn}

\nt Similarly as (\ref{comp}) and (\ref{loc}), we have the spaces
$$\WW^s_\comp(\Omega,E)\quad\mathrm{and}\quad\WW^s_\loc(\Omega,E)$$
in the case of a \Fr space $E=\Projlim{j\in\N}E^j$ with group action, namely,
$$\WW^s_\comp(\Omega,E)=\Projlim{j\in\N}\WW^s_\comp(\Omega,E^j),$$
and analogously for the \virg{loc} version.\\

\begin{prop}\label{1.3.10}
Let $E$ be a \Fr space with group action. Then we have
$$\WW^\infty(\R^q,E)=H^\infty(\R^q,E),$$
i.e.$,$ the space $\WW^\infty(\R^q,E)$ is independent of the choice of the group action $\Kll$ in $E.$
\end{prop}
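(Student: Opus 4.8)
The plan is to reduce everything to the case of a single Hilbert space $H$ and there to compare the norm of $\WW^s(\R^q,H)_\kappa$ with that of $H^s(\R^q,H)=\WW^s(\R^q,H)_\id$, exploiting the fact that a strongly continuous group action has at most polynomial growth. Concretely, I would first establish that for any group action $\kappa=\Kll$ on a Hilbert space $H$ there are constants $c>0$ and $M\ge0$ with $\norm{\Kl}_{\L(H)}\le c\,\max(\lambda,\lambda^{-1})^M$ for all $\lambda\in\R_+$: strong continuity makes $\lambda\mapsto\Kl h$ continuous, so $\set{\Kl:\lambda\in[1,2]}$ is pointwise bounded, hence uniformly bounded by Banach--Steinhaus; the group law $\Kl\kappa_\nu=\kappa_{\lambda\nu}$ together with $\kappa_1=\id$ then propagates this bound to all $\lambda\ge1$ by iteration, while $\Kl^{-1}=\kappa_{1/\lambda}$ handles $0<\lambda\le1$.

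Next I would insert this estimate into the defining norm $(\ref{N})$. Since $\ang\eta\ge1$, the bounds $\norm{\kappa_{\ang\eta}^{-1}}\le c\ang\eta^M$ and $\norm{\kappa_{\ang\eta}}\le c\ang\eta^M$ give, inside the integrand, $c^{-1}\ang\eta^{-M}\norm{\hat u(\eta)}_H\le\norm{\kappa_{\ang\eta}^{-1}\hat u(\eta)}_H\le c\ang\eta^M\norm{\hat u(\eta)}_H$; multiplying by $\ang\eta^{2s}$ and integrating in $\eta$ yields continuous embeddings $H^{s+M}(\R^q,H)\hookrightarrow\WW^s(\R^q,H)_\kappa\hookrightarrow H^{s-M}(\R^q,H)$ for every $s\in\R$. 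Intersecting over all $s\in\R$, the finite shift by $\pm M$ is immaterial, so $\WW^\infty(\R^q,H)=\bigcap_s\WW^s(\R^q,H)_\kappa$ and $H^\infty(\R^q,H)=\bigcap_s H^s(\R^q,H)$ have the same underlying set and carry mutually dominating families of Hilbert-space norms; hence they coincide as \Fr spaces, and the resulting space is manifestly independent of $\kappa$.

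Finally, for a \Fr space $E=\Projlim{j\in\N}E^j$ with group action I would just unravel Definition \ref{Fredge}: $\WW^\infty(\R^q,E)=\bigcap_s\WW^s(\R^q,E)=\bigcap_s\Projlim{j\in\N}\WW^s(\R^q,E^j)=\Projlim{j\in\N}\bigcap_s\WW^s(\R^q,E^j)=\Projlim{j\in\N}\WW^\infty(\R^q,E^j)$, and by the Hilbert-space step each factor equals $H^\infty(\R^q,E^j)$, so reassembling gives $\WW^\infty(\R^q,E)=\Projlim{j\in\N}H^\infty(\R^q,E^j)=H^\infty(\R^q,E)$, with no reference to $\kappa$. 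I expect the only real obstacle to be the polynomial growth bound of the first step, together with the minor care needed to check that the norm comparison and the interchange of $\bigcap_s$ with $\Projlim{j\in\N}$ are valid at the level of \Fr topologies and not merely as sets; everything after that is routine bookkeeping.
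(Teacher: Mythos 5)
Your argument is correct and follows essentially the same route as the paper: polynomial bounds on $\norm{\kappa_\lambda}_{\L(E^j)}$ yield the embeddings $H^{s+M_j}(\R^q,E^j)\hookrightarrow\WW^s(\R^q,E^j)\hookrightarrow H^{s-M_j}(\R^q,E^j),$ whence $\WW^\infty(\R^q,E^j)=H^\infty(\R^q,E^j)$ for each $j,$ and the projective limit over $j$ finishes the proof. The only difference is that you spell out the Banach--Steinhaus justification of the growth bound, which the paper simply asserts.
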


\begin{proof}
Let us first note that if $E$ is a \Fr space with group action, $E=\Projlim{j\in\N}E^j,$ for every $j$ there are constants $c_j,M_j>0$ such that
$$
\norm{\Kl}_{\L(E^j)}\le c_j\left\{
\begin{array}{ll}
\lambda^{M_j} & \mathrm{for\;}\lambda\ge1\\[2mm]
\lambda^{-M_j} & \mathrm{for\;}\lambda\le1
\end{array}\right..
$$
We then have, for every $s\in\R,$ continuous embeddings
\begin{equation}\label{new30}
H^{s+M_j}(\R^q,E^j)\hookrightarrow\WW^s(\R^q,E^j)\hookrightarrow H^{s-M_j}(\R^q,E^j),
\end{equation}
i.e., we obtain $\WW^\infty(\R^q,E^j)=H^\infty(\R^q,E^j)$ for every $j,$ and hence
$$\WW^\infty(\R^q,E)=\Projlim{j\in\N}\WW^\infty(\R^q,E^j)=
\Projlim{j\in\N}H^\infty(\R^q,E^j)=H^\infty(\R^q,E).$$
\end{proof}

\begin{rem}\label{emb}
For $s'\ge s$ we have a canonical continuous embedding
$$\WW^{s'}(\R^q,E)\hookrightarrow\WW^s(\R^q,E).$$
\end{rem}

\begin{prop}\label{2.2.28}
Let $E=\Projlim{j\in\N}E^j$ be a \Fr space with group action. Then for every $s\in\N$ we have
$$\WW^s(\R^q,E)=\{u\in\WW^0(\R^q,E):D^\alpha_yu\in\WW^0(\R^q,E), \abs\alpha\le s\}.$$
\end{prop}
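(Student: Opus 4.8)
The plan is to reduce the assertion to the case of a single Hilbert space and there to read it off directly from the Fourier‑side description of the $\WW^s$‑norm.

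\emph{Step 1: reduction to the Hilbert case.} By Definition \ref{Fredge} we have $\WW^s(\R^q,E)=\Projlim{j\in\N}\WW^s(\R^q,E^j)$ for every $s$ (in particular for $s=0$), and, unwinding the definitions, the condition ``$u\in\WW^0(\R^q,E)$ and $D^\alpha_y u\in\WW^0(\R^q,E)$ for all $|\alpha|\le s$'' means precisely that for each $j\in\N$ one has $u\in\WW^0(\R^q,E^j)$ and $D^\alpha_y u\in\WW^0(\R^q,E^j)$ for all $|\alpha|\le s$. Hence it suffices to prove, for a single Hilbert space $H$ with group action $\kappa$ and $s\in\N$,
\begin{equation}\label{plan-hilb}
\WW^s(\R^q,H)=\{u\in\WW^0(\R^q,H):D^\alpha_y u\in\WW^0(\R^q,H),\ |\alpha|\le s\};
\end{equation}
taking projective limits over $j$ then yields the asserted equality for $E$ (as sets, and with equivalent Fréchet topologies, once the Hilbert case is proved with equivalent norms).

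\emph{Step 2: the Fourier‑side computation.} Work from the start with the intrinsic description $\WW^s(\R^q,H)=\{u\in\S'(\R^q,H):\kappa_{\ang\eta}^{-1}\hat u(\eta)\in\ang\eta^{-s}L^2(\R^q,H)\}$ recalled above (cf.\ \cite{Hirs2}), so that for $u\in\WW^0(\R^q,H)$ the derivatives $D^\alpha_y u$ and their Fourier transforms are unambiguously defined, with $\widehat{D^\alpha_y u}(\eta)=\eta^\alpha\hat u(\eta)$ in $\S'(\R^q,H)$. Since the scalar factor $\eta^\alpha$ may be pulled out of $\norm{\kappa_{\ang\eta}^{-1}(\eta^\alpha\hat u(\eta))}_H=|\eta^\alpha|\,\norm{\kappa_{\ang\eta}^{-1}\hat u(\eta)}_H$, one has
$$
\norm{D^\alpha_y u}_{\WW^0(\R^q,H)}^2=\int_{\R^q}|\eta^\alpha|^2\,\norm{\kappa_{\ang\eta}^{-1}\hat u(\eta)}_H^2\,d\eta .
$$
Summing over $|\alpha|\le s$ and using the elementary two‑sided estimate $c_s\ang\eta^{2s}\le\sum_{|\alpha|\le s}|\eta^\alpha|^2\le C_s\ang\eta^{2s}$ (valid for $s\in\N$ with constants $c_s,C_s>0$), we obtain
$$
c_s\,\norm{u}_{\WW^s(\R^q,H)}^2\le\sum_{|\alpha|\le s}\norm{D^\alpha_y u}_{\WW^0(\R^q,H)}^2\le C_s\,\norm{u}_{\WW^s(\R^q,H)}^2 .
$$
Consequently the right–hand side of \eqref{plan-hilb} is exactly the set of $u\in\S'(\R^q,H)$ for which $\sum_{|\alpha|\le s}\norm{D^\alpha_y u}_{\WW^0(\R^q,H)}^2<\infty$, i.e.\ $\WW^s(\R^q,H)$, and the two norms are equivalent. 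This proves \eqref{plan-hilb}, and Step~1 finishes the argument.

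\emph{On the difficulty.} There is essentially no analytic obstacle: notably, neither the strong continuity of $\kappa$ nor the bound $\norm{\kappa_\lambda}\le c\max(\lambda^{M},\lambda^{-M})$ used in Proposition \ref{1.3.10} is needed here, since only the \emph{scalars} $\eta^\alpha$ are manipulated and they commute with $\kappa_{\ang\eta}^{-1}$ trivially. The single point requiring care is to phrase everything through the $\S'$‑characterisation of $\WW^s$ rather than through the completion of $\S(\R^q,H)$, so that $D^\alpha_y u$ makes sense on $\WW^0(\R^q,H)$; after that the computation is the same Plancherel‑type identity as for the scalar Sobolev spaces $H^s(\R^q)$, $s\in\N$.
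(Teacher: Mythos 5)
Your argument is correct and coincides with the paper's own proof: both rest on the identity $\widehat{D^\alpha_y u}(\eta)=\eta^\alpha\hat u(\eta)$, the fact that the scalar $\eta^\alpha$ passes through $\kappa_{\ang\eta}^{-1}$, and the two-sided estimate between $\ang\eta^{2s}$ and $\sum_{|\alpha|\le s}|\eta^\alpha|^2$, applied for each $E^j$ and then carried over to the projective limit. Your additional remarks on the $\S'$-characterisation and the norm equivalence only make explicit what the paper leaves implicit.
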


\begin{proof}
There are constants $c_1,c_2>0$ such that
$$c_1(\sum_{\abs\alpha\le s}\abs{\eta^\alpha}^2)\le\ang{\eta}^{2s}\le c_2(\sum_{\abs\alpha\le s}\abs{\eta^\alpha}^2).$$
Therefore, for all $\abs\alpha\le s,$
$$\int||\kappa^{-1}_{\ang\eta}\eta^\alpha\hat u(\eta)||^2_{E^j}d\eta<\infty\quad\Rightarrow
\quad\int\ang\eta^{2s}||\kappa^{-1}_{\ang\eta}\hat u(\eta)||^2_{E^j}d\eta<\infty,$$
and vice versa$,$ and this holds for all $j.$
\end{proof}

\begin{prop}
Let $E=\Projlim{j\in\N}E^j$ be a \Fr space with group action$,$ and set
$$L^2(\R^q,E):=\Projlim{j\in\N}L^2(\R^q,E^j).$$
Then we have
\begin{equation}\label{2.2.27}
\WW^\infty(\R^q,E)=\{u\in L^2(\R^q,E):D^\alpha_yu\in L^2(\R^q,E),\mathrm{\;for\,all\;}\alpha\in\N^q\}.
\end{equation}
\end{prop}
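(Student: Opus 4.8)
The plan is to reduce the statement to the case of the trivial group action via Proposition~\ref{1.3.10}, and then read off the conclusion from Proposition~\ref{2.2.28}. First I would note that, by the definition of the projective limit in \eqref{new411} together with the embeddings of Remark~\ref{emb}, one has $\WW^\infty(\R^q,E)=\bigcap_{s\in\N}\WW^s(\R^q,E)$; and by Proposition~\ref{1.3.10} this coincides with $H^\infty(\R^q,E)=\bigcap_{s\in\N}H^s(\R^q,E)$, a space that does not depend on the choice of $\kappa$. Thus it suffices to work with the trivial group action throughout.

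Next I would identify the base level: $H^0(\R^q,E)=\WW^0(\R^q,E)_\id$ equals $L^2(\R^q,E)=\Projlim{j\in\N}L^2(\R^q,E^j)$. Indeed, for each $j$ the norm \eqref{N} with $\kappa=\id$ is, by Plancherel's theorem in the Hilbert space $E^j$, nothing but the $L^2(\R^q,E^j)$-norm, so the two \Fr spaces agree (as topological vector spaces, hence a fortiori as sets).

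Then I would apply Proposition~\ref{2.2.28} with $\kappa=\id$: for every $s\in\N$ it yields
$$H^s(\R^q,E)=\{u\in L^2(\R^q,E):D^\alpha_yu\in L^2(\R^q,E),\ \abs\alpha\le s\}.$$
Intersecting over all $s\in\N$ and combining with the first step gives
$$\WW^\infty(\R^q,E)=H^\infty(\R^q,E)=\{u\in L^2(\R^q,E):D^\alpha_yu\in L^2(\R^q,E)\ \text{for all}\ \alpha\in\N^q\},$$
where the $E$-valued derivatives are understood in the distributional sense; this is exactly \eqref{2.2.27}.

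There is essentially no obstacle here, since the real content has already been isolated in Propositions~\ref{1.3.10} and~\ref{2.2.28}. The only point that genuinely needs a line of justification is the identification $\WW^0(\R^q,E)_\id=L^2(\R^q,E)$, which is Plancherel applied at each Fréchet level; everything else is the bookkeeping of commuting the countable intersection over the Sobolev index $s$ past the one defining $L^2(\R^q,E)$, which is immediate.
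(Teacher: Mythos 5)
Your proposal is correct and follows essentially the same route as the paper: both arguments reduce $\WW^\infty$ to $H^\infty$ via the norm equivalences behind Proposition~\ref{1.3.10} (the embeddings \eqref{new30}), invoke Proposition~\ref{2.2.28} with the trivial group action to characterise $H^s$ by $L^2$-derivatives up to order $s$, and then intersect over $s$. The only cosmetic difference is that the paper carries out these steps at each Hilbert level $E^j$ and passes to the projective limit at the end, whereas you work with the \Fr space $E$ throughout; the Plancherel identification $H^0(\R^q,E^j)=L^2(\R^q,E^j)$ you make explicit is left implicit in the paper.
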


\begin{proof}
For every fixed $j$ we have
$$H^s(\R^q,E^j)=\{u\in\S'(\R^q,E^j):\int\ang\eta^{2s}||\hat u(\eta)||^2_{E^j}d\eta<\infty\}.$$
From \eqref{new30} it follows that $\WW^\infty(\R^q,E^j)=\bigcap_{s\in\R}\WW^s(\R^q,E^j)=\bigcap_{s\in\N}H^s(\R^q,E^j).$ By virtue of
$$H^s(\R^q,E^j)=\{u\in L^2(\R^q,E^j):D^\alpha_yu\in L^2(\R^q,E^j),\mathrm{\;for\,all\;}\alpha\in\N^q,\abs\alpha\le s\}$$
for every $s\in\N$ (see Proposition \ref{2.2.28}) we then obtain the relation (\ref{2.2.27}) for the space $E^j$ instead of $E.$ Since this is true for all $j$ it follows (\ref{2.2.27}) for $E$ itself.
\end{proof}

\begin{cor}\label{1.3.15}
Let $E=\varprojlim_{j\in\N}E^j$ be a \Fr space with group action$.$
Then we have
\begin{equation}\label{UE}
\WW^\infty_\comp(\Omega,E)=C_0^\infty(\Omega,E),\quad\WW^\infty_\loc(\Omega,E)=C^\infty(\Omega,E).
\end{equation}
\end{cor}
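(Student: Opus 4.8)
The plan is to strip away the group action by means of Proposition \ref{1.3.10} and then fall back on the classical Sobolev embedding theorem in its Hilbert-space-valued form. First I would observe that, by Proposition \ref{1.3.10} — more precisely, by the sandwiching embeddings \eqref{new30} applied to each Hilbert space $E^j$ — one has $\WW^\infty(\R^q,E)=H^\infty(\R^q,E)$ as topological vector spaces, and likewise $\WW^\infty(\R^q,E^j)=H^\infty(\R^q,E^j)$ for every $j$. Since $\WW^\infty_\comp$ and $\WW^\infty_\loc$ are built from $\WW^\infty(\R^q,\cdot)$ purely by multiplication with cut-off functions $\varphi\in C_0^\infty(\Omega)$, this already yields $\WW^\infty_\comp(\Omega,E)=H^\infty_\comp(\Omega,E)$ and $\WW^\infty_\loc(\Omega,E)=H^\infty_\loc(\Omega,E)$, the group action having disappeared. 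Thus \eqref{UE} is reduced to the trivial-group-action statements $H^\infty_\comp(\Omega,E)=C_0^\infty(\Omega,E)$ and $H^\infty_\loc(\Omega,E)=C^\infty(\Omega,E)$.

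Next I would reduce to a single Hilbert space. Writing $E=\varprojlim_{j\in\N}E^j$, one has $H^\infty_\loc(\Omega,E)=\varprojlim_{j\in\N}H^\infty_\loc(\Omega,E^j)$ and $C^\infty(\Omega,E)=\varprojlim_{j\in\N}C^\infty(\Omega,E^j)$, and the analogous identities with compact supports, because a map $\Omega\to E$ is smooth, resp.\ compactly supported, precisely when each of its compositions with the injections $E\hookrightarrow E^j$ is, and the support does not depend on $j$. Hence it suffices to prove $H^\infty_\loc(\Omega,H)=C^\infty(\Omega,H)$ and $H^\infty_\comp(\Omega,H)=C_0^\infty(\Omega,H)$ for one Hilbert space $H$.

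For that I would run the usual Sobolev-lemma argument verbatim in the $H$-valued setting. If $s>q/2$ and $u\in H^s(\R^q,H)$, then $\hat u\in\ang\eta^{-s}L^2(\R^q,H)$ and, since $\ang\eta^{-s}\in L^2(\R^q)$, the Cauchy--Schwarz inequality gives $\hat u\in L^1(\R^q,H)$, so $u=F^{-1}\hat u$ is bounded and continuous into $H$. As $D^\alpha_y$ maps $H^s(\R^q,H)$ into $H^{s-|\alpha|}(\R^q,H)$, iterating this gives $H^\infty(\R^q,H)\subset\bigcap_{k\in\N}C_b^k(\R^q,H)$ (bounded continuous functions with bounded derivatives of all orders), and localising with cut-off functions yields $H^\infty_\loc(\Omega,H)\subset C^\infty(\Omega,H)$ and $H^\infty_\comp(\Omega,H)\subset C_0^\infty(\Omega,H)$. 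The reverse inclusions are elementary: for $v\in C_0^\infty(\R^q,H)$ the transform $\hat v$ is smooth and rapidly decreasing, so $\ang\eta^s\hat v\in L^2(\R^q,H)$ for all $s$, i.e.\ $v\in H^\infty(\R^q,H)$; applying this to $\varphi u$ with $u\in C^\infty(\Omega,H)$ and $\varphi\in C_0^\infty(\Omega)$ proves $C^\infty(\Omega,H)\subset H^\infty_\loc(\Omega,H)$, and restricting to compactly supported $u$ proves $C_0^\infty(\Omega,H)\subset H^\infty_\comp(\Omega,H)$. Combining the three reductions with this lemma gives \eqref{UE}.

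The main obstacle here is not analytic but organisational: one has to check carefully that the identification of $\WW^\infty$ with $H^\infty$ (that is, the disappearance of the group action) is compatible with the linking maps $E^{j+1}\hookrightarrow E^j$ and with multiplication by cut-off functions, so that the Hilbert-space equalities genuinely assemble into the Fréchet-space statement \eqref{UE}. Once the relevant definitions (Definition \ref{Fredge} together with the associated $\comp$/$\loc$ spaces, and the description of $C^\infty(\Omega,E)$, $C_0^\infty(\Omega,E)$ as $E$-valued smooth functions) have been matched up, nothing beyond the standard Sobolev embedding remains.
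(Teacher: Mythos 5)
Your proposal is correct and follows essentially the same route as the paper: both first invoke Proposition \ref{1.3.10} to replace $\WW^\infty$ by $H^\infty$ (eliminating the group action), then identify $H^\infty_{\comp/\loc}$ with $C_0^\infty/C^\infty$ via the Sobolev embedding. The only difference is one of explicitness — the paper quotes the characterisation of $H^\infty(\R^q,E)$ by $L^2$-derivatives from the preceding proposition and asserts that compact support plus all derivatives in $L^2$ ``just characterises'' $C^\infty$, whereas you spell out the underlying vector-valued Sobolev lemma and the componentwise reduction over the $E^j$.
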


\begin{proof}
First, Proposition \ref{1.3.10} gives us
$$\WW^\infty(\R^q,E)=H^\infty(\R^q,E)$$
which is characterised as the space 
$$\set{u\in L^2(\R^q,E):D^\alpha_yu\in L^2(\R^q,E)\mathrm{\;for\,all\;}\alpha\in\N^q}.$$
For $u\in\WW^\infty_\loc(\Omega,E)$ and every fixed $\varphi\in C_0^\infty(\Omega)$ it follows that $D^\alpha_y(\varphi u)$ belongs to $L^2(\R^q,E)$ and has compact support for all $\alpha\in\N^q.$ This just characterises the space $C^\infty(\Omega,E).$ The first relation of (\ref{UE}) is then an immediate consequence.
\end{proof}

\nt Later on for $H$ we will insert weighted cone spaces $\KsgX$ on the infinite stretched cone $\X=\R_+\times X$ with the group action $(\kappa_\lambda u)(r,x)=\lambda^\frac{n+1}2u(\lambda r,x),$ $\lambda\in\R_+,$ for $n=\dim X.$ Then we obtain so-called weighted edge spaces\index{weighted!spaces!edge $-$}\index{spaces!edge $-$!weighted $-$}
$$\WW^s(\R^q,\KsgX).\label{WsKsg}$$
More generally, it may be interesting to admit weights for $r\rightarrow\infty,$ i.e., to take the spaces $\K^{s,\gamma;g}(\X)=\ang{r}^{-g}\KsgX$ (cf. Definition \ref{ksgg}) for any $g\in\R,$ with the group action \eqref{Klhom} and associated edge spaces
\begin{equation}\label{new311}
\WW^s(\R^q,\K^{s,\gamma;g}(\X))_{\kappa^g}.
\end{equation}

\nt In the case $g=0$ we simply write $\kappa$ and $\WW^s(\R^q,\K^{s,\gamma}(\X)).$

\begin{thm}
For every $s,\gamma,g\in\R$ we have 
$$H^s_\comp(\X\times\R^q)\subset\WW^s(\R^q,\K^{s,\gamma;g}(\X))_{\kappa^g}
\subset H^s_\loc(\X\times\R^q).$$
\end{thm}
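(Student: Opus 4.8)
The two inclusions, taken together, say precisely that on the subspace of distributions with support in a fixed compact set $K\subset\X\times\R^q$ the norms of $\WW^s(\R^q,\K^{s,\gamma;g}(\X))_{\kappa^g}$ and of $H^s(\X\times\R^q)$ are equivalent. So I would fix $0<a<b<\infty$ and a compact $K'\Subset\R^q$, put $K=[a,b]\times X\times K'$, and aim to prove $c\norm{u}_{H^s(\X\times\R^q)}\le\norm{u}_{\WW^s(\R^q,\K^{s,\gamma;g}(\X))_{\kappa^g}}\le C\norm{u}_{H^s(\X\times\R^q)}$ for all $u$ supported in $K$, with $c,C$ depending only on $K$. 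Granting this, the first inclusion of the theorem is the upper estimate applied to $u\in H^s_\comp(\X\times\R^q)$, and the second is the lower estimate applied to $\chi(r)\psi(y)u$ for $u\in\WW^s$ and cut-off functions $\chi\in C_0^\infty(\R_+)$, $\psi\in C_0^\infty(\R^q)$, once one knows that multiplication by $\chi(r)\psi(y)$ is continuous on $\WW^s(\R^q,\K^{s,\gamma;g}(\X))_{\kappa^g}$; for the $\psi(y)$-factor this is standard, and for the $\chi(r)$-factor it follows from $(\kappa^g_{[\eta]})^{-1}\chi(r)=\chi(r/[\eta])(\kappa^g_{[\eta]})^{-1}$ together with the observation that the multiplier $\chi(r/[\eta])$ on $\K^{s,\gamma;g}(\X)$ has norm bounded uniformly in $\eta$ (its $r$-derivatives are $O([\eta]^{-k})$). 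By a finite partition of unity on the compact manifold $X$ subordinate to coordinate charts, together with the chart invariance of $\H^{s,\gamma}$, $H^{s;g}_\cone$, $\K^{s,\gamma;g}$ and $\WW^s$, I would then reduce to the model cone $\R_+\times\R^n$.

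The heart of the matter is the norm comparison on $K$-supported $u$. Writing $\widehat u(\eta)=(F_{y\to\eta}u)(\eta)$, which is supported in $[a,b]\times X$ for every $\eta$, one has $(\kappa^g_{[\eta]})^{-1}\widehat u(\eta)(r,x)=[\eta]^{-\frac{n+1}2-g}\widehat u(\eta)(r/[\eta],x)$, supported in $r\in[[\eta]a,\,[\eta]b]$, which escapes to $r=\infty$ as $\abs\eta\to\infty$. Hence, in the decomposition $\K^{s,\gamma;g}(\X)=\omega\H^{s,\gamma}(\X)+(1-\omega)H^{s;g}_\cone(\X)$ of Definition \ref{ksgg}, only the exit component $H^{s;g}_\cone(\X)=\ang r^{-g}H^s_\cone(X^\asymp)|_{\X}$ is felt; on that support $\ang r\asymp r\asymp[\eta]$, so the factor $\ang r^{-g}$ is absorbed by the normalisation $[\eta]^{-g}$ in $\kappa^g$ and the weights $\gamma$, $g$ drop out of the estimate entirely. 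It then remains to compute $\norm{(\kappa^g_{[\eta]})^{-1}\widehat u(\eta)}_{H^s_\cone(X^\asymp)}$, and here one uses the defining seminorm of $H^s_\cone(X^\asymp)$, built from $[r]^{-s}\Op_r(p)(\eta^1)$ with $p(r,\rho)=\til p([r]\rho,[r]\eta^1)$ an order reducing family: by the homogeneity of $\til p$ in its arguments this operator is covariant under the dilations $r\mapsto[\eta]r$ up to terms of strictly lower order (the lower-order terms coming only from the behaviour of $r\mapsto[r]$ near $r\sim1$). Substituting the dilation into the $\eta$-integral $\int[\eta]^{2s}\norm{(\kappa^g_{[\eta]})^{-1}\widehat u(\eta)}^2_{\K^{s,\gamma;g}(\X)}d\eta$, changing variables $\xi\mapsto\xi/[\eta]$ in the $x$-Fourier variable, and using that $\Op_r(p)(\eta^1)$ realises (on $K$-supported functions) the parameter-dependent $H^s$-norm with parameter $[\eta]\eta^1$, one finds the weight collapses to $[\eta]^{2s}\ang{\xi/[\eta]}^{2s}\asymp\ang{(\eta,\xi)}^{2s}$; this is exactly the mechanism behind Example \ref{3.9}, $\WW^s(\R^q,H^s(\R^n))=H^s(\R^q\times\R^n)$, to which the present computation reduces after localisation in $r$, and it yields the claimed two-sided bound with constants depending only on $K$.

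The step I expect to be the main obstacle is the one just described: quantifying the ``dilation covariance up to lower order'' of the exit order reducing family $\Op_r(p)(\eta^1)$ uniformly as the $r$-support of $(\kappa^g_{[\eta]})^{-1}\widehat u(\eta)$ is pushed to infinity, and checking that the lower-order remainders contribute only something controlled by, say, $C_K\norm{u}^2_{H^{s-1}(\X\times\R^q)}$, which on the $K$-supported subspace is absorbed into the main term. Everything else is routine: the chart reduction, the continuity of the cut-off multiplications, and the bookkeeping of powers of $[\eta]$. In particular, once one knows that $\H^{s,\gamma}(\X)$, $H^{s;g}_\cone(\X)$ and $H^s(\X)$ carry equivalent norms on functions whose $r$-support lies in a fixed compact subinterval of $\R_+$ — which follows from the embeddings $\H^{s,\gamma}(\X)\subset H^s_\loc(\X)$ and $H^{s;g}_\cone(\X)\subset H^s_\loc(\X)$ recorded above together with their elementary converses — the remaining inequalities in the model case are immediate.
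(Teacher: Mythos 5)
The paper states this theorem without proof (it is one of the results deferred to \cite{Schu20}), so there is no in-text argument to compare against; judged on its own, your proposal follows the standard route and is essentially correct. The key observations are all in place: reduction to a two-sided norm estimate on distributions supported in $K=[a,b]\times X\times K'$; the fact that $\kappa_{[\eta]}^{g,-1}\hat u(\eta)$ is supported in $r\in[[\eta]a,[\eta]b]$, so that only the exit component $H^{s;g}_\cone$ of $\K^{s,\gamma;g}(\X)$ is seen and the weights $\gamma,g$ cancel against the normalisations in $\kappa^g$ and $\ang r^{-g}$; and the final reduction, via conical coordinates and a partition of unity on $X$, to the exact identity $\ang\eta^{2s}\ang{\xi/\ang\eta}^{2s}=\ang{(\xi,\eta)}^{2s}$ behind Example \ref{3.9}. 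Your handling of the cut-off multiplications for the second inclusion (uniform boundedness of $\chi(r/[\eta])$ as an exit multiplier) is also correct. The one soft spot is exactly the step you flag, but your proposed fix is not quite right as stated: ``absorbing'' an $H^{s-1}$-remainder into the main term works for the upper estimate but not for the lower one, since a bound $\norm{u}^2_{H^s}\le C(\norm{u}^2_{\WW^s}+\norm{u}^2_{H^{s-1}})$ does not self-improve. The cleaner formulation is to prove, as a statement purely about cone spaces, that for $v$ with $r$-support in $[\lambda a,\lambda b]$ one has $\norm{v}_{\K^{s,\gamma;g}(\X)}\asymp\lambda^{-g}\norm{v_\iota}_{H^s(\R^{1+n})}$ \emph{uniformly in} $\lambda\ge\lambda_0$, where $v_\iota$ denotes $v$ in the conical charts $\chi_j^\pm$; this two-sided uniform equivalence follows from the chart-based description of $H^s_\cone(X^\asymp)$ (it is designed to be $H^s(\R^{1+n})$ in the coordinates $\til x$), rather than from an absorption argument, and with it the remaining computation is exact. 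Alternatively one can run an induction over integer $s$ starting from the unweighted $s=0$ case, using Proposition \ref{2.2.28} and duality.
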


\nt The edge pseudo-differential operators below will be independent of the specific choice of the group action when we refer to operators in spaces of that kind. A careful analysis of the functional analytic properties of weighted edge spaces (\ref{new311}) shows that $g:=s-\gamma$ may be a natural choice. In this case we set
\begin{equation}\label{new111}
K^{s,\gamma}(\X):=\K^{s,\gamma;s-\gamma}(\X)
\end{equation}
and
\begin{equation}\label{new211}
W^{s,\gamma}(\X\times\R^q):=\WW^s(\R^q,K^{s,\gamma}(\X))_{\kappa^{s-\gamma}}.
\end{equation}

\nt However, in the majority of our considerations we refer to the case $g=0$ which is natural, too (e.g., when standard Sobolev spaces are anisotropically decomposed, cf. Example \ref{3.9}).

\begin{thm}\label{sum}
Let $E$ be a \Fr space with group action $\kappa,$ and let $E_0,E_1$ be \Fr subspaces$,$ continuously embedded in $E,$ such that $E=E_0+E_1$ in the sense of the non-direct sum.
\begin{itemize}
\item[\textup{(i)}] If $\kappa$ restricts to group actions on $E_i,$ $i=0,1,$ then $\WW^s(\R^q,E_i)$ is continuously embedded in $\WW^s(\R^q,E),$ and we have
$$\WW^s(\R^q,E)=\WW^s(\R^q,E_0)+\WW^s(\R^q,E_1),$$
also as a non-direct sum$;$ here $\WW^s(\R^q,E_i)=KH^s(\R^q,E_i),$ see \textup{(\ref{Tiso})}.
\item[\textup{(ii)}] In the case that $E_0$ or $E_1$ are not invariant under $\kappa,$ we then have
$$\WW^s(\R^q,E)=KH^s(\R^q,E_0)+KH^s(\R^q,E_1),$$
again as a non-direct sum.
\end{itemize}
\end{thm}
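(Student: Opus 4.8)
The plan is to move the whole statement to the ``untwisted'' level by means of the isomorphism $K=F^{-1}\kappa_{\ang\eta}F$ of \eqref{Tiso} and to reduce it to an elementary fact about vector-valued $L^2$-spaces. For a \Fr space $E=\Projlim{j\in\N}E^j$ with group action, $K$ acts compatibly on each level $H^s(\R^q,E^j)$ and hence is an isomorphism $K\colon H^s(\R^q,E)\to\WW^s(\R^q,E)$; likewise $\Lambda^s u:=F^{-1}(\ang\eta^s\hat u)$ is an isomorphism $H^s(\R^q,E)\to L^2(\R^q,E)=\Projlim{j\in\N}L^2(\R^q,E^j)$. Both maps are linear and act only in the edge variable, so they take the continuously embedded subspaces $H^s(\R^q,E_i)$ to their images $KH^s(\R^q,E_i)$, resp.\ $L^2(\R^q,E_i)$; and when $\kappa$ restricts to a group action on $E_i$, the Fourier description of $\WW^s$ recalled in the remark following \eqref{comp}, applied level by level, identifies $KH^s(\R^q,E_i)$ with the intrinsically defined space $\WW^s(\R^q,E_i)$, whose continuous embedding into $\WW^s(\R^q,E)$ then follows from $E_i\hookrightarrow E$ and the compatibility of the group actions by estimating the norm \eqref{N} seminorm by seminorm. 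Since $K$ is linear and bijective, $K[A+B]=K[A]+K[B]$ for subspaces $A,B$; applying this twice, statements (i) and (ii) are both equivalent to the single assertion
$$L^2(\R^q,E)=L^2(\R^q,E_0)+L^2(\R^q,E_1),$$
again a non-direct sum, the non-directness being simply inherited (nothing is asserted about a topological complement).

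For this identity the inclusion $\supseteq$ is clear from $E_i\hookrightarrow E$. For $\subseteq$ I would invoke the open mapping theorem: the continuous linear surjection $\sigma\colon E_0\oplus E_1\to E$, $(x_0,x_1)\mapsto x_0+x_1$, of \Fr spaces is open, hence descends to a topological isomorphism $(E_0\oplus E_1)/N\cong E$ with $N=\ker\sigma$ closed. It therefore suffices to show that $L^2(\R^q,-)$ carries the quotient map $E_0\oplus E_1\to(E_0\oplus E_1)/N$ to a quotient map; then the addition map $L^2(\R^q,E_0)\oplus L^2(\R^q,E_1)=L^2(\R^q,E_0\oplus E_1)\to L^2(\R^q,E)$ is surjective, which is exactly the desired splitting of an arbitrary $u\in L^2(\R^q,E)$. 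For Hilbert spaces this is immediate from $E_0\oplus E_1=N\oplus N^\perp$, which gives $L^2(\R^q,E_0\oplus E_1)=L^2(\R^q,N)\oplus L^2(\R^q,N^\perp)$ and identifies $L^2(\R^q,N^\perp)$ with $L^2(\R^q,(E_0\oplus E_1)/N)$ via $\sigma$. The \Fr case follows by passing to the projective limit over the Hilbert levels $E_i^j$; alternatively one argues directly by measurable selection, choosing for a.e.\ $\eta$ a preimage $(u_0(\eta),u_1(\eta))\in\sigma^{-1}(u(\eta))$ with $\|u_i(\eta)\|_{E_i^j}\le c_j\|u(\eta)\|_{E^{\phi(j)}}$ for all $j$ (possible by the quantitative form of the open mapping theorem) and measurable in $\eta$, so that $u_i\in L^2(\R^q,E_i)$.

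The hard part is precisely this last step: the compatibility of $L^2(\R^q,-)$ with quotients of \Fr spaces. In the Hilbert category it is automatic from the orthogonal splitting above; the passage to \Fr spaces requires fixing a reduced projective spectrum for $E_0\oplus E_1$, $N$ and $E$ and verifying a Mittag--Leffler condition for the kernel spectra --- which holds here since the relevant transition maps have dense range and all spaces involved are separable --- or, equivalently, replacing this homological bookkeeping by the measurable-selection argument, which succeeds precisely because of separability. Granted this, everything else is formal: the two reductions via $K$ and $\Lambda^s$, the identification $\WW^s(\R^q,E_i)=KH^s(\R^q,E_i)$ under invariance of $E_i$, and the seminorm estimates giving the continuous embeddings in (i).
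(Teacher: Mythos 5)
The paper states Theorem \ref{sum} without proof (it defers such functional-analytic facts to \cite{Schu20} and \cite{Hirs2}), so there is no argument of the authors to match yours against; judged on its own, your reduction is the natural one, but the decisive step is not closed. The passage via $K$ from \eqref{Tiso} and $\Lambda^s=F^{-1}\ang{\eta}^sF$ to the single assertion $L^2(\R^q,E)=L^2(\R^q,E_0)+L^2(\R^q,E_1)$ is correct, as are the seminorm-by-seminorm embeddings in (i), the identification $\WW^s(\R^q,E_i)=KH^s(\R^q,E_i)$ under $\kappa$-invariance of $E_i,$ and the Hilbert-space case of the lifting via $E_0\oplus E_1=N\oplus N^\perp.$

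The gap is the \Fr case of the lifting, which is the only non-formal content of the theorem, and none of your three suggested routes works as written. First, ``passing to the projective limit over the Hilbert levels'' fails because the orthogonal sections constructed level by level are not compatible with the transition maps of the spectra, so they do not assemble to a section of $\sigma$ into the projective limit. Second, the quantitative open mapping theorem yields, for each seminorm index $j,$ \emph{some} preimage in $\sigma^{-1}(e)$ with the $j$-th seminorm controlled; what you need is a \emph{single} measurable selection $\eta\mapsto(u_0(\eta),u_1(\eta))$ satisfying all these bounds simultaneously, and producing one representative good for every $j$ at once is precisely the nontrivial point --- it amounts to a Bartle--Graves/Michael-type continuous, positively homogeneous right inverse for open surjections of \Fr spaces, which must be invoked explicitly or proved. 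Third, the Mittag--Leffler condition concerns dense range of the transition maps of the kernel spectrum, which you assert but do not verify, and separability is not the relevant hypothesis. Either cite a selection theorem with the required bounds, or exploit the structure present in all applications in this paper (cf.\ \eqref{new14} and \eqref{new19}): there the decomposition is induced by an explicit continuous projection onto the finite-dimensional-over-$C^\infty(X)$ space of singular functions $\E_\P(\X),$ so the splitting of $u$ is obtained by applying the projections pointwise under the Fourier transform and the difficulty disappears.
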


%
%
%
%
%
\subsection{Calculus with operator-valued symbols}\label{1.3.2a}

Parallel to the concept of abstract edge spaces modelled on Hilbert/\Fr spaces with group action, there are spaces of symbols with \textit{twisted homogeneity}\index{twisted!homogeneity}. Let $H$ and $\til H$ be Hilbert spaces with group action $\kappa$ and $\til\kappa,$ respectively. A function $f(y,\eta)\in C^\infty\big(U\times(\R^q\setminus\{0\}),\L(H,\til H)\big),$ $U\subseteq\R^p$ open, is said to be (twisted) homogeneous of order $\mu\in\R$ in the variable $\eta\in\R^q\setminus\{0\},$ if
\begin{equation}\label{hom35}
f(y,\lambda\eta)=\lambda^\mu\til\kappa_\lambda f(y,\eta)\kappa_\lambda^{-1}
\end{equation}
for all $\lambda\in\R_+$ and all $(y,\eta)\in U\times(\R^q\setminus\{0\}).$ Let $\chi(\eta)$ be an excision function; then 
$$a(y,\eta):=\chi(\eta)f(y,\eta)$$
belongs to $C^\infty\big(U\times\R^q,\L(H,\til H)\big)$ and satisfies the symbolic estimates\index{symbolic estimates}
\begin{equation}\label{symb}
\sup_{(y,\eta)\in K\times\R^q}\ang{\eta}^{-\mu+|\beta|}\norm{\til\kappa_{\ang\eta}^{-1}\{D_y^\alpha D_\eta^\beta a(y,\eta)\}\kappa_{\ang\eta}}_{\L(H,\til H)}<\infty
\end{equation}
for all $K\Subset U,$ $\alpha\in\N^p,$ $\beta\in\N^q.$

\begin{defn}\label{1.3.14}
Let $H$ and $\til H$ be Hilbert spaces with group action $\kappa$ and $\til\kappa,$ respectively. Then the space of $($operator-valued$)$ symbols\index{symbols!operator-valued $-$}\index{operator-valued!symbols}\index{symbol!space!(classical) operator-valued $-$}
\begin{equation}\label{symbsp}
S^\mu(U\times\R^q;H,\til H)
\end{equation}
for open $U\subseteq\R^p,$ $\mu\in\R,$ is defined to be the set of all $a(y,\eta)\in C^\infty\big(U\times\R^q,\L(H,\til H)\big)$ such that the symbolic estimates \textup{(\ref{symb})} hold for all $K\Subset U,$ $\alpha\in\N^p,$ $\beta\in\N^q.$ Let 
\begin{equation}\label{new112}
S^\mu_\clas( U\times\R^q;H,\til H)
\end{equation}
denote the subspace of all classical $($operator-valued$)$ symbols\index{classical!symbols!operator-valued $-$}\index{symbols!operator-valued $-$!classical $-$}\index{symbol!space!(classical) operator-valued $-$} $a(y,\eta)$ belonging to $S^\mu(U\times\R^q;H,\til H)$ such that there are $($twisted$)$ homogeneous components\index{twisted!homogeneous $-$ components}\index{homogeneous!components!twisted $-$} $a_{(\mu-j)}(y,\eta)$ of order $\mu-j,$ $j\in\N,$ with
$$a(y,\eta)-\chi(\eta)\sum_{j=0}^Na_{(\mu-j)}(y,\eta)\in S^{\mu-(N+1)}(U\times\R^q;H,\til H)$$
for every $N\in\N.$ We write $S^\mu_{(\clas)}(U\times\R^q;H,\til H)$ if we mean \textup{(\ref{symbsp}) or (\ref{new112})}. In particular$,$ $S^\mu_{(\clas)}(\R^q;H,\til H)$ will denote the subspace of respective elements that are independent of $y.$ If necessary$,$ in order to indicate the dependence on $\kappa,\til\kappa$ we write
\begin{equation}\label{new212}
S^\mu_{(\clas)}(U\times\R^q;H,\til H)_{\kappa,\til\kappa}.
\end{equation}
\end{defn}

\begin{rem}
The space \textup{(\ref{symbsp})} is \Fr with the semi-norm system given by the left hand side of \textup{(\ref{symb}),} where $K$ varies over compact sets of $U$ $($countably many exhausting $U)$ and $\alpha\in\N^p,$ $\beta\in\N^q.$
\end{rem}

\begin{rem}\label{1.3.16}
In our calculus we will construct many concrete examples of operator-valued symbols. For future references we observe that when a function $a(y,\eta)\in C^\infty\big(U\times\R^q,\L(H,\til H)\big)$ has the property
\begin{equation}\label{new18}
a(y,\lambda\eta)=\lambda^\mu\til\kappa_\lambda a(y,\eta)\Kl^{-1}
\end{equation}
for all $\lambda\ge1,$ $|\eta|\ge C,$ for some $C>0,$ then
$$a(y,\eta)\in S^\mu_\clas(U\times\R^q;H,\til H).$$
\end{rem}

\begin{exm}\label{Me28}
Set $H:=\KsgX,$ $\til H:=\K^{s-\mu,\gamma-\mu}(\X),$ and let $h(y)\in C^\infty(U,M_\O^\mu(X)).$ Moreover$,$ consider the family of operators
$$a(y,\eta):=r^{-\mu+j}\omega(r[\eta])\op_M^{\gamma-\frac n2}(h)(y)\eta^\alpha\omega'(r[\eta])$$
for $j\in\N,$ $\alpha\in\N^q,$ $|\alpha|\le j,$ and for cut-off functions $\omega,\omega'.$ Then we have
$$a(y,\eta)\in C^\infty(U\times\R^q,\L(\KsgX,\K^{s-\mu,\gamma-\mu}(\X)))$$
for every $s\in\R,$ and $a(y,\lambda\eta)=\lambda^{\mu-(j-|\alpha|)}\Kl a(y,\eta)\Kl^{-1}$ for all $\lambda\ge1,$ $|\eta|\ge C,$ for some $C>0.$ This implies
$$a(y,\eta)\in S_\clas^{\mu-(j-|\alpha|)}(U\times\R^q;\KsgX,\K^{s-\mu,\gamma-\mu}(\X)).$$
\end{exm}

\begin{prop}\label{cl21}
For every $a(\eta)\in S^\mu(\R^q;H,\til H)$ and any fixed $\xi\in\R^q$ we have $a_\xi(\eta):=a(\eta+\xi)\in S^\mu(\R^q;H,\til H)$ and $a(\eta)-a_\xi(\eta)\in S^{\mu-1}(\R^q;H,\til H).$ In particular$,$ if $a(\eta)$ is classical$,$ then
$$a_{(\mu)}(\eta)=a_{\xi,(\mu)}(\eta)$$
for every $\xi.$
\end{prop}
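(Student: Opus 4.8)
The plan is to verify the symbolic estimates \eqref{symb} for $a_\xi$ and for $a-a_\xi$ by direct computation. The only input beyond elementary calculus is that, by the strong continuity of $\kappa$ and $\til\kappa$ together with the Banach--Steinhaus theorem, for every compact interval $[c,C]\subset\R_+$ the families $\{\kappa_\lambda:\lambda\in[c,C]\}$ and $\{\til\kappa_\lambda:\lambda\in[c,C]\}$ are bounded in $\L(H)$ and $\L(\til H)$, respectively. To see that $a_\xi\in S^\mu(\R^q;H,\til H)$, note that $a_\xi$ is smooth and $D_\eta^\beta a_\xi(\eta)=(D_\eta^\beta a)(\eta+\xi)$; inserting $\til\kappa_{\ang{\eta+\xi}}\til\kappa_{\ang{\eta+\xi}}^{-1}$ on the left and $\kappa_{\ang{\eta+\xi}}\kappa_{\ang{\eta+\xi}}^{-1}$ on the right and using the group law,
\[
\til\kappa_{\ang\eta}^{-1}\{D_\eta^\beta a_\xi(\eta)\}\kappa_{\ang\eta}=\til\kappa_{\ang{\eta+\xi}/\ang\eta}\Big[\til\kappa_{\ang{\eta+\xi}}^{-1}\{(D_\eta^\beta a)(\eta+\xi)\}\kappa_{\ang{\eta+\xi}}\Big]\kappa_{\ang\eta/\ang{\eta+\xi}} .
\]
The bracketed term is $O(\ang{\eta+\xi}^{\mu-|\beta|})$ by \eqref{symb} for $a$, hence $O(\ang\eta^{\mu-|\beta|})$ because $\ang\eta$ and $\ang{\eta+\xi}$ are comparable with $\xi$-dependent constants (Peetre's inequality); the two outer factors are group actions with argument in the fixed compact interval $[(\sqrt2\ang\xi)^{-1},\sqrt2\ang\xi]\subset\R_+$, hence uniformly bounded in operator norm. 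This yields the bound $O(\ang\eta^{\mu-|\beta|})$, so $a_\xi\in S^\mu$.

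For $a-a_\xi\in S^{\mu-1}$ I would use the one-term Taylor expansion
\[
a(\eta)-a(\eta+\xi)=-\sum_{j=1}^{q}\xi_j\int_0^1(\partial_{\eta_j}a)(\eta+t\xi)\,dt .
\]
Each $\partial_{\eta_j}a$ lies in $S^{\mu-1}(\R^q;H,\til H)$, and the argument of the previous paragraph, applied with $\xi$ replaced by $t\xi$, shows $(\partial_{\eta_j}a)(\,\cdot+t\xi)\in S^{\mu-1}$ with all occurring constants uniform for $t\in[0,1]$, since $t\xi$ (hence $\ang{t\xi}$) stays in a compact set; integrating over $t\in[0,1]$ and summing over $j$ therefore keeps us in $S^{\mu-1}$.

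Finally, assume $a$ is classical. One checks that $a_\xi$ is then classical as well, by reorganizing the formal Taylor series of $a(\eta+\xi)$ by order of twisted homogeneity, which gives $a_{\xi,(\mu-k)}=\sum_{|\alpha|\le k}\tfrac{\xi^\alpha}{\alpha!}(\partial^\alpha a)_{(\mu-k)}$. From $a-\chi a_{(\mu)}\in S^{\mu-1}$ (classicality with $N=0$) and $a-a_\xi\in S^{\mu-1}$ we get $a_\xi-\chi a_{(\mu)}\in S^{\mu-1}$; combined with $a_\xi-\chi a_{\xi,(\mu)}\in S^{\mu-1}$ this gives $\chi\,(a_{(\mu)}-a_{\xi,(\mu)})\in S^{\mu-1}$, where $f:=a_{(\mu)}-a_{\xi,(\mu)}$ is twisted homogeneous of order $\mu$ (cf. \eqref{hom35}). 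The proof is then finished by the lemma that such an $f$ must vanish: evaluating the $S^{\mu-1}$-estimate at $\eta=\lambda\eta_0$ with $|\eta_0|=1$, $\lambda\to\infty$, and using $f(\lambda\eta_0)=\lambda^\mu\til\kappa_\lambda f(\eta_0)\kappa_\lambda^{-1}$ with the group law gives
\[
\norm{\til\kappa_{\eps_\lambda}f(\eta_0)\kappa_{\eps_\lambda}^{-1}}_{\L(H,\til H)}\le C\,\eps_\lambda^{-\mu}\ang{\lambda\eta_0}^{-1}\longrightarrow0,\qquad\eps_\lambda:=\lambda/\ang{\lambda\eta_0}\longrightarrow1 ,
\]
while, since $\eps_\lambda\to1$, the strong continuity of $\kappa,\til\kappa$ and the uniform bound on $\til\kappa_{\eps_\lambda}$ give $\til\kappa_{\eps_\lambda}f(\eta_0)\kappa_{\eps_\lambda}^{-1}h\to f(\eta_0)h$ for every $h\in H$; hence $f(\eta_0)=0$, so $f\equiv0$ and $a_{(\mu)}=a_{\xi,(\mu)}$. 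I expect the main obstacle to be exactly this bookkeeping with the group actions --- the uniform boundedness of the conjugating factors $\til\kappa_{\ang{\eta+\xi}/\ang\eta},\kappa_{\ang\eta/\ang{\eta+\xi}}$ inside the symbolic estimates, and the limiting argument for uniqueness of the twisted homogeneous principal part --- everything else being Peetre's inequality and a single Taylor step.
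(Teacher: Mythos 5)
The paper states Proposition \ref{cl21} without proof (it defers such details to \cite{Schu20}), so there is no in-paper argument to compare against; judged on its own, your proof is correct and complete. The ingredients you use are exactly the standard ones: the conjugation identity via the group law, Peetre's inequality, uniform boundedness of $\kappa_\lambda,\til\kappa_\lambda$ on compact subsets of $\R_+$ by Banach--Steinhaus, the first-order Taylor step for $a-a_\xi$ with uniformity in $t\in[0,1]$, and the limiting argument showing that a twisted homogeneous function of order $\mu$ whose excision lies in $S^{\mu-1}$ must vanish. The only mild redundancy is that your Taylor regrouping formula already gives $a_{\xi,(\mu)}=a_{(\mu)}$ as its $k=0$ case, so the uniqueness lemma is not strictly needed once classicality of $a_\xi$ is established that way; keeping it nevertheless makes the identification of principal parts independent of the regrouping bookkeeping.
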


\begin{rem}
There is an analogue of \textup{Definition \ref{Lmu2}} for \Fr spaces $E,\til E$ with group action rather than Hilbert spaces $H,\til H.$ In the case
$$E=\Projlim{j\in\N}E^j,\quad\til E=\Projlim{k\in\N}\til E^k$$
with group actions $\kappa$ and $\til\kappa$ in $E^0$ and $\til E^0,$ respectively$,$ restricting to group actions in $E^j$ and $\til E^k$ $($cf. \textup{Definition} $\ref{1.3.3})$ we have the spaces $S^\mu_{(\clas)}(\Omega\times\Omega\times\R^q,E^j,\til E^k)$ for every $j,k.$ For notational convenience we assume that there are continuous embeddings $E^{j+1}\hookrightarrow E^j,$ $\til E^{k+1}\hookrightarrow\til E^k$ for all $j,k\in\N.$ Then an element $a(y,y',\eta)\in C^\infty(\Omega\times\Omega\times\R^q,\L(E^0,\til E^0))$ belongs to
$$S^\mu_{(\clas)}(\Omega\times\Omega\times\R^q,E,\til E)$$
if for every $k\in\N$ there is a $j=j(k)\in\N$ such that $a(y,y',\eta)$ also belongs to $S^\mu_{(\clas)}(\Omega\times\Omega\times\R^q,E^{j(k)},\til E^k)$ for every $k\in\N.$
\end{rem}

\nt Let us give an example in the sense of \textup{Remark \ref{1.3.16}} now for \Fr spaces rather than Hilbert spaces.

\begin{exm}\label{b29}
Let $\varphi(r)\in C_0^\infty(\R_+),$ and let $\M_\varphi$\label{Mphi} be the operator of multiplication by $\varphi.$ Moreover$,$ set $\varphi_\eta(r):=\varphi(r[\eta]).$ Then $b(\eta):\eta\mapsto\M_{\varphi_\eta}$ defines a function
$$b(\eta)\in C^\infty(\R^q,\L(\K^{\infty,\gamma}(\X),\K^{\infty,\infty}(\X))),$$
cf. the formula \textup{(\ref{new10a}),} and we have
$$b(\lambda\eta)=\Kl b(\eta)\Kl^{-1}$$
for all $\lambda\ge1,$ $|\eta|\ge C$ for some $C>0.$ Thus
$$b(\eta)\in S^0_\clas(\R^q;\K^{\infty,\gamma}(\X),\K^{\infty,\infty}(\X)).$$
\end{exm}

\nt In the case $U=\Omega\times\Omega,$ $\Omega\subseteq\R^q$ open, we write $(y,y')$ instead of $y.$ Similarly as in the scalar case (see Section \ref{Sec1.2.1}) we form pseudo-differential operators with operator-valued symbols\index{pseudo-differential operators!with operator-valued symbols}\index{operator-valued!symbols!pseudo-differential operators with $-$}\index{symbols!operator-valued $-$!pseudo-differential operators with $-$}
\begin{equation}\newnot{new39}
\Op_y(a)u(y):=\dint e^{i(y-y')\eta}a(y,y',\eta)u(y')dy'\dslash\eta,
\end{equation}
$\dslash\eta=(2\pi)^{-q}d\eta,$ first for $u\in C_0^\infty(\Omega,H),$ and then extended to other function and distribution spaces. If the variable in the pseudo-differential action is clear, we simply write $\Op(a)$ rather than $\Op_y(a).$ The notation double, left and right symbol\index{symbol!left, right, double $-$}\index{left symbol}\index{right symbol}\index{double symbol} will be employed in an analogous meaning as in the scalar case. Let us set
\begin{equation}\newnot{Lmu2}
L^\mu_{(\clas)}(\Omega;H,\til H):=\set{\Op(a):a(y,y',\eta)\in S^\mu_{(\clas)}(\Omega\times\Omega\times\R^q;H,\til H)}.
\end{equation}

\begin{thm}\label{cont}
An $A\in L^\mu(\Omega;H,\til H)$ $($with $H,\til H$ being \Fr spaces with group action$)$ induces continuous operators
$$A:\WW^s_\comp(\Omega,H)\rightarrow\WW^{s-\mu}_\loc(\Omega,\til H)$$
for every $s\in\R.$
\end{thm}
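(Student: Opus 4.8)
The plan is to reduce first to the case where $H$ and $\til H$ are Hilbert spaces, then to localise the operator in $\Omega$, and finally to establish the bound by passing to the Fourier transform in $y$ and estimating the resulting operator-valued kernel with Peetre's inequality, the polynomial growth bound for the group action, and Young's inequality. For the reduction to Hilbert spaces, write $H=\Projlim{j\in\N}E^j$ and $\til H=\Projlim{k\in\N}\til E^k$. By the definition of operator-valued symbol spaces over \Fr spaces (the Remark preceding Example \ref{b29}), for every $k$ there is a $j(k)\in\N$ with $a\in S^\mu(\Omega\times\Omega\times\R^q;E^{j(k)},\til E^k)$. Since $\WW^s_\comp(\Omega,H)=\Projlim{j\in\N}\WW^s_\comp(\Omega,E^j)$ and $\WW^{s-\mu}_\loc(\Omega,\til H)=\Projlim{k\in\N}\WW^{s-\mu}_\loc(\Omega,\til E^k)$ (Definition \ref{Fredge} and the remarks after it), it suffices to prove the statement for Hilbert spaces, to apply it with $E^{j(k)},\til E^k$ in place of $H,\til H$, and then to compose with the canonical embedding $\WW^s_\comp(\Omega,H)\hookrightarrow\WW^s_\comp(\Omega,E^{j(k)})$ and intersect over $k$. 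From here on $H$ and $\til H$ are Hilbert.

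Next I would localise. Fix $\psi\in C_0^\infty(\Omega)$; it is enough to extend $u\mapsto\psi\,\Op(a)u$ to a continuous map $\WW^s(\R^q,H)\to\WW^{s-\mu}(\R^q,\til H)$ on functions supported in a fixed $L\Subset\Omega$, and by density one may take $u\in C_0^\infty(\Omega,H)$ with $\supp u\subseteq L$. Choosing $\varphi_0\in C_0^\infty(\Omega)$ with $\varphi_0\equiv1$ near $L$ we have $\psi\,\Op(a)u=\Op(b)u$ with $b(y,y',\eta):=\psi(y)\,a(y,y',\eta)\,\varphi_0(y')\in S^\mu(\Omega\times\Omega\times\R^q;H,\til H)$, which is compactly supported in $(y,y')$. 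Passing to the Fourier transform in $y$ one computes
$$\widehat{\Op(b)u}(\xi)=\int K(\xi,\eta)\,\hat u(\eta)\,\dslash\eta,\qquad K(\xi,\eta)=\int\hat b(\xi-\zeta,\zeta-\eta,\zeta)\,\dslash\zeta,$$
where $\hat b$ denotes the Fourier transform of $b$ in its first two (spatial) arguments. Integrating by parts in $(y,y')$ in the symbolic estimates $(\ref{symb})$ for $b$ gives, for every $N\in\N$,
$$\norm{\til\kappa_{\ang\zeta}^{-1}\,\hat b(\zeta_1,\zeta_2,\zeta)\,\kappa_{\ang\zeta}}_{\L(H,\til H)}\le c_N\,\ang{\zeta_1}^{-N}\,\ang{\zeta_2}^{-N}\,\ang{\zeta}^{\mu}.$$

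Then I would carry out the core estimate. Writing
$$\til\kappa_{\ang\xi}^{-1}\,\hat b(\xi-\zeta,\zeta-\eta,\zeta)\,\kappa_{\ang\eta}=\big(\til\kappa_{\ang\xi}^{-1}\til\kappa_{\ang\zeta}\big)\big(\til\kappa_{\ang\zeta}^{-1}\hat b(\xi-\zeta,\zeta-\eta,\zeta)\kappa_{\ang\zeta}\big)\big(\kappa_{\ang\zeta}^{-1}\kappa_{\ang\eta}\big),$$
I would estimate the two outer factors by $c\ang{\xi-\zeta}^{M}$ and $c\ang{\zeta-\eta}^{M}$, using $\norm{\kappa_\lambda}_{\L}\le c\max(\lambda,\lambda^{-1})^M$ (as in the proof of Proposition \ref{1.3.10}) together with Peetre's inequality, and then absorb the weight $\ang{\xi}^{s-\mu}\ang{\eta}^{-s}\ang{\zeta}^{\mu}$ by further applications of Peetre's inequality; after integrating in $\zeta$ (which converges once $N$ above is chosen large) this yields, for every $N'\in\N$,
$$\ang{\xi}^{s-\mu}\,\norm{\til\kappa_{\ang\xi}^{-1}K(\xi,\eta)\kappa_{\ang\eta}}_{\L(H,\til H)}\,\ang{\eta}^{-s}\le c_{N'}\,\ang{\xi-\eta}^{-N'}.$$
Using the norm $(\ref{N})$ with $s-\mu,\til\kappa$ in place of $s,\kappa$, writing $\til\kappa_{\ang\xi}^{-1}\widehat{\Op(b)u}(\xi)=\int\big(\til\kappa_{\ang\xi}^{-1}K(\xi,\eta)\kappa_{\ang\eta}\big)\big(\kappa_{\ang\eta}^{-1}\hat u(\eta)\big)\dslash\eta$, taking $\til H$-norms and applying Young's inequality to the convolution kernel $\ang{\cdot}^{-N'}$ (with $N'>q$) then gives $\norm{\Op(b)u}_{\WW^{s-\mu}(\R^q,\til H)}\le C\,\norm{u}_{\WW^s(\R^q,H)}$, which is the asserted continuity.

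The step I expect to be the main obstacle is this last estimate: one must absorb the \emph{a priori} polynomial growth $\max(\lambda,\lambda^{-1})^M$ of the group action — the only place where the abstract group-action hypothesis genuinely enters — into the rapid decay in $\ang{\xi-\zeta}$ and $\ang{\zeta-\eta}$ produced by the smoothness and compact support of $b$ in $(y,y')$, uniformly in all parameters, and then verify that both the intermediate $\zeta$-integral and the final convolution estimate converge. Nothing here is deep, but the bookkeeping has to be done with care.
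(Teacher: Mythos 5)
Your argument is correct, and it is in fact the standard proof of this theorem; the paper itself states Theorem \ref{cont} without proof, deferring (as it announces for all such background results) to the literature, e.g.\ \cite{Hirs2}, \cite{Schu20}, \cite{Seil3}, where exactly this scheme is carried out: reduction to Hilbert spaces via the projective-limit definition of the symbol classes, localisation to a double symbol with compact $(y,y')$-support, the kernel formula $K(\xi,\eta)=\int\hat b(\xi-\zeta,\zeta-\eta,\zeta)\dslash\zeta$ on the Fourier side, conjugation by $\til\kappa_{\ang\xi}^{-1}\cdots\kappa_{\ang\eta}$ split through $\kappa_{\ang\zeta}$, and the combination of the polynomial bound $\norm{\kappa_\lambda}\le c\max(\lambda,\lambda^{-1})^M$ with Peetre's inequality and a Schur/Young estimate. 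The only points worth writing out in full are the ones you already flag: the uniformity in $\zeta$ of the integration-by-parts bound (which holds precisely because $b$ has compact $(y,y')$-support, so its seminorms are of type $S^\mu(\cdots)_{\mathrm b}$) and the convergence of the intermediate $\zeta$-integral for $N$ large relative to $M+|s|+|s-\mu|+q$.
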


\begin{thm}\label{1.3.22}
Let $H$ be a Hilbert space with group action $\kappa$ and let $\varphi\in\S(\R^q).$
\begin{itemize}
\item[\textup{(i)}] The operator of multiplication by $\varphi$ represents a continuous operator
$$\M_\varphi:\WW^s(\R^q,H)\rightarrow\WW^s(\R^q,H)$$
and $\varphi\mapsto\M_\varphi$ defines a continuous operator
$$\S(\R^q)\rightarrow\L(\WW^s(\R^q,H))$$
for every $s\in\R.$
\item[\textup{(ii)}] The operator $\M_\varphi$ gives rise to an element $\varphi\cdot\id_H\in S^0(\R^q\times\R^q;H,H),$ the mapping $\varphi\mapsto\varphi\cdot\id_H=:a(y,\eta)$ defines a continuous operator
$$\S(\R^q)\rightarrow S^0(\R^q\times\R^q;H,H),$$
and we have $\Op(a)=\M_\varphi.$
\end{itemize}
\end{thm}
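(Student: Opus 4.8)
The plan is to dispose of (ii) first, where the symbol claim is nearly immediate because $\varphi(y)\cdot\id_H$ is a \emph{scalar} multiple of the identity and therefore commutes with every $\kappa_\lambda$, and then to obtain (i) by a direct estimate in the Fourier variable combining the polynomial growth of $\kappa$, Peetre's inequality and Young's inequality.

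For (ii): put $a(y,\eta):=\varphi(y)\,\id_H$, which does not depend on $\eta$, so $D^\beta_\eta a=0$ for $\beta\ne0$, while for $\beta=0$ one has $\kappa_{\ang\eta}^{-1}\{D^\alpha_y a(y,\eta)\}\kappa_{\ang\eta}=(D^\alpha_y\varphi)(y)\,\kappa_{\ang\eta}^{-1}\kappa_{\ang\eta}=(D^\alpha_y\varphi)(y)\,\id_H$, the scalar $(D^\alpha_y\varphi)(y)$ passing freely through $\kappa_{\ang\eta}$. Hence the left-hand side of the symbolic estimate (\ref{symb}) equals $\sup_{y\in K}|(D^\alpha_y\varphi)(y)|$, which is finite for every $K\Subset\R^q$ and every $\alpha$; thus $a\in S^0(\R^q\times\R^q;H,H)$. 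These suprema are precisely the defining seminorms of $a$ in that symbol space, and each is dominated by a Schwartz seminorm of $\varphi$, giving the continuity of $\varphi\mapsto a(y,\eta)$ from $\S(\R^q)$ into $S^0(\R^q\times\R^q;H,H)$. Finally, inserting the ($y'$-independent) symbol $a$ into (\ref{new39}) and pulling the scalar $\varphi(y)$ out of the oscillatory integral, Fourier inversion of the remaining kernel yields $\Op(a)u(y)=\varphi(y)u(y)=\M_\varphi u(y)$.

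For (i): I would estimate directly. From $\widehat{\varphi u}(\eta)=c\!\int\hat\varphi(\eta-\eta')\hat u(\eta')\,d\eta'$ one gets $\kappa_{\ang\eta}^{-1}\widehat{\varphi u}(\eta)=c\!\int\hat\varphi(\eta-\eta')\,\kappa_{\ang{\eta'}/\ang\eta}\bigl(\kappa_{\ang{\eta'}}^{-1}\hat u(\eta')\bigr)d\eta'$, using the group law $\kappa_{\ang\eta}^{-1}\kappa_{\ang{\eta'}}=\kappa_{\ang{\eta'}/\ang\eta}$. The operator norm of $\kappa_\lambda$ grows at most like $\max(\lambda,\lambda^{-1})^M$ for some $M>0$ (the Hilbert-space instance of the bound recorded in the proof of Proposition~\ref{1.3.10}), and Peetre's inequality gives $\ang{\eta'}/\ang\eta\le\ang{\eta-\eta'}$ and $\ang\eta/\ang{\eta'}\le\ang{\eta-\eta'}$, so $\norm{\kappa_{\ang{\eta'}/\ang\eta}}_{\L(H)}\le c\,\ang{\eta-\eta'}^M$. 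Multiplying by $\ang\eta^s$ and absorbing it via $\ang\eta^s\le c\,\ang{\eta-\eta'}^{|s|}\ang{\eta'}^s$, the integrand becomes the convolution of $g(\zeta):=|\hat\varphi(\zeta)|\ang\zeta^{M+|s|}$, which lies in $L^1(\R^q)$ because $\hat\varphi\in\S(\R^q)$, with $h(\eta'):=\ang{\eta'}^s\norm{\kappa_{\ang{\eta'}}^{-1}\hat u(\eta')}_H$, which lies in $L^2(\R^q)$ with $\norm{h}_{L^2}=\norm{u}_{\WW^s(\R^q,H)}$. Young's inequality then yields
\[
\norm{\varphi u}_{\WW^s(\R^q,H)}\le c\,\norm{g}_{L^1}\,\norm{u}_{\WW^s(\R^q,H)},
\]
and since $\norm{g}_{L^1}$ is bounded by finitely many Schwartz seminorms of $\varphi$, this proves the continuity of $\M_\varphi$ on $\WW^s(\R^q,H)$ together with the continuity of $\varphi\mapsto\M_\varphi$ into $\L(\WW^s(\R^q,H))$. (Alternatively, (ii) combined with Theorem~\ref{cont}, applied to $a$ regarded as a $y'$-independent double symbol, already gives $\M_\varphi:\WW^s_\comp(\R^q,H)\to\WW^s_\loc(\R^q,H)$; the convolution estimate then upgrades this to the global statement needed for $\varphi\in\S(\R^q)$.)

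The only genuinely non-formal point is the conjugation factor $\kappa_{\ang\eta}^{-1}\kappa_{\ang{\eta'}}$ in the proof of (i): it has to be controlled uniformly in $(\eta,\eta')$, and this is exactly where the polynomial operator-norm growth of the group action together with Peetre's inequality enters. Everything else is routine — (ii) collapses once one observes that $\varphi\cdot\id_H$ commutes with $\kappa$, and the remaining estimate in (i) is a standard application of Young's inequality.
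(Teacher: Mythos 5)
Your proof is correct. The paper states Theorem \ref{1.3.22} without proof (deferring, as it announces, to the literature such as \cite{Schu20} and \cite{Hirs2}), and your argument is exactly the standard one used there: for (ii) the observation that the scalar $\varphi(y)$ commutes with $\kappa_{\ang\eta}$ so that the symbol estimates \eqref{symb} reduce to sup-norms of $D^\alpha_y\varphi$ on compacta, and for (i) the convolution identity for $\widehat{\varphi u}$ combined with the group law $\kappa_{\ang\eta}^{-1}\kappa_{\ang{\eta'}}=\kappa_{\ang{\eta'}/\ang\eta}$, the polynomial bound $\norm{\kappa_\lambda}_{\L(H)}\le c\max(\lambda,\lambda^{-1})^M$, Peetre's inequality and Young's inequality — you have correctly identified the control of the conjugation factor as the only non-formal step.
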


\begin{rem}
Let $E,\til E$ be Hilbert spaces$,$ and $a\in\L(E,\til E)$ a given element. Then$,$ applying the operator $a$ pointwise to an element $u(y)\in H^s(\R^q,E),$ $s\in\R,$ and denoting the resulting function by $\Set{\M_au}(y),$ we have
$$\Set{\M_au}(y)\in H^s(\R^q,\til E),$$
$\M_a=\Op_y(a),$ where $a$ is interpreted as an element of $S_\clas^0(\R^q;E,\til E)$ where the involved group actions are $\id_E$ and $\id_{\til E},$ respectively.
\end{rem}

\nt In fact, the mapping $e\mapsto ae,$ $e\in E,$ can be interpreted as an operator-valued symbol in $S^0(\R^q;E,\til E)$ independent of $\eta.$ As a special case of the symbolic estimates (\ref{symb}), the norm $\norm{a}_{\L(E,\til E)}=\sup\frac{\norm{ae}_{\til E}}{\norm{e}_E}$ is finite. Thus
$$\begin{array}{rll}
\displaystyle\norm{\M_au}^2_{H^s(\R^q,\til E)}&\hspace{-2mm}=&\displaystyle\int\ang{\eta}^{2s}\norm{\Set{F\!\M_au}(\eta)}^2_{\til E}d\eta\;\;=\;\int\ang{\eta}^{2s}\norm{a\Set{Fu}(\eta)}^2_{\til E}d\eta\\[4mm]
&\hspace{-2mm}\le&\norm{a}^2\displaystyle\int\ang{\eta}^{2s}\norm{Fu(\eta)}^2_Ed\eta\;\;=\;\;\norm{a}^2\norm{u}^2_{H^s(\R^q,E)}.
\end{array}$$

\nt For future references we now give more examples of operator-valued symbols. In the following $\Omega\subseteq\R^q$ will be an open set.

\begin{exm}\label{exsy}
Let $\psi(y)\in C_0^\infty(\Omega),$ $\omega(r)$ a cut-off function$,$ and $d\in\R,$ $\alpha\in\N^q;$ then
$$a(y,\eta):=\psi(y)\omega(r[\eta])r^d\eta^\alpha$$
defines a symbol
$$a(y,\eta)\in S^{|\alpha|-d}_\clas(\Omega\times\R^q;\K^{s,\gamma;g}(\X),\K^{s,\gamma+d;g}(\X))$$
for every $s,\gamma,g\in\R.$
\end{exm}

\nt Example \ref{exsy} is a special case of Remark \ref{1.3.16}, since
$$a(y,\eta)\in C^\infty(\Omega\times\R^q,\L(\K^{s,\gamma;g}(\X),\K^{s,\gamma+d;g}(\X)))$$
and the relation (\ref{new18}) holds for $\mu=|\alpha|-d.$

\begin{exm}\label{exsy2}
The operator of multiplication by $\omega(r)-\omega(r[\eta])$ for a cut-off function $\omega$ defines an element of $S^0(\R^q;\K^{s,\gamma;g}(\X),\K^{s,\infty;g}(\X))$ for every $s,\gamma,g\in\R.$
\end{exm}

\begin{prop}\label{diff1}
Let $E=\Projlim{j\in\N}E^j$ be as in \textup{Corollary \ref{1.3.15}.} The operator $D_y^\alpha,$ $\alpha\in\N^q,$ induces continuous operators

\begin{equation}\label{DW1}
D^\alpha_y:\WW^s(\R^q,E)\rightarrow\WW^{s-|\alpha|}(\R^q,E)
\end{equation}
and
\begin{equation}\label{DW2}
D^\alpha_y:\WW^s_{\textup{comp/loc}}(\Omega,E)\rightarrow\WW^{s-|\alpha|}_{\textup{comp/loc}}(\Omega,E)
\end{equation}
for every $s\in\R,$ $\Omega\subseteq\R^q$ open.
\end{prop}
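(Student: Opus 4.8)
The plan is to reduce both assertions to the Hilbert-space case and then exploit that, on the Fourier side, the group action $\kappa$ acts by operators commuting with multiplication by the scalar symbol $\eta^\alpha$. By \textup{Definition \ref{Fredge}} we have $\WW^s(\R^q,E)=\Projlim_{j\in\N}\WW^s(\R^q,E^j)$, and $D_y^\alpha$ acts on each component independently (and compatibly with the embeddings $E^{j+1}\hookrightarrow E^j$), so for \eqref{DW1} it suffices to show that $D_y^\alpha\colon\WW^s(\R^q,H)\to\WW^{s-|\alpha|}(\R^q,H)$ is continuous for a single Hilbert space $H$ with group action $\kappa$; continuity of the resulting map on $E$ is then automatic, since it is continuous into every $\WW^{s-|\alpha|}(\R^q,E^j)$ and hence into the projective limit.

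For the Hilbert-space estimate I would use $\widehat{D_y^\alpha u}(\eta)=\eta^\alpha\hat u(\eta)$ together with the fact that the scalar factor $\eta^\alpha$ commutes with the linear map $\kappa_{\ang\eta}^{-1}$, which gives
$$\norm{D_y^\alpha u}^2_{\WW^{s-|\alpha|}(\R^q,H)}=\int\ang\eta^{2(s-|\alpha|)}\,|\eta^\alpha|^2\,\norm{\kappa_{\ang\eta}^{-1}\hat u(\eta)}^2_H\,d\eta.$$
Since $|\eta^\alpha|\le|\eta|^{|\alpha|}\le\ang\eta^{|\alpha|}$, the weight $\ang\eta^{2(s-|\alpha|)}|\eta^\alpha|^2$ is bounded by $\ang\eta^{2s}$, so the right-hand side is at most $\norm{u}^2_{\WW^s(\R^q,H)}$. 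This yields \eqref{DW1}, with norm $\le1$, after passing back to the projective limit.

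To obtain \eqref{DW2} I would deduce the $\comp/\loc$ versions from \eqref{DW1} by a cut-off argument, reducing again to each $E^j$ via the projective-limit definitions of $\WW^s_{\comp}$ and $\WW^s_{\loc}$. Given $u\in\WW^s_{\loc}(\Omega,E)$ and $\varphi\in C_0^\infty(\Omega)$, choose $\psi\in C_0^\infty(\Omega)$ with $\psi\equiv1$ on a neighbourhood of $\supp\varphi$, so that $D_y^\beta\psi$ vanishes on $\supp\varphi$ for every $\beta\ne0$. Then the Leibniz expansion of $D_y^\alpha(\psi u)$ collapses after multiplication by $\varphi$, i.e.\ $\varphi\,D_y^\alpha u=\varphi\,D_y^\alpha(\psi u)$. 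By definition of $\WW^s_{\loc}$ we have $\psi u\in\WW^s(\R^q,E)$, hence $D_y^\alpha(\psi u)\in\WW^{s-|\alpha|}(\R^q,E)$ by \eqref{DW1}; and multiplication by $\varphi\in C_0^\infty(\Omega)\subset\S(\R^q)$ preserves this space and is continuous there (the \Fr-space analogue of \textup{Theorem \ref{1.3.22} (i)}, obtained componentwise). Thus $\varphi\,D_y^\alpha u\in\WW^{s-|\alpha|}(\R^q,E)$ with compact support, i.e.\ $D_y^\alpha u\in\WW^{s-|\alpha|}_{\comp}(\Omega,E)$, and the map $u\mapsto D_y^\alpha u$ is continuous. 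The $\comp$ case of \eqref{DW2} is the same argument applied to $u\in\WW^s_{\comp}(\Omega,E)$, or follows at once from the $\loc$ case since $D_y^\alpha$ does not enlarge supports.

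There is no serious obstacle in this argument; the only point requiring a little care is the bookkeeping in the cut-off step — selecting $\psi\equiv1$ near $\supp\varphi$ precisely so that all the genuine Leibniz correction terms $(D_y^\beta\psi)(D_y^{\alpha-\beta}u)$ with $\beta\ne0$ are annihilated by the factor $\varphi$ — together with the routine observation that continuity of the componentwise maps into $\WW^{s-|\alpha|}(\R^q,E^j)$ upgrades to continuity into the projective limit $\WW^{s-|\alpha|}(\R^q,E)$.
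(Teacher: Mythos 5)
Your proof of \eqref{DW1} is the paper's proof verbatim: write $D^\alpha_y=F^{-1}\eta^\alpha F$, pull the factor $|\eta^\alpha|^2$ into the integrand, and bound $\ang\eta^{-2|\alpha|}|\eta^\alpha|^2$ by $1$, doing this for each $E^j$ and passing to the projective limit. For \eqref{DW2} you take a genuinely different (and somewhat slicker) route. The paper rewrites $\varphi(D^\alpha_yu)$, via the Leibniz rule and repeated commutators between differentiations and multiplications by cut-offs, as a finite sum $\sum_{\beta\le\alpha}D^\beta_y(\varphi_\beta u)$ with $\varphi_\beta\in C_0^\infty(\Omega)$, and then estimates each term using \eqref{DW1} together with Theorem \ref{1.3.22}; this costs a sum over multi-indices but needs no auxiliary cut-off beyond the $\varphi_\beta$ it produces. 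You instead insert a single $\psi\in C_0^\infty(\Omega)$ with $\psi\equiv1$ near $\supp\varphi$, so that all Leibniz correction terms $(D^\beta_y\psi)(D^{\alpha-\beta}_yu)$, $\beta\ne0$, are annihilated by $\varphi$, giving the clean identity $\varphi D^\alpha_yu=\varphi D^\alpha_y(\psi u)$; one application of \eqref{DW1} to $\psi u$ followed by the multiplier theorem then yields the seminorm estimate $\norm{\varphi D^\alpha_yu}_{\WW^{s-|\alpha|}(\R^q,E^j)}\le C\norm{\psi u}_{\WW^s(\R^q,E^j)}$, which is exactly what continuity in the comp/loc topologies requires. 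Both arguments rest on the same two ingredients (the Fourier-side estimate and Theorem \ref{1.3.22}); yours trades the combinatorial bookkeeping of the commutator expansion for the support argument, and is complete as written. The only point worth flagging is notational: the paper's definitions of $\WW^s_\comp$ and $\WW^s_\loc$ are stated in an unusual (apparently interchanged) way, so when you conclude ``$\varphi D^\alpha_yu\in\WW^{s-|\alpha|}(\R^q,E)$ for every $\varphi$, hence $D^\alpha_yu\in\WW^{s-|\alpha|}_{\comp}$'' you are implicitly using the paper's convention; the estimate you prove covers both versions in any case.
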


\begin{proof}
To show (\ref{DW1}) we write $D^\alpha_y=F^{-1}\eta^\alpha F$ and obtain for every $j\in\N$
\begin{eqnarray*}
\norm{D^\alpha_yu}^2_{\WW^{s-|\alpha|}(\R^q,E^j)} & = &\displaystyle\int\ang{\eta}^{2(s-|\alpha|)}\norm{\kappa_{\ang{\eta}}^{-1}(D^\alpha_yu)^\wedge(\eta)}^2_{E^j}d\eta\\
& = & \displaystyle\int\ang{\eta}^{2s}\ang{\eta}^{-2|\alpha|}|\eta^\alpha|^2 \norm{\kappa_{\ang{\eta}}^{-1}\hat{u}(\eta)}^2_{E^j}d\eta\\
& \le & \norm{u}^2_{\WW^s(\R^q,E^j)}.
\end{eqnarray*}
For (\ref{DW2}) we choose any $j\in\N,$ $\varphi\in C_0^\infty(\Omega),$ and show that there are elements $\varphi_\beta\in C_0^\infty(\Omega),$ such that
$$\norm{\varphi(D^\alpha_yu)}_{\WW^{s-|\alpha|}(\R^q,E^j)}\le\sum_{\beta\le\alpha}\norm{\varphi_\beta u}_{\WW^s(\R^q,E^j)}.$$

\nt In fact, by the Leibniz rule we have
$$D^\alpha_y(\varphi u)=\sum_{\beta\le\alpha}\psi_\beta D^\beta_yu$$
for suitable $\psi_\beta\in C_0^\infty(\Omega),$ where $\psi_\alpha=\varphi,$ i.e.,
$$\varphi(D^\alpha_yu)=D^\alpha_y(\varphi u)-\sum_{\beta<\alpha}\psi_\beta D^\beta_yu.$$

\nt By repeatedly forming commutators between differentiations and multiplications by functions in $C_0^\infty(\Omega)$ we obtain an expression of the form
$$\varphi(D^\alpha_yu)=\sum_{\beta\le\alpha}D^\beta_y(\varphi_\beta u)$$
for suitable $\varphi_\beta\in C_0^\infty(\Omega).$ Then, using Theorem \ref{1.3.22} together with the first part of the proof, it follows that
\begin{eqnarray*}
\norm{\varphi(D^\alpha_yu)}_{\WW^{s-|\alpha|}(\R^q,E^j)} & \le & \Big\Vert\sum_{\beta\le\alpha}D^\beta_y(\varphi_\beta u)\Big\Vert_{\WW^{s-|\alpha|}(\R^q,E^j)}\\
& \le & \sum_{\beta\le\alpha}\norm{\varphi_\beta u}_{\WW^{s-|\alpha|+|\beta|}(\R^q,E^j)}\\
& \le & C\sum_{\beta\le\alpha}\norm{\varphi_\beta u}_{\WW^s(\R^q,E^j)}.
\end{eqnarray*}

\nt In the latter step we employed Remark \ref{emb}.
\end{proof} 

\nt The elements of the pseudo-differential calculus are particularly transparent when we refer to operators with symbols with uniform symbolic estimates, according to a generalisation of Kumano-go's technique to the case of operator-valued symbols.\\

\nt Let us first recall some variants of vector-valued amplitude functions\index{vector-valued amplitude functions}\index{amplitude functions!vector-valued $-$}, with values in a \Fr space $V,$ that are involved in oscillatory integral constructions.

\begin{defn}\label{sy23}
Let $V$ be a \Fr space with a fixed countable system of semi-norms $(\pi_j)_{j\in\N}$ that defines the topology of $V.$ Moreover$,$ let $\bs{\mu}:=(\mu_j)_{j\in\N}$ and $\bs{\nu}:=(\nu_j)_{j\in\N}$ be sequences of reals. Then the space
\begin{equation}\label{symb23}
S^{\bs{\mu};\bs{\nu}}(\R^{2q};V)
\end{equation}
is defined to be the set of all $a(x,\xi)\in C^\infty(\R^{2q},V)$ such that
\begin{equation}\label{sm23}
\sup_{(x,\xi)\in\R^{2q}}\ang\xi^{-\mu_j}\ang x^{-\nu_j}\pi_j(D^\alpha_xD^\beta_\xi a(x,\xi))
\end{equation}
is finite for all $\alpha,\beta\in\N^q$ and all $j\in\N.$ Observe that the space $S^{\bs{\mu};\bs{\nu}}(\R^{2q};V)$ is \Fr with the system of semi-norms \textup{(\ref{sm23})}. Moreover$,$ let
$$S^{\bs{\infty};\bs{\infty}}(\R^{2q};V)=\bigcup_{\bs{\mu},\bs{\nu}}S^{\bs{\mu};\bs{\nu}}(\R^{2q};V)$$
$($the union is taken over all $\bs{\mu},\bs{\nu}).$
\end{defn}

\nt The spaces in Definition \ref{sy23} for $V=\C$ are employed in \cite{Kuma1}; the variant with values in a \Fr space is studied in \cite{Seil3}.\\

\nt Given a function $\chi(x,\xi)\in\S(\R^{2q})$ with $\chi(0,0)=1$ we form the oscillatory integral\index{oscillatory integral}\index{integral!oscillatory $-$}
\begin{equation}\label{osc23}
\textup{Os}[a]=\dint e^{-ix\xi}a(x,\xi)dx\dslash\xi:=\lim_{\varepsilon\rightarrow0}\dint e^{-ix\xi}\chi(\varepsilon x,\varepsilon\xi)a(x,\xi)dx\dslash\xi.
\end{equation}
Recall that the right hand side of (\ref{osc23}) represents a regularisation of a corresponding divergent double integral and that this process is justified by certain integrations by parts that lead to convergent integrals.\\

\nt The same process works also for vector-valued amplitude functions $a(x,\xi)\in S^{{\bs{\mu};\bs{\nu}}}(\R^{2q};V).$ As a result we obtain the following theorem.
\begin{thm}\label{Pos23}
The oscillatory integral construction $(\ref{osc23})$ defines a continuous mapping
$$\textup{Os}[\cdot]:S^{\bs{\mu};\bs{\nu}}(\R^{2q};V)\rightarrow V.$$
\end{thm}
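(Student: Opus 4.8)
The plan is to reduce the vector-valued assertion to the scalar Kumano-go oscillatory integral theorem (the case $V=\C$ of Definition \ref{sy23}), applied semi-norm by semi-norm. First I would fix $a(x,\xi)\in S^{\bs\mu;\bs\nu}(\R^{2q};V)$ and a cut-off $\chi(x,\xi)\in\S(\R^{2q})$ with $\chi(0,0)=1$, and recall the classical integration-by-parts identity: writing $\ang\xi^{-2M}(1-\Delta_x)^M e^{-ix\xi}=e^{-ix\xi}$ and $\ang x^{-2L}(1-\Delta_\xi)^L e^{-ix\xi}=e^{-ix\xi}$, one transfers the differential operators onto $\chi(\varepsilon x,\varepsilon\xi)a(x,\xi)$ by Leibniz' rule. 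Choosing $M,L$ large (depending on which semi-norm $\pi_j$ one controls, hence on $\mu_j$ and $\nu_j$ and on $q$), the resulting integrand is dominated, uniformly in $\varepsilon\in(0,1]$, by an integrable function times a finite sum of semi-norms $\pi_j$ of derivatives of $a$, because the symbolic estimates \eqref{sm23} give $\pi_j(D_x^\alpha D_\xi^\beta a(x,\xi))\le C\ang\xi^{\mu_j}\ang x^{\nu_j}$ and the derivatives falling on $\chi(\varepsilon x,\varepsilon\xi)$ only produce bounded factors (powers of $\varepsilon\le1$ times bounded derivatives of $\chi$).

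Next I would argue that the limit in \eqref{osc23} exists in $V$. Since $V$ is \Fr, it suffices to show that for every defining semi-norm $\pi_j$ the net $\varepsilon\mapsto\pi_j\big(\dint e^{-ix\xi}\chi(\varepsilon x,\varepsilon\xi)a(x,\xi)\,dx\,\dslash\xi\big)$ converges; one applies $\pi_j$ under the (absolutely convergent, after the integration by parts above) integral sign — legitimate because $\pi_j$ is continuous and the $V$-valued integrand is Bochner integrable with the uniform majorant just produced — and then invokes dominated convergence together with the pointwise limit $\chi(\varepsilon x,\varepsilon\xi)\to\chi(0,0)=1$. The same majorant shows the value of $\textup{Os}[a]$ is independent of the choice of $\chi$ (take the difference of two cut-offs; $\chi_1(0,0)-\chi_2(0,0)=0$, so dominated convergence forces the difference of the two regularisations to $0$). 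This makes $\textup{Os}[\cdot]$ well defined as a map $S^{\bs\mu;\bs\nu}(\R^{2q};V)\to V$.

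Finally, continuity: the estimate obtained from the integration by parts reads
$$\pi_j(\textup{Os}[a])\le C_j\sum_{|\alpha|\le 2L_j,\ |\beta|\le 2M_j}\ \sup_{(x,\xi)\in\R^{2q}}\ang\xi^{-\mu_j}\ang x^{-\nu_j}\,\pi_j\big(D_x^\alpha D_\xi^\beta a(x,\xi)\big),$$
i.e.\ $\pi_j(\textup{Os}[a])$ is bounded by a finite sum of the defining semi-norms \eqref{sm23} of $a$, with a constant $C_j$ depending only on $j$, $\mu_j$, $\nu_j$ and $q$ (through the chosen $M_j,L_j$ and on $\pi_j$ of a fixed number of derivatives of $\chi$). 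Since this holds for every $j$, the linear map $\textup{Os}[\cdot]$ is continuous between the two \Fr topologies. Linearity is immediate from linearity of the regularised integrals before passing to the limit.

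The main obstacle — really the only substantive point — is choosing the orders $M_j$ and $L_j$ of integration by parts correctly as functions of $\mu_j,\nu_j$ (and the dimension $q$) so that, after Leibniz expansion, every term of the transformed integrand is dominated by an $(x,\xi)$-integrable function uniformly in the regularising parameter $\varepsilon$; once that bookkeeping is set up the convergence, independence of $\chi$, and continuity all follow from dominated convergence in the \Fr space $V$ applied one semi-norm at a time, exactly as in the scalar case treated in \cite{Kuma1} and \cite{Seil3}.
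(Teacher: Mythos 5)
Your argument is correct and is exactly the route the paper intends: it does not write out a proof but remarks that the scalar regularisation of \cite{Kuma1} (integration by parts with $\ang{\xi}^{-2M}(1-\Delta_x)^M$ and $\ang{x}^{-2L}(1-\Delta_\xi)^L$, then dominated convergence) carries over to $V$-valued amplitudes, citing \cite{Seil3} for the \Fr-valued case. Your semi-norm-by-semi-norm implementation, with $M_j,L_j$ chosen according to $\mu_j,\nu_j$ and $q$, supplies precisely the details the paper omits, including the uniform majorant, the independence of $\chi$, and the resulting continuity estimate.
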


\nt Let $H$ and $\til H$ be Hilbert spaces with group actions $\kappa$ and $\til\kappa,$ respectively (the following considerations easily extend to the case of \Fr spaces $E$ and $\til E$ with group actions).

\begin{defn}\label{symbb}
The space $S^\mu(\R^d\times\R^q;H,\til H)_\mathrm{b}$ is defined to be the set of all $a(y,\eta)\in C^\infty(\R^d\times\R^q,\L(H,\til H))$ such that
$$\sup_{(y,\eta)\in\R^d\times\R^q}\ang\eta^{-\mu+|\beta|}\norm{\til\kappa^{-1}_{\ang\eta}\{D_y^\alpha D_\eta^\beta a(y,\eta)\}\kappa_{\ang\eta}}_{\L(H,\til H)}<\infty$$
for all $\alpha\in\N^d,$ $\beta\in\N^q.$ Moreover$,$ let
$$S^\mu_{\clas}(\R^d\times\R^q;H,\til H)_\mathrm{b}$$
denote the subspace of all $a(y,\eta)\in S^\mu(\R^d\times\R^q;H,\til H)_\mathrm{b}$ such that there are elements $a_{(\mu-j)}(y,\eta)$ which are twisted homogeneous of order $\mu-j,$ $j\in\N$ $($more precisely$,$ $a_{(\mu-j)}(y,\eta)\in C^\infty(\R^d\times(\R^q\setminus\{0\}),\L(H,\til H))_\mathrm{b}$ with subscript \textup{`b'} referring to the $y$-variable$,$ and twisted homogeneous in the sense of \textup{(\ref{hom35}))} with
$$a(y,\eta)-\chi(\eta)\sum_{j=0}^Na_{(\mu-j)}(y,\eta)\in S^{\mu-(N+1)}(\R^d\times\R^q;H,\til H)_\mathrm{b}$$
for every $N\in\N.$
\end{defn}

\nt As usual, we employ subscripts `(cl)' when a consideration is valid in the classical as well in the general case. Moreover, if necessary we write $S^\mu_{(\clas)}(\,\dots)_{\mathrm{b},\kappa,\til\kappa}$ similarly as (\ref{new212}). In the case $d=2q$ we replace $y$ by $(y,y').$ Let us set
$$L^\mu_{(\clas)}(\R^q;H,\til H)_{\mathrm{b}}:=\{\Op(a):a(y,y',\eta)\in S^\mu_{(\clas)}(\R^{2q}\times\R^q;H,\til H)_{\mathrm{b}}\}\label{Lmub}$$
for every $\mu\in\R$ (cf. the notation (\ref{new39})).

\begin{thm}\label{left}
Let $a(y,y',\eta)\in S^\mu_{(\clas)}(\R^q\times\R^q\times\R^q;H,\til H)_\mathrm{b},$ $\mu\in\R;$ then there are unique left and right symbols\index{symbol!left, right, double $-$}
$$a_{\mathrm L}(y,\eta)\;\textup{and}\;a_{\mathrm R}(y',\eta)\in S^\mu_{(\clas)}(\R^q\times\R^q\times\R^q;H,\til H)_\mathrm{b},$$
respectively$,$ such that
$$\Op(a)=\Op(a_{\mathrm L})=\Op(a_{\mathrm R}).$$
Those can be expressed by oscillatory integrals
$$a_{\mathrm L}(y,\eta)=\dint e^{-ix\xi}a(y,y+x,\eta+\xi)dx\dslash\xi$$
and
$$a_{\mathrm R}(y',\eta)=\dint e^{-ix\xi}a(y'+x,y',\eta-\xi)dx\dslash\xi.$$
We have asymptotic expansions
$$a_{\mathrm L}(y,\eta)=\sum_{\alpha\in\N^q}\frac1{\alpha!}\partial_\eta^\alpha D_{y'}^\alpha a(y,y',\eta)|_{y'=y}$$
and
$$a_{\mathrm R}(y',\eta)=\sum_{\alpha\in\N^q}\frac1{\alpha!}(-1)^{|\alpha|}\partial_\eta^\alpha D_y^\alpha a(y,y',\eta)|_{y=y'}.$$

\nt Moreover$,$ the mappings $a\mapsto a_{\mathrm L}$ and $a\mapsto a_{\mathrm R}$ are continuous. There are remainders
$$r_{\mathrm L,N}:=a_{\mathrm L}-\sum_{|\alpha|\le N}\frac1{\alpha!}\partial_\eta^\alpha D_{y'}^\alpha a(y,y',\eta)|_{y'=y}\in S^{\mu-(N+1)}(\R^q\times\R^q;H,\til H)_\mathrm{b},$$
$$r_{\mathrm L,N}=(N+1)\sum_{|\alpha|=N+1}\int_0^1\frac{(1-t)^N}{\alpha!}\dint e^{-ix\xi}(\partial_\eta^\alpha D_{y'}^\alpha a)(y,y+x,\eta+t\xi)dx\dslash\xi dt,$$
and
$$r_{\mathrm R,N}:=a_{\mathrm R}-\sum_{|\alpha|\le N}\frac1{\alpha!}(-1)^{|\alpha|}\partial_\eta^\alpha D_y^\alpha a(y,y',\eta)|_{y=y'}\in S^{\mu-(N+1)}(\R^q\times\R^q;H,\til H)_\mathrm{b},$$
$$r_{\mathrm R,N}=(N+1)\sum_{|\alpha|=N+1}\int_0^1\frac{(1-t)^N}{\alpha!}\dint e^{-ix\xi}(\partial_\eta^\alpha D_y^\alpha a)(y'+x,y',\eta-t\xi)dx\dslash\xi dt,$$
and the mappings $a\mapsto r_{\mathrm L,N}$ and $a\mapsto r_{\mathrm R,N}$ are also continuous between the respective symbol spaces.
\end{thm}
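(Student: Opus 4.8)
The plan is to transcribe Kumano-go's reduction of double symbols to left and right symbols into the operator-valued category, using the oscillatory-integral theorem (Theorem~\ref{Pos23}) with values in the Banach space $V=\L(H,\til H)$ to make sense of, and to estimate, every integral that occurs. Two elementary ingredients are used throughout: the standard bound $\norm{\kappa_\lambda}_{\L(H)}\le c\,\max\{\lambda^{M},\lambda^{-M}\}$ for strongly continuous group actions (and likewise for $\til\kappa$), as in the estimate used in the proof of Proposition~\ref{1.3.10}, and Peetre's inequality $\ang{\eta+\xi}^{s}\le C\ang\eta^{s}\ang\xi^{|s|}$, valid for all $s\in\R$. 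Combining these with the group law $\kappa_{\ang{\eta+\xi}}^{-1}\kappa_{\ang\eta}=\kappa_{\ang\eta/\ang{\eta+\xi}}$ yields, \emph{uniformly in $\eta$}, polynomial bounds in $\xi$ for $\norm{\kappa_{\ang{\eta+\xi}}^{-1}\kappa_{\ang\eta}}$ and $\norm{\til\kappa_{\ang\eta}^{-1}\til\kappa_{\ang{\eta+\xi}}}$, and the same with $\xi$ replaced by $t\xi$, $t\in[0,1]$.

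First I would show that $a_{\mathrm L}(y,\eta):=\dint e^{-ix\xi}a(y,y+x,\eta+\xi)\,dx\,\dslash\xi$ is a well-defined symbol of the required class. For fixed $(y,\eta)$, after inserting the identities $\til\kappa_{\ang{\eta+\xi}}\til\kappa_{\ang{\eta+\xi}}^{-1}$ and $\kappa_{\ang{\eta+\xi}}\kappa_{\ang{\eta+\xi}}^{-1}$ around $a(y,y+x,\eta+\xi)$ and estimating the middle block by the symbolic estimates \eqref{symb} evaluated at $\eta+\xi$ (the outer factors being polynomially bounded in $\xi$ by the first paragraph), one sees that $(x,\xi)\mapsto a(y,y+x,\eta+\xi)$ lies in a space $S^{\bs\mu';\bs\nu'}(\R^{2q}_{x,\xi};V)$ of Definition~\ref{sy23}; Theorem~\ref{Pos23} then gives convergence of the oscillatory integral in $\L(H,\til H)$. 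Differentiating under $\dint\cdots$ (justified by the same theorem) and using that conjugation by the fixed operators $\til\kappa_{\ang\eta}^{-1}$ and $\kappa_{\ang\eta}$ commutes with $\textup{Os}[\,\cdot\,]$, one estimates $\til\kappa_{\ang\eta}^{-1}\{D_y^\alpha D_\eta^\beta a_{\mathrm L}(y,\eta)\}\kappa_{\ang\eta}$ by factoring the conjugation through $\ang{\eta+\xi}$ exactly as above; the continuity of $\textup{Os}[\,\cdot\,]$ then yields a bound $\le C_{\alpha\beta}\ang\eta^{\mu-|\beta|}$ which is \emph{uniform} in $(y,\eta)$. (The $\xi$-order of the amplitude fed to $\textup{Os}[\,\cdot\,]$ may grow with $|\beta|$, but this is harmless because $\textup{Os}[\,\cdot\,]$ is continuous for every order and only the coefficient $\ang\eta^{\mu-|\beta|}$ must be uniform.) Hence $a_{\mathrm L}\in S^\mu(\R^q\times\R^q;H,\til H)_{\mathrm b}$, and $a\mapsto a_{\mathrm L}$ is continuous, being a composition of continuous maps (translation of the arguments, conjugation, $\textup{Os}[\,\cdot\,]$, conjugation back).

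Next I would prove $\Op(a)=\Op(a_{\mathrm L})$ together with the asymptotic expansion. Writing the Schwartz kernel of $\Op(a)$ as $\int e^{i(y-y')\eta}a(y,y',\eta)\,\dslash\eta$, substituting $y'=y+x$, $\eta\mapsto\eta+\xi$, and using the Fourier-inversion identity $\dint e^{-ix\xi}\xi^\alpha c(x)\,dx\,\dslash\xi=(D_x^\alpha c)(0)$, one identifies it with the kernel of $\Op(a_{\mathrm L})$. Inserting the Taylor expansion of $a(y,y+x,\eta+\xi)$ in $\xi$ at $\xi=0$ with $N+1$ terms and the exact integral remainder, the finite sum produces $\sum_{|\alpha|\le N}\frac1{\alpha!}\partial_\eta^\alpha D_{y'}^\alpha a(y,y',\eta)|_{y'=y}$, and the remainder produces the stated formula for $r_{\mathrm L,N}$, the factor $\int_0^1(1-t)^N\,dt$ being the Taylor remainder. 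Since $\partial_\eta^\alpha D_{y'}^\alpha a$ with $|\alpha|=N+1$ has order $\mu-(N+1)$, and the extra integration over $t\in[0,1]$ and the extra oscillatory integral preserve the symbol class continuously — the relevant amplitude, after replacing $\xi$ by $t\xi$, still satisfies the uniform bounds of the first paragraph for all $t\in[0,1]$ — we obtain $r_{\mathrm L,N}\in S^{\mu-(N+1)}(\R^q\times\R^q;H,\til H)_{\mathrm b}$ with continuous dependence on $a$. Uniqueness of $a_{\mathrm L}$ is immediate: $a_{\mathrm L}$ is recovered from the Schwartz kernel of $\Op(a_{\mathrm L})$, which is in turn determined by the operator, so $\Op(a_{\mathrm L})=0$ forces $a_{\mathrm L}=0$.

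Finally, the right symbol $a_{\mathrm R}(y',\eta)=\dint e^{-ix\xi}a(y'+x,y',\eta-\xi)\,dx\,\dslash\xi$ is handled by the same argument with the roles of $y$ and $y'$ interchanged; the sign $(-1)^{|\alpha|}$ in its expansion comes from the reflected shift $\eta\mapsto\eta-\xi$, and all membership and continuity statements for $a_{\mathrm R}$ and $r_{\mathrm R,N}$ follow verbatim. In the classical case, matching twisted-homogeneous components on the two sides of the asymptotic expansions and invoking Proposition~\ref{cl21} (a shift of the covariable does not change the leading twisted-homogeneous component) shows that $a_{\mathrm L}$ and $a_{\mathrm R}$ are again classical, with the components read off from the expansions. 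The step I expect to be the main obstacle is precisely the uniform-in-$(y,\eta)$ control in the first two paragraphs: one must route the conjugation $\til\kappa_{\ang\eta}^{-1}(\cdot)\kappa_{\ang\eta}$ through the shifted points $\eta\pm\xi$ and $\eta\pm t\xi$, bound $\norm{\kappa_{\ang{\eta+\xi}}^{-1}\kappa_{\ang\eta}}$ and $\norm{\til\kappa_{\ang\eta}^{-1}\til\kappa_{\ang{\eta+\xi}}}$ by powers of $\ang\xi$ with constants independent of $\eta$, and keep track of how the $\eta$- and $\xi$-differentiations interact with these extra factors, so that Theorem~\ref{Pos23} applies with bounds that are themselves uniform. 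Once this bookkeeping is carried out, the remainder is the scalar Kumano-go argument copied line by line.
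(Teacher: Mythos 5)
Your proposal is correct, and it coincides with the argument the paper itself has in mind: Theorem \ref{left} is stated without proof, the text deferring to the ``generalisation of Kumano-go's technique to the case of operator-valued symbols'' (\cite{Kuma1}, \cite{Seil3}, \cite{Schu20}), and what you write out is exactly that argument. In particular you correctly identify and resolve the only genuinely non-scalar point, namely bounding $\norm{\kappa_{\ang{\eta+t\xi}}^{-1}\kappa_{\ang\eta}}$ and $\norm{\til\kappa_{\ang\eta}^{-1}\til\kappa_{\ang{\eta+t\xi}}}$ by powers of $\ang\xi$ uniformly in $\eta$ and $t\in[0,1]$ via the group law and Peetre's inequality, so that the vector-valued oscillatory integral theorem (Theorem \ref{Pos23}) applied with $V=\L(H,\til H)$ delivers the uniform estimates defining the class $S^\mu(\,\cdots)_{\mathrm b}$; the rest is the scalar reduction of double symbols transcribed verbatim.
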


\begin{thm}\label{Multi} 
Let $H,\til H$ and $H_0$ be Hilbert spaces with group actions $\kappa,\til\kappa$ and $\kappa_0,$ respectively. Then $A\in L^\mu_{(\clas)}(\R^q;H_0,\til H)_\mathrm{b},$ $B\in L^\nu_{(\clas)}(\R^d;H,H_0)_\mathrm{b}$ entails $AB\in L^{\mu+\nu}_{(\clas)}(\R^q;H,\til H)_\mathrm{b}.$ If $A=\Op(a),$ $B=\Op(b)$ for left symbols\index{composition between pseudo-differential\newline operators}\index{pseudo-differential operators!composition between $-$}
$$a(y,\eta)\in S^\mu_{(\clas)}(\R^q\times\R^q;H_0,\til H)_\mathrm{b},\quad b(y,\eta)\in S^\nu_{(\clas)}(\R^q\times\R^q;H,H_0)_\mathrm{b}$$
then we have $AB=\Op(a\sharp b)$ for
$$(a\sharp b)(y,\eta)=\dint e^{-ix\xi}a(y,\eta+\xi)b(y+x,\eta)dx\dslash\xi$$
also referred to as the Leibniz product\index{Leibniz product} between $A$ and $b,$ where $(a\sharp b)(y,\eta)\in S^{\mu+\nu}_{(\clas)}(\R^q\times\R^q;H,\til H)_\mathrm{b},$ and $a\sharp b$ has an expansion
$$(a\sharp b)(y,\eta)=\sum_{|\alpha|\le N}\frac1{\alpha!}\partial_\eta^\alpha a(y,\eta)D_y^\alpha b(y,\eta)+r_N(y,\eta)$$
for a symbol $r_N(y,\eta)\in S^{\mu+\nu-(N+1)}_{(\clas)}(\R^q\times\R^q;H,\til H)_\mathrm{b}$ which has the form 
$$r_N=(N+1)\hspace{-2mm}\sum_{|\alpha|=N+1}\int_0^1\hspace{-1mm}\frac{(1-t)^N}{\alpha!}\hspace{-1mm}\dint e^{-ix\xi}(\partial_\eta^\alpha a)(y,\eta+t\xi)(D_y^\alpha b)(y+x,\eta)dx\dslash\xi dt$$
for every $N\in\N.$ The mappings $(a,b)\mapsto a\sharp b$ and $(a,b)\mapsto r_N$ are bilinear continuous in the respective spaces of symbols.
\end{thm}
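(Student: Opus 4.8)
The plan is to follow Kumano-go's technique in the operator-valued (twisted) setting. Write $A=\Op(a)$, $B=\Op(b)$ with left symbols $a\in S^\mu_{(\clas)}(\R^q\times\R^q;H_0,\til H)_{\mathrm b}$ and $b\in S^\nu_{(\clas)}(\R^q\times\R^q;H,H_0)_{\mathrm b}$. Applying $\Op(a)$ to $\Op(b)u$ for $u\in\S(\R^q,H)$ and inserting the oscillatory-integral definitions gives an iterated oscillatory integral in the frequencies $\zeta$ (of $A$) and $\eta$ (of $B$); performing the substitutions $y'=y+x$, $\zeta=\eta+\xi$ and separating off the $(x,\xi)$-integration by the Fubini theorem for oscillatory integrals with uniform symbolic estimates — valid by the regularisation \eqref{osc23} and the attendant integrations by parts — one obtains $AB=\Op(a\sharp b)$ with $(a\sharp b)(y,\eta)=\dint e^{-ix\xi}a(y,\eta+\xi)\,b(y+x,\eta)\,dx\dslash\xi$. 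It then remains to establish (a) that $a\sharp b\in S^{\mu+\nu}_{(\clas)}(\R^q\times\R^q;H,\til H)_{\mathrm b}$, (b) the asymptotic expansion together with the stated integral formula for $r_N$, and (c) the bilinear continuity of $(a,b)\mapsto a\sharp b$ and $(a,b)\mapsto r_N$. The mere existence of the oscillatory integral defining $a\sharp b$ follows from Theorem~\ref{Pos23}, once the integrand $(x,\xi)\mapsto a(y,\eta+\xi)b(y+x,\eta)$ is recognised, for fixed $(y,\eta)$, as an element of $S^{\bs\mu;\bs\nu}(\R^{2q}_{x,\xi};\L(H,\til H))$ for suitable sequences $\bs\mu,\bs\nu$ — here one uses Proposition~\ref{cl21} and the polynomial growth of the operator norms $\norm{\til\kappa_\lambda}$, $\norm{\kappa_{0,\lambda}^{-1}}$ (cf. the proof of Proposition~\ref{1.3.10}).

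The technical heart is the symbolic estimate \eqref{symb} for $a\sharp b$, i.e. the uniform boundedness of $\ang\eta^{-(\mu+\nu)+|\beta|}\norm{\til\kappa_{\ang\eta}^{-1}\{D_y^\alpha D_\eta^\beta(a\sharp b)(y,\eta)\}\kappa_{\ang\eta}}_{\L(H,\til H)}$. After differentiating under the integral I would insert $\kappa_{0,\ang\eta}\kappa_{0,\ang\eta}^{-1}$ between the two factors and then, inside the first one, $\til\kappa_{\ang{\eta+\xi}}\til\kappa_{\ang{\eta+\xi}}^{-1}$ on the left and $\kappa_{0,\ang{\eta+\xi}}^{-1}\kappa_{0,\ang{\eta+\xi}}$ on the right, which rewrites the $\kappa$-conjugated integrand as
$$\til\kappa_{\ang\eta^{-1}\ang{\eta+\xi}}\big[\til\kappa_{\ang{\eta+\xi}}^{-1}a(y,\eta+\xi)\kappa_{0,\ang{\eta+\xi}}\big]\kappa_{0,\ang{\eta+\xi}^{-1}\ang\eta}\cdot\big[\kappa_{0,\ang\eta}^{-1}b(y+x,\eta)\kappa_{\ang\eta}\big].$$
The two bracketed factors are now controlled directly by the symbolic estimates for $a$ and $b$, of sizes $O(\ang{\eta+\xi}^{\mu})$ and $O(\ang\eta^{\nu})$ respectively, while the outer group-action factors $\til\kappa_{\ang\eta^{-1}\ang{\eta+\xi}}$ and $\kappa_{0,\ang{\eta+\xi}^{-1}\ang\eta}$ are estimated, via the polynomial operator-norm bounds on $\til\kappa_\lambda$, $\kappa_{0,\lambda}$ together with the Peetre-type inequalities $c'\ang\xi^{-1}\le\ang{\eta+\xi}/\ang\eta\le C'\ang\xi$, to be of size $O(\ang\xi^{N_0})$. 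Using $\ang{\eta+\xi}^\mu\le c\ang\eta^\mu\ang\xi^{|\mu|}$ one concludes that the conjugated integrand, and each of its $(x,\xi)$-derivatives, is $O(\ang\eta^{\mu+\nu}\ang\xi^{N_1})$; repeated integration by parts in $x$ and $\xi$ against $e^{-ix\xi}$ then renders the integral absolutely convergent while preserving this bound. Finally, by the Leibniz rule every $\partial_\eta$ hits either $a(y,\eta+\xi)$, lowering its order in $\ang{\eta+\xi}$ — hence in $\ang\eta$ — or $b(y+x,\eta)$, lowering its order in $\ang\eta$, so that $|\beta|$ derivatives in $\eta$ produce exactly the gain $\ang\eta^{-|\beta|}$, while $y$-derivatives are harmless. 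This gives $a\sharp b\in S^{\mu+\nu}(\R^q\times\R^q;H,\til H)_{\mathrm b}$; in the classical case one checks in addition that the twisted-homogeneous components combine as the Leibniz product of the components of $a$ and $b$, of the expected orders.

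For the expansion I would Taylor-expand $a(y,\eta+\xi)$ in $\xi$ about $\xi=0$ to order $N$, with integral remainder; the monomials $\xi^\alpha$ combine with $e^{-ix\xi}$ as $(-D_x)^\alpha e^{-ix\xi}$ and, after integration by parts in $x$, are transferred onto $b(y+x,\eta)$, while the remaining integral $\dint e^{-ix\xi}(\cdot)\,dx\dslash\xi$ of an integrand independent of $\xi$ collapses to evaluation at $x=0$. This produces precisely $\sum_{|\alpha|\le N}\frac1{\alpha!}\partial_\eta^\alpha a(y,\eta)\,D_y^\alpha b(y,\eta)$ together with the stated double-oscillatory-integral formula for $r_N$; the bound $r_N\in S^{\mu+\nu-(N+1)}(\R^q\times\R^q;H,\til H)_{\mathrm b}$ then follows by running the estimates of the previous paragraph with $\partial_\eta^\alpha a$ in place of $a$, $|\alpha|=N+1$ (order $\mu-(N+1)$), uniformly in the Taylor parameter $t\in[0,1]$.

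Bilinear continuity of $(a,b)\mapsto a\sharp b$ and $(a,b)\mapsto r_N$ is obtained either by tracking the finitely many semi-norms of $a$ and $b$ entering all of the above estimates, or more transparently by applying the continuity of $\textup{Os}[\cdot]$ from Theorem~\ref{Pos23} with the \Fr target $V=S^{\mu+\nu-(N+1)}_{(\clas)}(\R^q\times\R^q;H,\til H)_{\mathrm b}$, since the integrand depends bilinearly and continuously on $(a,b)$ in the appropriate sense. I expect the main obstacle to be the uniform bookkeeping of the group-action conjugations — the insertion of the $\til\kappa_{\ang{\eta+\xi}}$, $\kappa_{0,\ang{\eta+\xi}}$ factors and the Peetre-type control of $\til\kappa_{\ang\eta^{-1}\ang{\eta+\xi}}$ and $\kappa_{0,\ang{\eta+\xi}^{-1}\ang\eta}$ uniformly in $(\eta,\xi)$ — which is precisely where the twisted calculus genuinely diverges from the scalar Kumano-go argument; the oscillatory-integral manipulations themselves are routine once this is settled.
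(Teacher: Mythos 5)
Your argument is correct and is exactly the generalised Kumano-go technique that the paper relies on for this theorem (which it states without proof, deferring to \cite{Kuma1}, \cite{Seil3}, \cite{Schu20}): the reduction to an oscillatory integral handled by Theorem~\ref{Pos23}, the insertion of $\kappa_{0,\ang\eta}\kappa_{0,\ang\eta}^{-1}$ between the factors and of $\til\kappa_{\ang{\eta+\xi}}$, $\kappa_{0,\ang{\eta+\xi}}$ around $a$, the Peetre-type control of $\til\kappa_{\ang{\eta+\xi}/\ang\eta}$ and $\kappa_{0,\ang\eta/\ang{\eta+\xi}}$ via the polynomial operator-norm bounds on the group actions, and the Taylor expansion with integral remainder are precisely the intended steps. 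No gaps.
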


%
%
%
%
%
\section{The edge algebra}

{\footnotesize If $M$ is a manifold with edge $Y,$ in the simplest case a wedge $\Xw\times\R^q$ for a $C^\infty$ manifold $X$ as the base of the model cone and $\R^q$ as the edge, by the edge algebra we understand a subalgebra of $L_\clas^\mu(M\setminus Y)$ that contains all edge-degenerate differential operators together with the parametrices of elliptic elements. Asymptotics of solutions to elliptic equations will be obtained by using parametrices within the calculus. Here we study the case of constant discrete asymptotics. The ingredients of the edge algebra are organised in such a way that weighted edge spaces and subspaces with such asymptotics are expected under the action of operators.\\

\nt In this Section we give an overview on essential elements of the edge calculus in a new accessible form, later on used as a reference in \cite{Schu62}, \cite{Schu63}. More details and complete proofs, as far as they are not given here, may be found, for instance, in \cite{Schu20}.\par}

%
%
%
%
%
\subsection{Weighted edge spaces and discrete asymptotics}\label{1.4.1}
In Section \ref{1.3.2} we introduced weighted edge spaces, based on Definitions \ref{1.3.5} and \ref{Fredge}. We now deepen this material and study subspaces with (constant) discrete asymptotics.\\

\nt Given a discrete asymptotic type\index{discrete asymptotic type}\index{asymptotic!discrete $-$ type} $\P,$ written in the form (\ref{discras}), associated with weight data\index{weight!data} $(\gamma,\Theta),$ $\Theta=(\vartheta,0],$ $-\infty\le\vartheta<0,$ according to the consideration in Section \ref{1.2.3} we have the spaces $\K_\P^{s,\gamma;g}(\X)$ as subspaces of $\K^{s,\gamma;g}(\X),$ $s,g\in\R.$ Using the fact that $\K_\P^{s,\gamma}(\X)$ is a \Fr space with group action $\kappa,$ see Definition \ref{1.3.3} and the formula (\ref{Klhom}) for $g=0,$ we can form the associated wedge space\index{weighted!spaces!edge!with constant discrete asymptotics}\index{edge!spaces!weighted $-$ with constant discrete\newline asymptotics}\index{constant discrete asymptotics!weighted edge spaces with $-$}
\begin{equation*}\newnot{WsP}
\WW^s(\R^q,\K_\P^{s,\gamma}(\X))=:\WW^{s,\gamma}_\P(\X\times\R^q)
\end{equation*}
which is again a \Fr space, cf. Definition \ref{Fredge}.

\begin{thm}
We have
\begin{equation}\label{new19}
\WW^s(\R^q,\K_\P^{s,\gamma}(\X))=\WW^s(\R^q,\K_\P^{\infty,\gamma}(\X))+\WW^s(\R^q,\K_\Theta^{s,\gamma}(\X))
\end{equation}
for every $s\in\R.$
\end{thm}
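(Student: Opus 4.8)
The plan is to obtain \eqref{new19} as a direct application of Theorem~\ref{sum}~(i), taken with the \Fr space $E:=\K_\P^{s,\gamma}(\X)$ equipped with the group action $\kappa$ of \eqref{Klhom} (case $g=0$), and with the subspaces $E_0:=\K_\P^{\infty,\gamma}(\X)$ and $E_1:=\K_\Theta^{s,\gamma}(\X)$. The point of splitting off $\K_\P^{\infty,\gamma}(\X)$ rather than the finite-dimensional summand $\E_\P(\X)$ from \eqref{new14} is that $\E_\P(\X)$ is \emph{not} invariant under $\kappa$ (the operator $\kappa_\lambda$ turns $\omega(r)r^{-p}\log^k r$ into a function carrying the rescaled cut-off $\omega(\lambda r)$), whereas both $E_0$ and $E_1$ \emph{are} $\kappa$-invariant, so that the clean part~(i) of Theorem~\ref{sum} --- and not the weaker part~(ii) --- applies and delivers precisely the stated form. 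One first reduces to the case of a finite weight interval $\Theta=(\vartheta,0]$: for $\Theta$ infinite one has $\K_\P^{s,\gamma}(\X)=\Projlim{l\in\N}\K_{\P_l}^{s,\gamma}(\X)$ with $\P_l$ associated to $\Theta_l=(-(l+1),0]$, hence $\WW^s(\R^q,\K_\P^{s,\gamma}(\X))=\Projlim{l\in\N}\WW^s(\R^q,\K_{\P_l}^{s,\gamma}(\X))$ by Definition~\ref{Fredge}, and one applies the finite-$\Theta$ identity termwise, $\K_\Theta^{s,\gamma}(\X)$ and $\K_\P^{\infty,\gamma}(\X)$ appearing as the associated projective limits.

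In the finite case the steps are: (1) the set-theoretic non-direct sum $E=E_0+E_1$ --- by \eqref{new14}, $\K_\P^{s,\gamma}(\X)=\K_\Theta^{s,\gamma}(\X)+\E_\P(\X)$ and $\K_\P^{\infty,\gamma}(\X)=\K_\Theta^{\infty,\gamma}(\X)+\E_\P(\X)$ (direct sums), and since $\K_\Theta^{\infty,\gamma}(\X)\subset\K_\Theta^{s,\gamma}(\X)$ one gets $E_0+E_1=\K_\Theta^{\infty,\gamma}(\X)+\E_\P(\X)+\K_\Theta^{s,\gamma}(\X)=\K_\Theta^{s,\gamma}(\X)+\E_\P(\X)=E$, which is non-direct because $E_0\cap E_1\supset\K_\Theta^{\infty,\gamma}(\X)$; (2) continuous embeddings --- $E_1\subset\K^{s,\gamma}(\X)\subset\K_\P^{s,\gamma}(\X)$ and $E_0=\Projlim{N\in\N}\K_\P^{N,\gamma}(\X)\hookrightarrow\K_\P^{s,\gamma}(\X)$, both being \Fr spaces; (3) $\kappa$-invariance --- on $E_1$ it holds because the $g=0$ formula for $\kappa_\lambda$ does not involve the weight, so $\kappa$ is a group action on every $\K^{s,\beta}(\X)$ and hence on $\K_\Theta^{s,\gamma}(\X)=\Projlim{m\in\N}\K^{s,\gamma-\vartheta-\frac1{m+1}}(\X)$ in the sense of Definition~\ref{1.3.3}, while on $E_0$ it holds because, as recorded in Section~\ref{1.2.3}, \eqref{Klhom} restricts to a group of isomorphisms of $\K_\P^{N,\gamma}(\X)$ for every $N$, hence of $\K_\P^{\infty,\gamma}(\X)$. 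Granting (1)--(3), Theorem~\ref{sum}~(i) yields $\WW^s(\R^q,E)=\WW^s(\R^q,E_0)+\WW^s(\R^q,E_1)$, which is \eqref{new19}.

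Most of this is routine bookkeeping: the embeddings, the weight-independence of $\kappa_\lambda$, and the identification $\K_\P^{\infty,\gamma}(\X)=\Projlim{N\in\N}\K_\P^{N,\gamma}(\X)$ (immediate from the direct-sum structure \eqref{new14} together with \eqref{new10a}, the finite-dimensional $\E_\P(\X)$ being pulled out of the limit). The one step deserving genuine attention is the reduction from $\Theta$ infinite to $\Theta$ finite, where one must check that forming the non-direct sum commutes with the projective limit over $l$; this is unproblematic because all the linking maps between the $\K_{\P_l}$-spaces are the canonical restrictions and the decompositions for consecutive $l$ are compatible with them.
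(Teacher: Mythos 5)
Your proposal is correct and follows essentially the same route as the paper: the paper's proof consists precisely of invoking the decomposition $\K_\P^{s,\gamma}(\X)=\K_\P^{\infty,\gamma}(\X)+\K_\Theta^{s,\gamma}(\X)$ and applying Theorem~\ref{sum}~(i). Your additional verifications (the $\kappa$-invariance of both summands, the continuous embeddings, and the reduction from infinite to finite $\Theta$) are accurate and merely make explicit what the paper leaves implicit.
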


\begin{proof}
Because of the relation
$$\K_\P^{s,\gamma}(\X)=\K_\P^{\infty,\gamma}(\X)+\K_\Theta^{s,\gamma}(\X)$$
it suffices to apply \textup{Theorem \ref{sum} (i)}.
\end{proof}

\nt Let us characterise the spaces (\ref{new19}) in more detail where we assume $\Theta=(\vartheta,0]$ to be finite. In this case we have a representation
$$\K^{s,\gamma}_\P(\X)=\E_\P(\X)+\K^{s,\gamma}_\Theta(\X),$$
cf. the formula (\ref{new14}), and hence, by virtue of Theorem \ref{sum}, using the fact that $\K^{s,\gamma}_\Theta(\X)$ is preserved under the action of $\kappa=\{\kappa_\lambda\}_{\lambda\in\R_+},$
\begin{equation}\label{asnew24}
\WW^s(\R^q,\K^{s,\gamma}_\P(\X))=KH^s(\R^q,\E_\P(\X))+\WW^s(\R^q,\K^{s,\gamma}_\Theta(\X)).
\end{equation}

\nt The space $\WW^s(\R^q,\K^{s,\gamma}_\Theta(\X))$ represents distributions of edge flatness\index{edge!flatness relative to a weight}\index{flatness!relative to a weight!edge $-$ }\index{weight!flatness relative to a $-$!edge $-$} $-\vartheta-0$ relative to the weight $\gamma,$ while the elements of $KH^s(\R^q,\E_\P(\X))$ are the singular functions of edge asymptotics of type $\P.$ The cut-off function $\omega$ as well as the function $\eta\mapsto[\eta]$ are fixed. The influence under changing those data will be discussed below.\\

\nt Using (\ref{singdiscr}) and setting $E=H^s(\R^q,\E_\P(\X))$ we obtain the singular functions of (constant) discrete asymptotics\index{singular functions!of constant discrete asymptotics}\index{constant discrete asymptotics!singular functions of $-$}
\begin{eqnarray}\label{discsing}
\notag KE & = &F^{-1}_{\eta\rightarrow y}\Big\{[\eta]^{\frac{n+1}2}\omega(r[\eta])\displaystyle\sum_{j=0}^N\displaystyle\sum_{l=0}^{m_j}\hat b_{jl}(x,\eta)(r[\eta])^{-p_j}\log^l(r[\eta]):\\[-5mm]
\\[-1mm]
\notag & & \hspace{1cm}b_{jl}(x,y')\in C^\infty(X,H^s(\R^q_{y'})),\,0\le l\le m_j,\,j=0,\dots,N\Big\},
\end{eqnarray}
$\hat b_{jl}(x,\eta):=F_{y'\rightarrow\eta}b_{jl}(x,y').$ In other words, every $u\in\WW^s(\R^q,\K^{s,\gamma}_\P(\X))$ can be written in the form
\begin{equation}\label{decomp}
u=u_\mathrm{sing}+u_\mathrm{flat}
\end{equation}
with $u_\mathrm{sing}$ being an element of  (\ref{discsing}) and $u_\mathrm{flat}\in\WW^s(\R^q,\K^{s,\gamma}_\Theta(\X)).$

\begin{rem}
Let $E$ be a \Fr space $E$ with group action $\Kll\!;$ according to \textup{Proposition \ref{1.3.10}} we have $\WW^\infty(\R^q,E)=H^\infty(\R^q,E),$ i.e.$,$ we may ignore $\kappa$ in this case. Thus$,$ analogously as \textup{(\ref{asnew24})} we have
\begin{equation}\label{asnew25}
\WW^\infty(\R^q,\K^{\infty,\gamma}_\P(\X))=H^\infty(\R^q,\E_\P(\X))+H^\infty(\R^q,\K^{\infty,\gamma}_\Theta(\X)).
\end{equation}
In particular$,$ for the space of singular functions it follows that
$$H^\infty(\R^q,\E_\P(\X))=C^\infty\big(X,H^\infty(\R^q,\E_\P(\R_+))\big),$$
see the formula \textup{(\ref{asnew11}),} i.e.$,$ the singular functions of the discrete asymptotics for $s=0$ are of the form $c_{jl}(x,y)\omega(r)r^{-p_j}\log^lr,$ $c_{jl}\in C^\infty(X,H^\infty(\R^q)).$
\end{rem}

\begin{rem}
Let us fix $p\in\C,$ $\Re p<\frac{n+1}2-\gamma,$ $l\in\N,$ $b\in C^\infty(X);$ then
\begin{equation}\label{45a}
c\mapsto[\eta]^{\frac{n+1}2}\omega(r[\eta])b(x)(r[\eta])^{-p}\log^l(r[\eta])c
\end{equation}
represents an element
\begin{equation}\label{45b}
k(\eta)\in S^0_\clas(\R^q;\C,\K^{\infty,\gamma}(\X))_{\id,\kappa},
\end{equation}
see \textup{Definition \ref{1.3.14}}.
\end{rem}

\nt In fact, we have
$$k(\lambda\eta)c=\kappa_\lambda k(\eta)c$$
for all $c\in\C,$ $\lambda\ge1,$ $|\eta|$ sufficiently large, and 
\begin{equation}\label{keta}
k(\eta)\in C^\infty(\R^q,\L(\C,\K^{\infty,\gamma}(\X)))
\end{equation}
(see Remark \ref{1.3.16}).\\

\nt Denote for the moment the potential symbol\index{potential!symbol}\index{symbol!potential $-$} (\ref{keta}) associated with $p,l,b$ by $k_{p,l,b}(\eta),$ then the associated singular functions have the form $\Op_y(k)v$ for $v\in H^s(\R^q).$ In general, a function $b_{jl}(x,y')\in C^\infty(X,H^s(\R^q_{y'}))$ has a convergent expansion
$$b_{jl}(x,y')=\sum_{\nu=0}^\infty\lambda_\nu b_{jl\nu}(x)v_{jl\nu}(y'),$$
$\lambda_\nu\in\C,$ $\sum_\nu|\lambda_\nu|<\infty,$ with sequences $(b_{jl\nu})_{\nu\in\N}\subset C^\infty(X),$ $(v_{jl\nu})_{\nu\in\N}\subset H^s(\R^q),$ tending to zero in the respective spaces, and then our singular functions (\ref{discsing}) have the form
\begin{equation}\label{45c}
u_\mathrm{sing}=\sum_{\nu=0}^\infty\sum_{j=0}^N\sum_{l=0}^{m_j}\lambda_\nu\Op(k_{p_j,l,b_{jl\nu}})v_{jl\nu}.
\end{equation}

\nt In the definition of singular terms of the asymptotics we fixed the function $\eta\mapsto[\eta].$ Let us compare the results between different such functions $[\eta]$ and $[\eta]_1.$ On the level of potential symbols (\ref{45b}) coming from (\ref{45a}) and their analogues $k_1(\eta)$ based on $[\eta]_1,$ we have
$$k(\eta)-k_1(\eta)\in S^{-\infty}(\R^q;\C,\K^{\infty,\gamma}(\X)),$$
since this difference has compact support in $\eta.$ Thus
$$\Op(k-k_1):H^s(\R^q)\rightarrow\WW^\infty(\R^q,\K^{\infty,\gamma}_\P(\X)),$$
i.e., we obtain a difference of the representation of singular functions belonging to the space (\ref{asnew25}). In the general case the expansion (\ref{45c}) shows a similar effect.

\begin{defn}\label{glatt}
The space of smoothing operators on the wedge $\Xw\times\Omega$ with $($constant$)$ discrete asymptotics\index{smoothing operators!on the wedge!with constant discrete asymptotics}\index{operators!smoothing $-$ on the wedge!with constant discrete asymptotics}\index{constant discrete asymptotics!smoothing operators on the wedge with $-$} ${L}^{-\infty}(\Xw\times\Omega,\g),$ $\g=(\gamma,\sigma,\Theta),$ is defined as the set of all
$$C:\WW^s_\comp(\Omega,\KsgX)\rightarrow\WW^\infty_\loc(\Omega,\K^{\infty,\sigma}(\X))$$
that are continuous for all $s\in\R$ and such that $C$ and $C^*$ induce continuous operators
$$C:\WW^s_\comp(\Omega,\KsgX)\rightarrow\WW^\infty_\loc(\Omega,\K^{\infty,\sigma}_\P(\X)),$$
and
$$C^*:\WW^s_\comp(\Omega,\K^{s,-\sigma}(\X))\rightarrow
\WW^\infty_\loc(\Omega,\K^{\infty,-\gamma}_\Q(\X))$$
for all $s\in\R,$ for some asymptotic types $\P$ and $\Q,$ associated with the weight data $(\sigma,\Theta)$ and $(-\gamma,\Theta),$ respectively. More generally$,$ given $j_-,j_+\in\N,$ by ${{L}}^{-\infty}(\Xw\times\Omega,\g;j_-,j_+)$ we denote the space of all
$$C:\WW^s_\comp(\Omega,\K^{s,\gamma}(\X))\oplus H_\comp^s(\Omega,\C^{j_-})\rightarrow
\WW^\infty_\loc(\Omega,\K^{\infty,\sigma}_\Q(\X))\oplus H_\loc^\infty(\Omega,\C^{j_+})$$
that are continuous for all $s\in\R$ and such that $C$ and $C^*$ induce continuous operators analogously as in the case of upper left corners$,$ here including the extra spaces over $\Omega.$
\end{defn}

\begin{rem}
Observe that we have
$$\Phi L^{-\infty}(\X\times\Omega)\Phi'\subset{{L}}^{-\infty}(\Xw\times\Omega,\g)$$
for every $\Phi,\Phi'\in C^\infty(\R_+)$ vanishing near zero.
\end{rem}

%
%
%
%
%
\subsection{Edge quantisation}\label{S1.4.2}

The edge quantisation\index{edge!quantisation}\index{quantisation!edge $-$} is a rule to pass from edge-degenerate operators (or symbols)\index{edge-degenerate!operator}\index{edge-degenerate!symbol}\index{operators!edge-degenerate $-$}\index{symbol!edge-degenerate $-$} to a representation modulo elements of order $-\infty$ such that the quantised objects are continuous between the chosen weighted edge spaces. Analogously as in Section \ref{S1.1.1} on a Manifold $M$ with edge $Y$ we proceed as follows. We define the space $L_\deg^\mu(M)\subset L^\mu(M\setminus Y)$ consisting of all operators $A$ that are modulo $L^{-\infty}(M\setminus Y)$ in the local splitting of variables $(r,x,y)$ close to $Y,$ cf. the formula \eqref{regchart}, of the form
\begin{equation}\label{AA1}
A=r^{-\mu}\Op_{r,x,y}(a)
\end{equation}
for a symbol $a(r,x,y,\rho,\xi,\eta)=\til a(r,x,y,r\rho,\xi,r\eta),$ $\til a(r,x,\til\rho,\xi,\til\eta)\in S^\mu_\clas(\Rr\times\Sigma\times\Omega\times\R_{\til\rho,\xi,\til\eta}^{1+n+q})$ (where $\Sigma\subseteq\R^n$ corresponds to a chart on $X).$ An alternative is to write (again, modulo $L^{-\infty}(M\setminus Y)$)
\begin{equation}\label{AA2}
A=r^{-\mu}\Op_{r,y}(p)
\end{equation}
locally near $Y$ for an operator family $p(r,y,\rho,\eta)=\til p(r,y,r\rho,r\eta),$ $\til p(r,y,\til\rho,\til\eta)\in C^\infty(\Rr\times\Omega,L^\mu_\clas(X;\R_{\til\rho,\til\eta}^{1+q})).$\\

\nt Similarly in the conical case the latter representation will be dominating in the considerations below. However, from \eqref{AA1} we see that the homogeneous principal symbol $\sigma_\psi(A)$ of order $\mu,$ invariantly defined as a function in $C^\infty(T^*(M\setminus Y)\setminus 0),$ has a representation locally near $Y$ in the form
$$\sigma_\psi(A)(r,x,y,\rho,\xi,\eta)=r^{-\mu}\til\sigma_\psi(A)(r,x,y,r\rho,\xi,r\eta)$$
for a $\til\sigma_\psi(A)(r,x,y,\til\rho,\xi,\til\eta)$ which is smooth up to $r=0.$ We will call $\til\sigma_\psi(A)$ the reduced principal symbol of $A.$\\

\nt It is straightforward that every $A\in L_\deg^\mu(M)$ admits a properly supported representative modulo $L^{-\infty}(M\setminus Y).$

\begin{thm}\label{comp8A}
$A\in L_\deg^\mu(M),$ $B\in L_\deg^\nu(M)$ and $A$ or $B$ properly supported entails $AB\in L_\deg^{\mu+\nu}(M)$ and we have
$$\sigma_\psi(AB)=\sigma_\psi(A)\sigma_\psi(B),\quad\til\sigma_\psi(AB)=\til\sigma_\psi(A)\til\sigma_\psi(B).$$
\end{thm}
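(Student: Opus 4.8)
The statement is the exact edge analogue of Theorem \ref{comp8A}'s conical predecessor (Theorem \ref{comp8}), so the plan is to reduce everything to the composition theorem for scalar (pseudo-differential) operators on the open manifold $M\setminus Y$ and then track what happens to the edge-degenerate structure near $Y$. First I would observe that the non-degenerate part is immediate: $A\in L_\deg^\mu(M)$ means in particular $A\in L^\mu(M\setminus Y)$, similarly for $B$, and the standard composition theorem (the analogue of Theorem~\ref{comp8} stated for $L^\mu(X;\R^l)$ with $l=0$) gives $AB\in L^{\mu+\nu}(M\setminus Y)$ together with the multiplicativity $\sigma_\psi(AB)=\sigma_\psi(A)\sigma_\psi(B)$. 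So the only genuine content is the behaviour in a neighbourhood of $Y$, where I must check that the composition is again of the edge-degenerate form \eqref{AA2} and that the reduced symbol multiplies as claimed.

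Near $Y$, using the properly supported representatives (which exist by the remark just before the theorem), write $A=r^{-\mu}\Op_{r,y}(p)$ and $B=r^{-\nu}\Op_{r,y}(q)$ modulo $L^{-\infty}(M\setminus Y)$, with $p(r,y,\rho,\eta)=\til p(r,y,r\rho,r\eta)$, $\til p\in C^\infty(\Rr\times\Omega,L^\mu_\clas(X;\R^{1+q}_{\til\rho,\til\eta}))$, and likewise for $q,\til q$ with order $\nu$. The key step is the commutation identity $\Op_{r,y}(p)\,r^{-\nu}=r^{-\nu}\Op_{r,y}(\tau^{-\nu}p)$, where $\tau^{-\nu}$ shifts the arguments so that $(\tau^{-\nu}p)(r,y,\rho,\eta)$ is again of the form $\widetilde{(\tau^{-\nu}p)}(r,y,r\rho,r\eta)$ with a symbol smooth up to $r=0$ of the same class (this uses that $r^{-\nu}$ acts on the Fourier side in $(\rho,\eta)$ by a combination of a weight factor and a shift; concretely one passes through the Mellin picture of Theorem~\ref{Me} or argues directly with the substitution $\til\rho=r\rho,\ \til\eta=r\eta$). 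Hence $AB=r^{-\mu}\Op_{r,y}(p)\,r^{-\nu}\Op_{r,y}(q)=r^{-(\mu+\nu)}\Op_{r,y}(\tau^{-\nu}p)\Op_{r,y}(q)$, and the Leibniz/composition formula for $\Op_{r,y}$ (the scalar pseudo-differential composition on $\R_+\times\Omega$ with values in $L_\clas(X)$, as in Theorem~\ref{Multi} applied with trivial group action, or simply the classical symbol composition) produces a single operator $\Op_{r,y}(s)$ with $s\sim\sum_{\alpha,\beta}\frac1{\alpha!\beta!}\partial^{\alpha}_\rho\partial^\beta_\eta(\tau^{-\nu}p)\,D^\alpha_r D^\beta_y q$. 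One then checks term by term that each summand, written in the variables $\til\rho=r\rho,\ \til\eta=r\eta$, again lies in $C^\infty(\Rr\times\Omega,L^\bullet_\clas(X;\R^{1+q}))$ with the correct order drop, so that $s(r,y,\rho,\eta)=\til s(r,y,r\rho,r\eta)$ with $\til s$ smooth up to $r=0$ of order $\mu+\nu$. Thus $AB\in L^{\mu+\nu}_\deg(M)$.

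For the symbolic identities, $\sigma_\psi(AB)=\sigma_\psi(A)\sigma_\psi(B)$ follows from the scalar composition theorem on $M\setminus Y$ as noted. For the reduced symbol, one uses the local representation $\sigma_\psi(A)(r,x,y,\rho,\xi,\eta)=r^{-\mu}\til\sigma_\psi(A)(r,x,y,r\rho,\xi,r\eta)$ displayed in the text: substituting this (and the analogue for $B$) into the already-known multiplicativity of $\sigma_\psi$, the common factor $r^{-(\mu+\nu)}$ and the simultaneous rescaling of $(\rho,\eta)$ match on both sides, forcing $\til\sigma_\psi(AB)=\til\sigma_\psi(A)\til\sigma_\psi(B)$ as functions smooth up to $r=0$. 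Alternatively, directly from the leading ($\alpha=\beta=0$) term of the Leibniz expansion above one reads off that the principal part of $\til s$ equals $\til\sigma_\psi(A)\til\sigma_\psi(B)$.

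The main obstacle is the bookkeeping in the second step: verifying that conjugation by the weight factors $r^{-\mu}, r^{-\nu}$ and the Leibniz expansion do not destroy smoothness up to $r=0$, i.e. that every term $\partial^\alpha_\rho\partial^\beta_\eta(\tau^{-\nu}p)\,D^\alpha_r D^\beta_y q$, after the substitution $(\rho,\eta)\mapsto(r\rho,r\eta)$, still has a representative in $C^\infty(\Rr\times\Omega,L_\clas(X;\R^{1+q}))$ with the expected order. This is the point where the specific degenerate structure $r\partial_r$, $r\eta$ is essential, and it is precisely the edge analogue of the argument already carried out in the conical case; the computation is routine once one differentiates $\til p(r,y,r\rho,r\eta)$ and reorganises using that $r\partial_r$ lowers no order while each $\partial_\rho$ contributes a compensating factor of $r$. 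I would present it by reducing to the already-available conical composition theorem with $\eta$ treated as an extra parameter, so that Theorem~\ref{comp8} (in parameter-dependent form, as the text announces it will be needed "in parameter-dependent form also for the edge calculus") does the work, leaving only the trivial observation that the parameter $\eta$ enters in the same degenerate way as $\rho$.
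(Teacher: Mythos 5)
The paper states Theorem \ref{comp8A} without proof (as it does its conical predecessor, Theorem \ref{comp8}), deferring complete arguments to \cite{Schu20}; so there is no in-text proof to compare against. Your plan is the standard one and is essentially correct: reduce to the ordinary composition theorem on $M\setminus Y$, and near $Y$ check that conjugation by the weight factor and the Leibniz expansion preserve the edge-degenerate form, using that $\partial_\rho^\alpha\partial_\eta^\beta p$ carries a factor $r^{\alpha+|\beta|}$ which absorbs the negative powers of $r$ produced by $D_r^\alpha$ acting on $\til q(r,y,r\rho,r\eta)$ and by the commutation with $r^{-\nu}$. Two small points deserve care. First, the identity $\Op_{r}(p)\,r^{-\nu}=r^{-\nu}\Op_{r}(\tau^{-\nu}p)$ is not literally a shift of the Fourier covariable $\rho$ (a genuine translation of the covariable occurs only in the Mellin picture, where $r^{-\nu}$ shifts $z$); in the Fourier picture one should instead view $r^{\nu}\Op_r(p)r^{-\nu}$ as the operator with double symbol $(r/r')^{\nu}p(r,\rho)$ and reduce it to a left symbol, whose expansion $\sum_k c_k\,r^{-k}\partial_\rho^k p$ is again edge-degenerate by the same cancellation --- your hedge towards Theorem \ref{Me} covers this, but the clean argument is the double-symbol one. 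Second, after the term-by-term verification you still need an asymptotic summation \emph{within} the class $C^\infty(\Rr\times\Omega,L_\clas^{\mu+\nu}(X;\R^{1+q}_{\til\rho,\til\eta}))$ (a Borel argument in the variables $(\til\rho,\til\eta)$, uniform up to $r=0$) so that the full composed symbol, not just each term, is of the form $\til s(r,y,r\rho,r\eta)$ modulo a smoothing remainder; this is routine but should be stated. The derivation of $\til\sigma_\psi(AB)=\til\sigma_\psi(A)\til\sigma_\psi(B)$ by cancelling $r^{-(\mu+\nu)}$ for $r>0$ and passing to the limit $r\rightarrow0$ by continuity is correct.
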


\begin{defn}
An $A\in L_\deg^\mu(M)$ is called $\sigma_\psi$-elliptic $($of order $\mu)$ if $\sigma_\psi(A)\ne0$ on $T^*(M\setminus Y)\setminus0$ as usual$,$ and locally close to $Y,$ $\til\sigma_\psi(A)\ne0$ up to $r=0.$
\end{defn}

\begin{thm}
$A\in L_\deg^\mu(M)$ $\sigma_\psi$-elliptic has a $($properly supported$)$ parametrix $P\in L_\deg^{-\mu}(M)$ in the sense
$$1-PA,1-AP\in L^{-\infty}(M\setminus Y),$$
and $\sigma_\psi(P)=\sigma_\psi(A)^{-1},$ $\til\sigma_\psi(P)=\til\sigma_\psi(A)^{-1}.$
\end{thm}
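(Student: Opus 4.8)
The plan is to carry out the classical elliptic parametrix construction on the level of the \emph{reduced} symbols, so that the degenerate structure \eqref{AA1}--\eqref{AA2} is preserved, and then to patch the operator obtained near $Y$ together with an ordinary parametrix on $M\setminus Y$.

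First I would work in a singular chart near $Y$, writing $A=r^{-\mu}\Op_{r,x,y}(a)$ modulo $L^{-\infty}(M\setminus Y)$ with $a(r,x,y,\rho,\xi,\eta)=\til a(r,x,y,r\rho,\xi,r\eta)$, $\til a\in S^\mu_\clas(\Rr\times\Sigma\times\Omega\times\R^{1+n+q})$. By $\sigma_\psi$-ellipticity the leading homogeneous component $\til a_{(\mu)}=\til\sigma_\psi(A)$ is invertible for $(\til\rho,\xi,\til\eta)\ne0$, uniformly up to $r=0$; by homogeneity and compactness this gives a lower bound for $|\til a(r,x,y,\til\rho,\xi,\til\eta)|$ for $|(\til\rho,\xi,\til\eta)|$ large, again uniform up to $r=0$. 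I would then build $\til b\sim\sum_{k\ge0}\til b_{(-\mu-k)}$ with $\til b_{(-\mu-k)}\in S^{-\mu-k}_\clas$ smooth up to $r=0$, recursively: put $\til b_{(-\mu)}:=\til a_{(\mu)}^{-1}$, and choose $\til b_{(-\mu-k)}$ so that the Leibniz product of $\til a$ with the partial sum $\sum_{l<k}\til b_{(-\mu-l)}$ equals $1$ modulo order $-k-1$. Each step amounts to left-multiplying an already known classical symbol (smooth up to $r=0$) by $\til a_{(\mu)}^{-1}$, and the fact that the product stays in the class up to $r=0$ is exactly Theorem~\ref{comp8A}. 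A Borel-type asymptotic summation in the \Fr space $C^\infty(\Rr,S^{-\mu}_\clas)$ produces $\til b$; setting $b(r,x,y,\rho,\xi,\eta):=\til b(r,x,y,r\rho,\xi,r\eta)$ and $P_Y:=r^\mu\Op_{r,x,y}(b)$ yields a local element of $L^{-\mu}_\deg$ with $P_YA-1$ and $AP_Y-1$ having symbols of infinite order decay, hence smoothing near $Y$.

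Next I would globalise. Away from $Y$ the operator $A$ is an ordinary elliptic pseudo-differential operator and therefore has a parametrix $P_0\in L^{-\mu}_\clas(M\setminus Y)$; patching $P_Y$ and $P_0$ by a partition of unity subordinate to a covering of $M$ adapted to $Y$ (the $(1-\varphi)$-part being supported away from $Y$, so that the form \eqref{AA1} near $Y$ is retained) produces $P\in L^{-\mu}_\deg(M)$ with $1-PA,\ 1-AP\in L^{-\infty}(M\setminus Y)$; here one uses Theorem~\ref{comp8A} to keep products inside the class and the coordinate invariance of $L^\mu_\deg$ under the morphisms of manifolds with edge, which reduces to invariance of \eqref{AA1}--\eqref{AA2} under diffeomorphisms that extend smoothly across $\partial\MM$. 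Replacing $P$ by a properly supported representative modulo $L^{-\infty}(M\setminus Y)$ (the analogue of Remark~\ref{R1.2.2} for $L^\mu_\deg(M)$, recorded above) one may assume $P$ properly supported. Finally, applying $\sigma_\psi$ and $\til\sigma_\psi$ to the identity $PA=1$ mod $L^{-\infty}$ and using multiplicativity from Theorem~\ref{comp8A} gives $\sigma_\psi(P)\sigma_\psi(A)=1$ and $\til\sigma_\psi(P)\til\sigma_\psi(A)=1$, that is, $\sigma_\psi(P)=\sigma_\psi(A)^{-1}$ and $\til\sigma_\psi(P)=\til\sigma_\psi(A)^{-1}$.

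The main obstacle I anticipate is not the formal Neumann iteration but the bookkeeping of uniformity up to $r=0$: one must check that every recursively defined $\til b_{(-\mu-k)}$ and every remainder term of the Leibniz product genuinely lies in $C^\infty(\Rr,S^\bullet_\clas)$ rather than only in $C^\infty(\R_+,\cdot)$ with a possible blow-up at the cone tip, and that the Borel summation can be arranged uniformly in $r\in\Rr$. This is precisely where the hypothesis that $\til\sigma_\psi(A)$ be invertible \emph{up to} $r=0$ (not merely on $\R_+$) is used, and it is also the reason one works throughout with the representation \eqref{AA1} with $\til a$ smooth up to $r=0$, rather than with the plain Fourier symbol of $A$ on $M\setminus Y$.
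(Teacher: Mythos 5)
Your proposal is correct and follows essentially the same route the paper relies on: the theorem is stated there without proof, but the identical device --- Leibniz-inverting $r^{-\mu}p$ within the edge-degenerate class so that the inverse again has the form $r^{\mu}\til p^{(-1)}$ with $\til p^{(-1)}(r,y,\til\rho,\til\eta)$ classical and smooth up to $r=0$, followed by patching with an interior parametrix and passing to a properly supported representative --- is exactly what Section \ref{S4.5} invokes when constructing $a^{(-1)}$. The uniformity up to $r=0$ that you single out is indeed the only delicate point, and it is supplied by the hypothesis $\til\sigma_\psi(A)\ne0$ up to $r=0$ together with the closedness of the class under composition (Theorem \ref{comp8A}).
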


\nt By edge quantisation we understand a rule to pass from an edge-degenerate symbol
$$p(r,x,y,\rho,\xi,\eta)=\til p(r,x,y,r\rho,\xi,r\eta)$$
for $\til p(r,x,y,\til\rho,\xi,\til\eta)\in S^\mu_\clas(\Rr\times\Sigma\times\Omega\times\R^{1+n+q}),$
$\Sigma\subseteq\R^n,$ $\Omega\subseteq\R^q$ open, to a Mellin symbol\index{symbol!Mellin $-$}\index{Mellin!symbols}
$$f(r,x,y,z,\xi,\eta)=\til f(r,x,y,z,\xi,r\eta),$$
$\til f(r,x,y,z,\xi,\til\eta)\in S^\mu_\clas(\Rr\times\Sigma\times\Omega\times\Gamma_{\frac{n+1}2-\gamma}\times\R^{n+q}_{\xi,\til\eta}),$ with $z$ running over a weight line $\Gamma_{\frac{n+1}2-\gamma},$ such that
$$\Op_{r,x,y}(p)=\op_M^{\gamma-\frac n2}\Op_{x,y}(f)$$
modulo $L^{-\infty}(\R_+\times\Sigma\times\Omega),$ as operators $C_0^\infty(\R_+\times\Sigma\times\Omega)\rightarrow C^\infty(\R_+\times\Sigma\times\Omega).$ In order to have a more concise description we mainly talk about operator-valöued amplitude functions where the action in $x$ is already carried out, globally on a $C^\infty$ manifold $X.$ Then the task to construct a correspondence $p\mapsto f$ will be formulated on the level of $L_\clas^\mu(X)$-valued amplitude functions. It turns out that we find the Mellin symbol in a very specific way, namely as a holomorphic operator function, now denoted by $h.$

\begin{thm}\label{qua}\index{Mellin!quantisation}\index{quantisation!Mellin $-$}
For every
\begin{equation}\label{new37a}
p(r,y,\rho,\eta)=\til p(r,y,r\rho,r\eta),\quad\til p(r,y,\til\rho,\til\eta)\in C^\infty\big(\Rr\times\Omega,L^\mu_\clas(X;\R^{1+q}_{\til\rho,\til\eta})\big)
\end{equation}
there exists an operator function\index{symbol!Mellin $-$!holomorphic $-$}\index{Mellin!symbol!holomorphic $-$}\index{holomorphic!Mellin symbol}
$$h(r,y,z,\eta)=\til h(r,y,z,r\eta),\quad\til h(r,y,z,\til\eta)\in C^\infty\big(\Rr\times\Omega,M^\mu_\O(X;\R^q_{\til\eta})\big),$$
such that for every fixed $\gamma\in\R$
\begin{equation}\label{new37}
\Op_y(p)=\Op_y(\op_M^{\gamma-\frac n2}(h))\;\mod\,C^\infty\Set{\R_+\times\Omega,L^{-\infty}(\R_+\times X\times\Omega)},
\end{equation}
as operators $C_0^\infty(\R_+\times X)\rightarrow C^\infty(\R_+\times X)$ for all $(y,\eta)\in\Omega\times\R^q.$
\end{thm}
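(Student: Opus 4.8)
The plan is to reduce the statement to the scalar Mellin quantisation theorem already available in the cone calculus, namely Theorem~\ref{Me}, applied with the extra covariable $\til\eta$ treated as a parameter. First I would recall that $\til p(r,y,\til\rho,\til\eta)$ depends smoothly on $(r,y)$ with values in $L^\mu_\clas(X;\R^{1+q}_{\til\rho,\til\eta})$; for each fixed $y$ and each fixed $\til\eta$ the family $\til p(r,y,\cdot,\til\eta)\in C^\infty(\Rr,L^\mu_\clas(X;\R_{\til\rho}))$, so Theorem~\ref{Me} produces an $h(r,y,z,\til\eta)\in C^\infty(\Rr,M^\mu_\O(X))$ with $\Op_r(\til p(r,y,r\rho,\til\eta))=\op_M^{\gamma-\frac n2}(h(r,y,z,\til\eta))$ modulo $L^{-\infty}(\X)$, for all $\gamma$. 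The substantive point is that this correspondence $\til p\mapsto\til h$ can be chosen to depend smoothly on the parameters $(y,\til\eta)$, i.e. $\til h\in C^\infty(\Rr\times\Omega,M^\mu_\O(X;\R^q_{\til\eta}))$; this is exactly the parameter-dependent refinement of Theorem~\ref{Me} (the construction in \cite{Schu20}, or \cite{Krai2}, is given by explicit formulas — a kernel cut-off operator composed with a holomorphic-extension operator — and these operators act continuously on the relevant \Fr symbol spaces, hence map smoothly parametrised families to smoothly parametrised families). Then I would \emph{freeze} the anisotropic structure by substituting $\til\rho=r\rho$, $\til\eta=r\eta$, obtaining $h(r,y,z,\eta):=\til h(r,y,z,r\eta)$ and $p(r,y,\rho,\eta)=\til p(r,y,r\rho,r\eta)$; the scalar identity at each fixed $(y,\eta)$ then reads $\Op_r(p(r,y,\cdot,\eta))=\op_M^{\gamma-\frac n2}(h(r,y,\cdot,\eta))$ modulo $C^\infty(\Rr,L^{-\infty}(\R_+\times X))$, which is precisely \eqref{new37} after applying $\Op_y$, since $\Op_y$ commutes with taking the $r$-action.

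The key steps, in order, are: (1) state the parameter-dependent version of Theorem~\ref{Me} with parameter $\til\eta\in\R^q$ and prove the smooth dependence on $(y,\til\eta)$ by inspecting the kernel cut-off / holomorphic extension construction and verifying it is continuous on $C^\infty(\Rr\times\Omega, L^\mu_\clas(X;\R^{1+q}))$ into $C^\infty(\Rr\times\Omega, M^\mu_\O(X;\R^q))$; (2) carry out the anisotropic substitution $\til\rho\rightsquigarrow r\rho$, $\til\eta\rightsquigarrow r\eta$ on both sides, noting that $r^{-\mu}$ and the substitution are compatible with the Mellin quantisation because the Mellin operator is built in the $r$-variable while $\eta$ is an outer parameter; (3) apply $\Op_y$ to the pointwise (in $y$) identity and control the remainder, observing that a family in $C^\infty(\Rr\times\Omega,L^{-\infty}(\R_+\times X))$ that is smoothing for every $(y,\eta)$ yields, after $\Op_y$, an operator in $C^\infty(\R_+\times\Omega,L^{-\infty}(\R_+\times X\times\Omega))$ — here one uses that the remainder in the scalar theorem can be chosen to depend continuously on all parameters, so the $(y,\eta)$-dependence of the remainder kernel is smooth and Schwartz in the appropriate variables. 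The fact that \eqref{new37} is asserted \emph{for every fixed $\gamma$} is automatic because the scalar Theorem~\ref{Me} already has this feature: the holomorphic symbol $h$ does not depend on $\gamma$, only the weight line on which $\op_M$ is evaluated does.

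The main obstacle I expect is step~(1): establishing that the Mellin quantisation can be performed \emph{uniformly and smoothly} in the parameters $(y,\til\eta)$ rather than merely pointwise. This is not a formal consequence of the pointwise statement; one must return to the actual mechanism. The cleanest route is the kernel cut-off argument: starting from $\til p(r,y,\til\rho,\til\eta)$, one writes its inverse Fourier transform in $\til\rho$, multiplies by a cut-off function $\psi(r')$ in the translation variable supported near $r'=1$, and Fourier-transforms back, obtaining a symbol holomorphic in $z$; the operations involved (Fourier transform, multiplication by a Schwartz-class cut-off, Mellin transform, and the passage to the strip of holomorphy) are all continuous linear maps between the relevant \Fr spaces of operator-valued symbols, and therefore send $C^\infty(\Rr\times\Omega,\cdot)$ to $C^\infty(\Rr\times\Omega,\cdot)$. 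One must also check that the symbolic estimates survive the anisotropic rescaling $\til\eta=r\eta$ near $r=0$; this is the standard observation that $r\eta$-dependence, combined with smoothness up to $r=0$, is exactly what the spaces in Theorem~\ref{qua} are designed to encode, so the membership $\til h\in C^\infty(\Rr\times\Omega,M^\mu_\O(X;\R^q_{\til\eta}))$ is the natural output. Once step~(1) is in place, steps (2) and (3) are routine bookkeeping with the definitions of $\op_M^{\gamma-\frac n2}$ and $\Op_y$.
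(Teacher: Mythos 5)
The paper contains no proof of Theorem \ref{qua}: the statement is used as a black box, with the section preamble and the remark following Theorem \ref{Me} deferring to \cite{Schu20} and \cite{Krai2}, where precisely the route you describe (kernel cut-off producing the holomorphic extension, with $\til\eta$ carried along as a genuine symbolic parameter so that the target is $M^\mu_\O(X;\R^q_{\til\eta})$ and not merely a smooth family in $M^\mu_\O(X)$, followed by the anisotropic substitution) is carried out. So your outline is the intended argument, and your step (1) is correctly formulated as continuity of the kernel cut-off from $C^\infty(\Rr\times\Omega,L^\mu_\clas(X;\R^{1+q}_{\til\rho,\til\eta}))$ to $C^\infty(\Rr\times\Omega,M^\mu_\O(X;\R^q_{\til\eta}))$ — note that this is strictly stronger than the "freeze $\til\eta$ and invoke smooth dependence" phrasing of your opening paragraph, which would only give smoothness in $\til\eta$ rather than the joint symbol estimates in $(\Im z,\til\eta)$ that membership in $M^\mu_\O(X;\R^q_{\til\eta})$ requires.

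Two points in steps (2)--(3) need a sharper justification than the one you offer. First, the compatibility of the frozen-$\til\eta$ identity with the substitution $\til\eta\rightsquigarrow r\eta$ is not because "$\eta$ is an outer parameter" — replacing $\til\eta$ by $r\eta$ changes the $r$-dependence of the symbols being quantised, and quantisation sees that dependence — but because both $\Op_r$ and $\op_M^{\gamma-\frac n2}$ are here applied to \emph{left} symbols, so the Schwartz kernels of both sides depend on $\til\eta$ only through the output variable $r$; the frozen-$\til\eta$ kernel identity may therefore be evaluated at $\til\eta=r\eta$. For double symbols (which do appear at the intermediate stage where the Fourier quantisation is rewritten as a Mellin quantisation) this step is not automatic and one must first reduce to left symbols. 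Second, the remainder $\til d(r,y,\til\rho,\til\eta)$, of order $-\infty$ jointly in $(\til\rho,\til\eta)$, does \emph{not} remain a smoothing symbol uniformly up to $r=0$ after the substitution: the bound $\ang{r\rho,r\eta}^{-N}$ degenerates as $r\to0$. This is exactly why \eqref{new37} is asserted only modulo operators smoothing over the \emph{open} half-axis and why Theorem \ref{sm29} needs the localising factors $\omega(r[\eta])$; your step (3) should be phrased so as not to claim control of the remainder at $r=0$. With these two repairs the argument is sound.
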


\begin{thm}\label{symb26}
\begin{itemize}
\item[\textup{(i)}]Let $\til h(r,y,z,\til\eta)\in C^\infty(\Rr\times\Omega,M_\O^\mu(X;\R_{\til\eta}^q)),$ and set $$h(r,y,z,\eta):=\til h(r,y,z,r\eta),$$
and
$$a(y,\eta):=\omega(r[\eta])r^{-\mu}\op_M^{\gamma-\frac n2}(h)(y,\eta)\omega'(r[\eta]),$$
for some cut-off functions $\omega,\omega'.$ Then 
$$a(y,\eta)\in S^\mu(\Omega\times\R^q;\K^{s,\gamma}(\X),\K^{s-\mu,\gamma-\mu}(\X))$$
for every $s\in\R.$
\item [\textup{(ii)}]Let $\til p(r,y,\til\rho,\til\eta)\in C^\infty(\Rr\times\Omega,L^\mu_\clas(X;\R_{\til\rho,\til\eta}^{1+q})),$ and set
$$p(r,y,\rho,\eta):=\til p(r,y,r\rho,r\eta)$$ and
$$b(y,\eta):=\epsilon(r)(1-\omega(r[\eta]))r^{-\mu}\Op_r(p)(y,\eta)(1-\omega''(r[\eta]))\epsilon'(r)$$
for arbitrary cut-off functions $\omega,\omega'',\epsilon,\epsilon'.$ Then
\begin{equation}\label{K28}
b(y,\eta)\in S^\mu(\Omega\times\R^q;\K^{s,\gamma}(\X),\K^{s-\mu,\infty}(\X))
\end{equation}
for every $s,\gamma\in\R.$
\item [\textup{(iii)}]Let $p_{\mathrm{int}}(r,y,\rho,\eta)\in C^\infty(\R_+\times\Omega,L^\mu_\clas(X;\R_{\rho,\eta}^{1+q}))$ and $\varphi,\varphi'\in C^\infty_0(\R_+).$ Then for
$$c(y,\eta):=\varphi(r)\Op_r(p_{\mathrm{int}})(y,\eta)\varphi'(r)$$ we have
$$c(y,\eta)\in S^\mu(\Omega\times\R^q;\K^{s,\gamma}(\X),\K^{s-\mu,\infty}(\X))$$
for every $s,\gamma\in\R.$
\end{itemize}
\end{thm}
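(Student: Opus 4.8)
The three assertions share the same structure, and the plan is to verify, for each of $a$, $b$, $c$: first, that the family takes values in $C^\infty$ of the stated operator space; second, the twisted symbolic estimates \eqref{symb} of order $\mu$ with respect to the group action $\kappa_\lambda u(r,x)=\lambda^{\frac{n+1}2}u(\lambda r,x)$ on the relevant $\K$-spaces (which is the $\til\kappa=\kappa$ appearing on both sides of \eqref{symb}). The mapping property is essentially already available. For (i) it follows by composing the continuity of $\omega\op_M^{\gamma-\frac n2}(f)\omega'$ on the spaces $\K^{s,\gamma}(\X)$ from Section \ref{1.2.4} (applied with empty Mellin asymptotic type, since $h\in M_\O^\mu$ is entire, in its $r$-dependent form as used in Theorem \ref{cont20}) with the weight shift $r^{-\mu}\colon\K^{s-\mu,\gamma}(\X)\to\K^{s-\mu,\gamma-\mu}(\X)$. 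For (ii) and (iii) one uses the continuity of pseudo-differential operators with exit behaviour on $H^{s;g}_\cone(X^\asymp)$, together with the observation that left multiplication by $(1-\omega(r[\eta]))$ (resp.\ by $\varphi\in C_0^\infty(\R_+)$) makes the output vanish identically near $r=0$, hence flat of infinite order there — which is exactly what the target weight $\infty$ records — and for (iii) the compact $r$-support of $\varphi,\varphi'$ additionally makes the output trivial near $r=\infty$. Smoothness in $(y,\eta)$ is immediate from the smooth dependence of $h$, $\til p$, $p_\mathrm{int}$ and of $[\eta]$ on their arguments.

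For the symbolic estimates the key device is conjugation by $\kappa_{[\eta]}$, equivalently by the dilation $\delta_{[\eta]}$, $\delta_\lambda u(r,x)=u(\lambda r,x)$, since $\kappa_\lambda=\lambda^{\frac{n+1}2}\delta_\lambda$ (so the scalar factors cancel and $\kappa_{[\eta]}^{-1}(\cdot)\kappa_{[\eta]}=\delta_{[\eta]}^{-1}(\cdot)\delta_{[\eta]}$). Using $\delta_\lambda\M_\phi\delta_\lambda^{-1}=\M_{\phi(\lambda\,\cdot\,)}$ and the conjugation rule of Remark \ref{con} (resp.\ its Fourier analogue $\delta_\lambda^{-1}\Op_r(p)\delta_\lambda=\Op_r(\widetilde p)$, $\widetilde p(r,\rho)=p(\lambda^{-1}r,\lambda\rho)$) one finds, for $|\eta|\ge C$ so that $[\eta]=|\eta|$ and $\widehat\eta:=\eta/|\eta|$,
\[
\kappa_{[\eta]}^{-1}\,a(y,\eta)\,\kappa_{[\eta]}=[\eta]^\mu\,\omega(r)\,r^{-\mu}\,\op_M^{\gamma-\frac n2}\!\big(\til h(r/[\eta],y,z,r\widehat\eta)\big)(y)\,\omega'(r),
\]
and analogously for $b$ and $c$ (with $\op_M$ replaced by $\Op_r$ of $\til p(r/[\eta],y,r\rho,r\widehat\eta)$, the cut-offs $1-\omega,1-\omega''$ frozen and $\epsilon,\epsilon'$ replaced by $\epsilon(r/[\eta]),\epsilon'(r/[\eta])$; for $c$ the amplitude $p_\mathrm{int}$ merely has its $r$-argument rescaled to $r/[\eta]$ and carries no internal $r\rho$, $r\eta$ at all). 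Thus, after this harmless conjugation, each family is $[\eta]^\mu$ times a cut-off Mellin (resp.\ Fourier) operator whose amplitude comes from a \emph{fixed} classical operator-valued symbol — $\til h$ with values in $M_\O^\mu(X;\R^q)$, resp.\ $\til p$ in $L^\mu_\clas(X;\R^{1+q})$, smooth up to $r=0$ — evaluated at the frozen arguments $(r/[\eta],r\widehat\eta)$ which, on the $r$-set where the cut-offs live, range over a bounded subset of the relevant symbol space uniformly in $\eta$. Uniform boundedness of the corresponding operators between the stated $\K$-spaces (the parametrised form of the continuity statements cited above) yields \eqref{symb} for $\alpha=\beta=0$. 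For general multi-indices one applies the Leibniz rule: a factor $\partial_{\eta_j}$ either hits a cut-off $\omega(r[\eta])$, producing $r\,\widehat\eta_j\,\omega'(r[\eta])$, which after the $\kappa_{[\eta]}$-normalisation carries an extra $[\eta]^{-1}$, or it hits the amplitude, producing $r\,\partial_{\til\eta_j}\til h$ (resp.\ $r\,\partial_{\til\eta_j}\til p$) up to lower-order terms — one order lower and, after normalisation, again with a factor $[\eta]^{-1}$ — while a factor $D_y$ only hits $\til h$, $\til p$ or $p_\mathrm{int}$ and stays in the same classes by Definition \ref{1.2.16} (resp.\ the definition of $L^\mu_\clas(X;\R^l)$). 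Iterating gives the decay $\ang\eta^{-\mu+|\beta|}$ of \eqref{symb} for all $\alpha,\beta$. Item (iii) then admits a shortcut: $\varphi,\varphi'\in C_0^\infty(\R_+)$ confine everything to a fixed compact cylinder $K\times X\subset\R_+\times X$, on which the $\K$-norms reduce to ordinary Sobolev norms and $p_\mathrm{int}$ is a genuine classical symbol with parameter $\eta$, so the estimates are those of the standard parameter-dependent calculus on $K\times X$.

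I expect item (ii) to be the main obstacle. There the cut-off $1-\omega(r[\eta])$ genuinely depends on $\eta$, and the region of its support, $r\gtrsim[\eta]^{-1}$, creeps toward the conical singularity as $|\eta|\to\infty$; correspondingly, after the $\kappa_{[\eta]}$-conjugation the factors $\epsilon(r/[\eta])$, $\epsilon'(r/[\eta])$ cease to be a bounded family of cut-offs (their support escapes to $r\sim[\eta]$). One must therefore phrase the continuity of the conjugated family in the uniform, parameter-dependent form of the calculus of operators on $X^\asymp$ with exit order $0$ — checking that the amplitude $\til p(r/[\eta],y,r\rho,r\widehat\eta)$ together with the factor $r^{-\mu}(1-\omega(r))$ and the slowly varying truncations $\epsilon(r/[\eta])$ stays in a bounded set of exit symbols as $\eta$ varies — and then observe that the output, being supported in $c_0\le r\le \mathrm{const}\cdot[\eta]$ after normalisation, has its $\K^{s-\mu,N}(\X)$-norm bounded by the $H^{s-\mu}_\cone$-norm uniformly in $N$ \emph{and} in $\eta$ (since on $r\ge c_0$ the fixed cut-off $\omega$ vanishes, so the weighted part plays no role), which is what is needed to land in $\K^{s-\mu,\infty}(\X)$ with estimates of order $\mu$. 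This uniform-in-$\eta$ control of the exit behaviour is the one genuinely delicate point; the remainder is bookkeeping with the Leibniz rule and the mapping properties already established in Sections \ref{1.2.2}--\ref{1.2.4}.
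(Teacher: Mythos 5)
The paper states Theorem \ref{symb26} without proof (deferring to \cite{Schu20}), and your outline follows the same route as the standard argument: reduce the twisted estimates \eqref{symb} to uniform operator bounds by conjugating with $\kappa_{[\eta]}$, i.e.\ with $\delta_{[\eta]}$ (Remark \ref{con} and its Fourier analogue), observe that the rescaled amplitudes $\til h(r/[\eta],y,z,r\eta/|\eta|)$, resp.\ $\til p(r/[\eta],y,r\rho,r\eta/|\eta|)$, stay in bounded sets of the respective classes on the $r$-regions cut out by the cut-offs, and handle $\alpha,\beta\ne0$ by the Leibniz rule, each $\eta$-derivative producing a factor $r$ that becomes $[\eta]^{-1}$ after normalisation. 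Your computation of the conjugated family in (i) is correct, and your identification of the crux of (ii) — the supports of $\epsilon(r/[\eta]),\epsilon'(r/[\eta])$ escaping to $r\sim[\eta]$, which forces one to verify that $(1-\omega(r))r^{-\mu}\til p(r/[\eta],y,r\rho,r\eta/|\eta|)$ is a \emph{bounded} family of exit symbols of order $(\mu;0)$ — together with the observation that the output vanishes identically near $r=0$ (so the $\K^{s-\mu,N}$-norms collapse to the $H^{s-\mu;0}_\cone$-norm uniformly in $N$, accounting for the target weight $\infty$) is exactly where the substance of the proof lies.

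The one step that does not close as written is the ``shortcut'' for (iii). The twisted estimates are \emph{not} the standard parameter-dependent estimates on a fixed compact cylinder $K\times X$: conjugation by $\kappa_{[\eta]}$ moves the supports of $\varphi,\varphi'$ out to $r\sim[\eta]$, and trading the twisted operator norm for the untwisted one costs factors $\norm{\kappa_{[\eta]}^{\pm1}}$, which grow polynomially in $[\eta]$ and destroy the precise order $\mu-|\beta|$. Moreover, your description of the conjugated amplitude in this case (``merely has its $r$-argument rescaled \dots and carries no internal $r\rho$, $r\eta$'') contradicts your own correct formula $\widetilde p(r,\rho)=p(\lambda^{-1}r,\lambda\rho)$: one obtains $p_{\mathrm{int}}(r/[\eta],y,[\eta]\rho,\eta)$, i.e.\ the $\rho$-covariable is rescaled as well, and it is precisely the equivalence $\ang{([\eta]\rho,\eta)}\asymp[\eta]\ang{\rho}$ on the relevant set that extracts the factor $[\eta]^{\mu}$ and places the conjugated family (now supported near $r\sim[\eta]$) in a bounded set of exit symbols. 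So (iii) must be run through the same dilation argument as (ii), with the simplification that there is no degeneracy at $r=0$ and the compact $r$-support of $\varphi,\varphi'$ trivialises both the weight at $0$ and the exit behaviour; with that repair the proof is complete in outline.
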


\begin{thm}\label{sm29}
For every $\til p(r,y,\til\rho,\til\eta)$ in the space $C^\infty(\Rr\times\Omega,L^\mu_\clas(X;\R_{\til\rho,\til\eta}^{1+q})),$ $p(r,y,\rho,\eta):=\til p(r,y,r\rho,r\eta)$ and $h(r,y,z,\eta)$ as in \textup{Theorem \ref{qua}} for arbitrary cut-off functions $\omega,\omega',\omega'',\epsilon,\epsilon',$ $\omega''\prec\omega\prec\omega',$ we have
\begin{eqnarray*}
\epsilon r^{-\mu}\Op_r(p)(y,\eta)\epsilon' & = & \epsilon\{\omega(r[\eta])r^{-\mu}\op_M^{\gamma-\frac n2}(h)(y,\eta)\omega'(r[\eta])\\[2mm]
& + & (1-\omega(r[\eta]))r^{-\mu}\Op_r(p)(y,\eta)(1-\omega''(r[\eta]))\}\epsilon' 
\end{eqnarray*}
modulo $C^\infty(\Omega,L^{-\infty}(\X;\R^q)).$
\end{thm}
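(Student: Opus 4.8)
The plan is to prove the identity by splitting the left-hand side with the given cut-off functions and matching, term by term, the two summands on the right together with three smoothing remainders. All of $\epsilon,\epsilon',\omega(r[\eta]),\omega'(r[\eta]),\omega''(r[\eta])$ and $r^{-\mu}$ are multiplication operators in $r$, hence commute with one another; so I would first insert $1=\omega(r[\eta])+(1-\omega(r[\eta]))$ to the left of $\Op_r(p)(y,\eta)$ and $1=\omega''(r[\eta])+(1-\omega''(r[\eta]))$ to its right, producing four summands. The one carrying $(1-\omega(r[\eta]))$ on the left and $(1-\omega''(r[\eta]))$ on the right is exactly the \virg{exit} summand $\epsilon(1-\omega(r[\eta]))r^{-\mu}\Op_r(p)(y,\eta)(1-\omega''(r[\eta]))\epsilon'$ of the right-hand side, so it remains to rewrite the other three.

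Next I would treat the summand $\epsilon(1-\omega(r[\eta]))r^{-\mu}\Op_r(p)(y,\eta)\omega''(r[\eta])\epsilon'$: since $\omega''\prec\omega$, the supports of $1-\omega(r[\eta])$ and $\omega''(r[\eta])$ are disjoint in $r$ for every $\eta$, so pseudolocality of the classical pseudo-differential operator $\Op_r(p)(y,\eta)$ on $\R_+\times X$ forces this composition to have a kernel that is $C^\infty$ in all four variables (and in $y$, by the smooth dependence of $\til p$). For the remaining two summands, whose sum is $\epsilon\omega(r[\eta])r^{-\mu}\Op_r(p)(y,\eta)\epsilon'$, I would invoke the Mellin quantisation Theorem \ref{qua}, writing $\Op_r(p)(y,\eta)=\op_M^{\gamma-\frac n2}(h)(y,\eta)+G(y,\eta)$ with $G$ a smooth family of smoothing operators; the $G$-contribution is then again smoothing, and for the remaining term $\epsilon\omega(r[\eta])r^{-\mu}\op_M^{\gamma-\frac n2}(h)(y,\eta)\epsilon'$ I would insert $1=\omega'(r[\eta])+(1-\omega'(r[\eta]))$ on the right, peeling off the desired \virg{Mellin} summand $\epsilon\omega(r[\eta])r^{-\mu}\op_M^{\gamma-\frac n2}(h)(y,\eta)\omega'(r[\eta])\epsilon'$ and leaving $\epsilon\omega(r[\eta])r^{-\mu}\op_M^{\gamma-\frac n2}(h)(y,\eta)(1-\omega'(r[\eta]))\epsilon'$, which — as $\omega\prec\omega'$ — is once more an off-diagonal piece of a Mellin pseudo-differential operator and hence has a smooth kernel. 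At this stage the difference of the two sides of the asserted identity equals the sum of exactly these three remainders.

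Finally I would check that each of the three remainders lies in $C^\infty(\Omega,L^{-\infty}(\X;\R^q))$, i.e. is rapidly decreasing in $\eta$ with values in $C^\infty(\X\times\X)$, locally uniformly in $y$. The point is that each remainder carries a factor of the form $\omega(r[\eta])$ multiplying the output or $\omega''(r[\eta])$ multiplying the input — the $G$-term and the off-diagonal Mellin term the former, the $\Op_r(p)$-cross-term the latter — and such a factor is supported where $r$, respectively $r'$, is at most $\mathrm{const}/[\eta]$. Hence on any fixed compact subset of $\X\times\X$, on which the relevant variable is bounded away from $0$, the kernel of the remainder, together with all its $y$- and $\eta$-derivatives, vanishes identically once $|\eta|$ exceeds a bound depending on that compact set; combined with smoothness and local boundedness for $\eta$ in a bounded set, this yields the required rapid decrease in every seminorm. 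The only genuinely non-formal ingredient is Theorem \ref{qua} (that the Mellin quantisation remainder is smoothing and smooth in the parameters); the one remaining subtle point — that the two cross-terms are smoothing even though the gap between the supports in question shrinks like $1/[\eta]$ — does not require any uniform-in-$\eta$ kernel estimate, precisely because membership in $C^\infty(\Omega,L^{-\infty}(\X;\R^q))$ is tested on fixed compacta, where those terms are eventually zero.
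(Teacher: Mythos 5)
Your argument is correct and is, in substance, the standard proof of this quantisation identity; the paper itself states Theorem \ref{sm29} without proof (deferring to \cite{Schu20}), and the decomposition you use — inserting $\omega(r[\eta])+(1-\omega(r[\eta]))$ and $\omega''(r[\eta])+(1-\omega''(r[\eta]))$, matching the two displayed summands, and disposing of the three remainders — is exactly the intended one. The two load-bearing observations are both right: the cross terms have smooth kernels by pseudolocality together with the disjointness of supports coming from $\omega''\prec\omega\prec\omega'$ (and Remark \ref{con} for the Mellin term), and every remainder carries a factor $\omega(r[\eta])$ or $\omega''(r'[\eta])$ whose support escapes each fixed compact subset of $\X\times\X$ as $[\eta]\to\infty$, so that rapid decrease of all $L^{-\infty}(\X)$-seminorms follows from smoothness in $(y,\eta)$ alone — your closing remark that no uniform-in-$\eta$ kernel estimate is needed, precisely because $L^{-\infty}(\X)$ imposes no control as $r\to0$, is the key point and is correctly identified.
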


\nt The operator-valued symbols
\begin{equation}\label{new38}
\begin{array}{rcl}
a(y,\eta)&=&\epsilon\big\{\omega(r[\eta])r^{-\mu}\op_M^{\gamma-\frac n2}(h)(y,\eta)\omega'(r[\eta])\\[4mm]
&+&(1-\omega(r[\eta]))r^{-\mu}\Op_r(p)(y,\eta)(1-\omega''(r[\eta]))\big\}\epsilon'\\[4mm]
&+&\varphi\Op_r(p_{\mathrm{int}})(y,\eta)\varphi'
\end{array}
\end{equation}
with $p,h$ and $\omega,\omega',\omega'',\epsilon,\epsilon'$ as in Theorem \ref{sm29} and $p_{\mathrm{int}},\varphi,\varphi'$ as in Theorem \ref{symb26} (iii) will play the role of the non-smoothing amplitude functions of the edge operator calculus\index{amplitude functions!of the edge calculus}\index{edge!amplitude functions of the $-$ calculus}.

\begin{rem}
Given \textup{\eqref{new37a}} the rule to pass from $r^{-\mu}\Op_r(p)(y,\eta)$ to
\begin{eqnarray*}
\epsilon a_{\mathrm{edge}}(y,\eta)\epsilon' & := & \epsilon\{\omega(r[\eta])r^{-\mu}\op_M^{\gamma-\frac n2}(h)(y,\eta)\omega'(r[\eta])\\[2mm]
& + & (1-\omega(r[\eta]))r^{-\mu}\Op_r(p)(y,\eta)(1-\omega''(r[\eta]))\}\epsilon'
\end{eqnarray*}
is what we understand by edge quantisation\index{edge!quantisation!with respect to a weight}\index{quantisation!edge $-$!with respect to a weight}\index{weight!edge quantisation with respect to a $-$} $($with respect to a chosen weight $\gamma)$ close to the edge. It can be proved that
$$\epsilon r^{-\mu}\Op_r(p)(y,\eta)\epsilon'=\epsilon a_{\mathrm{edge}}(y,\eta)\epsilon'\qquad\mod C^\infty(\Omega,L^{-\infty}(\X;\R_\eta^q))$$
in the sense of pseudo-differential operators on $\X,$ i.e.$,$ operators
$$C_0^\infty(\X)\rightarrow C^\infty(\X).$$
This entails
\begin{equation}\label{g9}
\epsilon r^{-\mu}\Op_{r,y}(p)\epsilon'=\epsilon\Op_y(a_{\mathrm{edge}})\epsilon'\qquad\mod L^{-\infty}(\X\times\Omega).
\end{equation}
\end{rem}

\nt Associated to the operator function (\ref{new38}), we have a (edge-degenerate) homogeneous principal symbol\index{edge-degenerate!symbol!principal $-$}\index{symbol!edge-degenerate $-$!principal $-$}\index{principal symbol!edge-degenerate $-$} $\sigma_\psi(a)(r,x,y,\rho,\xi,\eta)$ which has the form
\begin{equation}\label{sym38}
\sigma_\psi(a)(r,x,y,\rho,\xi,\eta)=r^{-\mu}\epsilon(r)\til\sigma_\psi(a)(r,x,y,r\rho,\xi,r\eta)\epsilon'(r)+\sigma_\psi(\varphi\Op_r(p_{\mathrm{int}})\varphi')
\end{equation}
where $\til\sigma_\psi(a)(r,x,y,\til\rho,\xi,\til\eta)\in C^\infty(T^*(\X\times\Omega)\setminus0)$ is the parameter-dependent homogeneous principal symbol\index{homogeneous!principal symbol}\index{symbol!homogeneous principal $-$}\index{principal symbol!homogeneous $-$} (of order $\mu)$ of the function $\til p(r,x,y,\til\rho,\xi,\til\eta)$ occurring in Theorem \ref{sm29}, smooth up to $r=0,$ and the second summand in (\ref{sym38}) is the standard homogeneous principal symbol (of order $\mu)$ of the operator $\varphi\Op_r(p_{\mathrm{int}})\varphi'\in L_\clas^\mu(\X\times\Omega).$ Moreover, setting
\begin{equation}\label{new38a}
p_0(r,y,\rho,\eta):=\til p(0,y,r\rho,r\eta),\quad h_0(r,y,z,\eta):=\til h(0,y,z,r\eta)
\end{equation}
we also have a relation of the kind (\ref{new37}), namely,
$$\Op_y(p_0)=\Op_y(\op_M^{\gamma-\frac n2}(h_0))\quad\mod C^\infty(\R_+\times\Omega,L^{-\infty}(\X\times\Omega)),$$
and we define the homogeneous principal edge symbol
\begin{equation}\label{sig1}
\begin{array}{rcl}
\sigma_\wedge(a)(y,\eta)&:=&\omega(r|\eta|)r^{-\mu}\op_M^{\gamma-\frac n2}(h_0)(y,\eta)\omega'(r|\eta|)\\[4mm]
&+&(1-\omega(r|\eta|))r^{-\mu}\Op_r(p_0)(y,\eta)(1-\omega''(r|\eta|)).
\end{array}
\end{equation}

%
%
%
%
%
\subsection{Smoothing Mellin, trace, potential and Green operators}\label{S4.3}

The amplitude functions of operators on a manifold with edge consist of symbols in the sense of Definition \ref{1.3.14} with values in the cone algebra of the infinite cone $\Xw$ (see Section \ref{1.2}). According to the structure of the cone algebra, we have, in particular, Green operator-valued symbols.

\begin{defn}\label{Green}
A $g(y,\eta)\in\bigcap_{s,g\in\R} S^\mu_\clas(\Omega\times\R^q;\K^{s,\gamma;g}(\X),\K^{\infty,\sigma;\infty}(\X))$ for an open set $\Omega\subset\R^q$ is called a Green symbol of order $\mu\in\R,$ associated with the weight data\index{Green symbols!associated with weight data}\index{symbols!Green $-$ associated with weight data}\index{weight!data!Green symbols associated with $-$} $\g:=(\gamma,\sigma,\Theta),$\label{g43} for weights $\gamma,\sigma\in\R$ and some interval\index{weight!interval}\index{interval!weight $-$} $\Theta,$ if
$$g(y,\eta)\in\bigcap_{s,g\in\R} S^\mu_\clas(\Omega\times\R^q;\K^{s,\gamma;g}(\X),\S^\sigma_\P(\X)),$$
and
$$g^*(y,\eta)\in\bigcap_{s,g\in\R} S^\mu_\clas(\Omega\times\R^q;\K^{s,-\sigma;g}(\X),\S^{-\gamma}_\Q(\X)),$$
where $\P$ and $\Q$ are $(g$-dependent$)$ asymptotic types associated with $(\sigma,\Theta)$ and $(-\gamma,\Theta),$ respectively. The $(y,\eta)$-wise formal adjoint refers to the non-degenerate sesquilinear pairing $(\ref{Pair}).$ By $\RR^\mu_G(\Omega\times\R^q,\g)$\label{RG} we denote the set of all such operator-valued symbols $g,$ called Green symbols $($with asymptotics$),$ for arbitrary $\P$ and $\Q.$ Moreover$,$ let $\RR^\mu_G(\Omega\times\R^q,\g)_{\P,\Q}$\label{RmuPQ} denote the subspace of Green symbols for fixed $\P$ and $\Q.$
\end{defn}

\nt In an analogous manner we define double symbols $g(y,y',\eta)$ of the class $\RR^\mu_G(\Omega\times\Omega\times\R^q,\g),$ $\RR^\mu_G(\Omega\times\Omega\times\R^q,\g)_{\P,\Q}$ $($cf. \textup{Definition} $\ref{1.3.14}$ where the dimensions of variables and covariables may be independent$).$\\

\nt For notational purpose we mainly consider Green symbols with $y$ and $\eta$ of the same dimension. Corresponding results will have a straightforward extension to open sets $\Omega$ of arbitrary dimension, such as the following theorem. Moreover, observe that $\RR^\mu_G(\Omega\times\R^q,\g)_{\P,\Q},$ for fixed $\P$ and $\Q,$ is \Fr in a natural way.

\begin{thm}
Let $g_j(y,\eta)\in\RR^{\mu-j}_G(\Omega\times\R^q,\g),$ $j\in\N,$ be an arbitrary sequence where the involved asymptotic types are independent of $j.$ Then there is a $g(y,\eta)\in\RR^\mu_G(\Omega\times\R^q,\g)$ which is the asymptotic sum $g(y,\eta)\sim\sum_{j=0}^\infty g_j(y,\eta),$ i.e.$,$ we have
$$g(y,\eta)-\sum_{j=0}^Ng_j(y,\eta)\in\RR^{\mu-(N+1)}_G(\Omega\times\R^q,\g)$$
for every $N\in\N.$ Every such $g$ is unique modulo $\RR^{-\infty}_G(\Omega\times\R^q,\g).$
\end{thm}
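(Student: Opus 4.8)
The plan is to construct the asymptotic sum $g(y,\eta)$ by the usual Borel-type summation with convergence-forcing cut-off factors, carried out in the Fréchet space $\RR^\mu_G(\Omega\times\R^q,\g)_{\P,\Q}$ with the asymptotic types $\P,\Q$ fixed once and for all (they are independent of $j$ by hypothesis, so there is a single target space in which all the $g_j$ and all partial sums live). First I would recall that each $\RR^{\mu-j}_G(\Omega\times\R^q,\g)_{\P,\Q}$ is a Fréchet space, and that we have the continuous embeddings $\RR^{\mu-j-1}_G(\cdots)_{\P,\Q}\hookrightarrow\RR^{\mu-j}_G(\cdots)_{\P,\Q}$, since lowering the order is a restriction on the symbolic estimates while the mapping property into $\S^\sigma_\P(\X)$ (and for the adjoint into $\S^{-\gamma}_\Q(\X)$) is unchanged. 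Fix an excision function $\chi(\eta)\in C^\infty(\R^q)$ with $\chi(\eta)=0$ near $\eta=0$ and $\chi(\eta)=1$ for $|\eta|\ge 1$, and set $\chi_c(\eta):=\chi(\eta/c)$ for $c>0$. Then I would set
$$g(y,\eta):=\sum_{j=0}^\infty \chi_{c_j}(\eta)\,g_j(y,\eta)$$
for a sequence $c_j\to\infty$ chosen sufficiently fast.

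The key point is the choice of $c_j$. For each fixed $j$, as $c\to\infty$ the symbol $\chi_c(\eta)g_j(y,\eta)$ tends to zero in the Fréchet topology of $\RR^{\mu-(j-1)}_G(\cdots)_{\P,\Q}$ — indeed, in any seminorm of type \eqref{symb}, differentiating $\chi_c(\eta)$ costs a factor $c^{-|\beta'|}$ on the support where $|\eta|\le c$, and there $\ang{\eta}^{-\mu+j}$ is already controlled, so one gains an arbitrarily small constant once $c$ is large; the same applies to the formal adjoint $g_j^*$ via the defining symbolic estimates for Green symbols. Concretely I would arrange that $c_j$ is so large that the $j$-th seminorm (in an exhaustion of the countable seminorm system of $\RR^{\mu-(j-1)}_G(\cdots)_{\P,\Q}$) of $\chi_{c_j}g_j$ is $\le 2^{-j}$. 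Then, for every fixed $N$, the tail $\sum_{j>N}\chi_{c_j}g_j$ converges absolutely in $\RR^{\mu-(N+1)}_G(\cdots)_{\P,\Q}$ (each seminorm of that space is bounded by a convergent geometric series once $j$ is large enough that it appears in the exhaustion), hence defines an element of that space; adding the finitely many terms $\sum_{j\le N}\chi_{c_j}g_j$, which differ from $\sum_{j\le N}g_j$ by compactly supported (hence order $-\infty$) corrections, gives
$$g(y,\eta)-\sum_{j=0}^N g_j(y,\eta)\in\RR^{\mu-(N+1)}_G(\Omega\times\R^q,\g)_{\P,\Q}$$
for every $N$, which is the asymptotic expansion claimed. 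In particular $g\in\RR^\mu_G(\Omega\times\R^q,\g)$ (take $N=0$, or $N=-1$ with the convention that the empty sum is zero).

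For uniqueness: if $g$ and $g'$ both have the asymptotic expansion $\sum_j g_j$, then for every $N$ their difference lies in $\RR^{\mu-(N+1)}_G(\cdots)$, hence $g-g'\in\bigcap_{N}\RR^{\mu-(N+1)}_G(\Omega\times\R^q,\g)=\RR^{-\infty}_G(\Omega\times\R^q,\g)$, which is precisely the asserted ambiguity. The main obstacle — really the only nontrivial verification — is the claim that $\chi_c(\eta)g_j(y,\eta)\to 0$ in $\RR^{\mu-(j-1)}_G$ as $c\to\infty$, \emph{including} the control on the two operator-valued mapping properties (into $\S^\sigma_\P(\X)$ and, for the adjoint, into $\S^{-\gamma}_\Q(\X)$) that are built into the definition of a Green symbol; but these are estimates of exactly the same type as the plain symbolic estimates \eqref{symb}, just with the target Fréchet space replaced by $\S^\sigma_\P(\X)$ resp.\ $\S^{-\gamma}_\Q(\X)$, so the same Leibniz-rule argument for $\chi_c$ applies verbatim. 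One should also note that multiplying by $\chi_c(\eta)$ does not disturb the homogeneous components $a_{(\mu-j-i)}$ for large $|\eta|$ (since $\chi_c\equiv 1$ there), so the classicality of each summand, and hence of $g$, is preserved — this is what keeps $g$ inside $\RR^\mu_G$ rather than merely in a non-classical analogue.
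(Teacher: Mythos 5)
Your construction is the standard Borel-type summation with excision factors $\chi(\eta/c_j)$, carried out in the single Fr\'echet space $\RR^{\mu}_G(\Omega\times\R^q,\g)_{\P,\Q}$ fixed by the hypothesis that the asymptotic types do not depend on $j$; the paper omits the proof (deferring to \cite{Schu20}), and your argument is exactly the intended one, including the two points that genuinely need checking, namely that multiplication by $\chi_{c_j}$ respects the adjoint conditions and the classicality. The only cosmetic slip is the phrase ``on the support where $|\eta|\le c$'': the decisive gain of one order comes from $\supp\chi_c\subset\{|\eta|\ge c_0c\}$, where $\ang{\eta}^{-1}\le(c_0c)^{-1}\to0$, while the derivatives of $\chi_c$ are supported where $|\eta|\sim c$, so the factors $c^{-|\beta'|}$ are compensated there by $\ang\eta^{|\beta'|}$.
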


\nt Let us now generalise Definition \ref{Green} to the case of $2\times2$ block matrix symbols where the upper left corners are as before but the extra entries off the diagonal play the role of trace and potential symbols, respectively.

\begin{defn}\label{Green2}
An element
$$g(y,\eta)\in\bigcap_{s,g\in\R}S^\mu_\clas(\Omega\times\R^q;\K^{s,\gamma;g}(\X)
\oplus\C^{j_-},\K^{\infty,\sigma;\infty}(\X)\oplus\C^{j_+})$$
is called a Green symbol of order $\mu\in\R,$ associated with the weight data $\g:=(\gamma,\sigma,\Theta),$ for weights $\gamma,\sigma\in\R$ and some weight interval $\Theta,$ if
$$g(y,\eta)\in\bigcap_{s,g\in\R} S^\mu_\clas\Set{\Omega\times\R^q;\K^{s,\gamma;g}(\X)\oplus\C^{j_-},\S_\P^\sigma(\X)
\oplus\C^{j_+}}\!,$$
and
$$g^*(y,\eta)\in\bigcap_{s,g\in\R} S^\mu_\clas\Set{\Omega\times\R^q;\K^{s,-\sigma;g}(\X)\oplus\C^{j_+},
\S^{-\gamma}_\Q(\X)\oplus\C^{j_-}}$$
for some $(g$-dependent$)$ asymptotic types $\P$ and $\Q$ associated with $(\sigma,\Theta)$ and $(-\gamma,\Theta),$ respectively. The $(y,\eta)$-wise formal adjoint refers to the non-degenerate sesquilinear pairings
$$\Set{\K^{s,\gamma;g}(\X)\oplus\C^j}\times\Set{\K^{-s,-\gamma;-g}(\X)
\oplus\C^j}\rightarrow\C$$
via the scalar product $(u,v)_{\K^{0,0;0}(\X)\oplus\C^j},$ for $s,\gamma,g\in\R,$ $j\in\N.$ The set of all those operator-valued symbols will be denoted by $\RR^\mu_G(\Omega\times\R^q,\g;j_-,j_+)_{\P,\Q},$ and by $\RR^\mu_G(\Omega\times\R^q,\g;j_-,j_+)$\label{RGj} the union of these spaces over all $\P,\Q.$
\end{defn}

\nt Writing
\begin{equation}\label{g29}
g(y,\eta)=\Set{g_{i,j}(y,\eta)}_{i,j=1,2}
\end{equation}
with $g_{i,j}$ being the entries of the respective $2\times2$ block matrix, we also call $g_{21}(y,\eta)$ a trace and $g_{12}(y,\eta)$ a potential symbol. The notation Green symbol for matrices (\ref{g29}) extends Definition \ref{Green} and is used to emphazise that the formal properties of the block matrices (\ref{g29}) are similar to those of the upper left corners. Note that the lower right corners are $j_+\times j_-$ matrices of classical scalar symbols of order $\mu.$ The definition of the symbol spaces in Definition \ref{Green2} refers to the group action $\Kl\oplus\id$ in both spaces where \virg{id} means the trivial group action in the respective finite-dimensional space.

\begin{defn}\label{1.4.12}
The space ${L}^\mu_G(\Xw\times\Omega,\g;j_-,j_+),$ $\mu\in\R,$ $\g=(\gamma,\sigma,\Theta),$ of so-called Green operators\index{Green!operators!with constant discrete asymptotics}\index{operators!Green $-$!with constant discrete asymptotics}\index{asymptotics!constant discrete $-$!Green operators with $-$} on $\Xw\times\Omega$ with $($constant discrete$)$ asymptotics is defined as the set of all
\begin{equation}\label{50}
G=\Op_y(g)+C
\end{equation}
for a symbol $g(y,\eta)\in\RR_G^\mu(\Omega\times\R^q,\g;j_-,j_+)$ and a smoothing operator $C$ in the sense of \textup{Definition} $\ref{glatt}.$ In the case $j_-,j_+=0$ we simply write ${L}^\mu_G(\Xw\times\Omega,\g).$
\end{defn}

\nt We call an operator $G\in{L}^\mu_G(\Xw\times\Omega,\g;j_-,j_+)$ \textit{properly supported}\index{properly supported operators}\index{operators!properly supported $-$} (with respect to the variables on $\Omega)$ if the support of its operator-valued distributional kernel over $\Omega\times\Omega\ni(y,y')$ intersects every strip $\set{(y,y')\in\Omega\times\Omega:y\in K,y'\in\Omega}$ and $\set{(y,y')\in\Omega\times\Omega:y\in\Omega,y'\in K}$ for arbitrary $K\Subset\Omega,$ in a compact set.

\begin{thm}\label{4.13}
Every $G\in{L}^\mu_G(\Xw\times\Omega,\g;j_-,j_+)$ can be written in the form
$$G=G_0+C$$
for a properly supported operator $G_0\in{L}^\mu_G(\Xw\times\Omega,\g)$ and $C\in{L}^{-\infty}(\Xw\times\Omega,\g).$
\end{thm}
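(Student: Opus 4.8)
\nt The plan is to run the standard properly supported representative argument, carrying the asymptotic types along. By Definition~\ref{1.4.12} we may write $G=\Op_y(g)+C_1$ with $g\in\RR^\mu_G(\Omega\times\R^q,\g;j_-,j_+)$ and $C_1\in{L}^{-\infty}(\Xw\times\Omega,\g;j_-,j_+)$. Since $\Omega\subseteq\R^q$ is a countable union of compact sets, fix a function $\psi\in C^\infty(\Omega\times\Omega)$ which is properly supported in the sense of the theorem (its support meets every strip $\set{(y,y')\in\Omega\times\Omega:y\in K,\,y'\in\Omega}$ and $\set{(y,y')\in\Omega\times\Omega:y\in\Omega,\,y'\in K}$, $K\Subset\Omega$, in a compact set) and which equals $1$ in a neighbourhood of the diagonal. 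Writing $k_g(y,y')=\int e^{i(y-y')\eta}g(y,\eta)\,\dslash\eta$ for the operator-valued distributional kernel of $\Op_y(g)$, split $k_g=\psi\,k_g+(1-\psi)\,k_g$, and correspondingly $\Op_y(g)=G_0+G_1$, where $G_0$ and $G_1$ are the operators with kernels $\psi\,k_g$ and $(1-\psi)\,k_g$.

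\nt For the off-diagonal part $G_1$ the Green property of $g$ is what is used. For all $s,g\in\R$ the symbol $g$ takes values in $\L\bigl(\K^{s,\gamma;g}(\X)\oplus\C^{j_-},\S^\sigma_\P(\X)\oplus\C^{j_+}\bigr)$ and has finite order $\mu$ in $\eta$, so its partial inverse Fourier transform in $\eta$ is smooth for $y\ne y'$ with values in that space and, together with all derivatives, rapidly decreasing as $|y-y'|\to\infty$; the same holds for the kernel of the $(y,\eta)$-wise formal adjoint $g^*$, now with values in $\L\bigl(\K^{s,-\sigma;g}(\X)\oplus\C^{j_+},\S^{-\gamma}_\Q(\X)\oplus\C^{j_-}\bigr)$. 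As $1-\psi$ vanishes near the diagonal, $(1-\psi)k_g$ lies in $C^\infty\bigl(\Omega\times\Omega,\L(\K^{s,\gamma;g}(\X)\oplus\C^{j_-},\S^\sigma_\P(\X)\oplus\C^{j_+})\bigr)$ for all $s,g$, and likewise for the adjoint kernel; by the usual kernel estimates this makes $G_1$ and $G_1^*$ continuous from $\WW^s_\comp$ into $\WW^\infty_\loc$ of the relevant spaces carrying the asymptotic types $\P$ and $\Q$. Comparison with Definition~\ref{glatt} then gives $G_1\in{L}^{-\infty}(\Xw\times\Omega,\g;j_-,j_+)$.

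\nt For the near-diagonal part, $G_0$ has kernel supported in $\supp\psi$, hence is properly supported in the sense of the theorem, and $G_0=\Op_y(h)$ with the double symbol $h(y,y',\eta):=\psi(y,y')\,g(y,\eta)$. Multiplication by the scalar function $\psi$ leaves the local symbolic estimates and the ranges in the asymptotic spaces unchanged, so $h$ belongs to $\RR^\mu_G(\Omega\times\Omega\times\R^q,\g;j_-,j_+)$ (the double-symbol version of Definition~\ref{Green2}), and it remains to pass from $h$ to a left symbol. Choosing a locally finite partition of unity $\set{\varphi_l}$ on $\Omega$ together with functions $\psi_l\in C_0^\infty(\Omega)$, $\varphi_l\prec\psi_l$, one has $\Op_y(h)=\sum_l\M_{\varphi_l}\Op_y(h)\M_{\psi_l}$ modulo an operator whose double symbol is supported off the diagonal, hence in ${L}^{-\infty}(\Xw\times\Omega,\g;j_-,j_+)$ by the preceding paragraph; each summand carries the double symbol $\varphi_l(y)h(y,y',\eta)\psi_l(y')$ with compact support in $\Omega\times\Omega$, so it extends by zero to a $\mathrm{b}$-symbol. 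Theorem~\ref{left} (in its variant for \Fr spaces with group action, applied with $H=\K^{s,\gamma;g}(\X)\oplus\C^{j_-}$, $\til H=\K^{\infty,\sigma}_\P(\X)\oplus\C^{j_+}$ and the group action $\kappa^g\oplus\id$) then produces a left symbol with the expansion $\sum_\alpha\frac1{\alpha!}\partial_\eta^\alpha D_{y'}^\alpha h(y,y',\eta)|_{y'=y}$ plus a remainder of order $-\infty$; each term $\partial_\eta^\alpha D_{y'}^\alpha h(y,y',\eta)|_{y'=y}$ is a Green symbol of order $\mu-|\alpha|$, so by the asymptotic-summation theorem for Green symbols this left symbol again lies in $\RR^\mu_G(\Omega\times\R^q,\g;j_-,j_+)$. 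Summing over $l$ gives $G_0=\Op_y(g_0)+C_2$ with $g_0\in\RR^\mu_G(\Omega\times\R^q,\g;j_-,j_+)$ and, by the off-diagonal argument once more, $C_2\in{L}^{-\infty}(\Xw\times\Omega,\g;j_-,j_+)$, whence $G_0\in{L}^\mu_G(\Xw\times\Omega,\g;j_-,j_+)$ is properly supported. Putting $C:=C_1+G_1+C_2\in{L}^{-\infty}(\Xw\times\Omega,\g;j_-,j_+)$ we obtain $G=G_0+C$, as asserted.

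\nt The only genuinely delicate point I anticipate is the bookkeeping of the asymptotic types $\P$ and $\Q$ through the reduction of Theorem~\ref{left}, namely that both the resulting left symbol and its $(y,\eta)$-wise formal adjoint stay in the Green class with precisely those types. The accompanying passage from the globally uniform ($\mathrm{b}$) setting in which Theorem~\ref{left} is stated to the present situation over the open set $\Omega$, carried out by the localization above, is routine but needs to be written out with care.
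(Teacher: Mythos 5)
Your proof is correct and follows exactly the route the paper indicates: replace $g(y,\eta)$ by the properly supported double symbol $\psi(y,y')g(y,\eta)$ with $\psi\equiv1$ near the diagonal, check that the off-diagonal remainder is smoothing with asymptotics, and reduce the near-diagonal double symbol back to a left symbol in the Green class. The paper states this only as a one-sentence remark; your write-up supplies the standard verifications (kernel argument off the diagonal, Theorem~\ref{left} plus asymptotic summation of Green symbols) that the remark leaves implicit.
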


\nt In fact, it suffices to replace $g(y,\eta)$ in (\ref{50}) by $g_0(y,y',\eta):=\psi(y,y')g(y,\eta)$ for a $\psi(y,y')\in C^\infty(\Omega\times\Omega)$ of proper support such that $\psi(y,y')=1$ in a suitable neighbourhood of $\mathrm{diag}(\Omega\times\Omega).$\\

\nt Let us now add so-called smoothing Mellin symbols of the edge calculus with (constant discrete) asymptotics. Those are $(y,\eta)$-depending families of operators taking values in the space ${L}^\mu_{M+G}(\X,\g),$ for $\g=(\gamma,\gamma-\mu,(-(k+1),0]),$ $k\in\N,$ cf. Definition \ref{1.2.34}.

\begin{defn}\label{1.4.14}
Let $\RR^\mu_{M+G}(\Omega\times\R^q,\g)$ be defined as the space of all operator functions of the form $m(y,\eta)+g(y,\eta)$ for arbitrary $g(y,\eta)\in\RR^\mu_G(\Omega\times\R^q,\g)$ and
\begin{equation}\label{new40b}
m(y,\eta):=r^{-\mu}\omega(r[\eta])\sum_{j=0}^k\sum_{|\alpha|\le j}r^j\op_M^{\gamma_{j\alpha}-\frac n2}(f_{j\alpha})(y)\eta^\alpha\omega'(r[\eta])
\end{equation}
for arbitrary $f_{j\alpha}(y,z)\in C^\infty(\Omega,M_{\RR_{j\alpha}}^{-\infty}(X))$ for $($constant discrete$)$ asymptotic types $\RR_{j\alpha}$ and weights $\gamma_{j\alpha}$ satisfying
\begin{equation}\label{w39}
\gamma-j\le\gamma_{j\alpha}\le\gamma,\quad\pi_\C\RR_{j\alpha}\cap\Gamma_{\frac{n+1}2-\gamma_{j\alpha}}=\emptyset.
\end{equation}

\nt In a similar manner we define $\RR_{M+G}^\mu(\Omega\times\R^q,\g;j_-,j_+)$ to be the space of all $2\times2$ block matrix-valued symbols of the form $\diag(m(y,\eta),0)+g(y,\eta)$ for arbitrary $m(y,\eta)\in\RR_{M+G}^\mu(\Omega\times\R^q,\g),$ and $g(y,\eta)\in\RR_G^\mu(\Omega\times\R^q,\g;j_-,j_+).$
\end{defn}

\begin{prop}\label{1.4.15}
The space $\RR^\mu_{M+G}(\Omega\times\R^q,\g;j_-,j_+)$ is contained in $S^\mu_\clas\big(\Omega\times\R^q;\K^{s,\gamma;g}(\X)\oplus\C^{j_-},\K^{\infty,\gamma-\mu;\infty}(\X)\oplus\C^{j_+}\big)$ and in $S^\mu_\clas\big(\Omega\times\R^q;\K^{s,\gamma;g}_\P(\X)\oplus\C^{j_-},\K^{\infty,\gamma-\mu;\infty}_\Q(\X)\oplus\C^{j_+}\big)$ for every $s,g\in\R,$ and for any $($constant discrete$)$ asymptotic type $\P,$ for some resulting $\Q.$
\end{prop}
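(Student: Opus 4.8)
The plan is to split a symbol from $\RR^\mu_{M+G}(\Omega\times\R^q,\g;j_-,j_+)$, which by Definition \ref{1.4.14} has the form $\diag(m(y,\eta),0)+g(y,\eta)$ with $g\in\RR^\mu_G(\Omega\times\R^q,\g;j_-,j_+)$ and $m$ as in \eqref{new40b}, and to treat the two pieces separately. For the Green part there is nothing to do: by Definition \ref{Green2} it already lies in $S^\mu_\clas\big(\Omega\times\R^q;\K^{s,\gamma;g}(\X)\oplus\C^{j_-},\S^{\gamma-\mu}_\P(\X)\oplus\C^{j_+}\big)$ for some asymptotic type $\P$, and via the continuous embedding $\S^{\gamma-\mu}_\P(\X)\subset\S^{\gamma-\mu}(\X)=\K^{\infty,\gamma-\mu;\infty}(\X)$ (cf.\ Definition \ref{ksgg}) this gives both assertions for $g$, the resulting $\Q$ for the Green part being $\P$ itself, independent of the source type. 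Since the $\C^{j_\pm}$-summands carry the trivial group action and $\diag(m,0)$ is the zero operator between them, the symbol estimates of $\diag(m,0)$ reduce to those of $m(y,\eta)$ viewed as an operator-valued symbol between the cone spaces alone; and because $m$ is the finite sum \eqref{new40b}, it suffices to prove, for each $j=0,\dots,k$ and $|\alpha|\le j$, that
$$a_{j\alpha}(y,\eta):=\omega(r[\eta])\,r^{-\mu+j}\,\op_M^{\gamma_{j\alpha}-\frac n2}(f_{j\alpha})(y)\,\omega'(r[\eta])\,\eta^\alpha$$
belongs to $S^\mu_\clas\big(\Omega\times\R^q;\K^{s,\gamma;g}(\X),\K^{\infty,\gamma-\mu;\infty}(\X)\big)$, and to $S^\mu_\clas\big(\Omega\times\R^q;\K^{s,\gamma;g}_\P(\X),\K^{\infty,\gamma-\mu;\infty}_\Q(\X)\big)$ for every $\P$ with a resulting $\Q$.

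First I would establish the pointwise mapping properties and the smooth dependence. For fixed $(y,\eta)$ the functions $\omega(r[\eta])$, $\omega'(r[\eta])$ are again cut-off functions on $\R_+$, so $a_{j\alpha}(y,\eta)$ is, up to the scalar $\eta^\alpha$, an operator of the type occurring in Definition \ref{1.2.34}: the conditions \eqref{w39} on $\gamma_{j\alpha}$ and $\RR_{j\alpha}$ coincide with \eqref{Me20} and hence ensure that $\op_M^{\gamma_{j\alpha}-\frac n2}(f_{j\alpha})(y)$ is defined on the relevant weighted cone spaces and that, combined with $r^{-\mu+j}$, it shifts the weight from $\gamma$ at least down to $\gamma-\mu$. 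By the mapping properties of smoothing Mellin plus Green operators in the cone algebra --- Definition \ref{1.2.34} together with the action of $\omega\op_M^{\gamma-\frac n2}(f)\omega'$ on weighted cone spaces recorded in Section \ref{1.2.4} --- it follows that $a_{j\alpha}(y,\eta)$ is continuous $\K^{s,\gamma;g}(\X)\to\K^{\infty,\gamma-\mu;\infty}(\X)$ and, more precisely, $\K^{s,\gamma;g}_\P(\X)\to\K^{\infty,\gamma-\mu;\infty}_\Q(\X)$ for every $\P$, with $\Q$ built from $\P$, $\RR_{j\alpha}$ and $\omega$. Smoothness in $y$ comes from $f_{j\alpha}\in C^\infty(\Omega,M^{-\infty}_{\RR_{j\alpha}}(X))$ and smoothness in $\eta$ from that of $\omega(r[\eta])$, $\omega'(r[\eta])$ and $\eta^\alpha$; hence $a_{j\alpha}\in C^\infty\big(\Omega\times\R^q,\L(\K^{s,\gamma;g}(\X),\K^{\infty,\gamma-\mu;\infty}(\X))\big)$, and likewise with the asymptotic subspaces.

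Next I would check twisted homogeneity for large $|\eta|$. By the same conjugation computation as in Example \ref{Me28} --- which uses only that $f_{j\alpha}$, being independent of $r$ and $r'$, makes $\op_M^{\gamma_{j\alpha}-\frac n2}(f_{j\alpha})(y)$ commute with $\kappa_\lambda$ (Remark \ref{con}), together with $[\lambda\eta]=\lambda[\eta]$ for $\lambda\ge1$ and $|\eta|$ large, $\kappa_\lambda^{-1}(r^{-\mu+j}\,\cdot\,)\kappa_\lambda=\lambda^{\mu-j}(r^{-\mu+j}\,\cdot\,)$, and $(\lambda\eta)^\alpha=\lambda^{|\alpha|}\eta^\alpha$ --- one obtains
$$a_{j\alpha}(y,\lambda\eta)=\lambda^{\mu-(j-|\alpha|)}\,\kappa_\lambda\,a_{j\alpha}(y,\eta)\,\kappa_\lambda^{-1}\qquad\text{for all }\lambda\ge1,\ |\eta|\ge C,$$
where $\kappa=\Kll$ is the group action \eqref{Klhom} on source and target, which also restricts to $\K^{s,\gamma;g}_\P(\X)$ by the Remark following the definition of that space. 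Since $|\alpha|\le j$, the exponent is $\mu-(j-|\alpha|)\le\mu$ with $j-|\alpha|\in\N$; so by Remark \ref{1.3.16} in its version for \Fr spaces with group action (cf.\ the discussion around Example \ref{b29}) the two properties just established yield $a_{j\alpha}\in S^{\mu-(j-|\alpha|)}_\clas\big(\Omega\times\R^q;\K^{s,\gamma;g}(\X),\K^{\infty,\gamma-\mu;\infty}(\X)\big)\subset S^\mu_\clas\big(\Omega\times\R^q;\K^{s,\gamma;g}(\X),\K^{\infty,\gamma-\mu;\infty}(\X)\big)$, and the same argument on the asymptotic subspaces gives membership in $S^\mu_\clas\big(\Omega\times\R^q;\K^{s,\gamma;g}_\P(\X),\K^{\infty,\gamma-\mu;\infty}_\Q(\X)\big)$. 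Summing the finitely many $a_{j\alpha}$ and adding back the Green part $g$ --- so that the overall resulting $\Q$ is the union of the Mellin-induced types and the Green type --- completes the proof. The only substantial ingredient is the pointwise step of the second paragraph, which is exactly where \eqref{w39} is used and which is inherited from the cone calculus of Section \ref{1.2}; the $\eta$-dependence enters only through the innocuous rescalings $r\mapsto r[\eta]$ and the factor $\eta^\alpha$, so the remainder is bookkeeping.
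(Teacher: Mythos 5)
Your proposal is correct and follows essentially the same route as the paper: the paper's own (very brief) proof likewise invokes Remark \ref{1.3.16} in its Fr\'echet-space version, observes that apart from the Green summand $m(y,\eta)$ is a finite sum of operator functions satisfying the twisted homogeneity relation for $\lambda\ge1,$ $|\eta|\ge C$ with order $\mu-j+|\alpha|,$ and uses the mapping properties \eqref{AB}. You have simply written out in full the steps the paper leaves implicit.
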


\begin{proof}
In fact, it suffices to apply Remark \ref{1.3.16} (which holds in analogous form also in the case of \Fr spaces $H,\til H)$ to see that, apart from the Green summand, the symbol $m(y,\eta)$ is a finite sum of operator functions satisfying the homogeneity relation for $\lambda\ge1,$ $|\eta|\ge C,$ with order $\mu-j+|\alpha|,$ and that we have the mapping properties (\ref{AB}).
\end{proof}

\nt Given a symbol $(m+g)(y,\eta)\in\RR_{M+G}^\mu(\Omega\times\R^q,\g;j_-,j_+)$ we define $\sigma_\wedge(m+g)(y,\eta),$ the homogeneous principal edge symbol of order $\mu,$ to be the homogeneous principal component of the classical operator-valued symbols in the sense of Proposition \ref{1.4.15}. In particular, for a Green symbol $g(y,\eta)\in\RR_G^\mu(\Omega\times\R^q,\g;j_-,j_+)$ we have $\sigma_\wedge(g)(y,\eta).$ Observe that for $m(y,\eta)$ of the form (\ref{new40b}) we have
\begin{equation}\label{new41}
\sigma_\wedge(m)(y,\eta)=r^{-\mu}\omega(r|\eta|)\sum_{j=0}^k\sum_{|\alpha|=j}r^j\op_M^{\gamma_{j\alpha}-\frac n2}(f_{j\alpha})(y)\eta^\alpha\omega'(r|\eta|).
\end{equation}

\begin{defn}\label{1.4.16}
The space ${L}^\mu_{M+G}(\Xw\times\Omega,\g;j_-,j_+)$ of so-called smoothing Mellin plus Green operators with $($constant discrete$)$ asymptotics$,$ for $\mu\in\R$ and weight data $\g=(\gamma,\gamma-\mu,\Theta),$ is defined as the set of all
\begin{equation}\label{newA45}
A=\Op_y(m+g)+C
\end{equation}
for a symbol $(m+g)(y,\eta)\in\RR_{M+G}^\mu(\Omega\times\R^q,\g;j_-,j_+)$ and a smoothing operator $C$ in the sense of \textup{Definition} $\ref{glatt}.$ For $\Theta=(-\infty,0]$ we define the space of smoothing Mellin plus Green operators as the intersection of the respective spaces over $\N\ni k$ for finite $\Theta=(-(k+1),0].$
\end{defn}

\nt For an operator $A\in{L}^\mu_{M+G}(\Xw\times\Omega,\g)$ written as in Definition \ref{1.4.16} we set
\begin{equation}\label{new41a}
\sigma_\wedge(A)(y,\eta):=\sigma_\wedge(m+g)(y,\eta).
\end{equation}

\begin{rem}
Similarly as in \textup{Theorem \ref{4.13}} we can represent every $A\in{L}^\mu_{M+G}(\Xw\times\Omega,\g)$ in the form $A=A_0+C$ for a properly supported operator $A_0\in{L}^\mu_{M+G}(\Xw\times\Omega,\g)$ and some $C\in{L}^{-\infty}(\Xw\times\Omega,\g).$\\

\nt In fact$,$ we may replace the symbols $(m+g)(y,\eta)$ in \textup{\eqref{newA45}} by $\psi(y,y')(m+g)(y,\eta)$ for a $\psi$ similarly as in connection with \textup{Theorem \ref{4.13}}.
\end{rem}

\begin{thm}
$A\in{L}^\mu_{M+G}(\Xw\times\Omega,\g;j_0,j_+)$ for $\g=(\gamma-\nu,\gamma-(\mu+\nu),\Theta),$ and $B\in{L}^\nu_{M+G}(\Xw\times\Omega,\mathbf h;j_-,j_0)$ for $\mathbf h=(\gamma,\gamma-\nu,\Theta),$ $A$ or $B$ properly supported$,$ imply $AB\in{L}^{\mu+\nu}_{M+G}(\Xw\times\Omega,\g\circ\mathbf h;j_-,j_+)$ for $\g\circ\mathbf h=(\gamma,\gamma-(\mu+\nu),\Theta),$ and we have
$$\sigma_\wedge(AB)(y,\eta)=\sigma_\wedge(A)(y,\eta)\sigma_\wedge(B)(y,\eta).$$
\end{thm}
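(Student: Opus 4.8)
The plan is to reduce the composition statement to the two already–available calculi: the calculus of operator-valued symbols with uniform (Kumano-go) estimates (Theorems \ref{Multi} and \ref{left}), and the composition behaviour of smoothing Mellin plus Green operators on the infinite cone $\X$ (Theorems \ref{comp25} and the corresponding statement for $\RR_G$, together with Proposition \ref{1.4.15}). First I would use the representation afforded by the remark following Definition \ref{1.4.16}: after replacing $A$ or $B$ by a properly supported representative modulo ${L}^{-\infty}(\Xw\times\Omega,\cdot)$, I may assume $A=\Op_y(m_A+g_A)$ and $B=\Op_y(m_B+g_B)$ with $(m_A+g_A)\in\RR^\mu_{M+G}(\Omega\times\R^q,\g;j_0,j_+)$ and $(m_B+g_B)\in\RR^\nu_{M+G}(\Omega\times\R^q,\mathbf h;j_-,j_0)$, one of them of proper support in $y$, while the smoothing remainders $C$ are absorbed once we know (from Definition \ref{glatt}) that ${L}^{-\infty}$ is a two-sided ideal under composition with operators of this type — that last fact is immediate from the mapping properties in Proposition \ref{1.4.15}.

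The core step is then the Leibniz product. By Theorem \ref{Multi}, applied with $H:=\K^{s,\gamma;g}(\X)\oplus\C^{j_-}$, $H_0:=\K^{s-\nu,\gamma-\nu;g}(\X)\oplus\C^{j_0}$, $\til H:=\K^{\infty,\gamma-(\mu+\nu);\infty}(\X)\oplus\C^{j_+}$, the composition $\Op_y(m_A+g_A)\Op_y(m_B+g_B)$ equals $\Op_y\big((m_A+g_A)\sharp(m_B+g_B)\big)$, and the $\sharp$-product lies in the appropriate classical operator-valued symbol class, with the asymptotic expansion $\sum_{|\alpha|\le N}\frac1{\alpha!}\partial^\alpha_\eta(m_A+g_A)\,D^\alpha_y(m_B+g_B)$ plus a remainder $r_N$ of order $\mu+\nu-(N+1)$. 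What remains is to identify this $\sharp$-product as an element of $\RR^{\mu+\nu}_{M+G}(\Omega\times\R^q,\g\circ\mathbf h;j_-,j_+)$. Expanding bilinearly, the product splits into four pieces: $m_A\sharp m_B$, $m_A\sharp g_B$, $g_A\sharp m_B$, and $g_A\sharp g_B$. For the three pieces involving a Green factor one uses that Green symbols form an ideal: differentiating in $y$ or $\eta$ preserves the class $\RR_G$ (this is built into Definition \ref{Green2}, each entry being a classical operator-valued symbol into spaces with asymptotics), and the pointwise composition of a Green symbol with any of the symbols appearing here still maps into $\S^\sigma_\P(\X)$-type spaces and has a formally adjoint enjoying the same property — so $m_A\sharp g_B$, $g_A\sharp m_B$, $g_A\sharp g_B\in\RR^{\mu+\nu}_G(\Omega\times\R^q,\g\circ\mathbf h;j_-,j_+)$; here the $(y,\eta)$-wise composition rule for Green operators on $\X$ (the remark after the smoothing-Mellin–plus–Green definitions) is invoked. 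For the genuinely Mellin piece $m_A\sharp m_B$ one computes the $(y,\eta)$-pointwise products $r^{-\mu}\omega\,r^j\op_M^{\gamma_{j\alpha}-\frac n2}(f^A_{j\alpha})\eta^\alpha\omega'$ against the analogous factors of $m_B$, and uses the conical-cone Mellin translation product (Theorem \ref{comp25}, formula (\ref{M25})) to see that each such product is, modulo Green, again of the form (\ref{new40b}) with resulting Mellin asymptotic types (via Remark \ref{2.27}) and with weights satisfying (\ref{w39}) for the composed weight data $\g\circ\mathbf h$; the $\eta^\alpha$-factors combine simply as $\eta^{\alpha+\beta}$, which raises the polynomial order by $|\alpha|+|\beta|$ exactly matching the loss of $r$-powers, so the total order is $\mu+\nu$. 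Finally the remainder $r_N$ is handled by an asymptotic-summation argument: choosing $N$ large and invoking the asymptotic completeness of the Green symbol class (the theorem preceding Definition \ref{Green2}) one absorbs $r_N$ into a Green symbol, so that $AB$ has exactly the asserted form.

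The symbolic identity $\sigma_\wedge(AB)=\sigma_\wedge(A)\sigma_\wedge(B)$ follows by taking homogeneous principal components: $\sigma_\wedge$ of a $\sharp$-product is the product of the $\sigma_\wedge$'s (this is the twisted-homogeneity analogue of $\sigma_\psi(AB)=\sigma_\psi(A)\sigma_\psi(B)$, and is contained in the classical-symbol part of Theorem \ref{Multi}), while the lower-order terms in the Leibniz expansion and $r_N$ drop out. One checks on the explicit formula (\ref{new41}) that only the $|\alpha|=j$ summands survive in $\sigma_\wedge(m)$, and that the conormal-symbol product (\ref{M25}) restricted to leading order reproduces the product of the edge symbols; the Green contributions add their own $\sigma_\wedge(g)$, consistently with $\sigma_\wedge$ being multiplicative on the whole $\RR_{M+G}$-class.

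\textbf{Main obstacle.} The routine-but-delicate point is verifying that all the bookkeeping with weights and asymptotic types is consistent: one must check that the weight constraints (\ref{w39}) are stable under the Mellin translation product — i.e. that applying $T^{\nu-i}$ to a conormal symbol of $A$ and multiplying by one of $B$ lands on a weight line disjoint from the poles of the resulting Mellin asymptotic type, for the composed weight datum $\gamma-(\mu+\nu)$ — and that the shift operators $T^\beta$ interact correctly with the $[\eta]$-dependent cut-offs $\omega(r[\eta])$ when one passes from the abstract $\sharp$-product back to the concrete form (\ref{new40b}). This is exactly the edge-level reflection of the computations behind Theorem \ref{comp25}, but now carried out $(y,\eta)$-parametrically and with the extra $\eta^\alpha$ weights; the rest of the argument is a matter of assembling already-established ideal and expansion properties.
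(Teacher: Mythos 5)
The paper states this theorem without proof --- Section 4 is an expository overview that defers complete arguments to \cite{Schu20} --- so there is no in-text proof to compare against. Judged on its own, your argument is the standard one and is correct in outline: reduce to $\Op_y$ of symbols via properly supported representatives as in Theorem \ref{4.13}, apply the Leibniz-product Theorem \ref{Multi}, split $(m_A+g_A)\sharp(m_B+g_B)$ into its four bilinear pieces, dispose of the Green-containing pieces by the ideal property of $\RR_G$, reduce the leading term of $m_A\sharp m_B$ to the pointwise Mellin translation product of Theorem \ref{comp25} with resulting asymptotic types from Remark \ref{2.27}, absorb the tail of the expansion into a Green symbol by asymptotic summation, and read off $\sigma_\wedge(AB)=\sigma_\wedge(A)\sigma_\wedge(B)$ from the homogeneous principal components.

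Two points deserve more explicit treatment than you give them. First, Theorem \ref{Multi} is formulated for the classes with uniform estimates in $y\in\R^q$ (subscript b), whereas $(m+g)(y,\eta)\in\RR^\mu_{M+G}(\Omega\times\R^q,\g;\cdot,\cdot)$ carries only local estimates on $\Omega$; the proper-support hypothesis is precisely what permits the localisation by factors $\psi(y,y')$ needed to invoke the oscillatory-integral machinery, and this should be said rather than folded into the opening reduction. Second, in the pointwise product $m_A(y,\eta)m_B(y,\eta)$ the factors carry different reference weights $\gamma_{j\alpha}$ and independent $[\eta]$-dependent cut-offs, so returning to the normal form \eqref{new40b} requires, besides commuting $r$-powers through $\op_M^{\gamma-\frac n2}(\cdot)$ (the source of the translations $T^\beta$ in \eqref{M25}), also (i) comparing $\op_M^{\gamma'}(f)$ with $\op_M^{\gamma''}(f)$ on functions supported near $r=0$, where the contour shift across poles of $f$ produces finite-rank, hence Green, remainders, and (ii) exchanging the various cut-offs $\omega(r[\eta])$ for a fixed pair modulo Green symbols, in the spirit of Example \ref{exsy2}. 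You correctly flag the weight bookkeeping \eqref{w39} as the delicate step; items (i) and (ii) are of the same nature and are where the extra Green contributions actually arise. With these details supplied the argument is complete.
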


\begin{thm}
An operator $A\in{L}^\mu_{M+G}(\Xw\times\Omega,\g)$ induces continuous operators
$$A:\WW^s_{\comp}(\Omega,\KsgX)\rightarrow\WW^{s-\mu}_{\loc}(\Omega,\K^{\infty,\gamma-\mu}(\X))$$
and
$$A:\WW^s_{\comp}(\Omega,\K^{\infty,\gamma-\mu}_\P(\X))\rightarrow\WW^{s-\mu}_{\loc}(\Omega,\K^{\infty,\gamma-\mu}_\Q(\X))$$
for every $s\in\R$ and every asymptotic type $\P$ with some resulting asymptotic type $\Q.$ If $A$ is properly supported we may write $\comp$ or $\loc$ on both sides.
\end{thm}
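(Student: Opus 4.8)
The plan is to reduce the statement to two facts already available: the mapping properties of the amplitude functions in Proposition \ref{1.4.15}, and the continuity of operator-valued pseudo-differential operators on abstract edge spaces in Theorem \ref{cont}. First I would write $A=\Op_y(m+g)+C$ as in Definition \ref{1.4.16}, with $(m+g)\in\RR^\mu_{M+G}(\Omega\times\R^q,\g)$ and $C\in{L}^{-\infty}(\Xw\times\Omega,\g)$, and assume for the moment that $\Theta=(-(k+1),0]$ is finite (the case $\Theta=(-\infty,0]$ is then obtained by intersecting over $k\in\N$). The smoothing term $C$ requires no work: by Definition \ref{glatt} it already maps $\WW^s_\comp(\Omega,\KsgX)$ continuously into $\WW^\infty_\loc(\Omega,\K^{\infty,\gamma-\mu}_{\P_0}(\X))$ for some asymptotic type $\P_0$ associated with $(\gamma-\mu,\Theta)$ and all $s\in\R$, and since $\K^{\infty,\gamma-\mu}_{\P_0}(\X)\hookrightarrow\K^{\infty,\gamma-\mu}(\X)$ continuously, $C$ lands in both target spaces of the assertion.

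Next I would treat $\Op_y(m+g)$. By Proposition \ref{1.4.15} (with $j_-=j_+=0$), the symbol $(m+g)(y,\eta)$ lies in $S^\mu_\clas(\Omega\times\R^q;\K^{s,\gamma;g}(\X),\K^{\infty,\gamma-\mu;\infty}(\X))$ and in $S^\mu_\clas(\Omega\times\R^q;\K^{s,\gamma;g}_\P(\X),\K^{\infty,\gamma-\mu;\infty}_\Q(\X))$ for every $s,g\in\R$ and every discrete asymptotic type $\P$, with a resulting $\Q$. Specialising to $g=0$ and regarding $(m+g)$ as a ($y'$-independent) double symbol, this exhibits $\Op_y(m+g)$ as an element of $L^\mu(\Omega;H,\til H)$ in the sense of Section \ref{1.3.2a}, with $H$ and $\til H$ the \Fr spaces $\KsgX$ (resp.\ the source space carrying asymptotics of type $\P$) and $\K^{\infty,\gamma-\mu;\infty}(\X)$ (resp.\ $\K^{\infty,\gamma-\mu;\infty}_\Q(\X)$), each equipped with its group action $\kappa$. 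Theorem \ref{cont} then yields continuity of $\Op_y(m+g):\WW^s_\comp(\Omega,H)\rightarrow\WW^{s-\mu}_\loc(\Omega,\til H)$, and composing on the left with the continuous embeddings $\K^{\infty,\gamma-\mu;\infty}(\X)\hookrightarrow\K^{\infty,\gamma-\mu}(\X)$ and $\K^{\infty,\gamma-\mu;\infty}_\Q(\X)\hookrightarrow\K^{\infty,\gamma-\mu}_\Q(\X)$ (inclusions in the direction of stronger decay as $r\to\infty$, hence continuous) produces exactly the two stated mapping properties. Adding back the contribution of $C$ settles the finite-$\Theta$ case, and passing to the intersection over $k$ handles $\Theta=(-\infty,0]$.

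For the final sentence I would use that if $A$ is properly supported, then so is $\Op_y(m+g)$, and $C$ may be taken properly supported as well (cf.\ Theorem \ref{4.13} and the analogous reduction for smoothing Mellin plus Green operators). The classical argument that a properly supported operator restricts to continuous maps $\WW^s_\comp(\Omega,H)\to\WW^{s-\mu}_\comp(\Omega,\til H)$ and $\WW^s_\loc(\Omega,H)\to\WW^{s-\mu}_\loc(\Omega,\til H)$ carries over verbatim to the operator-valued framework, using cut-off functions $\varphi,\psi\in C_0^\infty(\Omega)$ with $\psi\equiv1$ near $\supp\varphi$ together with Theorem \ref{1.3.22}.

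As for the main obstacle: essentially all of the substantive content has been pushed into Proposition \ref{1.4.15}, whose proof rests in turn on Remark \ref{1.3.16} (recognising twisted-homogeneous operator families as classical operator-valued symbols), on the cone-algebra mapping properties \eqref{AB} for the smoothing Mellin part, and on the continuity of Green symbols built into Definition \ref{Green}. The only genuinely delicate point remaining in the argument above is bookkeeping: matching the source and target spaces of the operator-valued symbol to the pair $(H,\til H)$ for which Theorem \ref{cont} applies, and absorbing the discrepancy between the target $\K^{\infty,\gamma-\mu;\infty}$ delivered by Proposition \ref{1.4.15} and the weaker targets $\K^{\infty,\gamma-\mu}$, $\K^{\infty,\gamma-\mu}_\Q$ appearing in the statement by continuous embeddings. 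Since those embeddings run in the favourable direction, no loss occurs, and beyond this indexing care there is no real obstruction.
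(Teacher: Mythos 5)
Your argument is correct, and it is exactly the route the paper's preparations point to: the paper states this theorem without proof, but Proposition \ref{1.4.15} and Theorem \ref{cont} (together with Definition \ref{glatt} for the smoothing remainder and Theorem \ref{4.13}/Remark thereafter for the properly supported reduction) are set up precisely so that the assertion follows by the assembly you describe. The only point worth flagging is that the source space $\K^{\infty,\gamma-\mu}_\P(\X)$ in the second mapping of the statement carries the wrong weight (it should be $\gamma$, with $\P$ associated to $(\gamma,\Theta)$, as in Proposition \ref{1.4.15} and Theorem \ref{cont42}); your proof implicitly works with the corrected source space, which is the right thing to do.
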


\begin{defn}\label{D1.4.23}
\begin{itemize}
\item[\textup{(i)}] An operator $1+A$ for $A\in{L}^0_{M+G}(\Xw\times\Omega,\g;j_-,j_+)$ for $\g=(\gamma,\gamma,(-(k+1),0]),$ is called elliptic if
\begin{equation}\label{new46_B}
\sigma_\wedge(1+A)(y,\eta)=1+\sigma_\wedge(A)(y,\eta):\KsgX\oplus\C^{j_-}\rightarrow\KsgX\oplus\C^{j_+}
\end{equation}
is a family of bijective operators for all $(y,\eta)\in\Omega\times(\R^q\setminus\{0\})$ and some $s\in\R.$
\item[\textup{(ii)}] An operator $1+B$ for $B\in{L}^0_{M+G}(\Xw\times\Omega,\g;j_+,j_-)$ properly supported$,$ is called a parametrix of $1+A$ if
$$(1+A)(1+B)=1\;\textup{ and }\;(1+B)(1+A)=1$$
modulo ${L}^{-\infty}.$
\end{itemize}
\end{defn}

\nt Note that Definition \ref{D1.4.23} contains as a special case the ellipticity in the space identity plus operators in ${L}^0_{M+G}.$ This may illustrate some phenomena, and we only asked a bijectivity condition between edge symbols, without taking into account additional edge conditions of trace and potential type.\\

\nt Note that the conormal symbol
\begin{equation}\label{iso131}
\sigma_\c\sigma_\wedge(1+A)(y,\eta):=1+f_{00}(y,\eta):H^s(X)\rightarrow H^s(X)
\end{equation}
for $z\in\Gamma_{\frac{n+1}2-\gamma}$ and fixed $y\in\Omega$ is a family of isomorphisms if and only if \eqref{new46_B} is a family of Fredholm operators. In particular, when we require the bijectivity of \eqref{new46_B} we implicitly assume the isomorphisms \eqref{iso131}. In general, when we assume for the moment that $A\in{L}^0_{M+G}(\Xw\times\Omega,\g)$ and that the operators \eqref{iso131} form a family of isomorphisms, then we can always find $j_-,j_+\in\N$ and a family of $2\times2$ block matrix isomorphisms
\begin{equation}\label{bi47}
\left(\begin{array}{cc}
\sigma_\wedge(1+A)&\sigma_\wedge(K)\\[4mm]
\sigma_\wedge(T)&\sigma_\wedge(Q)
\end{array}\right)(y,\eta):
\begin{array}{ccc}
\KsgX& &\K^{s,\gamma}(\X)\\
\oplus&\rightarrow&\oplus\\
\C^{j_-}& &\C^{j_+}
\end{array}
\end{equation}
where the matrix $\left(\begin{array}{cc}
0&\sigma_\wedge(K)\\[4mm]
\sigma_\wedge(T)&\sigma_\wedge(Q)
\end{array}\right)(y,\eta)$
has the form $\sigma_\wedge(g)(y,\eta)$ for some $g(y,\eta)\in\RR^\mu_G(\Omega\times\R^q,\g;j_-,j_+),$ cf. Definition \ref{Green2}. Note that the special case $j_-=j_+=0$ contains all typical features of the general operators for arbirtary $j_\pm;$ so we often content ourselves with operators of the form of an upper left corner. At present we impose Mellin amplitude functions $f_{j\alpha}(y,z)\in C^\infty(\Omega,M^{-\infty}_{\RR_{j\alpha}}(X))$ with constant discrete asymptotic types $\RR_{j\alpha},$ cf. Definition \ref{1.4.14}. However, if we intend to find a parametrix of an elliptic operator $1+A,A\in{L}^0_{M+G}(\Xw\times\Omega,\g)$ we also need a condition on a constant discrete behaviour of asymptotic types under inversion of the principal conormal symbol, namely,
\begin{equation}\label{Sig47}
\sigma_\c\sigma_\wedge(1+A)^{-1}(y,z)\in C^\infty(\Omega,M^0_\S(X))
\end{equation}
for some constant discrete Mellin asymptotic type $\S.$ (To finally dismiss such a restriction is just one of the essential results of the present calculus of operators with variable discrete asympotics.) Under the condition \eqref{Sig47} we have the following theorem.

\begin{thm}\label{inv}
An elliptic operator $1+A,$ $A\in{L}^0_{M+G}(\Xw\times\Omega,\g;j_-,j_+)$ has a properly supported parametrix $1+B,$ with $B\in{L}^0_{M+G}(\Xw\times\Omega,\g;j_+,j_-).$
\end{thm}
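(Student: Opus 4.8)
The strategy is the classical formal Neumann series / parametrix construction inside the calculus, organised along the principal symbolic hierarchy. Since $\mu=0$ and $\g=(\gamma,\gamma,(-(k+1),0])$, the operator $1+A$ has two relevant symbolic levels: the conormal symbol $\sigma_\c\sigma_\wedge(1+A)(y,z)=1+f_{00}(y,z)$ living on $\Gamma_{\frac{n+1}2-\gamma}$, and, above it, the homogeneous principal edge symbol $\sigma_\wedge(1+A)(y,\eta)$ acting on $\K^{s,\gamma}(\X)\oplus\C^{j_-}$. First I would invert the conormal symbol: ellipticity of $1+A$ forces \eqref{new46_B} to be a family of Fredholm operators, hence by \eqref{iso131} the operators $1+f_{00}(y,z):H^s(X)\to H^s(X)$ are isomorphisms for $z\in\Gamma_{\frac{n+1}2-\gamma}$ and all $y\in\Omega$. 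By the standing hypothesis \eqref{Sig47} (applied $(y,z)$-wise, smoothly in $y$) we obtain $(1+f_{00})^{-1}(y,z)=1+l_0(y,z)$ with $l_0\in C^\infty(\Omega,M^0_\S(X))$ for a constant discrete Mellin asymptotic type $\S$; here one uses Theorem \ref{T1.2.28} together with Remark \ref{2.27} exactly as in the cone-algebra parametrix construction of Section \ref{1.2.5}. If $j_-=j_+=0$ this is immediate; for nonzero $j_\pm$ one first complements by the block-matrix isomorphism \eqref{bi47} to reduce, at the level of edge symbols, to an upper-left corner of the form $1+$(M+G)-symbol, so WLOG I continue with $j_-=j_+=0$.

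Next I would build a parametrix at the level of the full edge symbol $\sigma_\wedge$. Writing $1+B_0$ for the $\sigma_\wedge$-parametrix to be constructed, the composition formula $\sigma_\wedge(CD)=\sigma_\wedge(C)\sigma_\wedge(D)$ (Theorem preceding Definition \ref{D1.4.23}) together with the Mellin translation product for the sequence of conormal symbols (the edge-symbol analogue of Theorem \ref{comp25}) reduces the equation $(1+A)(1+B_0)\equiv 1$ modulo lower conormal order to a triangular recursion: the leading term fixes $\sigma_\c^0\sigma_\wedge(1+B_0)=1+l_0$, and then $\sigma_\c^{-i}\sigma_\wedge(B_0)$ for $i=1,\dots,k$ are determined successively by solving $\sum_{i+j=l}(T^{-i}\sigma_\c^{-j}(1+A))\,\sigma_\c^{-i}(1+B_0)=0$ for $l=1,\dots,k$, each step requiring only a left multiplication by $1+l_0$ and an application of Remark \ref{2.27} to keep the resulting Mellin asymptotic types constant discrete. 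This produces a symbol $1+(m+g)_{B_0}\in \RR^0_{M+G}(\Omega\times\R^q,\g)$ with $\sigma_\wedge(1+A)\sigma_\wedge(1+B_0)=1=\sigma_\wedge(1+B_0)\sigma_\wedge(1+A)$, i.e. $(1+A)(1+B_0)=1-G_1$ and $(1+B_0)(1+A)=1-G_1'$ with $G_1,G_1'\in {L}^0_{M+G}(\Xw\times\Omega,\g)$ having vanishing $\sigma_\wedge$ — but these are not yet smoothing: a Green summand of order $<0$ and an asymptotic error survive. This is where an asymptotic summation is needed: since $\sigma_\wedge(G_1)=0$, $G_1$ lies in a subclass of lower homogeneity, and iterating (formal Neumann series $1+B\sim(1+B_0)\sum_{\nu\ge0}G_1^\nu$) together with the asymptotic-sum theorem for Green symbols (the theorem following Definition \ref{Green2}) yields $B\in{L}^0_{M+G}(\Xw\times\Omega,\g)$ with $(1+A)(1+B)\equiv 1$, $(1+B)(1+A)\equiv 1$ modulo ${L}^{-\infty}$. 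Finally, I would pass to a properly supported representative via the remark after Definition \ref{1.4.16}, and, if needed, re-expand the block matrix to recover a parametrix $1+B\in{L}^0_{M+G}(\Xw\times\Omega,\g;j_+,j_-)$, completing the proof.

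The main obstacle is the control of asymptotic types through the iteration: one must verify that the Mellin asymptotic types $\RR_{j\alpha}$ occurring in the successively constructed smoothing Mellin symbols, and the asymptotic types $\P,\Q$ of the Green remainders, remain \emph{constant} discrete at every stage — this is exactly what \eqref{Sig47} is imposed to guarantee, and the bookkeeping relies crucially on Remark \ref{2.27} (stability of the classes $M^{-\infty}_\RR$, $M^\mu_\RR$ under sums, products and composition with holomorphic symbols) and on Theorem \ref{T1.2.28} for inverting $1+$(smoothing Mellin). A secondary, more routine point is checking that the Leibniz product in the $(\cdot)_\mathrm{b}$-symbol classes (Theorem \ref{Multi}) and the continuity of asymptotic summation keep all compositions inside $\RR^0_{M+G}(\Omega\times\R^q,\g;j_-,j_+)$; this is standard once the symbolic levels have been separated as above.
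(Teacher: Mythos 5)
The paper states Theorem \ref{inv} without proof (details are deferred to \cite{Schu20}), so there is no in-text argument to compare against; judged on its own, your proposal is correct and is exactly the expected argument: it lifts the cone-algebra parametrix construction of Section \ref{1.2.5} (inversion of the principal conormal symbol via Theorem \ref{T1.2.28}, the triangular recursion through the Mellin translation product controlled by Remark \ref{2.27}, then a Neumann series with asymptotic summation for the order $\le-1$ Green remainder) to the edge-symbol level, with \eqref{Sig47} supplying the constancy in $y$ of the resulting Mellin asymptotic type, and it matches the way the paper itself later uses the theorem at the end of Section \ref{S4.5}. One cosmetic imprecision: from \eqref{Sig47} and Theorem \ref{T1.2.28} the correction term satisfies $l_0\in C^\infty(\Omega,M^{-\infty}_{\S}(X))$ (it is the full inverse $1+l_0$ that lies in $C^\infty(\Omega,M^{0}_{\S}(X))$); this sharper membership is what you need to form the smoothing Mellin symbol $\omega(r[\eta])\op_M^{\gamma-\frac n2}(l_0)\omega'(r[\eta])$ in $\RR^0_{M+G}$, and it does not affect the rest of your argument.
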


%
%
%
%
%
\subsection{Edge operators and their symbolic structure}\label{S4.4}

Our next objective is to construct operators that furnish our edge algebra. 
The main ingredients are operator-valued symbols of the following kind.

\begin{defn}\label{1.4.17}
Let $\RR^\mu(\Omega\times\R^q,\g),$ $\Omega\subseteq\R^q$ open$,$ defined to be the space of operator functions $\a(y,\eta):=a(y,\eta)+(m+g)(y,\eta)$ for $a(y,\eta)$ as in $(\ref{new38})$ and $(m+g)(y,\eta)\in\RR^\mu_{M+G}(\Omega\times\R^q,\g).$ More generally$,$ let $\RR^\mu(\Omega\times\R^q,\g;j_-,j_+)$ denote the space of $2\times2$ block matrix functions of the form $\diag(\a(y,\eta),0)+g(y,\eta)$ for $\a(y,\eta)\in\RR^\mu(\Omega\times\R^q,\g),$ and $g(y,\eta)\in\RR_G^\mu(\Omega\times\R^q,\g;j_-,j_+)$.
\end{defn}

\nt By virtue of Theorem \ref{symb26}, Definition \ref{Green} and Proposition \ref{1.4.15} we have
$$\RR^\mu(\Omega\times\R^q,\g)\subset S^\mu\Set{\Omega\times\R^q;\K^{s,\gamma}(\X),
\K^{s-\mu,\gamma-\beta}(\X)}.$$

\nt On the space $\RR^\mu(\Omega\times\R^q,\g)$ we have a two-component principal symbolic structure, namely,
$$\sigma(\a)=(\sigma_\psi(\a),\sigma_\wedge(\a)).$$
Here, $\sigma_\psi(\a)$ is defined by the expression (\ref{sym38}), while
\begin{equation}\label{sig40}
\sigma_\wedge(\a)(y,\eta):=\sigma_\wedge(a)(y,\eta)+\sigma_\wedge(m+g)(y,\eta),
\end{equation}
cf. the formulas (\ref{sig1}) and (\ref{new41a}).

\begin{defn}\label{1.4.18}
The space ${{L}}^\mu(\Xw\times\Omega,\g),$ $\mu\in\R,$ $\g=(\gamma,\gamma-\mu,\Theta),$ of edge operators with $($constant discrete$)$ asymptotics is defined as the set of all
\begin{equation}\label{edg40}
A=\Op_y(\a)+\Phi A_{\mathrm{int}}\Phi'+C
\end{equation}
for a symbol $\a(y,\eta)\in\RR^\mu(\Omega\times\R^q,\g),$ $A_{\mathrm{int}}\in L_{\clas}^\mu(\X\times\Omega),$ functions $\Phi,\Phi'\in C^\infty(\R_+)$ vanishing near $r=0,$ and an operator $C\in{{L}}^{-\infty}(\Xw\times\Omega,\g),$ cf. \textup{Definition \ref{glatt}}. More generally$,$ let ${{L}}^\mu(\Xw\times\Omega,\g;j_-,j_+)$ denote the space of $2\times2$ block matrix operators of the form $\A=\diag(A,0)+G+C$ for $A\in{{L}}^\mu(\Xw\times\Omega,\g),$ $G=\Op_y(g)$ for some $g(y,\eta)\in\RR_G^\mu(\Omega\times\R^q,\g;j_-,j_+),$ and $C\in{{L}}^{-\infty}(\Xw\times\Omega,\g;j_-,j_+),$ cf. \textup{Definition \ref{glatt}}.
\end{defn}

\nt The dimensions $j_-$ and $j_+$ have the interpretation of fibre dimensions of smooth complex vector bundles $J_-$ and $J_+,$ respectively, over the edge. Here in the local theory the open set plays the role of a chart on an edge in general; $\Omega$ may assumed to be a contractible open set (e.g, a ball), and then $J_\pm$ are of course trivial. Nevertheless, in order to emphasise invariance properties we also write $J_\pm$ rather than $\Omega\times\C^{j_\pm},$ and the operators $A\in{{L}}^\mu(\Xw\times\Omega,\g;j_-,j_+)$ act as continuous operators
\begin{equation*}
\A=\left(\begin{array}{cc}
A&K\\[4mm]
T&Q
\end{array}\right):
\begin{array}{ccc}
C_0^\infty(\X\times\Omega)& &C^\infty(\X\times\Omega)\\
\oplus&\rightarrow&\oplus\\
C_0^\infty(\Omega,J_-)& &C^\infty(\Omega,J_+)
\end{array}
\end{equation*}
where $C_{(0)}^\infty(\Omega,J_\pm)$ means the spaces of smooth sections in the respective bundles. More generally, also in the upper left corners we may admit operators between sections of (smooth complex) vector bundles over $\Xw\times\Omega,$ or, in the simplest case, block matrices. However, here we focus on asymptotic effects; the case with bundles is a straightforward generalisation, and we ignore this case.

\begin{rem}\label{Re1.4.27}
Similarly as in the standard pseudo-differential calculus on an open manifold it can be proved that for every $A\in{L}^\mu(\Xw\times\Omega,\g;j_-,j_+)$ there is a properly supported $($with respect to the variables $r$ and $y)$ operator $A_0$ such that $A=A_0$ modulo ${L}^{-\infty}(\Xw\times\Omega,\g;j_-,j_+).$
\end{rem}

\nt As noted before, for simplicity, we often consider operators of the form of upper left corners rather than $2\times2$ block matrices in general. The results for block matrices are completely analogous; more details may be found in the text books on edge pseudo-differential operators, e.g., \cite{Schu20}.\\

\nt For an edge operator $A$ given in the form (\ref{edg40}) we form the principal symbolic hierarchy
$$\sigma(A):=(\sigma_\psi(A),\sigma_\wedge(A))$$
by $\sigma_\psi(A):=\sigma_\psi(\a)+\sigma_\psi(\Phi A_{\textup{int}}\Phi')$ where the first summand is defined in \eqref{sym38} while the second one is the homogenous principal symbol of the operator $\Phi A_{\textup{int}}\Phi'$ in the standard sense, and $\sigma_\wedge(A):=\sigma_\wedge(\a).$ Observe that
\begin{equation}\label{ssig}
\sigma_\psi(A)(r,x,y,\rho,\xi,\eta)=r^{-\mu}\til\sigma_\psi(A)(r,x,y,r\rho,\xi,r\eta)
\end{equation}
with $x$ referring to a chart on $X,$ say, $x$ varying in an open set $\Sigma\subset\R^q,$ where 
\begin{equation}\label{ssig2}
\til\sigma_\psi(A)(r,x,y,r\rho,\xi,r\eta)\in C^\infty(\Rr\times\Sigma\times\Omega\times(\R^{1+n+q}_{\til\rho,\xi,\til\eta}\setminus\{0\}))
\end{equation}
is homogeneous in $(\til\rho,\xi,\til\eta)\ne0$ of order $\mu,$ and smooth in $r$ up to $r=0.$

\begin{thm}
Let $A\in{L}^\mu(\Xw\times\Omega,\g;j_0,j_+),$ $B\in{L}^\nu(\Xw\times\Omega,{\mathbf h};j_-,j_0)$ for $\g=(\gamma-\nu,\gamma-(\mu+\nu),\Theta),$ ${\mathbf h}=(\gamma,\gamma-\nu,\Theta),$ and let $A$ or $B$ be properly supported$,$ cf. \textup{Remark \ref{Re1.4.27}}. Then $AB\in{L}^{\mu+\nu}(\Xw\times\Omega,\g\circ{\mathbf h};j_-,j_+),$ and we have
$$\sigma_\psi(AB)=\sigma_\psi(A)\sigma_\psi(B),\quad\sigma_\wedge(AB)=\sigma_\wedge(A)\sigma_\wedge(B).$$
If $A$ or $B$ belongs to the subspace with subscript $M+G$ $(G)$ then also the composition $AB.$
\end{thm}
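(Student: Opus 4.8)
The plan is first to reduce to the case of upper left corners: the passage to $2\times 2$ block matrices is purely formal, since the off‑diagonal and lower right entries, as well as the operator $G$ in the block decomposition $\A=\diag(A,0)+G+C$ of Definition \ref{1.4.18}, are governed by the Green calculus (Definitions \ref{Green}, \ref{Green2}) and by the composition of classical scalar symbols. For upper left corners I would expand $AB$ using the representations $A=\Op_y(\a)+\Phi A_{\textup{int}}\Phi'+C$ and $B=\Op_y(\mathbf b)+\Psi B_{\textup{int}}\Psi'+C'$ from Definition \ref{1.4.18}. By Remark \ref{Re1.4.27} I may assume one of $A,B$ properly supported in $(r,y)$, which makes every product below well defined modulo ${L}^{-\infty}(\Xw\times\Omega,\g\circ\mathbf h)$; the summands containing $C$ or $C'$ contribute only such residual terms, by the continuity statement of Theorem \ref{cont} together with the mapping properties defining the smoothing class in Definition \ref{glatt}.

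The main term is $\Op_y(\a)\Op_y(\mathbf b)$. After the standard localisation in $y$ the amplitude functions of \eqref{new38} and of Definition \ref{1.4.14} satisfy uniform (``$\textup b$''‑type) symbolic estimates, so Theorem \ref{Multi} gives $\Op_y(\a)\Op_y(\mathbf b)=\Op_y(\a\sharp\mathbf b)$ modulo smoothing, with $\a\sharp\mathbf b$ the Leibniz product and its asymptotic expansion $\sum_\alpha\tfrac1{\alpha!}\partial_\eta^\alpha\a\,D_y^\alpha\mathbf b$. The key point is that $\a\sharp\mathbf b\in\RR^{\mu+\nu}(\Omega\times\R^q,\g\circ\mathbf h)$. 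I would split $\a=a+(m+g)$, $\mathbf b=b+(m'+g')$ and treat four kinds of contributions: (i) $a\sharp b$, the composition of the edge‑quantised parts, is handled by the Mellin quantisation Theorems \ref{qua} and \ref{sm29} together with the composition of edge‑degenerate families (Theorem \ref{comp8A}), producing again an operator of the form \eqref{new38} with the correct holomorphic Mellin symbol near $r=0$ and a classical interior part for $r\to\infty$; (ii) any product with at least one $M+G$ factor stays in $\RR^{\mu+\nu}_{M+G}$ by the composition theorem for smoothing Mellin plus Green operators stated above, using also that replacing the cut‑offs $\omega(r[\eta])$ by $\omega'(r[\eta])$ etc. costs only Green remainders (cf.\ Example \ref{exsy2}); (iii) any product involving a Green symbol stays Green, since Green symbols form a two‑sided ideal: this follows from the mapping properties in Definitions \ref{Green}, \ref{Green2}, the continuity of all factors between the weighted cone spaces, and the preservation of asymptotic types; (iv) the Leibniz remainders lie in lower order classes of the same form, and one takes an asymptotic sum. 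Tracking the weight lines through $\opMg$ yields exactly $\g\circ\mathbf h=(\gamma,\gamma-(\mu+\nu),\Theta)$.

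Next I would dispose of the mixed and interior products. The terms $\Op_y(\a)\cdot\Psi B_{\textup{int}}\Psi'$, $\Phi A_{\textup{int}}\Phi'\cdot\Op_y(\mathbf b)$ and $\Phi A_{\textup{int}}\Phi'\cdot\Psi B_{\textup{int}}\Psi'$ are supported in $r$ away from $0$, where the Mellin and Green parts are smoothing; on the overlapping compact $r$‑interval they reduce to a standard composition in $L_\clas^{\mu+\nu}(\X\times\Omega)$ cut off by functions vanishing near $r=0$, hence contribute a term $\Phi''(AB)_{\textup{int}}\Phi'''$ modulo ${L}^{-\infty}(\Xw\times\Omega,\g\circ\mathbf h)$, while the parts of $\Op_y(\a),\Op_y(\mathbf b)$ concentrated near $r=0$ compose with the $\Psi$‑cut interior operator into a Green residuum. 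Collecting everything gives $AB\in{L}^{\mu+\nu}(\Xw\times\Omega,\g\circ\mathbf h)$. For the symbol rules, $\sigma_\psi(AB)=\sigma_\psi(A)\sigma_\psi(B)$ is the usual multiplicativity of homogeneous principal symbols of the interior parts combined with $\til\sigma_\psi(AB)=\til\sigma_\psi(A)\til\sigma_\psi(B)$ from Theorem \ref{comp8A}, and $\sigma_\wedge(AB)=\sigma_\wedge(A)\sigma_\wedge(B)$ follows because the leading twisted‑homogeneous component of a Leibniz product is the product of the leading components, $(\a\sharp\mathbf b)_{(\mu+\nu)}=\a_{(\mu)}\mathbf b_{(\nu)}$, together with \eqref{sig40}, \eqref{sig1} and \eqref{new41}. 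The last assertion, that $AB$ inherits the subscript $M+G$ (resp.\ $G$) once one factor has it, is immediate from contribution (ii) (resp.\ (iii)): then no interior term $\Phi A_{\textup{int}}\Phi'$ survives and the $M+G$ class (resp.\ the Green ideal) is preserved under composition on either side.

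The step I expect to be the main obstacle is (i): showing that the Leibniz product of the two edge‑quantised amplitude functions \eqref{new38} is again of that form with the correct holomorphic Mellin symbol. This is the genuinely nontrivial ``gluing'' of the Mellin–edge quantisation near $r=0$ with the exit/interior calculus for $r\to\infty$, and it is precisely where Theorems \ref{qua} and \ref{sm29}, and the systematic manipulation of the cut‑off functions $\omega,\omega',\omega'',\epsilon,\epsilon'$ modulo Green remainders, carry the real weight of the argument.
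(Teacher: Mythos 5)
The paper states this theorem without proof (the composition results of Section 4 are deferred to \cite{Schu20}), so there is no in-text argument to compare against; your proposal reconstructs exactly the route that the paper's machinery is set up to support, and it is correct in outline. The reduction to upper left corners, the use of Theorem \ref{Multi} after localisation to pass to the Leibniz product, the fourfold splitting of $\a\sharp\mathbf b$ with the Green symbols as a two-sided ideal and the $M+G$ class absorbing mixed terms via Remark \ref{2.27}, the disjoint-support argument for the products of the $\omega(r[\eta])$-localised terms with the interior parts, and the identification of the hard step as the stability of the edge-quantised form \eqref{new38} under the Leibniz product are all as in the standard proof. One point to tighten: the symbols $a(y,\eta)$ of \eqref{new38} are only asserted to lie in $S^\mu$ (Theorem \ref{symb26}), not $S^\mu_\clas$, and $\sigma_\wedge(a)$ is defined directly by \eqref{sig1} rather than as the top homogeneous component of a classical symbol; so the identity $\sigma_\wedge(AB)=\sigma_\wedge(A)\sigma_\wedge(B)$ should be argued by showing that $a(y,\eta)-\chi(\eta)\sigma_\wedge(a)(y,\eta)$ has order $\mu-1$ (Taylor expansion of $\til h,\til p$ at $r=0$, the gain of one order from the factor $r$, and the comparison of $[\eta]$ with $|\eta|$), after which your ``product of leading terms'' argument applies verbatim.
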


\begin{thm}\label{cont42}
An $A\in{L}^\mu(\Xw\times\Omega,\g),$ $\g=(\gamma,\gamma-\mu,\Theta),$ induces continuous operators
\begin{equation}\label{42a}
A:\WW^s_{[\comp)}(\Omega,\KsgX)\rightarrow\WW^{s-\mu}_{[\loc)}(\Omega,\K^{s-\mu,\gamma-\mu}(\X)),
\end{equation}
\begin{equation}\label{42b}
A:\WW^s_{[\comp)}(\Omega,\K^{s,\gamma}_\P(\X))\rightarrow\WW^{s-\mu}_{[\loc)}(\Omega,\K^{s-\mu,\gamma-\mu}_\Q(\X))
\end{equation}
for every $s\in\R$ and every $($constant discrete$)$ asymptotic type $\P$ with some resulting $($constant discrete$)$ asymptotic type $\Q.$ If $A$ is properly supported$,$ then in $(\ref{42a})$ and $(\ref{42b})$ we may write $[\comp)$ or $[\loc)$ on both sides.
\end{thm}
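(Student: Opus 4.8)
The plan is to reduce the continuity statement to the three structural constituents of an operator $A\in{L}^\mu(\Xw\times\Omega,\g)$, namely $A=\Op_y(\a)+\Phi A_{\mathrm{int}}\Phi'+C$ as in Definition~\ref{1.4.18}, and to treat each summand separately using the machinery developed in Sections~\ref{1.3.2}--\ref{1.3.2a} and~\ref{S4.3}. For the smoothing term $C\in{L}^{-\infty}(\Xw\times\Omega,\g)$ the required mapping properties into $\WW^\infty_\loc$ (hence a fortiori into $\WW^{s-\mu}_\loc$) with asymptotic type improvement are part of the very definition of the smoothing class, cf. Definition~\ref{glatt}, so there is nothing to prove there; the same remark applies to the asymptotics-improving statement~\eqref{42b} for the $C$-part. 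For the cut-off interior term $\Phi A_{\mathrm{int}}\Phi'$ with $A_{\mathrm{int}}\in L^\mu_\clas(\X\times\Omega)$ and $\Phi,\Phi'\in C^\infty(\R_+)$ vanishing near $r=0$: since $\Phi,\Phi'$ are supported away from the tip, this operator is (modulo irrelevant smoothing contributions) a standard classical pseudo-differential operator on the open manifold $\X\times\Omega$ whose Schwartz kernel lives in a region bounded away from $r=0$. One writes it as $\Op_y(c)$ for an operator-valued symbol $c(y,\eta)=\varphi\,\Op_r(p_{\mathrm{int}})(y,\eta)\,\varphi'$ of the type appearing in Theorem~\ref{symb26}(iii), which tells us $c(y,\eta)\in S^\mu(\Omega\times\R^q;\K^{s,\gamma}(\X),\K^{s-\mu,\infty}(\X))$ for every $s,\gamma$; then Theorem~\ref{cont} yields continuity $\WW^s_\comp(\Omega,\K^{s,\gamma}(\X))\to\WW^{s-\mu}_\loc(\Omega,\K^{s-\mu,\infty}(\X))$, and since $\K^{s-\mu,\infty}(\X)\hookrightarrow\K^{s-\mu,\gamma-\mu}(\X)$ continuously, \eqref{42a} follows for this summand, with the $\infty$-weight giving~\eqref{42b} for free.

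The heart of the argument is the genuine edge term $\Op_y(\a)$ with $\a(y,\eta)=a(y,\eta)+(m+g)(y,\eta)\in\RR^\mu(\Omega\times\R^q,\g)$. First I would split $\a$ according to Definition~\ref{1.4.17}: the part $(m+g)(y,\eta)\in\RR^\mu_{M+G}(\Omega\times\R^q,\g)$ is handled by Proposition~\ref{1.4.15}, which asserts it is a classical operator-valued symbol in $S^\mu_\clas(\Omega\times\R^q;\K^{s,\gamma}(\X),\K^{\infty,\gamma-\mu}(\X))$ and also in the version with asymptotic types $\P\rightsquigarrow\Q$; applying Theorem~\ref{cont} then gives exactly the mapping~\eqref{42a} (into an even smaller space) and~\eqref{42b} for the $M+G$-summand. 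For the principal edge quantisation $a(y,\eta)$ of the form~\eqref{new38}, I would invoke Theorem~\ref{symb26}: part (i) controls the Mellin-near-the-tip piece $\omega(r[\eta])r^{-\mu}\op_M^{\gamma-\frac n2}(h)(y,\eta)\omega'(r[\eta])$, giving a symbol in $S^\mu(\Omega\times\R^q;\K^{s,\gamma}(\X),\K^{s-\mu,\gamma-\mu}(\X))$; part (ii) controls the far-from-the-tip piece $\epsilon(1-\omega(r[\eta]))r^{-\mu}\Op_r(p)(y,\eta)(1-\omega''(r[\eta]))\epsilon'$, giving a symbol valued in $\L(\K^{s,\gamma}(\X),\K^{s-\mu,\infty}(\X))$; and part (iii) handles the $\varphi\,\Op_r(p_{\mathrm{int}})\,\varphi'$ piece as above. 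Summing, $a(y,\eta)\in S^\mu(\Omega\times\R^q;\K^{s,\gamma}(\X),\K^{s-\mu,\gamma-\mu}(\X))$, and one more application of Theorem~\ref{cont} produces~\eqref{42a} for this summand. Assembling the three summands and using that the target weight $\gamma-\mu$ is the common lower bound ($\infty\ge\gamma-\mu$) gives~\eqref{42a} in full.

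For the asymptotics statement~\eqref{42b}: the $C$ and $(m+g)$ summands have already been dealt with (by Definition~\ref{glatt} and Proposition~\ref{1.4.15}); the pieces $a(y,\eta)$ and $\Phi A_{\mathrm{int}}\Phi'$ map into $\WW^{s-\mu}_\loc(\Omega,\K^{s-\mu,\gamma-\mu}(\X))$ \emph{without} needing any asymptotic input, and in fact, because $a(y,\eta)$ restricted to the asymptotics spaces is again a symbol $a(y,\eta)\in S^\mu(\Omega\times\R^q;\K^{s,\gamma}_\P(\X),\K^{s-\mu,\gamma-\mu}_\Q(\X))$ for a resulting $\Q$ — this being precisely the content of the cone-calculus mapping property for $\op_M^{\gamma-\frac n2}(h)$ together with the continuity of multiplication by $\omega(r[\eta])$ between asymptotic cone spaces (Theorem~\ref{Th12} and Example~\ref{Me28} underlie this) — Theorem~\ref{cont} yields~\eqref{42b}. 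The resulting type $\Q$ is the ``join'' of the types produced by the several summands, which is again a discrete asymptotic type since discrete types are stable under finite unions and under the shadow-completion forced by the Mellin symbols. The final clause about $[\comp)$ versus $[\loc)$ follows from Remark~\ref{Re1.4.27}: replacing $A$ by a properly supported representative $A_0$ (differing by a smoothing operator already covered), the proper support in the $y$-variables lets one freely interchange $\comp$ and $\loc$ on either side.

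The main obstacle is not any single estimate but the bookkeeping of the \emph{resulting asymptotic type} $\Q$ in~\eqref{42b}: one must verify that the type produced by the Mellin part $\op_M^{\gamma-\frac n2}(h)$ acting on $\K^{s,\gamma}_\P(\X)$, the type produced by the smoothing Mellin symbols $f_{j\alpha}$ in $m(y,\eta)$, and the fixed types carried by the Green symbol $g$ and the smoothing operator $C$, all combine into a single well-defined discrete type associated with the correct weight data $(\gamma-\mu,\Theta)$ — here the shadow condition and Remark~\ref{2.27} on products of Mellin asymptotic types do the work, but the argument requires care because the weights $\gamma_{j\alpha}$ in~\eqref{new40b} differ from $\gamma$ and the poles sit on different lines; one has to track how the translation operators $T^\beta$ move $\pi_\C\RR_{j\alpha}$ and confirm no pole lands on the relevant weight line, which is exactly the role of condition~\eqref{w39}.
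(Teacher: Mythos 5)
The paper states Theorem \ref{cont42} without giving a proof (details are deferred to \cite{Schu20}), so there is no in-text argument to compare against; judged on its own, your strategy is the intended and standard one: decompose $A=\Op_y(\a)+\Phi A_{\mathrm{int}}\Phi'+C$ according to Definition \ref{1.4.18}, dispose of $C$ by Definition \ref{glatt}, establish operator-valued symbol estimates for the constituents of $\a$ via Theorem \ref{symb26} and Proposition \ref{1.4.15}, and feed everything into the abstract continuity Theorem \ref{cont}. Your bookkeeping of which pieces land in flat spaces $\K^{s-\mu,\infty}(\X)$ (hence are harmless for asymptotics) versus which genuinely produce a type $\Q$ is correct, as is the observation that proper support lets one interchange $\comp$ and $\loc$.

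Two steps are under-justified. First, for \eqref{42b} the essential input is a version of Theorem \ref{symb26}(i) \emph{with asymptotic types}, namely $\omega(r[\eta])r^{-\mu}\op_M^{\gamma-\frac n2}(h)(y,\eta)\omega'(r[\eta])\in S^\mu\big(\Omega\times\R^q;\K^{s,\gamma}_\P(\X),\K^{s-\mu,\gamma-\mu}_\Q(\X)\big).$ This is nowhere stated in the paper and is not a formal corollary of the cone-calculus mapping theorem: one must verify the twisted symbolic estimates with respect to the \Fr semi-norm systems of the asymptotic subspaces, using that $h$ is $M^\mu_\O$-valued (holomorphy in $z$ is what forces the resulting type $\Q$ to be again constant discrete, obtained from $\P$ by shadow-completion through the Taylor expansion of $h$ in $r$) and that conjugation by $\kappa_{[\eta]}$ is compatible with the dilated cut-offs $\omega(r[\eta])$. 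You correctly name the ingredients (Theorem \ref{Th12}, Example \ref{Me28}), but this estimate is the actual substance of the second mapping property and needs to be carried out rather than asserted. Second, your treatment of $\Phi A_{\mathrm{int}}\Phi'$ via Theorem \ref{symb26}(iii) tacitly replaces $\Phi,\Phi'\in C^\infty(\R_+)$ vanishing near $r=0$ by compactly supported $\varphi,\varphi'$; since $\K^{s,\gamma}(\X)=\K^{s,\gamma;0}(\X)$ carries $H^{s;0}_\cone$-control at the conical exit, the portion of $\Phi A_{\mathrm{int}}\Phi'$ supported near $r=\infty$ requires exit-calculus control of $A_{\mathrm{int}}$ in the sense of $L^{\mu;0}_\clas(X^\asymp)$ (as in Definition \ref{cone_def2}); Theorem \ref{symb26}(iii) alone does not cover it. This may reflect an implicit convention of the paper, but as written your argument does not address the behaviour of this summand for $r\rightarrow\infty$.
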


\nt For purposes below we define ${L}^{\mu-1}(\Xw\times\Omega,\g)$ for $\g=(\gamma,\gamma-\mu,\Theta)$ as the space of all $A\in{L}^\mu(\Xw\times\Omega,\g)$ such that $\sigma(A)=0$ (i.e., both components vanish). Every such $A$ has again a pair $\sigma(A)=(\sigma_\psi(A),\sigma_\wedge(A))$ of principal symbols, this time of homogeneity $\mu-1.$ Then, inductively, for every $j\in\N,$ $j\ge1,$ we set
$${L}^{\mu-j}(\Xw\times\Omega,\g):=\set{A\in{L}^{\mu-(j-1)}(\Xw\times\Omega,\g):\sigma(A)=0}.$$

\begin{rem}
Let $A\in{L}^{\mu-1}(\Xw\times\Omega,\g)$ and $\psi,\psi'\in C_0^\infty(\Rr\times\Omega).$ Then
$$\psi A\psi':\WW^s(\R^q,\KsgX)\rightarrow\WW^{s-\mu}(\R^q,\K^{s-\mu,\gamma-\mu}(\X))$$
is a compact operator for every $s\in\R.$
\end{rem}

%
%
%
%
%
\subsection{Ellipticity in the edge calculus}\label{S4.5}

\begin{defn}\label{E1}
An operator $A\in{L}^\mu(\Xw\times\Omega,\g),$ $\g=(\gamma,\gamma-\mu,\Theta),$ is called $\sigma_\psi$-elliptic if $\sigma_\psi(A)(r,x,y,\rho,\xi,\eta)\ne0$ for all $r,x,y,$ $r>0,$ and $(\rho,\xi,\eta)\ne0,$ and if $\til\sigma_\psi(A)(r,x,y,\til\rho,\xi,\til\eta)\ne0$ for all $r,x,y,$ up to $r=0,$ and $(\til\rho,\xi,\til\eta)\ne0.$
\end{defn}

\nt We call an operator $A\in{L}^\mu(\Xw\times\Omega,\g)$ $\sigma_\psi$-elliptic close to the edge if the conditions $\sigma_\psi(A)\ne0$ and $\til\sigma_\psi(A)\ne0$ hold for all $0<r<R$ and $0\le r<R,$ respectively, for some $R>0.$

\begin{thm}\label{T41}
Let $A\in{L}^\mu(\Xw\times\Omega,\g)$ be $\sigma_\psi$-elliptic close to the edge. Then for every $y\in\Omega$ there exists a discrete set $D(y)\subset\C,$ $D(y)\cap\set{c\le\Re z\le c'}$ finite for every $c\le c',$ such that
\begin{equation}\label{Sig49}
\sigma_\wedge(A)(y,\eta):\KsgX\rightarrow\K^{s-\mu,\gamma-\mu}(\X),
\end{equation}
$\eta\ne0,$ is a family of Fredholm operators for every $\gamma\in\R,$ such that $\Gamma_{\frac{n+1}2-\gamma}\cap D(y)=\emptyset.$
\end{thm}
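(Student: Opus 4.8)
The plan is to fix $y\in\Omega$ and reduce the Fredholm property of $\sigma_\wedge(A)(y,\eta)$ to the cone-algebra theory of Section \ref{S1.2.6}, reading off $D(y)$ from the elliptic theory of holomorphic Mellin symbols. Since $\sigma_\wedge(A)=\sigma_\wedge(\a)$ is twisted homogeneous of order $\mu$ with respect to the group action $\kappa$ on $\K^{s,\gamma}(\X)$ and on $\K^{s-\mu,\gamma-\mu}(\X)$, one has $\sigma_\wedge(A)(y,\lambda\eta)=\lambda^\mu\kappa_\lambda\,\sigma_\wedge(A)(y,\eta)\,\kappa_\lambda^{-1}$ for all $\lambda>0$ and $\eta\ne0$, cf. \eqref{hom35}; as each $\kappa_\lambda$ is an isomorphism of the spaces involved, the Fredholm property of \eqref{Sig49} for all $\eta\ne0$ is equivalent to the Fredholm property for a single $\eta$ with $|\eta|=1$, and I fix such an $\eta$ from now on.

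Next I would identify $\sigma_\wedge(A)(y,\eta)$ with an element of the cone algebra ${L}^\mu(\X,\g)$, $\g=(\gamma,\gamma-\mu,\Theta)$, in the sense of Definition \ref{cone_def2}. By \eqref{sig40}, \eqref{sig1}, \eqref{new38a} and \eqref{new41a} it has the form $A_\sing+A_\reg$ with $A_\sing=\omega r^{-\mu}\op_M^{\gamma-\frac n2}\big(\til h(0,y,\,\cdot\,,0)\big)\omega'+R$, where $R\in{L}^\mu_{M+G}(\X,\g)$ carries $\sigma_\wedge(m+g)$ (for $|\eta|=1$ the Mellin part of $\sigma_\wedge(m)$ is of the form \eqref{new41} with weights $\gamma_{j\alpha}$ satisfying \eqref{w39}, hence of the type admitted in Definition \ref{1.2.34}), and $A_\reg=(1-\omega)r^{-\mu}\Op_r(p_0)(y,\eta)(1-\omega'')$ is of exit type at $r=\infty$; here $h_0(r,y,z,\eta)=\til h(0,y,z,r\eta)$, $p_0(r,y,\rho,\eta)=\til p(0,y,r\rho,r\eta)$, and $\Op_y(p_0)=\Op_y(\op_M^{\gamma-\frac n2}(h_0))$ modulo smoothing as recalled before \eqref{sig1}. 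The interior and exit symbols of this cone operator are governed by the parameter-dependent family $\til p(0,y,\,\cdot\,,\,\cdot\,,\,\cdot\,)$, which is parameter-dependent elliptic since $A$ is $\sigma_\psi$-elliptic close to the edge, i.e. $\til\sigma_\psi(A)(0,x,y,\til\rho,\xi,\til\eta)\ne0$ for $(\til\rho,\xi,\til\eta)\ne0$. Thus $\sigma_\wedge(A)(y,\eta)$ automatically meets the interior- and exit-ellipticity conditions of Definition \ref{Ell_def}~(ii), so by Theorem \ref{1.2.41}~(ii) the operator \eqref{Sig49} is Fredholm for some (hence every) $s$ precisely when the principal conormal symbol
\[
\sigma_\c\big(\sigma_\wedge(A)(y,\eta)\big)(z)=\til h(0,y,z,0)+f_{00}(y,z):H^s(X)\to H^{s-\mu}(X)
\]
is a family of isomorphisms for all $z\in\Gamma_{\frac{n+1}2-\gamma}$; note that this symbol does not depend on $\eta$.

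It then remains to construct $D(y)$. Put $h(z):=\til h(0,y,z,0)$; by Theorem \ref{Me} applied to $\til p(0,y,\,\cdot\,,0)\in L^\mu_\clas(X;\R)$ we have $h\in M_\O^\mu(X)$, and $h$ is elliptic in the sense of Definition \ref{Ell13} by the theorem following that definition, because $\til p(0,y,\,\cdot\,,0)$ is parameter-dependent elliptic. Hence there is a discrete $D_0(y)\subset\C$, meeting every strip $\{c\le\Re z\le c'\}$ in a finite set, with $h(z):H^s(X)\to H^{s-\mu}(X)$ invertible for $z\notin D_0(y)$, and the parametrix construction for elliptic holomorphic Mellin symbols yields $h(z)^{-1}\in M_{\RR_0}^{-\mu}(X)$ for a Mellin asymptotic type $\RR_0$ with $\pi_\C\RR_0=D_0(y)$. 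Since $f_{00}(y,\,\cdot\,)\in M_{\RR_{00}}^{-\infty}(X)$, Remark \ref{2.27} gives $l(y,z):=h(z)^{-1}f_{00}(y,z)\in M_\P^{-\infty}(X)$ for a resulting Mellin asymptotic type $\P$, and Theorem \ref{T1.2.28} yields $(1+l(y,z))^{-1}=1+l'(y,z)$ with $l'\in M_\S^{-\infty}(X)$, defined for $z\notin\pi_\C\S$. Setting $D(y):=D_0(y)\cup\pi_\C\S$, which is discrete and meets every vertical strip in a finite set, the factorization $\til h(0,y,z,0)+f_{00}(y,z)=h(z)\big(1+l(y,z)\big)$ has invertible factors for $z\notin D(y)$; hence the conormal symbol is an isomorphism on every line $\Gamma_{\frac{n+1}2-\gamma}$ disjoint from $D(y)$, and by the previous two paragraphs \eqref{Sig49} is Fredholm for all such $\gamma$, all $s$, and all $\eta\ne0$.

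The main obstacle is this last step: pinning down $D(y)$ as a discrete set with the asserted strip-finiteness relies on the inversion calculus for Mellin symbols — that the inverse of an elliptic holomorphic Mellin symbol is again a meromorphic Mellin symbol, and that $1+l$ can be inverted within $M_\S^{-\infty}(X)$ by Theorem \ref{T1.2.28}. Absent that machinery one would instead run a direct meromorphic-Fredholm-theorem argument, exploiting that $f_{00}$ is rapidly decreasing along each weight line while $h(z)^{-1}$ is uniformly bounded there, so that $1+l(y,z)$ is invertible for $|\Im z|$ large, and that $l(y,z)$ has only finitely many poles in each strip with finite-rank principal parts. A secondary, purely technical point is the careful verification that for $|\eta|=1$ the operator $\sigma_\wedge(A)(y,\eta)$ genuinely belongs to ${L}^\mu(\X,\g)$ with the exit behaviour at $r=\infty$ inherited from the $\sigma_\psi$-ellipticity near the edge; this is routine given the constructions of Section \ref{S1.4.2}.
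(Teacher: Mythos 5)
Your argument follows exactly the route the paper itself sketches in the discussion surrounding Theorem \ref{T41}: reduce via twisted homogeneity, identify $\sigma_\wedge(A)(y,\eta)$ as an element of the cone algebra over $\X$ whose interior and exit components are elliptic by the $\sigma_\psi$-ellipticity close to the edge, invoke Theorem \ref{1.2.41}~(ii) to reduce everything to the bijectivity of the conormal symbol $\til h(0,y,z,0)+f_{00}(y,z)$ on the weight line, and read off $D(y)$ from the meromorphic inversion of that symbol via the factorisation $h\cdot(1+h^{-1}f_{00})$ together with Theorem \ref{T1.2.28}. This is the intended proof, and the structure is sound. Two small points. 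First, twisted homogeneity only relates $\eta$ to $\lambda\eta$ on the same ray, so the reduction is to all $\eta$ on the unit sphere rather than to a single $\eta$; this is harmless here because your Fredholm criterion (invertibility of the conormal symbol) is independent of the direction of $\eta$ and the $\sigma_\psi$-ellipticity holds for all $(\til\rho,\xi,\til\eta)\ne0$.

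Second, and more substantively, your set $D(y):=D_0(y)\cup\pi_\C\S$ is too small. Theorem \ref{T1.2.28} produces $(1+l)^{-1}=1+l'$ with $l'\in M_\S^{-\infty}(X)$, but $\pi_\C\S$ need not contain the poles of $l$ itself (already in the scalar model $l(z)=z^{-1}$ one gets $l'(z)=-(z+1)^{-1}$, so the pole of $l$ at $0$ is not in $\pi_\C\S$). If a weight line $\Gamma_{\frac{n+1}2-\gamma}$ passes through a pole of $f_{00}(y,\cdot)$ that lies neither in $D_0(y)$ nor in $\pi_\C\S$, the conormal symbol is not even a bounded family on that line (and the condition $\pi_\C\RR_{00}\cap\Gamma_{\frac{n+1}2-\gamma}=\emptyset$ from \eqref{Me20}/\eqref{w39} fails), so the conclusion of the theorem is false for that $\gamma$ with your $D(y)$. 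The fix is one line: take $D(y):=D_0(y)\cup\pi_\C\P\cup\pi_\C\S$, where $\P$ is the Mellin asymptotic type of $l=h^{-1}f_{00}$ (equivalently, adjoin $\pi_\C\RR_{00}$); this set is still discrete with finite intersection with every vertical strip, and then both factors in $h(z)(1+l(y,z))$ are defined and invertible on every weight line avoiding $D(y)$.
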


\nt In the elliptic theory of edge operators we usually assume that for some $\gamma\in\R$ the criterion of Theorem \ref{T41} is satisfied for all $y\in\Omega.$ From Theorem \ref{T1.2.47} (ii) we know that \eqref{Sig49} is Fredholm if and only if $\sigma_\wedge(A)(y,\eta)$ is elliptic in the cone algebra over $\X.$ This refers to the principal symbols from the cone theory, namely, the components of \eqref{new22}. In the present case those concern $\sigma_\wedge(A),$ i.e., we have to consider
\begin{equation}\label{cone49}
\sigma(A)=(\sigma_\psi\sigma_\wedge(A),\sigma_{\mathrm c}\sigma_\wedge(A),\sigma_{\mathrm E}\sigma_\wedge(A))
\end{equation}

\nt It turns out that the $\sigma_\psi$-ellipticity of $A$ in the sense of Definition \ref{E1} entails the ellipticity condition of the cone calculus for the first and the third component of \eqref{cone49}. What concerns $\sigma_{\mathrm c}\sigma_\wedge(A)$ we have in the present case
$$\sigma_{\mathrm c}\sigma_\wedge(A)(y,z)=\til h(0,y,z,0)+f_{00}(y,z)$$
where $f_{00}(y,z)$ comes from the $(M+G)$-summand in $A,$ cf.~the formula \eqref{newA45}, while the conormal symbol $\til h(0,y,z,0)$ of the cone operator $\sigma_\wedge(A)(y,\eta)$ with $a(y,\eta)$ being of the form \eqref{new38}, is coming from \eqref{new38a}. All this refers to the representation of $A$ as \eqref{edg40}, and $\sigma_{\mathrm c}\sigma_\wedge(A)(y,\eta)=\sigma_{\mathrm c}\sigma_\wedge(\a)(y,\eta)$ for $\sigma_\wedge(\a)(y,\eta)$ from \eqref{sig40}.\\

\nt Then, another well-known aspect is to require the existence of elliptic edge conditions of trace and potential type, i.e., on the edge-symbolic level a smooth family of block matrix isomorphisms
\begin{equation}\label{matr}
\left(\begin{array}{cc}
\sigma_\wedge(A)&\sigma_\wedge(K)\\[4mm]
\sigma_\wedge(T)&\sigma_\wedge(Q)
\end{array}\right)(y,\eta):
\begin{array}{ccc}
\KsgX& &\K^{s-\mu,\gamma-\mu}(\X)\\
\oplus&\rightarrow&\oplus\\
J_{-,y}& &J_{+,y}
\end{array}
\end{equation}
for suitable (smooth complex) vector bundles $J_\pm$ over $\Omega$ (which are, of course, trivial when, for instance, $\Omega$ is contractible). The relation (\ref{matr}) is required for an $s=s_0,$ and then, as soon as $\sigma_\wedge(K),\sigma_\wedge(T),\sigma_\wedge(Q)$ are homogeneous principal components of corresponding entries as in Definition \ref{Green2} (with $j_\pm$ being the fibre dimension of $J_\pm)$ the operators (\ref{matr}) are isomorphisms for all $s\in\R.$

\begin{rem}\label{co58}
It turns out$,$ that when $\sigma_\wedge(A)(y,\eta)$ can be completed by extra finite rank entries to a family of isomorphisms $(\ref{matr}),$ it is possible to find a Green symbol $g_1(y,\eta)$ of the kind of an upper left corner in \textup{Definition \ref{Green2}} and a smoothing Mellin symbol $m_1(y,\eta)$ of the form $(\ref{new40b}),$ such that
$$\sigma_\wedge(A)(y,\eta)+\sigma_\wedge(m_1+g_1)(y,\eta):\KsgX\rightarrow\K^{s-\mu,\gamma-\mu}(\X)$$
is a family of isomorphisms. Since in principle the associated operator $A+M_1+G_1$ is of the same nature as $A$ itself and because asymptotic properties of solutions are the main issue here$,$ the essential aspects concern elliptic operators $A\in{L}^\mu(\Xw\times\Omega,\g)$ without extra data of trace and potential type.
\end{rem}

\begin{rem}
Let $A_1,A_2\in{{L}}^\mu(\Xw\times\Omega,\g),$ and suppose that $A_1=A_2$ modulo $L^{-\infty}(\X\times\Omega).$ Then we have
$$A_1=A_2\qquad\mod\,{{L}}^\mu_{M+G}(\Xw\times\Omega,\g).$$
\end{rem}

\begin{defn}\label{1.4.22}
An operator $A\in{L}^\mu(\Xw\times\Omega,\g)$ for $\g=(\gamma,\gamma-\mu,\Theta)$ is called elliptic close to the edge $\Omega$ if $A$ is $\sigma_\psi$-elliptic close to the edge$,$ cf. \textup{Definition \ref{E1},} and if
\begin{equation}\label{new43a}
\sigma_\wedge(A)(y,\eta):\KsgX\rightarrow\K^{s-\mu,\gamma-\mu}(\X)
\end{equation}
is a family of isomorphisms for all $(y,\eta)\in\Omega\times(\R^q\setminus\{0\}),$ for some $s=s_0\in\R.$
\end{defn}

\nt It follows, in particular, that if (\ref{new43a}) is an isomorphism for $s=s_0$ then so is for all $s\in\R,$ cf. Theorem \ref{1.2.41} (ii). The operators (\ref{new43a}) are elliptic elements of the cone algebra over the (open stretched) infinite cone $\X$ for every fixed $y\in\Omega,$ $\eta\in\R^q\setminus\{0\},$ cf. Definition \ref{Ell_def} (ii).\\

\nt The ellipticity in the cone algebra means the bijectivity of the respective principal symbols from the subordinate cone calculus, especially, of the conormal symbol
\begin{equation}\label{con43}
\sigma_{\mathrm c}\sigma_\wedge(A)(y,z)=h_0(0,y,z,0)+f_{00}(y,z):H^s(X)\rightarrow H^{s-\mu}(X),
\end{equation}
cf.~relations (\ref{new38a}) and (\ref{new41}), which is a family of isomorphisms for every $s\in\R.$ Here $\sigma_{\mathrm c}\sigma_\wedge(A)(y,z)\in C^\infty(\Omega,M_\RR^\mu(X))$ for some (constant discrete) asymptotic type $\RR$ where $\pi_\C\RR\cap\Gamma_{\frac{n+1}2-\gamma}=\emptyset$ (concerning notation, cf. Definition \ref{1.2.24}).\\

\nt In the construction of the local parametrices of operators that are elliptic in the sense of Definition \ref{1.4.22} we first invert the components of $\sigma(A)=(\sigma_\psi(A),\sigma_\wedge(A))$ and then, via an operator convention we pass to an operator
\begin{equation}\label{P43}
P\in{L}^{-\mu}(\Xw\times\Omega,\g^{-1}),\qquad\g^{-1}=(\gamma-\mu,\gamma,\Theta)
\end{equation}
such that
$$\sigma(P)=(\sigma_\psi(A)^{-1},\sigma_\wedge(A)^{-1}).$$

\nt The inversion process for $\sigma_\wedge(A)$ contains the inversion of (\ref{con43}). In the present special subclass which admits only constant (in $y)$ discrete asymptotics the relation (\ref{P43}) can be true only when
\begin{equation}\label{inv43}
\sigma_{\mathrm c}\sigma_\wedge(A)^{-1}(y,z)\in C^\infty(\Omega,M_\S^{-\mu}(X))
\end{equation}
for some (constant discrete) asymptotic type $\S.$ This is, of course, an additional condition, now imposed to illustrate how asymptotics of solutions are generated. Then we have the following result.

\begin{thm}\label{ell43}
If $A\in{L}^\mu(\Xw\times\Omega,\g)$ is elliptic in the sense of \textup{Definition \ref{1.4.22}} and if the condition \textup{(\ref{inv43})} is satisfied$,$ then there exists a $($properly supported with respect to the $(y,y')$ variables$)$ $P\in{L}^{-\mu}(\Xw\times\Omega,\g^{-1})$ such that
$$\varphi(1-PA)\psi\in{L}^{-\infty}(\Xw\times\Omega,(\gamma,\gamma,\Theta)),$$
$$\psi(1-AP)\varphi\in{L}^{-\infty}(\Xw\times\Omega,(\gamma-\mu,\gamma-\mu,\Theta)),$$
for every $\varphi,\psi\in C_0^\infty(\Rr\times X\times\Omega),$ with $\supp\varphi,$ $\supp\psi\subseteq[0,R]\times X$ for $R>0$ as in \textup{Definition \ref{E1}}.
\end{thm}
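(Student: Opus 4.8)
The plan is to construct $P$ by the standard two-step procedure dictated by the principal symbolic hierarchy $\sigma(A)=(\sigma_\psi(A),\sigma_\wedge(A))$: first invert $\sigma_\psi(A)$ (using $\sigma_\psi$-ellipticity close to the edge), then correct the remaining edge-symbolic obstruction (using $\sigma_\wedge$-ellipticity together with the standing hypothesis \eqref{inv43}), and finally remove the smoothing-order remainder by a formal Neumann argument carried out inside ${L}^\bullet(\Xw\times\Omega,\cdot)$. Because ellipticity is only assumed for $0\le r<R$, all the quantised pieces are localised by cut-off functions supported in $[0,R]\times X$, which is why the final remainders are cut off by $\varphi,\psi$.

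First I would invert $\sigma_\psi$. Since $A$ is $\sigma_\psi$-elliptic close to the edge, the reduced symbol $\til\sigma_\psi(A)(r,x,y,\til\rho,\xi,\til\eta)$ of \eqref{ssig}--\eqref{ssig2} is pointwise invertible up to $r=0$ for $0\le r<R$. Inverting it, forming an asymptotic sum of the resulting edge-degenerate symbol class and passing to an operator via the edge quantisation of Theorem \ref{qua} and Theorem \ref{sm29} (with the Leibniz-product expansions of Theorem \ref{Multi} and Theorem \ref{left} controlling the lower-order terms), one obtains an operator $P_0\in{L}^{-\mu}(\Xw\times\Omega,\g^{-1})$, localised near the edge, with $\sigma_\psi(P_0)=\sigma_\psi(A)^{-1}$ and $\til\sigma_\psi(P_0)=\til\sigma_\psi(A)^{-1}$ on $[0,R]$. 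Then, by the composition theorem for edge operators, $\varphi(1-P_0A)\psi$ has vanishing $\sigma_\psi$, so its obstruction is carried entirely by the edge symbol.

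Second I would correct the edge symbol. By Definition \ref{1.4.22} the family $\sigma_\wedge(A)(y,\eta)\colon\KsgX\to\K^{s-\mu,\gamma-\mu}(\X)$ is an isomorphism for $\eta\ne0$, and for each fixed $(y,\eta)$ it is an elliptic element of the cone algebra over $\X$; by Theorem \ref{T1.2.47} (ii) its inverse again lies in ${L}^{-\mu}(\X,\g^{-1})$. The hypothesis \eqref{inv43} enters precisely here: it asserts that the inverted principal conormal symbol $\sigma_{\mathrm c}\sigma_\wedge(A)^{-1}(y,z)$ stays in $C^\infty(\Omega,M_\S^{-\mu}(X))$ with a \emph{constant} discrete Mellin asymptotic type $\S$ (the bookkeeping being governed by Theorem \ref{T1.2.28} and Remark \ref{2.27}), which is exactly what guarantees that $\sigma_\wedge(A)^{-1}(y,\eta)$ is the homogeneous principal edge symbol of an element $\a_1(y,\eta)\in\RR^{-\mu}(\Omega\times\R^q,\g^{-1})$ rather than only of an object with variable or branching asymptotics. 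Adding $\Op_y(\a_1)$ (again localised near the edge, and corrected by a smoothing Mellin plus Green summand so that also the lower-order conormal symbols match through the Mellin translation product of Theorem \ref{comp25}) gives $P_1$, and $P:=P_0+P_1\in{L}^{-\mu}(\Xw\times\Omega,\g^{-1})$ satisfies $\sigma_\psi(1-PA)=0$ and $\sigma_\wedge(1-PA)=0$ close to the edge, i.e.\ $\varphi(1-PA)\psi\in{L}^{-1}(\Xw\times\Omega,(\gamma,\gamma,\Theta))$.

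Finally I would iterate: with $C:=\varphi(1-PA)\psi\in{L}^{-1}$ one kills its $\sigma$-components of homogeneity $-1$ by the same two-step correction, and (because all asymptotic types produced at each order remain constant discrete, which is again where \eqref{inv43} is needed) the asymptotic-summation results for Green symbols and for the full class let one form $P\sim P(1+C+C^2+\cdots)$, yielding a left parametrix with $\varphi(1-PA)\psi\in{L}^{-\infty}(\Xw\times\Omega,(\gamma,\gamma,\Theta))$. The analogous construction from the right (or applied to the formal adjoint) gives a right parametrix $P'$ with $\psi(1-AP')\varphi$ smoothing, and the usual identity $P\equiv P'$ modulo ${L}^{-\infty}$ shows $P$ is two-sided; a properly supported representative is obtained as in Remark \ref{Re1.4.27}. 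The main obstacle is the second step and its repetition in the iteration: one must check that \emph{every} inversion encountered — of $\sigma_\wedge(A)$ at the edge-symbol level and of its conormal symbol at each order — does not leave the class of operators with constant discrete asymptotics, and this is exactly the content of \eqref{inv43}, which forces careful tracking of Mellin asymptotic types through Theorem \ref{T1.2.28}, Remark \ref{2.27}, and the Mellin translation product.
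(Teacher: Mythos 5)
Your construction is essentially correct, but it is organised differently from the paper's. The paper does not invert $\sigma_\psi$ and $\sigma_\wedge$ as two parallel symbol levels followed by a Neumann iteration in the graded edge calculus. Instead it first forms the Leibniz inverse $r^\mu p^{(-1)}$ of the local interior symbol $r^{-\mu}p$ and quantises it via Theorems \ref{qua} and \ref{sm29} to get $A^{(-1)}=\Op_y(a^{(-1)})$ with $A^{(-1)}A=1+M+G$ where $M+G\in{L}^0_{M+G}(\Xw\times\Omega,\g_0)$ --- i.e.\ the remainder after the interior step is not merely of order $-1$ but already lies in the much smaller smoothing-Mellin-plus-Green class. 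It then corrects only the principal conormal symbol, inverting $1+f(y_0,z)$ via Theorem \ref{T1.2.28} (this is where \eqref{inv43} is used, exactly as you say), localises near $y_0$, and hands the remaining iteration entirely to Theorem \ref{inv}, which is the dedicated parametrix result for $1+{L}^0_{M+G}$. The advantage of that organisation is that the order-by-order improvement takes place inside the $M+G$ class, where it is governed by the Mellin translation product and by asymptotic summation of Green symbols --- the only asymptotic-summation statement the paper actually provides. Your final step invokes asymptotic summation ``for the full class'' $\RR^\mu(\Omega\times\R^q,\g)$, which is true in the edge calculus but is not established in this paper; if you follow the paper's reduction you never need it.

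Two points in your second step need to be tightened. First, as written, $P:=P_0+\Op_y(\a_1)$ with $\sigma_\wedge(\a_1)=\sigma_\wedge(A)^{-1}$ cannot be right: $P_0$ already carries a nonzero edge symbol (and $\a_1$ would carry a second copy of $\sigma_\psi(A)^{-1}$), so the sum has neither $\sigma_\psi(P)=\sigma_\psi(A)^{-1}$ nor $\sigma_\wedge(P)=\sigma_\wedge(A)^{-1}$. The correction $P_1$ must be a smoothing-Mellin-plus-Green operator whose edge symbol is the \emph{difference} $\sigma_\wedge(A)^{-1}-\sigma_\wedge(P_0)$; that this difference is indeed of the form $\sigma_\wedge(m_1+g_1)$ with constant discrete asymptotic types follows from $\sigma_\wedge(P_0)\sigma_\wedge(A)=1+\sigma_\wedge(M+G)$ together with \eqref{inv43} and Remark \ref{2.27}, and this verification is the actual content of the step, not a parenthetical. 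Second, \eqref{inv43} is consumed once, in inverting the principal conormal symbol; the subsequent iteration involves only compositions, which stay in the constant discrete class automatically, so your closing worry about ``every inversion encountered'' at each order resolves itself once the principal one is done.
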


\nt In general, (\ref{inv43}) will be violated, i.e., the points of $\pi_\C\S$ may depend on $y,$ and the associated multiplicities may have `jumps' with varying $y,$ which is just the main issue in our program of variable discrete asymptotics in \cite{Schu63}. Note that the localising factors $\varphi,\psi$ come from the fact that we do not impose $\sigma_\psi$-ellipticity of $A$ for $r>R+\varepsilon$ for some $\varepsilon>0$ and hence we cannot expect any smoothing remainders for $r$ too large.

\begin{thm}
Let $A\in{L}^\mu(\Xw\times\Omega,\g)$ satisfy the conditions of \textup{Theorem \ref{ell43}}. Then
$$Au=f$$
for $f\in\WW^{s-\mu}_{[\loc)}(\Omega,\K^{s-\mu,\gamma-\mu}_\Q(\X)),$ $s\in\R$ fixed$,$ and $u\in\WW^{-\infty}(\R^q,\K^{-\infty,\gamma}(\X)),$ with $\supp u\subset[0,R]\times X\times K$ for some $K\Subset\Omega$ implies
$$u\in\WW^s(\R^q,\K^{s,\gamma}_\P(\X))$$
for every constant asymptotic type $\Q$ with some resulting constant asymptotic type $\P.$
\end{thm}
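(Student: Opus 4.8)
The plan is to derive the statement from the existence of a parametrix (Theorem~\ref{ell43}) together with two facts already recorded in the calculus: the continuity of edge operators between weighted edge spaces \emph{with} asymptotics (Theorem~\ref{cont42}), and the fact that the smoothing operators of Definition~\ref{glatt} automatically map into spaces with asymptotics. Since $A$ meets the hypotheses of Theorem~\ref{ell43}, first I would fix, using also Remark~\ref{Re1.4.27}, a properly supported parametrix $P\in{L}^{-\mu}(\Xw\times\Omega,\g^{-1})$, $\g^{-1}=(\gamma-\mu,\gamma,\Theta)$, so that $\varphi(1-PA)\psi\in{L}^{-\infty}(\Xw\times\Omega,(\gamma,\gamma,\Theta))$ for all $\varphi,\psi\in C_0^\infty(\Rr\times X\times\Omega)$ with $\supp\varphi,\supp\psi\subseteq[0,R]\times X$. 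Because $X$ is compact and $K\Subset\Omega$, the set $\supp u$ is compact and contained in $[0,R]\times X\times K$; I would therefore choose such a $\psi$ with $\psi\equiv1$ on a neighbourhood of $\supp u$ and then $\varphi$ of the same kind with $\varphi\equiv1$ on $\supp\psi$, so that $\psi u=u=\varphi u$ and $C:=\varphi(1-PA)\psi$ is a smoothing operator of the class above.

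Applying $P$ to the equation $Au=f$ gives $PAu=Pf$, and then
$$u=\varphi u=\varphi PAu+\varphi(1-PA)u=\varphi Pf+\varphi(1-PA)\psi u=\varphi Pf+Cu ,$$
where $\psi u=u$ was used in the penultimate step. It then remains to treat the two summands separately. For $\varphi Pf$: the operator $P$ is properly supported, of order $-\mu$ and weight data $(\gamma-\mu,\gamma,\Theta)$, and $f\in\WW^{s-\mu}_{[\loc)}(\Omega,\K^{s-\mu,\gamma-\mu}_\Q(\X))$, so Theorem~\ref{cont42} (read with $\mu$ replaced by $-\mu$) gives $Pf\in\WW^s_{[\loc)}(\Omega,\K^{s,\gamma}_{\P_1}(\X))$ for a resulting discrete asymptotic type $\P_1$ associated with $(\gamma,\Theta)$; multiplying by the compactly supported $\varphi$ then puts $\varphi Pf$ in $\WW^s(\R^q,\K^{s,\gamma}_{\P_1}(\X))$. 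For $Cu$: since $u\in\WW^{-\infty}(\R^q,\K^{-\infty,\gamma}(\X))$ has compact support it lies in $\WW^{s_1}_\comp(\Omega,\K^{s_1,\gamma}(\X))$ for some $s_1\in\R$, and Definition~\ref{glatt} forces $Cu\in\WW^\infty_\loc(\Omega,\K^{\infty,\gamma}_{\P_0}(\X))$ for some asymptotic type $\P_0$ associated with $(\gamma,\Theta)$, hence $Cu\in\WW^s(\R^q,\K^{s,\gamma}_{\P_0}(\X))$ for every $s$. Thus $u$ is the sum of an element of $\WW^s(\R^q,\K^{s,\gamma}_{\P_1}(\X))$ and one of $\WW^s(\R^q,\K^{s,\gamma}_{\P_0}(\X))$.

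Finally I would let $\P$ be any discrete asymptotic type associated with $(\gamma,\Theta)$ whose carrier contains $\pi_\C\P_0\cup\pi_\C\P_1$ with multiplicities at least the respective maxima, so that $\K^{s,\gamma}_{\P_0}(\X)+\K^{s,\gamma}_{\P_1}(\X)\subset\K^{s,\gamma}_\P(\X)$. Since $\K^{s,\gamma}_{\P_0}(\X)$, $\K^{s,\gamma}_{\P_1}(\X)$ and $\K^{s,\gamma}_\P(\X)$ are \Fr subspaces of $\K^{s,\gamma}(\X)$ that are invariant under the group action $\kappa$ (cf.\ the remark following the definition of $\K^{s,\gamma}_\P(\X)$ in Section~\ref{1.2.3}), Theorem~\ref{sum}~(i) gives
$$\WW^s(\R^q,\K^{s,\gamma}_{\P_0}(\X))+\WW^s(\R^q,\K^{s,\gamma}_{\P_1}(\X))\subset\WW^s(\R^q,\K^{s,\gamma}_\P(\X)),$$
whence $u\in\WW^s(\R^q,\K^{s,\gamma}_\P(\X))$, which is the assertion. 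The point that needs the most care is the localisation near $r=R$: because $A$ is only $\sigma_\psi$-elliptic close to the edge (i.e.\ for $r<R$ in Definition~\ref{E1}), the cut-offs $\varphi,\psi$ must be chosen equal to $1$ on $\supp u$ while remaining supported in $[0,R]\times X$, which is possible precisely because $\supp u$ is compact and, after shrinking $R$ slightly if necessary, stays strictly inside the ellipticity region. No iteration in the smoothness index is needed, since the order $-\mu$ of $P$ already recovers the full gain of $\mu$ orders from $f$.
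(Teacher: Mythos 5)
Your proposal is correct and follows essentially the same route as the paper's own proof: apply the properly supported parametrix $P$ from Theorem \ref{ell43}, write $u=\varphi Pf+\varphi(1-PA)\psi u$ using cut-offs with $\psi\equiv1$ on $\supp u$ and $\varphi\equiv1$ on $\supp\psi,$ and conclude from the mapping property of $P$ in spaces with asymptotics together with the smoothing property of the remainder. You merely make explicit two points the paper leaves implicit, namely the merging of the two resulting asymptotic types into a single $\P$ and the compatibility of the cut-off supports with the ellipticity region $[0,R]\times X.$
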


\begin{proof}
We choose any $\psi\in C_0^\infty(\Rr\times X\times\Omega)$ such that $\psi\equiv1$ on $\supp u.$ Thus $u=\psi u,$ and by virtue of Theorem \ref{cont42} we have
$$A\psi u=f\quad\Rightarrow\quad PA\psi u=Pf\in\WW^s_{[\loc)}(\Omega,\K^{s,\gamma}_{\til\Q}(\X))$$
for some resulting asymptotic type $\til\Q,$ i.e.,
$$\varphi PA\psi u=\varphi Pf\in\WW^s_{[\loc)}(\Omega,\K^{s,\gamma}_{\til\Q}(\X)).$$
From $\varphi PA\psi u=\varphi\psi u-\varphi(1-PA)\psi u$ and because the smoothing part of the remainder, $\varphi(1-PA)\psi u,$ belongs to the space $\WW^\infty_{[\loc)}(\Omega,\K^{\infty,\gamma}_{\til{\til\Q}}(\X))$ for some asymptotic type $\til{\til\Q},$ it follows that 
$$\varphi\psi u\in\WW^s_{[\loc)}(\Omega,\K^{s,\gamma}_{\til\Q}(\X))+\WW^\infty_{[\loc)}(\Omega,\K^{\infty,\gamma}_{\til{\til\Q}}(\X)).$$

\nt Choosing $\varphi$ in such a way that $\varphi|_{\supp\psi}\equiv1$ we obtain $u\in\WW^s_{[\loc)}(\Omega,\K^{s,\gamma}_\Q(\X)).$
\end{proof}

\nt For future references we sketch more details on the way to obtain a parametrix and to derive asymptotics of solutions. From Definition \ref{1.4.18} the operator $A$ is of the form (\ref{edg40}) where by Definition \ref{1.4.17} the amplitude function $\a(y,\eta)$ contains an $a(y,\eta)$ as in (\ref{new38}) and an $(m+g)(y,\eta)\in\RR^\mu_{M+G}(\Omega\times\R^q;\g).$ Recall that (\ref{new38}) is already the result of some quantisation (containing what we did through Theorem \ref{symb26}), starting from local operator-valued symbols (\ref{new37a}). Under the condition of $\sigma_\psi$-ellipticity close to $r=0$ (which only depends on $p(r,y,\rho,\eta))$ by ``elementary means'' of the general parameter-dependent pseudo-differential calculus we find a Leibniz inverse $\sharp^{-1}$ of $r^{-\mu}p(r,y,\rho,\eta)$ of the form $r^\mu p^{(-1)}(r,y,\rho,\eta)$ for a
$$p^{(-1)}(r,y,\rho,\eta)=\til p^{(-1)}(r,y,r\rho,r\eta),$$
$\til p^{(-1)}(r,y,\til\rho,\til\eta)\in C^\infty(\Rr\times\Omega,L_\clas^{-\mu}(X;\R_{\til\rho,\til\eta}^{1+q})),$ such that
$$r^\mu p^{(-1)}(r,y,\rho,\eta)\sharp_{r,y}r^{-\mu}p(r,y,r\rho,r\eta)=1,$$
modulo $C^\infty(\R_+\times\Omega,L^{-\infty}(X;\R_{\rho,\eta}^{1+q})).$ Then, applying Theorems \ref{qua} and \ref{sm29}, we find an $a^{(-1)}(y,\eta)$ belonging to $r^\mu p^{(-1)}$ near $r=0$ of analogous structure as $a(y,\eta)$ (that belongs to $r^{-\mu}p$ near $r=0)$ such that  
$$a^{(-1)}(y,\eta)\sharp_y a(y,\eta)=1\qquad\mod \RR_{M+G}^0(\Omega\times\R^q,\g_0)$$
for $\g_0=(\gamma,\gamma,(-\infty,0]).$\\

\nt In other words, setting $A^{(-1)}:=\Op_y(a^{(-1)})$ it follows that
$$A^{(-1)}A=1+M+G$$
for some $M+G\in{L}^0_{M+G}(\Xw\times\Omega).$ At this moment we do not claim that $A^{(-1)}$ is also elliptic with respect to $\sigma_\wedge;$ however, similarly as in the cone calculus we know that
$$\sigma_\mathrm c\sigma_\wedge(A^{(-1)})(y,z+\mu)\sigma_\mathrm c\sigma_\wedge(A)(y,z)=1+f(y,z)$$
for an $f(y,z)\in C^\infty(\Omega,M_\RR^{-\infty}(X))$ and some constant (in $y)$ discrete Mellin asymptotic type $\RR.$ Since for every fixed $y_0\in\Omega$ we have an inverse $(1+f(y_0,z))^{-1}=1+l(y_0,z)$ for some $l\in M_\S^{-\infty}(X)$ we obtain for $z\in\Gamma_{\frac{n+1}2-(\gamma-\mu)}$
$$\sigma_\mathrm c\sigma_\wedge(A)(y_0,z)^{-1}=(1+l(y_0,z-\mu))\sigma_\mathrm c\sigma_\wedge(A^{(-1)})(y_0,z),$$
i.e., we completed the inverse of the subordinate principal conormal symbol of the given operator $A$ at a point $y_0\in\Omega.$ Then, if we associate with the conormal symbol $1+l(y_0,z)$ an operator-valued symbol $1+\omega(r[\eta])\op_M^{\gamma-\frac n2}l)(y_0)\omega'(r[\eta])\in\RR_{M+G}^0(\R^q,\g),$ the operator
$$P_0:=\Op_y(1+\omega(r[\eta])\op_M^{\gamma-\frac n2}(l)(y_0)\omega'(r[\eta]))A^{(-1)}\in{L}^{-\mu}(\Xw\times\Omega,\g^{-1})$$
is not only $\sigma_\psi$-elliptic but also $\sigma_\wedge$-elliptic at $y_0.$ Thus there is a neighbourhood $U\subset\Omega$ of $y_0$ such that $P_0$ is $\sigma_\wedge$-elliptic for all $y\in U.$ If we localise the task to assign the asymptotics of solutions to $U$ (which is reasonable anyway) without loss of generality we may assume that (after changing notation) our $P_0$ is $(\sigma_\psi,\sigma_\wedge)$-elliptic (with respect to $\sigma_\wedge$ everywhere in $\Omega).$ In other words, $P_0$ is an elliptic `crude' parametrix of $A$ in the sense
$$P_0A=1+M_{-1}+G_0$$
for $M_{-1}+G_0\in\RR_{M+G}^0(\Omega\times\R^q,(\gamma,\gamma,\Theta))$ but of vanishing principal conormal symbol. By Proposition \ref{inv} we find an operator $1+N$ for $N\in{L}^{-\mu}_{M+G}(\Xw\times\Omega,(\gamma-\mu,\gamma-\mu,\Theta))$ such that
$$(1+N)P_0A\in{L}^{-\infty}(\Xw\times\Omega,(\gamma,\gamma,\Theta)).$$

\cleardoublepage

\addcontentsline{toc}{section}{References}

\cleardoublepage


\twocolumn

\addtolength{\hoffset}{-10mm}

\addtolength{\textwidth}{20mm}

\addcontentsline{toc}{section}{Index}
\printindex

\section*{List of Symbols\hfill}
\addcontentsline{toc}{section}{List of Symbols}
\nt\addsymbol \prec: {prec}
\addsymbol \ang\cdot: {angeta}
\addsymbol [\cdot]: {kleta}\\
\addsymbol a_{\mathrm L},a_{\mathrm R}: {left}\\
\addsymbol \dslash\rho: {dsla}
\addsymbol \Diff^\nu(X): {Diffnu}
\addsymbol \Diff^\mu_\mathrm{deg}(M): {Diffdeg}\\
\addsymbol \E_\P(\X): {singdiscr}\\
\addsymbol \Gamma_\beta: {Gambeta}\\
\addsymbol g_X: {RiemMetr}
$\g,$~~\pageref{boldg}, \pageref{g43}\\
\addsymbol \g\circ\til\g: {gcompg}
\addsymbol \g^{-1}: {g-1}\\
\addsymbol \H^{s,\gamma}(\X): {symbol:HsGa}
\addsymbol H^s(\R\times X): {HS13}
\addsymbol \hat H^s(\R_\tau\times X): {Fu13}
\addsymbol H^{s,\gamma}(M): {HsgM}
\addsymbol H^s_\cone(X^\asymp): {Hscone}
\addsymbol H^{s;g}_\cone(\X): {Hscong}
\addsymbol \hat H^s(\Gamma_\beta\times X): {HhatG}
\addsymbol H^{s,\gamma}_\P(M): {HsgP}
\addsymbol H^s(\R^q,H): {HsRH}\\
\addsymbol \kappa_\lambda^g: {Klhom}\\
\addsymbol \K^{s,\gamma;g}(\X): {symbol:Ksgag}
\addsymbol \K^{s,\gamma}(\X): {symbol:Ksga}
\addsymbol \K^{\infty,\gamma}(\X): {new10a}
\addsymbol \K^{s,\gamma;g}_\Theta(\X): {symbol:KTh}
\addsymbol \K^{s,\gamma;g}_\P(\X): {new14}
\addsymbol \K^{s,\gamma}_\Theta(\X): {Kcfl}
\addsymbol \K^{s,\gamma}_\P(\X): {Kcas}
\addsymbol K^{s,\gamma}(\X): {new111}\\
\addsymbol L^{-\infty}(X): {Lclass}
\addsymbol L^\mu_{(\clas)}(\Omega;\R^l): {Lmu}
\addsymbol L^{-\infty}(\Omega;\R^l): {Linf}
\addsymbol L^\mu_{(\clas)}(X;\R^l): {new10}
\addsymbol L^{-\infty}(X;\R^l): {Linf2}
\addsymbol L^\mu_{(\clas)}(X;\Gamma_{\frac12-\gamma}): {opvalsym}
\addsymbol L^{\mu;\nu}_\clas(X^\asymp): {Lclmn}
\addsymbol L^\mu_{(\clas)}(\Omega;H,\til H): {Lmu2}
\addsymbol L^\mu_{(\clas)}(\R^q;H,\til H)_{\mathrm{b}}: {Lmub}
\addsymbol L_G(\X,\g)_{\P,\Q}: {AAGPQ}
\addsymbol L_G(\X,\g): {AAG}
\addsymbol L_{M+G}^\mu(\X,\g): {AAMGm}
\addsymbol L^{\mu}(M,\g): {AAmMg}
\addsymbol L^{-\infty}(\Xw\times\Omega,\g): {glatt}
\addsymbol L^\mu_G(\Xw\times\Omega,\g;j_-,j_+): {1.4.12}
\addsymbol L^{\mu}_G(\Xw\times\Omega,\g): {1.4.12}
\addsymbol L^{\mu}_{M+G}(\Xw\times\Omega,\g): {1.4.16}
\addsymbol L^{\mu}(\Xw\times\Omega,\g): {1.4.18}\\
\addsymbol M_\gamma: {symbol:wM}
\addsymbol \M_\varphi: {Mphi}
\addsymbol M_\O^\mu(X;\R^q): {1.2.16}
\addsymbol M_\O^\mu(X): {MOmX}
\addsymbol M_\O^\mu(\R^q): {MOm}
\addsymbol M_\RR^{-\infty}(X): {MRinf}
\addsymbol M_{\RR}^{\mu}(X): {MRm}\\
\addsymbol \op_M^\gamma: {opMell}
\addsymbol \Op_{(y)}: {new39}
\addsymbol \textup{Os}[a]: {osc23}\\
\addsymbol \pi_\C{\P}: {Pip}
\addsymbol \pi_\C{\RR}: {Pir}\\
\addsymbol \overline\P: {Pcon}\\
\addsymbol \RR^\mu_G(\Omega\times\R^q,\g): {RG}
\addsymbol \RR^\mu_G(\Omega\times\R^q,\g)_{\P,\Q}: {RmuPQ}
\addsymbol \RR^\mu_G(\Omega\times\R^q,\g;j_-,j_+)_{\P,\Q}: {RGj}
\addsymbol \RR^\mu_G(\Omega\times\R^q,\g;j_-,j_+): {RGj}
\addsymbol \RR^\mu_{M+G}(\Omega\times\R^q,\g): {1.4.14}
\addsymbol \RR^\mu(\Omega\times\R^q,\g): {1.4.17}\\
$\sigma_{\mathrm c}^{\mu-j},$~~\pageref{consym}, \pageref{consym2}\\
\addsymbol \sigma_\psi: {symbs}
\addsymbol \sigma_\mathrm{c}: {symbs}
\addsymbol \sigma_\mathrm{E}: {new22}
\addsymbol \sigma_\mathrm{e}: {cs3}
\addsymbol \sigma_{\psi,\mathrm{e}}: {cs3}\\
\addsymbol S^\mu_{(\clas)}(U\times\R^n): {stsym}
\addsymbol \S^\gamma(\X): {symbol:Sga}
\addsymbol \S_\O(\X): {symbol:SgaO}
\addsymbol \S^\gamma_\Theta(\X): {symbol:SgaTh}
\addsymbol \S^\gamma_\P(\X): {SgP}
\addsymbol S^\mu_{(\clas)}(U\times\R^q;H,\til H): {symbsp}
\addsymbol S^\mu_{(\clas)}(\R^q;H,\til H): {symbsp}
\addsymbol S^\mu_{(\clas)}(U\times\R^q;H,\til H)_{\kappa,\til\kappa}: {new212}
\addsymbol S^{\bs{\mu};\bs{\nu}}(\R^{2q};V): {symb23}
\addsymbol S^{\bs{\infty};\bs{\infty}}(\R^{2q};V): {symb23}
\addsymbol S^\mu_{(\clas)}(\R^d\times\R^q;H,\til H)_{\mathrm{b}}: {symbb}
\addsymbol \S'(\R^q,H): {Sprime}\\
\addsymbol T^\delta\P: {TdP}
\addsymbol T^\beta f: {Tbeta}\\
\addsymbol u_\mathrm{sing},u_\mathrm{flat}: {decomp}\\
\addsymbol \WW^s(\R^q,H): {symbol:Ws}
\addsymbol \WW^s(\R^q,H)_\kappa: {Wskappa}
\addsymbol \WW^s_\comp(\Omega,H): {comp}
\addsymbol \WW^s_\loc(\Omega,H): {loc}
\addsymbol \WW^s(\R^q,\KsgX): {WsKsg}
\addsymbol \WW^s(\R^q,\K^{s,\gamma;g}(\X))_{\kappa^g}: {new311}
\addsymbol \WW^{s,\gamma}_\P(\X\times\R^q): {WsP}\\
\addsymbol \Xw: {xw1}
\addsymbol X^\asymp: {xsymp}
\end{document}